\NewDocumentCommand \liez { og } { \IfValueTF {#1} {\lie{z}_{#1}(#2)} {\lie{z}(#2)} }
\NewDocumentCommand \wWhit { O{\ft} t~ s m }
  {\IfBooleanTF {#2}
    {\widetilde{\Whit}\IfBooleanTF{#3}{}{^{\raisebox{-0.3em}{$\scriptstyle #1$}}}_{#4}}
    {\Whit\IfBooleanTF{#3}{}{^{#1}}_{#4}}}
\NewDocumentCommand \wO { O{\ft} t~ s m }
  {\IfBooleanTF {#2}
    {\widetilde{\cO}\IfBooleanTF{#3}{}{^{#1}\mspace{-2mu}}(#4)}
    {\cO\IfBooleanTF{#3}{}{^{#1}\mspace{-2mu}}(#4)}}
\NewDocumentCommand \Weyl { G{V} o }
  { \IfNoValueTF {#2} {\mathbf{A}_{#1}\hspace{0pt}}
                      {\mathbf{A}_{#1}(#2)} }
\crefname{property}{property}{properties}
\crefname{condition}{condition}{conditions}
\NewDocumentCommand \typeA {sg} 
  {\IfNoValueTF{#2}{type~A}{type~$\mathrm{A}_{#2}$}}
\newcommand{\module}{\nobreakdash-module}
\newcommand{\modules}{\nobreakdash-modules}
\newcommand{\bimodule}{\nobreakdash-bimodule}
\newcommand{\action}{\nobreakdash-action}
\newcommand{\actions}{\nobreakdash-actions}
\newcommand{\equivariant}{\nobreakdash-equivariant}
\newcommand{\invariant}{\nobreakdash-invariant}
\newcommand{\invariants}{\nobreakdash-invariants}
\newcommand{\graded}{\nobreakdash-graded}
\newcommand{\grading}{\nobreakdash-grading}
\newcommand{\gradings}{\nobreakdash-gradings}
\newcommand{\Walgebra}{W\nobreakdash-algebra}
\newcommand{\Walgebras}{W\nobreakdash-algebras}
\newcommand{\sltriple}{$\fsl_2$\nobreakdash-triple}
\newcommand{\sltriples}{$\fsl_2$\nobreakdash-triples}
\theoremstyle{plain}
\newtheorem*{JM}{The Jacobson--Morozov theorem}
\NewDocumentCommand \JMthm {s} {Jacobson--Morozov theorem\IfBooleanTF{#1}{ (\cref{thm:JM})}{}}
\author{Stephen Morgan}
\title{Quantum Hamiltonian reduction of W-algebras and category~
\texorpdfstring{$\cO$}{O}}
\begin{document}

\begin{preliminary}

\maketitle
\clearpage

\begin{abstract}

\Walgebras\ are a class of non-commutative algebras related to the classical
universal enveloping algebras. They can be defined as a subquotient of $U(\fg)$
related to a choice of nilpotent element $e$ and compatible nilpotent subalgebra
$\fm$. The definition is a quantum analogue of the classical construction
of Hamiltonian reduction.

We define a quantum version of Hamiltonian reduction by stages and use it to
construct intermediate reductions between different \Walgebras\ $U(\fg,e)$ in
\typeA. This allows us to express the \Walgebra\ $U(\fg,e')$ as a subquotient
of $U(\fg,e)$ for adjacent nilpotent elements $e' \ge e$.
It also produces a collection of $(U(\fg,e),U(\fg,e'))$-bimodules analogous to
the generalised Gel'fand--Graev modules used in the classical definition of the
\Walgebra; these can be used to obtain adjoint functors between the
corresponding module categories.

The category of modules over a \Walgebra\ has a full subcategory defined in a
parallel fashion to that of the Bernstein-Gel'fand--Gel'fand (BGG)
category~$\cO$; this version of category~$\cO(e)$ for \Walgebras\ is equivalent
to an infinitesimal block of $\cO$ by an argument of Mili\v{c}i\'{c} and Soergel.
We therefore construct analogues of the translation functors between the
different blocks of $\cO$, in this case being functors between the categories~
$\cO(e)$ for different \Walgebras\ $U(\fg,e)$. This follows an argument of Losev,
and realises the category~$\cO(e')$ as equivalent to a full subcategory of the
category~$\cO(e)$ where $e' \ge e$ in the refinement ordering. Future work is to
use this to provide an alternate categorification of $U(\fsl_2)$ along the lines
of the work of Bernstein, Frenkel and Khovanov.

\end{abstract}

\begin{acknowledgements}
I would like to thank my adviser Joel Kamnitzer, for his enormous help in this
research. I~would also like to thank him for taking me in, and for recognising
that platypus can still lay eggs. In~addition, I would like to thank Chris
Dodd for his expertise and many helpful conversations. Finally, I would like to
thank my parents for their support and encouragement through the years, and my
partner Zsuzsi for her patience, support and love.
\end{acknowledgements}

\tableofcontents

\end{preliminary}

\chapter{Introduction}

In this chapter, we review the most important notions that will be
used in the rest of the thesis: \Walgebras\ and their relation to Slodowy
slices, quantum Hamiltonian reduction by stages, category~$\cO$ and
categorification.

\section{W-algebras}

\Walgebras, which we shall define in \cref{sec:WalgDef}, have been studied by a
number of mathematicians and physicists over the past 25 years
(cf.~\cite{dBT:QuantWAlg,Pre:PrimWAlg}).
The differing backgrounds and motivations of the researchers have produced
several equivalent definitions -- though not always obviously so -- rooted in
different fields and perspectives.
Physicists find them interesting due to the fact that they arise in the study of
conformal field theory, while representation theorists look to the insight they
can give us into the classical representation theory of Lie algebras.

For a semisimple Lie algebra $\fg$, the universal enveloping algebra $U(\fg)$ is
an associative algebra which completely controls the representation theory of
$\fg$. The category $\Mod{U(\fg)}$ is therefore of central concern to us. As in
the study of the representations of any algebra, a lot of information can be
determined by looking at how the centre $Z(\fg) \coloneqq Z(U(\fg))$ acts. A central
question to our work is what other algebras encapsulate important information
about the representation theory of $U(\fg)$? The \Walgebras\ $U(\fg,e)$ form
precisely one such class of algebras.

Given a semisimple Lie algebra $\fg$ with a chosen nilpotent element $e$, the
\Walgebra\ $U(\fg,e)$ is an associative algebra which lies between the algebras
$U(\fg)$ and $Z(\fg)$. More precisely, $U(\fg,e)$ is a subquotient of $U(\fg)$
determined by the nilpotent element $e$, where $U(\fg,0) = U(\fg)$ and
$U(\fg,e_{\text{reg}}) \simeq Z(\fg)$ for the regular nilpotent
$e_{\text{reg}} \in \fg$ (for example, the full Jordan block $J_n(0)$ in
$\fsl_n$). This second statement was known to Kostant, and the modern definition
of \Walgebras\ in many ways seeks to generalise his earlier work on this special
case \cite{Kos:WhitRep}. Between these two extremes lies one isomorphism class
of algebras for each nilpotent orbit in $\fg$. These intermediate \Walgebras\ 
control the representation theory of the blocks of $\Mod{U(\fg)}$ corresponding
to different central characters, where more singular nilpotent elements $e$ will
control blocks corresponding to less singular central characters.

Though this definition seems purely algebraic, it is actually a quantum version
of the classical geometric idea of Hamiltonian reduction of Poisson varieties.
Considering a nilpotent element $e$ in a semisimple Lie algebra $\fg$ associated
to an algebraic group $G$, one can consider the nilpotent orbit
$\orbit_e \coloneqq G \cdot e$ (note that the notation for the nilpotent orbit
$\orbit_e$ is similar to the notation for category~$\cO$ -- which is meant in a
given context will generally be clear). If $e$ is completed to an
\sltriple\ $\set{e,h,f}$, this gives rise to a natural transverse slice to
$\orbit_e$ known as the \emph{Slodowy slice}, $\slodowy_e = e + \ker \ad f$, for
example as in \cref{fig:SlodEg}. We usually consider the Slodowy slice
$\slodowy_e$ as a subset of $\fg^*$ using the isomorphism $\fg \simeq \fg^*$ induced
by the Killing form. In fact, the Slodowy slice can be expressed as the
Hamiltonian reduction of $\fg^*$ with respect to the action of a certain
unipotent algebraic group depending on the \sltriple. The subquotient
definition for \Walgebras\ can be understood in an analogous way.

\begin{figure}[h]
\begin{align*}
e & = \begin{psmallmatrix} 0&1&0 \\ 0&0&0 \\ 0&0&0 \end{psmallmatrix} &
h & = \begin{psmallmatrix} 1&0&0 \\ 0&-1&0 \\ 0&0&0 \end{psmallmatrix} &
f & = \begin{psmallmatrix} 0&0&0 \\ 1&0&0 \\ 0&0&0 \end{psmallmatrix} &
\slodowy_e & = \set*{ \begin{pmatrix} a&0&0 \\ b&a&d \\ c&0&-2a \end{pmatrix}
                       \st a,b,c,d \in \CC }
\end{align*}
\caption{The Slodowy slice of an $\fsl_2$-triple in $\fsl_3$.}
\label{fig:SlodEg}
\end{figure}

The universal enveloping algebra $U(\fg)$ is a filtered algebra whose associate
graded algebra is the ring of functions $\CC[\fg^*]$. Given that $\fg$ is a Lie
algebra, the ring of functions $\CC[\fg^*]$ acquires a Poisson bracket coming
from the Lie bracket on $\fg$, known as either the Lie--Poisson or the
Kostant--Kirillov bracket. The multiplication in $U(\fg)$ encodes both the
multiplication and the Poisson bracket in $\CC[\fg^*]$; this is known as a
deformation quantisation of the Poisson algebra $\CC[\fg^*]$. The \Walgebra\ 
$U(\fg,e)$ has a similar geometric interpretation: its associate graded algebra
is the ring of functions on the Slodowy slice $\slodowy_e \subseteq \fg^*$. Further,
$U(\fg,e)$ is the unique deformation quantisation (up to isomorphism) of the
ring of functions on the Slodowy slice $\slodowy_e$. The structure of the
\Walgebra\ $U(\fg,e)$ is therefore geometrically encoded in the Poisson variety
$\slodowy_e$ \cite{GG:QuantSlod}.

This geometric interpretation allows us to better understand the definition of
the \Walgebra: upon passing to the associate graded algebra, expressing
$U(\fg,e)$ as a subquotient of $U(\fg)$ corresponds to choosing expressing
$\slodowy_e$ as a Hamiltonian reduction of $\fg^*$. Correspondingly, the
subquotient definition of the \Walgebra\ can be understood as expressing
$U(\fg,e)$ as a \emph{quantum Hamiltonian reduction} (\emph{QHR}) of $U(\fg)$.
In this interpretation, the nilpotent element $e$ plays the role of a regular
value of the moment map, and a certain nilpotent Lie algebra $\fm$ (the Premet
subalgebra) associated to $e$ plays the role of the unipotent subgroup acting on
$\fg^*$.

\section{Quantum Hamiltonian reduction by stages}

It can be taken as the definition of \Walgebras\ that they can be expressed as
certain quantum Hamiltonian reductions of $U(\fg) = U(\fg,0)$, but one might ask
whether a \Walgebra\ $U(\fg,e')$ can be expressed as a QHR of another \Walgebra\ 
$U(\fg,e)$ in a way compatible with the original reduction -- that is, do there
exist pairs of nilpotent elements $e$ and $e'$ for which we can express
$U(\fg,e')$ as a QHR of $U(\fg,e)$ in such a way that \cref{fig:IntRedDiag}
commutes up to isomorphism? Upon de-quantising the diagram, this would
correspond to expressing $\slodowy_{e'}$ as a Hamiltonian reduction of $\fg^*$
by stages, passing through the Slodowy slice $\slodowy_e$ in the middle step.
The theory of Hamiltonian reduction by stages is a well-developed branch of
symplectic geometry, and has been outlined in -- for example -- \cite{MMO:HamRed}.
To construct our intermediate reductions, we therefore need to develop a quantum
version of Hamiltonian reduction by stages.

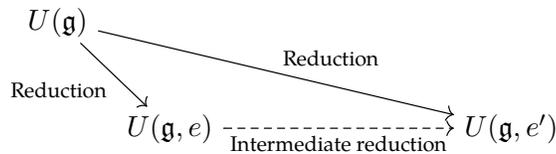
\begin{figure}[th!]
\begin{equation*}
\begin{tikzcd}[column sep=tiny]
U(\fg)
  \ar{dr}[swap]{\text{Reduction}}
  \ar{drrr}{\text{Reduction}} \\
 & U(\fg,e) \ar[dashed]{rr}[swap]{\text{Intermediate reduction}}
 &\hspace{6em} & U(\fg,e')
\end{tikzcd}
\end{equation*}
\caption{Reduction of W-algebras by stages.}
\label{fig:IntRedDiag}
\end{figure}

The first main result of this thesis is \cref{thm:qhr}, which states that we can
construct such an intermediate set of reductions in Lie algebras of \typeA\ for
all pairs of nilpotent elements such that $e'$ covers $e$ in the dominance
ordering, that is whenever $\orbit_e \subseteq \smash{\overline{\orbit}_{e'}}$ and for which
there are no intermediate orbits.
The result is to construct a sequence of reductions for all \Walgebras\
corresponding to nilpotent elements of the the Lie algebra $\fg$, such that
$U(\fg,e)$ can be reduced to $U(\fg,e')$ precisely if $e'$ covers $e$.
In such a way, a sequence of commuting reductions is obtained which is precisely
the reverse of the Hasse diagram for nilpotents orbits under the dominance
ordering (see \cref{fig:sl6Hasse}, for example).

\begin{conj} \label{thm:qhr}
Let $\fg$ be a semisimple Lie algebra of \typeA, and $e$ and $e'$ be two
nilpotent elements such that $e'$ covers $e$ in the dominance ordering.
Then there exists a Lie subalgebra $\fk \subseteq \fg$ and a left ideal
$U(\fg,e)\fk_\kappa \subseteq U(\fg,e)$ such that
\begin{equation*}
U(\fg,e') \simeq \pp*{U(\fg,e) \big/ U(\fg,e)\fk_\kappa }^\fk,
\end{equation*}
where invariants are taken with respect to the adjoint action of $\fk$.
\end{conj}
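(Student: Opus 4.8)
The plan is to realise both $U(\fg,e)$ and $U(\fg,e')$ as quantum Hamiltonian reductions of $U(\fg)$ and then factor the second reduction through the first. Concretely, in \typeA\ the covering relation $e' > e$ in the dominance ordering corresponds to merging two adjacent rows of the partition attached to $e$, so I would first fix compatible \sltriples\ $\set{e,h,f}$ and $\set{e',h',f'}$ and compatible good gradings such that the Premet subalgebra $\fm$ for $e$ sits inside the Premet subalgebra $\fm'$ for $e'$; the essential combinatorial input is that one can choose these data so that $\fm \subseteq \fm'$ and so that $e' - e$ lies in (a twist of) $\fm'$. This is where I expect the bulk of the work to live — producing, for every covering pair, an explicit compatible pair of \sltriples\ and gradings realising the nesting $\fm \subseteq \fm'$, and checking that the characters $\chi = (e,-)$ and $\chi' = (e',-)$ on $\fm$ and $\fm'$ are compatible in the sense that $\chi'$ restricts to $\chi$ on $\fm$.

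Granting this, set $\fk$ to be a complement to $\fm$ inside $\fm'$ (or, more precisely, its image in the Slodowy-slice picture), so that $\fm' = \fm \oplus \fk$ as vector spaces, and let $\fk_\kappa \subseteq U(\fg,e)$ denote the image of $\set{x - \chi'(x) : x \in \fk}$ under the natural map realising $U(\fg,e)$ as a subquotient of $U(\fg)$ — this requires checking that this subspace does descend to $U(\fg,e)$, using that $[\fm,\fk] \subseteq \fm'$ and the definition of the \Walgebra\ as $\big(U(\fg)/U(\fg)\fm_\chi\big)^{\fm}$ via the Whittaker/Gel'fand--Graev presentation. Then the two-step reduction
\begin{equation*}
\Big(\big(U(\fg)/U(\fg)\fm_\chi\big)^{\fm} \big/ \big(\cdots\big)\fk_\kappa\Big)^{\fk}
\end{equation*}
should collapse, by a standard "reduction in stages" argument, to $\big(U(\fg)/U(\fg)\fm'_{\chi'}\big)^{\fm'}$, which is by definition $U(\fg,e')$. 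The key lemma to prove here is the quantum stages identity: for the nested pair $\fm \subseteq \fm'$ with $\fm$ an ideal-like subalgebra inside $\fm'$ (normalised appropriately by the good grading), reducing by $\fm$ and then by $\fk = \fm'/\fm$ agrees with reducing by $\fm'$ all at once. In the Poisson (classical) limit this is exactly Hamiltonian reduction by stages as in \cite{MMO:HamRed}; the quantum version is the content of the earlier section on QHR by stages, so I would invoke that machinery.

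The main obstacle, besides the combinatorial bookkeeping, is that in general $\fm$ need not be normal in $\fm'$, so "$\fk = \fm'/\fm$" is not literally a Lie algebra and the naive stages argument fails; I would handle this either by choosing the good gradings so that $\fm$ is in fact an ideal of $\fm'$ (which I expect to be possible precisely because $e'$ covers $e$, so only one "elementary move" is involved), or by working with the associated parabolic/nilpotent filtration and reducing in stages along that filtration. A secondary subtlety is independence of choices: the resulting isomorphism should not depend on the chosen \sltriples\ or good gradings, which follows from the analogous independence results for \Walgebras\ themselves together with functoriality of the reduction. Finally, one must verify that $\fk$, as constructed, actually acts on the quotient $U(\fg,e)/U(\fg,e)\fk_\kappa$ by the adjoint action and that the invariants are computed correctly — this is the same Whittaker-model computation used to set up $U(\fg,e)$ in the first place, now applied one level up, so I expect it to go through by the same arguments.
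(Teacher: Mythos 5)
The statement you are proving is stated in the paper as a conjecture, not a theorem, and the paper does not give a complete proof. Your outline is close in spirit to what the paper does, but it glosses over the exact point where the argument remains open, and it also makes one combinatorial misstep. Specifically: the covering relation for the \emph{dominance} ordering is not ``merging two adjacent rows'' of the partition (that is the \emph{refinement} ordering, which is strictly coarser); by Gerstenhaber's lemma, $\lambda$ covers $\mu$ in dominance iff $\mu$ is obtained from $\lambda$ by moving a single box from row $i$ down to some row $j > i$, subject to a side condition. The paper's explicit construction of $e_2$ and $\fm_2$ in \cref{thm:typeAred} is built around exactly this single-box move and its arrow-diagram incarnation.

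The more serious issue is your assumption that one can arrange $\fm \subseteq \fm'$ with \emph{both} $\fm$ and $\fm'$ Premet subalgebras (for $e$ and $e'$ respectively). That is the step ``I expect the bulk of the work to live,'' and in fact it is generically impossible. What the paper actually does is start from a genuine Premet subalgebra $\fm_1$ for $e_1$ (built from a right-aligned pyramid), and then adjoin to it an abelian $\fk$ to form $\fm_2 = \fm_1 \rtimes \fk$. The conditions \labelcref{cond:SubalgDecomp}--\labelcref{cond:KRegular} are all verified, $\chi_2$ does restrict to $\chi_1$ and defines a character of $\fm_2$, and the quantum reduction-by-stages identity of \cref{thm:Quantum2StageReduction} does apply, so that
\begin{equation*}
U(\fg,e_1)\qq{\kappa}\fk \;\simeq\; U(\fg)\qq{\chi_2}\fm_2 .
\end{equation*}
But $\fm_2$ is in general \emph{not} a Premet subalgebra for any good grading of $e_2$: it merely satisfies Premet's weaker ``admissible-pair'' conditions (it is ad-nilpotent, $\chi_2$ is a character of it, $\fm_2\cap\liez{e_2}=\{0\}$, and it has the right dimension). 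So the final identification $U(\fg)\qq{\chi_2}\fm_2 \simeq U(\fg,e_2)$ is \emph{not} automatic from the definition of the \Walgebra; it is exactly the content of \cref{conj:typeAQred}, a special case of a conjecture of Premet which at present is known only over fields of characteristic $p$ (\cref{prop:typeAsubreg} covers the subregular-to-regular case, where $\fm_2$ does happen to be Premet). Your proposal treats this last step as routine (``which is by definition $U(\fg,e')$''), but it is precisely the unresolved part; a complete proof along these lines would require either proving Premet's conjecture in characteristic $0$, or showing directly that the filtered isomorphism between the associated graded algebras (the Poisson varieties $\fg^*\qq{\chi_2}M_2$ and $\slodowy_{\chi_2}$) lifts to the quantisation.
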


%% Hasse diagram with reductions for sl6

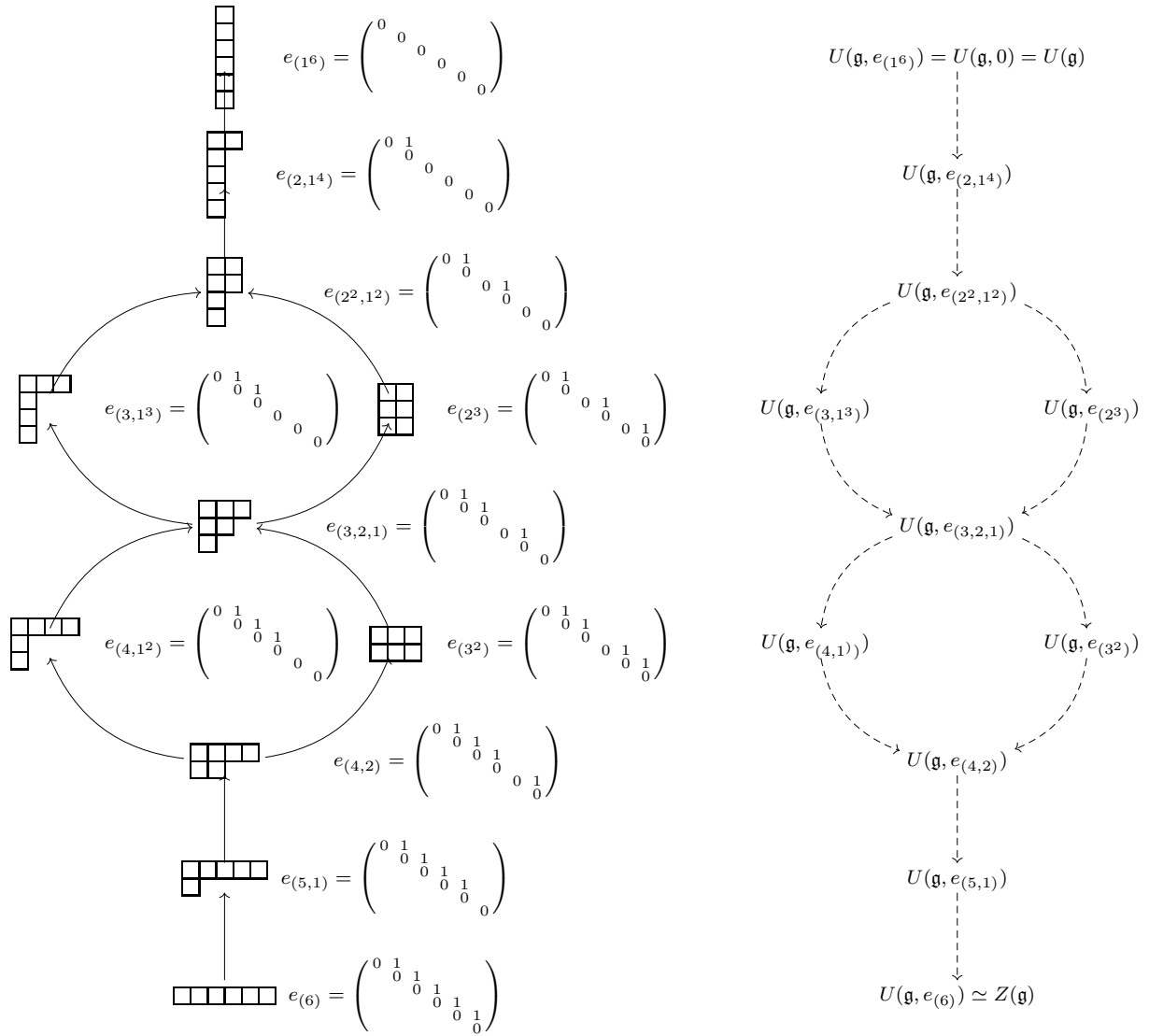
\begin{figure}[f]
\scriptsize
\ytableausetup{smalltableaux,centertableaux}
\begin{equation*}
\begin{tikzcd}[column sep=tiny, row sep=huge]
& \ydiagram{1,1,1,1,1,1} &
  \mathclap{
    e_{\seq{1^6}} =
      \begin{psmallmatrix} 0& & & & &  \\
                            &0& & & &  \\
                            & &0& & &  \\
                            & & &0& &  \\
                            & & & &0&  \\
                            & & & & &0
      \end{psmallmatrix}
  }&&\hspace{5mm}& %% Spacing between two diagrams
& \mathclap{U(\fg,e_{\seq{1^6}}) = U(\fg,0) = U(\fg)}
    \ar[dashed]{d}\\
\
& \ydiagram{2,1,1,1,1} \ar{u} &
  \mathclap{
    e_{\seq{2,1^4}} =
      \begin{psmallmatrix} 0&1& & & &  \\
                            &0& & & &  \\
                            & &0& & &  \\
                            & & &0& &  \\
                            & & & &0&  \\
                            & & & & &0
      \end{psmallmatrix}
  }&&&
& U(\fg,e_{\seq{2,1^4}})
    \ar[dashed]{d}\\
\
& \ydiagram{2,2,1,1} \ar{u} &
  \mathclap{\hspace{1.5cm}
    e_{\seq{2^2,1^2}} =
      \begin{psmallmatrix} 0&1& & & &  \\
                            &0& & & &  \\
                            & &0&1& &  \\
                            & & &0& &  \\
                            & & & &0&  \\
                            & & & & &0
      \end{psmallmatrix}
  }&&&
& U(\fg,e_{\seq{2^2,1^2}})
    \ar[dashed, bend right]{dl}
    \ar[dashed, bend left]{dr}\\
\
\ydiagram{3,1,1,1} \ar[bend left]{ur} &
  e_{\seq{3,1^3}} =
    \begin{psmallmatrix} 0&1& & & &  \\
                          &0&1& & &  \\
                          & &0& & &  \\
                          & & &0& &  \\
                          & & & &0&  \\
                          & & & & &0
    \end{psmallmatrix} &
\ydiagram{2,2,2} \ar[bend right]{ul} &
  e_{\seq{2^3}} =
    \begin{psmallmatrix} 0&1& & & &  \\
                          &0& & & &  \\
                          & &0&1& &  \\
                          & & &0& &  \\
                          & & & &0&1 \\
                          & & & & &0
    \end{psmallmatrix} &&
U(\fg,e_{\seq{3,1^3}})
    \ar[dashed, bend right]{dr} &&
U(\fg,e_{\seq{2^3}})
    \ar[dashed, bend left]{dl} \\
\
& \ydiagram{3,2,1} \ar[bend left]{ul} \ar[bend right]{ur} &
  \mathclap{\hspace{1.5cm}
    e_{\seq{3,2,1}} =
      \begin{psmallmatrix} 0&1& & & &  \\
                            &0&1& & &  \\
                            & &0& & &  \\
                            & & &0&1&  \\
                            & & & &0&  \\
                            & & & & &0
      \end{psmallmatrix}
  }&&&
& U(\fg,e_{\seq{3,2,1}})
    \ar[dashed, bend right]{dl}
    \ar[dashed, bend left]{dr}\\
\
\ydiagram{4,1,1} \ar[bend left]{ur} &
  e_{\seq{4,1^2}} =
    \begin{psmallmatrix} 0&1& & & &  \\
                          &0&1& & &  \\
                          & &0&1& &  \\
                          & & &0& &  \\
                          & & & &0&  \\
                          & & & & &0
    \end{psmallmatrix} &
\ydiagram{3,3} \ar[bend right]{ul} &
  e_{\seq{3^2}} =
    \begin{psmallmatrix} 0&1& & & &  \\
                          &0&1& & &  \\
                          & &0& & &  \\
                          & & &0&1&  \\
                          & & & &0&1 \\
                          & & & & &0
    \end{psmallmatrix} &&
U(\fg,e_{\seq{4,1^)}})
    \ar[dashed, bend right]{dr} &&
U(\fg,e_{\seq{3^2}})
    \ar[dashed, bend left]{dl} \\
\
& \ydiagram{4,2} \ar[bend left]{ul} \ar[bend right]{ur} &
  \mathclap{\hspace{1.5cm}
    e_{\seq{4,2}} =
      \begin{psmallmatrix} 0&1& & & &  \\
                            &0&1& & &  \\
                            & &0&1& &  \\
                            & & &0& &  \\
                            & & & &0&1 \\
                            & & & & &0
      \end{psmallmatrix}
  }&&&
& U(\fg,e_{\seq{4,2}})
    \ar[dashed]{d}\\
\
& \ydiagram{5,1} \ar{u} &
  \mathclap{
    e_{\seq{5,1}} =
      \begin{psmallmatrix} 0&1& & & &  \\
                            &0&1& & &  \\
                            & &0&1& &  \\
                            & & &0&1&  \\
                            & & & &0&  \\
                            & & & & &0
      \end{psmallmatrix}
  }&&&
& U(\fg,e_{\seq{5,1}})
    \ar[dashed]{d}\\
\
& \ydiagram{6} \ar{u} &
  \mathclap{
    e_{\seq{6}} =
      \begin{psmallmatrix} 0&1& & & &  \\
                            &0&1& & &  \\
                            & &0&1& &  \\
                            & & &0&1&  \\
                            & & & &0&1 \\
                            & & & & &0
      \end{psmallmatrix}
  }&&&
& \mathclap{U(\fg,e_{\seq{6}}) \simeq Z(\fg)}
\end{tikzcd}
\end{equation*}
\ytableausetup{nosmalltableaux,nocentertableaux}

\caption{
The Hasse diagram and intermediate quantum Hamiltonian reductions for $\fg =
\fsl_6$.
Each partition $P$ of $6$ has a corresponding conjugacy class of nilpotent
matrices – here their representative $e_P$ in Jordan canonical form is shown to
the right of the corresponding partition – and each has an associated W-algebra
$U(\fg,e_P)$. The diagram showing the intermediate quantum Hamiltonian
reductions between W-algebras is the reverse of the Hasse diagram.}

\label{fig:sl6Hasse}
\end{figure}

To construct the reductions, we use the theory of \emph{pyramids} in semisimple
Lie algebras developed by Elashvili and Kac \cite{EK:ClassGG}. Pyramids are
combinatorial objects closely related to Young tableaux, however they encode not
only the nilpotent element $e$, but also a \emph{good grading} of $\fg$ for~$e$.
This allows much of the same information provided by an \sltriple\ to be
specified by a weaker piece of data, giving us extra flexibility needed for
this construction. This provides a construction which produces a quantum
Hamiltonian reduction of the \Walgebra\ $U(\fg,e)$, which conjecturally is
isomorphic to the \Walgebra\ $U(\fg,e')$.

\section{Category~\texorpdfstring{$\cO$}{O}}

For a semisimple Lie algebra $\fg$, there is a well-studied category of its
representations known as category~$\cO$. It consists of all representations
satisfying a certain finiteness condition, and in particular contains all
finite-dimensional modules. Category~$\cO$ has a number of remarkable
properties, and decomposes naturally into blocks in a way that allows it to be
equipped with a number of module structures that parallel important classical
constructions (cf.~\cite{BFK:CatTLAlg,KMS:CatSpecht,KMS:AbCat}): this kind of
enrichment is known as a \emph{categorification}. The representations of
\Walgebras\ are naturally related to the blocks of category~$\cO$, and in my work
I attempt to use this relation to construct categorifications of classical
objects.

The representation theory of $U(\fg,e)$ has been studied by a number of authors,
and in particular its relationship to the representation theory of $U(\fg)$ has
been examined in \cite{BGK:HWTWAlg, Web:CatOWAlg, Los:FinRepWAlg}. For
example, Skryabin has shown that the category $\Mod{U(\fg,e)}$ is equivalent to
a full subcategory of $\Mod{U(\fg)}$, characterised by a certain finiteness
property with respect to the action of a subalgebra $U(\fm)$ determined by $e$
\cite{Pre:TransSlice}.

Given a semisimple Lie algebra $\fg$, a choice of triangular decomposition
$\fg \simeq \fn_-\oplus \fh\oplus \fn_+$ allows us to define the highest weight modules as those
which are a finite union of sets of the form $U(\fn_-) \cdot v$ for $v$ a
\emph{highest weight vector} (i.e.\ $\fn_+ \cdot v = 0$).
Many important $U(\fg)$\modules\ are highest weight modules, including all
finite-dimensional representations and Verma modules.
The \emph{BGG category~$\cO$} \cite{BGG:CatO} is the minimal full subcategory of
$\Mod{U(\fg)}$ containing all highest-weight modules that is closed under direct
sums, sub- and quotient modules, kernels and cokernels, and tensoring with
finite-dimensional modules (the so-called \emph{translation functors}).
This remains rich enough to contain a lot of information about the
representation theory of $U(\fg)$, while having additional properties making it
more amenable to study.
In particular it is a \emph{highest weight category}, which allows us to choose
a number of convenient bases for its Grothendieck group (that is, the group
hose elements are formal differences of isomorphism classes of representations
and whose group operation is the direct sum of representations).

Category~$\cO$ has a natural \emph{block decomposition} $\cO \simeq \bigoplus \cO_\chi$
indexed by the generalised central characters of $U(\fg)$. There are no
homomorphisms or non-trivial extensions between modules belonging to different
blocks, so to understand the structure of category~$\cO$ it suffices to
understand the blocks $\cO_\chi$. In \cite{MS:CompWhit}, Mili\v{c}i\'{c} and Soergel apply
the theory of Harish-Chandra bimodules to construct an equivalence of categories
exchanging the condition on the (generalised) central character with a related
condition on the (generalised) nilpotent character. We choose a nilpotent
element $e$ compatible with $\chi$ in the sense that the stabiliser subgroup of $\chi$
under the `dotted Weyl action' is generated by the simple reflections
corresponding to~$e$. When combined with the Skryabin equivalence, this can be used
to construct an equivalence $\cO_\chi \simeq \cO_0(e)$, where $\cO_0(e)$ is a full
subcategory of $\Mod{U(\fg,e)}$ analogous to a regular block of category~$\cO$.
\cite{Los:CatOWAlg, Web:CatOWAlg}

The construction of quantum Hamiltonian reduction in \cref{thm:qhr} produces a
pair of adjoint functors $\Mod{U(\fg,e)} \leftrightarrows \Mod{U(\fg,e')}$,
however these functors do not preserve the finiteness conditions of the
categories $\cO(e)$ and $\cO(e')$. To solve this problem we adapt one of the
techniques of Losev in \cite{Los:CatOWAlg}, in which he proves that $\cO(e)$ is
equivalent to a full subcategory of $\Mod{U(\fg)}$ called the Whittaker
category. This allows us to realise the category $\cO(e')$ as a full subcategory
of $\Mod{U(\fg,e)}$. Future work is to use this embedding along with
\emph{averaging functors} to construct pairs of functors
$\cO(e) \leftrightarrows \cO(e')$, analogous to the classical translation
functors between different infinitesimal blocks of category~$\cO$. We hope in
the future to show that these functors intertwine with the translation
functors through the Mili\v{c}i\'{c}--Soergel equivalence.

\chapter{W-algebras}

In this chapter, we will work towards defining the basic objects of our study:
the \Walgebras\ $U(\fg,e)$. There are a number of equivalent definitions of
\Walgebras\ given in a number of different sources
(cf.~\cite[§3]{Wan:NilOrbWAlg}). We shall generally use a definition of
\Walgebras\ expressed as a certain subquotient of the universal enveloping
algebra $U(\fg)$ known as the Whittaker module definition, though we'll
occasionally remark upon and use the equivalence with other formulations.

\section{Nilpotent orbits and Slodowy slices}
\label{sec:Nilorbits}

We begin by recalling some basic facts about the nilpotent cone
$\nilcone \subseteq \fg$. The algebraic group $G$ acts on $\fg$ by the adjoint action,
and this action preserves the nilpotent cone. As a result, we have a
stratification of $\nilcone$ into nilpotent orbits $\orbit_e$, where
$\orbit_e \coloneqq G \cdot e$ is the orbit of the nilpotent element $e$ under the
adjoint action of $G$. This holds in an arbitrary semisimple Lie algebra, but it
has a particularly simple form in \typeA, where $\fg = \fsl_n$: $G = SL_n$ acts
on $\fg$ by conjugation and the nilpotent cone $\nilcone$ consists of all nilpotent
matrices in $\fg$, which are classified up to conjugacy by their Jordan
canonical form.

The nilpotent cone $\nilcone$ has a unique dense open orbit
$\orbit_{\text{reg}}$ called the \emph{regular orbit}, which consists of all
\emph{regular} nilpotent elements: that is elements $e \in \fg$ for which
$\dim Z_G(e) = \rank \fg$, where $Z_G(e)$ is the stabiliser of $e$ in $G$.
The complement of the regular orbit $\nilcone \smallsetminus
\orbit_{\text{reg}}$, itself has a unique open dense orbit $\orbit_{\text{sub}}$
called the \emph{subregular orbit}, and a unique \emph{minimal orbit}
$\orbit_{\text{min}}$ of smallest strictly-positive dimension.
In \typeA{n-1} these special orbits have explicit descriptions in
terms of the Jordan canonical form of the nilpotent elements comprising them:
\begin{itemize}
\item $\orbit_{\text{reg}}$ has elements with a single Jordan block of size $n$.
\item $\orbit_{\text{sub}}$ has elements with one block of size $n-1$ and
      another of size $1$.
\item $\orbit_{\text{min}}$ has elements with one block of size $2$ and $n-2$
      blocks of size $1$.
\end{itemize}

The set of nilpotent orbits in $\nilcone$ is naturally a partially ordered set,
where $\orbit' \le \orbit$ if and only if $\orbit' \subseteq \smash{\overline{\orbit}}$.
Under this ordering, we can make some statements about the three nilpotent
orbits described above. The orbit $\orbit_{\text{reg}}$ is the maximal element
of the poset, $\orbit_{\text{sub}}$ is the maximal element of the poset lying
under $\orbit_{\text{reg}}$, and $\orbit_{\text{min}}$ is the minimal element of
the poset lying above the zero orbit $\set{0}$.

\subsection{The Jacobson--Morozov theorem}

In order to discuss nilpotent orbits, it will frequently be convenient to
complete a given nilpotent element $e \in \fg$ to an \sltriple\ 
$\set{e,h,f} \subseteq \fg$, i.e.\ $e$, $h$ and~$f$ satisfy the $\fsl_2$ commutation
relations $[h,e] = 2e$, $[h,f] = -2f$, and $[e,f] = h$.
Another way of phrasing this condition is that there exists a Lie algebra
homomorphism $\fsl_2 \to \fg$ such that the image of $e$ in the standard basis
of $\fsl_2$ is the element $e \in \fg$. The \emph{Jacobson--Morozov theorem} states
that it is always possible to extend a nilpotent element $e$ to an
\sltriple\ in a semisimple Lie algebra. There are a number of different
proofs of this theorem in the literature (cf.~\cite[§3.7.25]{CG:RepTheoryGeom});
the version we present here comes from \cite[§3.3]{CM:Nilorbits}.

\begin{JM} \label{thm:JM}
For any non-zero nilpotent element $e$ in a semisimple Lie algebra~$\fg$, there
exists an \sltriple\ $\set{e,h,f} \subseteq \fg $ such that $[h,e] = 2e$, $[h,f] = -2f$,
and $[e,f] = h$.
\end{JM}

\begin{proof}
We prove this by induction on $\dim \fg$. The non-zero semisimple Lie algebra
with smallest possible dimension is $\fsl_2$ itself, and since any non-zero
nilpotent in $\fsl_2$ is conjugate to any other, we can simply conjugate the
standard \sltriple\ to coincide with $e$. We now proceed with the inductive
step. If $e$ lies in a proper semisimple subalgebra of $\fg$, then we can apply the
inductive hypothesis, so we now assume that $e$ lies in no proper semisimple
subalgebra of $\fg$.

We first prove that $\killing{e}{\liez{e}} = 0$. Consider $x \in \liez{e}$, and
recall that $\killing{e}{x} = \tr (\ad e \ad x)$. The Jacobi identity shows that
$\ad e$ commutes with $\ad x$ for any $x \in \liez{e}$, so we can state that
$(\ad e \ad x)^n = (\ad e)^n (\ad x)^n$ for any $n \in \NN$. The element $e$ is
nilpotent, and so $(\ad e)^n = 0$ for large enough $n$; as a result, the
operator $\ad e \ad x$ is also nilpotent and hence traceless, proving our claim.

We thus know that $e \in (\liez{e})^\bot$, where the orthogonal complement is
taken with respect to the Killing form; we now show that $(\liez{e})^\bot =
[\fg,e]$. First note that the associativity of the Killing form shows that
$[\fg,e] \subseteq (\liez{e})^\bot$; to show that this is everything we count
dimensions. Note that the map $\ad e \colon \fg \to \fg$ has kernel $\liez{e}$
and image $[\fg,e]$, and so the rank--nullity theorem tells us that
$\dim [\fg, e] = \dim \fg - \dim \liez{e}$. This is precisely the dimension of
$(\liez{e})^\bot$. As a result, we know that $[h,e] = 2e$ for some element
$h \in \fg$. We can further take $h$ to be semisimple, as if it were not its
semisimple part would also satisfy this property.

\begin{lem} \label{lem:JMhlem}
The element $h$ constructed above lies in $[\fg,e]$.
\end{lem}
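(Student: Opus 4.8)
The plan is to argue by contradiction, and the hypothesis that $e$ lies in no proper semisimple subalgebra of $\fg$ will be used in exactly one place. Recall that the discussion preceding the lemma established the identity $(\liez{e})^\bot = [\fg,e]$ for the Killing form $\killing{\cdot}{\cdot}$, so $h \in [\fg,e]$ is equivalent to $\killing{h}{y} = 0$ for every $y \in \liez{e}$. Assume instead that some $y \in \liez{e}$ satisfies $\killing{h}{y} \neq 0$.

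First I would whittle $y$ down to a rigid form. As $h$ is semisimple, $\fg$ splits into $\ad h$-eigenspaces $\fg = \bigoplus_i \fg_i$, with $e \in \fg_2$ and $h \in \fg_0 = \liez{h}$. Since $[h,e] = 2e$, the Jacobi identity shows $\liez{e}$ is $\ad h$-stable, hence graded, $\liez{e} = \bigoplus_i(\liez{e} \cap \fg_i)$; write $y = \sum_i y_i$ accordingly. Invariance of the Killing form gives $\killing{\fg_i}{\fg_j} = 0$ whenever $i + j \neq 0$, so $\killing{h}{y} = \killing{h}{y_0}$ and we may assume $y = y_0 \in \fg_0 \cap \liez{e}$. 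This last space is the Lie algebra of $Z_G(h) \cap Z_G(e)$, hence algebraic, hence closed under Jordan decomposition; writing $y_0 = s + n$ with $s$ semisimple, $n$ nilpotent and $[s,n]=0$, the operator $\ad h\,\ad n$ is the product of commuting semisimple and nilpotent operators and so is traceless, giving $\killing{h}{n} = 0$. Therefore $\killing{h}{s} \neq 0$; in particular $s \neq 0$, while $[s,h] = [s,e] = 0$.

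Now I would use the centraliser $\liez{s}$ to produce the contradiction. Because $\fg$ is semisimple and $s \neq 0$, we have $\liez{s} \subsetneq \fg$; being the centraliser of the semisimple element $s$ it is reductive, and it contains both $h$ and $e$. The nilpotent element $e$ of the reductive algebra $\liez{s}$ must then lie in its derived subalgebra $[\liez{s},\liez{s}]$ (a standard fact: the component of a nilpotent element in the centre of a reductive Lie algebra vanishes). Thus $[\liez{s},\liez{s}]$ is a semisimple subalgebra, properly contained in $\fg$ since it lies inside the proper subalgebra $\liez{s}$, and it contains the nonzero element $e$ — contradicting the standing hypothesis. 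Hence no such $y$ exists, so $h \in [\fg,e]$.

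The only part that is not routine bookkeeping is the last paragraph: the $\ad h$-grading and the Jordan-decomposition reductions are formal, whereas the crux is recognising that a forbidden semisimple subalgebra can be manufactured as the derived subalgebra of a centraliser, and that this is the unique moment at which the reduction to ``$e$ in no proper semisimple subalgebra'' is actually spent. That the lemma genuinely requires this hypothesis, rather than holding for an arbitrary semisimple $h$ with $[h,e]=2e$, is already visible in $\fsl_2 \oplus \fsl_2$ with $e$ supported on one factor: adding a nonzero semisimple element of the other factor to $h$ preserves $[h,e] = 2e$ but takes it outside $[\fg,e]$.
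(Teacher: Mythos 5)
Your proof is correct and follows essentially the same strategy as the paper: reduce to a nonzero semisimple element in $\liez{h}\cap\liez{e}$ via the $\ad h$-eigenspace decomposition and the Jordan decomposition, then use the reductivity of its centraliser to manufacture a proper semisimple subalgebra containing $e$. The only divergence is in the very last containment: the paper observes directly that $2e=[h,e]\in[\liez{s},\liez{s}]$ since both $h,e\in\liez{s}$, whereas you invoke the general fact that a nilpotent element of a reductive Lie algebra lies in the derived subalgebra — both are valid, with the paper's version being the more economical one-liner.
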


Assuming this \namecref{lem:JMhlem} for the moment, we give a proof of the \JMthm.
Let $f' \in \fg$ be an element such that $[e,f'] = h$; since $h$ is semisimple we
have that $\fg = \bigoplus_{j} \fg_{\lambda_j}$ where
$\fg_{\lambda_j} = \set{x \in \fg \st [h,x] = \lambda_j x}$ is the eigenspace corresponding to
eigenvalue $\lambda_j$. We therefore have a decomposition $f' = \sum_j f_j$, where $f_j$
is the projection of $f'$ onto $\fg_{\lambda_j}$: this gives us that
$h = \sum_j [e,f_j]$. However, we can note that $h \in \fg_0$, and that
$[e,\fg_{\lambda_j}] \subseteq \fg_{\lambda_j+2}$; this allows us to see that there exists a $k$
with $\lambda_k = -2$, and that $h = [e,f_k]$. Taking $f = f_k$ gives us our
\sltriple\ $\set{e,h,f}$, and completes the proof of the \JMthm.

It remains to prove \cref{lem:JMhlem}. We shall prove this by contradiction: we
shall assume that $h \notin [\fg,e]$, and construct a proper semisimple
subalgebra of $\fg$ containing $e$, violating the assumption we made at the
beginning of the proof preventing us from using the inductive hypothesis.
Since $[\fg,e] = (\liez{e})^\bot$, we know that $\killing{h}{\liez{e}} \neq 0$.

Since $\ad h$ preserves $\liez{e}$, we can decompose it into $\ad h$ eigenspaces
$\bigoplus_j \liez{e}_{\mu_j}$, where $\liez{e}_0$ is the centraliser of $h$ in
$\liez{e}$: this gives us the decomposition
$\liez{e} = \liez[\liez{e}]{h} \oplus \bigoplus_{\mu_j \neq 0} \liez{e}_{\mu_j}$.
Associativity of the Killing form tells us that $\killing{h}{[h,\liez{e}]} = 0$,
while the eigenvalue decomposition tells us that for $x \in \liez{e}_{\mu_j}$ we
have that $\killing{h}{[h,x]} = \mu_j \killing{h}{x}$. We therefore know that
$h \in (\liez{e}_{\mu_j})^\bot$ for any $\mu_j \neq 0$. So to satisfy the condition
that $\killing{h}{\liez{e}} \neq 0$, there must exist an element
$z \in \liez{e}_0 = \liez[\liez{e}]{h}$ with $\killing{h}{z} \neq 0$. We can
assume that $z$ is semisimple by the same argument used above to prove that
there exists a semisimple element $h$ with $[h,e] = 2e$.

Since the centraliser of any semisimple element is reductive
(cf.~\cite[Lemma~2.1.2]{CM:Nilorbits}), we have that $[\liez{z},\liez{z}]$ is a
semisimple subalgebra of $\fg$. It is proper, since no non-zero semisimple
element commutes with all of $\fg$ (by the definition of semisimplicity). By
construction, $z$ commutes with both $h$ and $e$, and hence $2e = [h,e] \in
[\liez{z},\liez{z}]$. Thus $[\liez{z},\liez{z}]$ is a proper semisimple
subalgebra of $\fg$ containing $e$, contradicting our hypothesis. This completes
the proof of \cref{lem:JMhlem}, and of the \JMthm.
\end{proof}

\begin{eg}
\label{eg:TypeAsltriple}
The proof of the \JMthm\ is unfortunately non-constructive,
however in \typeA\ we can provide an explicit construction of an
\sltriple\ for a given nilpotent $e$. We can always conjugate $e$ into
Jordan canonical form, and it suffices to give an \sltriple\ for a Jordan
block: the \sltriple\ for the full Jordan canonical form can be constructed
from the given blocks.

For a nilpotent consisting of a single Jordan block of size~$n$, an \sltriple\
consists of the elements $h = \operatorname{diag} (n-1, n-3, \dotsc, 1-n)$ and
\begin{equation*}
f =
\begin{pmatrix}
  0   &        &        & 0 \\
  a_1 & 0      &        &   \\
      & \ddots & \ddots &   \\
  0   &        & a_{n-1}& 0
\end{pmatrix},
\end{equation*}
where $a_i = i(n-i)$.
\end{eg}

\subsection{Good gradings}

An \sltriple\ $\set{e,h,f}$ contains a semisimple element $h$, and
as a result $\fg$ will decompose into a direct sum of eigenspaces of the
operator $\ad h$. Specifically we can write $\fg = \bigoplus_{j \in \ZZ} \fg_j$,
where $\fg_j = \set{x \in \fg \st [h,x] = jx}$. The Jacobi identity implies that
$[\fg_i,\fg_j] \subseteq \fg_{i+j}$, and so any \sltriple\ endows $\fg$ with a
natural $\ZZ$\grading. In fact, every $\ZZ$\grading\ comes from the action of a
semisimple element in such a way.

\begin{lem} \label{lem:ZgradSS}
Given a $\ZZ$\grading\ $\Gamma \colon \fg = \bigoplus_{j \in \ZZ} \fg_j$, there exists a
semisimple element $h_\Gamma \in \fg$ such that
$\fg_j = \set{x \in \fg \st [h_\Gamma, x] = jx}$.
\end{lem}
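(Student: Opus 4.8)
The plan is to recover the grading element $h_\Gamma$ from the derivation that defines the grading. A $\ZZ$\grading\ $\fg = \bigoplus_{j} \fg_j$ determines a linear operator $D \colon \fg \to \fg$ acting as multiplication by $j$ on $\fg_j$. The condition $[\fg_i, \fg_j] \subseteq \fg_{i+j}$ is exactly the statement that $D$ is a derivation of $\fg$: for $x \in \fg_i$, $y \in \fg_j$ we have $D[x,y] = (i+j)[x,y] = [Dx, y] + [x, Dy]$, and one extends bilinearly. So first I would check that $D$ is a well-defined derivation.

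Next I would invoke the fact that every derivation of a semisimple Lie algebra is inner: $\Der(\fg) = \ad \fg$ since $H^1(\fg,\fg) = 0$ by Whitehead's lemma. This gives a unique element $h_\Gamma \in \fg$ with $D = \ad h_\Gamma$, and then by construction $[h_\Gamma, x] = Dx = jx$ for $x \in \fg_j$, which is precisely the conclusion. The uniqueness of $h_\Gamma$ follows from the centre of $\fg$ being trivial.

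The one remaining point is that $h_\Gamma$ is semisimple. This is where a small argument is needed: $\ad h_\Gamma = D$ is diagonalisable by construction (its eigenspaces are the $\fg_j$, which span $\fg$), so $h_\Gamma$ is $\ad$\nobreakdash-semisimple, hence semisimple as an element of the semisimple Lie algebra $\fg$. Alternatively, one can decompose $h_\Gamma = s + n$ into its semisimple and nilpotent Jordan parts in $\fg$; then $\ad s$ and $\ad n$ are the semisimple and nilpotent parts of the diagonalisable operator $\ad h_\Gamma$, forcing $\ad n = 0$ and hence $n = 0$. I expect this last step about semisimplicity to be the only place requiring slight care; everything else is a direct application of Whitehead's lemma. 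Note also the mild subtlety that the definition of $D$ requires the grading to be by $\ZZ$ (or at least by a torsion\nobreakdash-free abelian group) so that "multiplication by $j$" makes sense as a $\CC$\nobreakdash-linear map, which is already built into the hypothesis.
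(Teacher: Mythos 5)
Your proposal is correct and follows essentially the same route as the paper: define the degree operator, observe it is a derivation, and invoke that derivations of a semisimple Lie algebra are inner. The one thing you add is the justification that $h_\Gamma$ is actually semisimple (via diagonalisability of $\ad h_\Gamma$ or the Jordan decomposition), a point the paper simply asserts without argument — so your version is, if anything, slightly more complete.
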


\begin{proof}
Note that the degree map $\partial \colon \fg \to \ZZ$ given by $\partial(x) = jx$ for
$x \in \fg_j$ is a derivation of the semisimple Lie algebra $\fg$.
Since all derivations of a semisimple Lie algebra are inner derivations, there
exists a semisimple element $h_\Gamma \in \fg$ such that $\partial = \ad h_\Gamma$.
\end{proof}

\begin{rem}
This proof can be extended to reductive Lie algebras such as $\fgl_n$.
\end{rem}

We will be interested in the properties of $\ZZ$\gradings\
coming from \sltriples, and so we will give them a special name:
$\ZZ$\gradings\ coming from \sltriples\ by the above procedure are called
\emph{Dynkin gradings}. Dynkin gradings have a number of useful properties:

\begin{enumerate}[label=\textsf{GG\arabic*}.,ref=\textsf{GG\arabic*},labelindent=\parindent,leftmargin=*]
\item \label[property]{GGprop1}
  $e \in \fg_2$,
\item \label[property]{GGprop2}
  $\ad e \colon \fg_j \to \fg_{j+2}$ is injective for $j \le -1$,
\item \label[property]{GGprop3}
  $\ad e \colon \fg_j \to \fg_{j+2}$ is surjective for $j \ge -1$,
\item \label[property]{GGprop4}
  $\liez{e} \subseteq \bigoplus_{j \ge 0} \fg_j$,
\item \label[property]{GGprop5}
  $\killing{\fg_i}{\fg_j} = 0$ unless $i+j=0$,
\item \label[property]{GGprop6}
  $\dim \liez{e} = \dim \fg_0 + \dim \fg_1$.
\end{enumerate}

\Cref*{GGprop1} follows directly from the definition of an
\sltriple, while \cref*{GGprop2,GGprop3} follow from
the fact that $\fg$ has the structure of a finite-dimensional
$\fsl_2$-representation, and hence decomposes as a direct sum of irreducible
$\fsl_2$\modules\ (see \cref{fig:sl2rep}). \Cref*{GGprop4,GGprop5,,GGprop6}, on the other hand, can be
proven from \cref*{GGprop1,GGprop2,,GGprop3} directly.

%% Irreducibility of sl2 representations

\begin{figure}[hf]

\tikzstyle{dot}=[circle,fill=black,inner sep=0,minimum size=3mm]

\centering
\begin{tikzpicture}[scale=.5,auto]

  % Draw axes and ticks
  \draw[<->] (-.5,7) -- (-.5,-7);
  \foreach \x in {6,4,...,-6} {
    \draw (-.8,\x) -- (-.5,\x);
    \node (tick \x)  at (-.5,\x) [label=left:$\x$] {};
  }

  \foreach \x in {6,4,...,-6}
    \node at (1,\x) [dot] {};
  \draw (1,6) -- (1,-6);

  \foreach \x in {5,3,...,-5}
    \node at (3,\x) [dot] {};
  \draw (3,5) -- (3,-5);

  \foreach \x in {5,3,...,-5}
    \node at (5,\x) [dot] {};
  \draw (5,5) -- (5,-5);

  \foreach \x in {3,1,-1,-3}
    \node at (7,\x) [dot] {};
  \draw (7,3) -- (7,-3);

  \foreach \x in {2,0,-2}
    \node at (9,\x) [dot] {};
  \draw (9,2) -- (9,-2);
\end{tikzpicture}

\caption{
The decomposition of a finite-dimensional $\fsl_2$-module as a sum of
simple modules. Each column is an irreducible component, and each dot represents
the 1-dimensional weight space of weight given at the left. The action of $h$
corresponds to scaling by the weight, the action of $e$ moves up the string to
the next weight space and the action of $f$ moves down.}

\label{fig:sl2rep}
\end{figure}
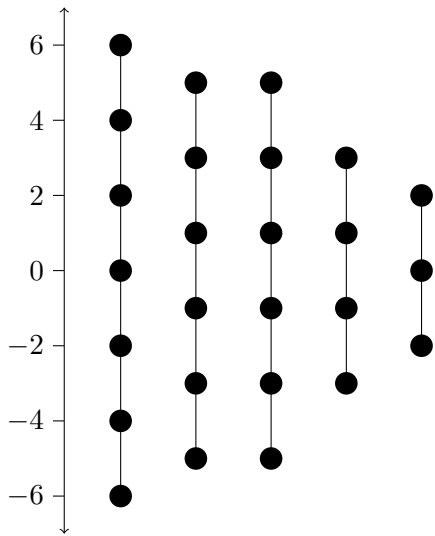

\begin{prop} \label{prop:GGprops}
Any $\ZZ$\grading\ satisfying \cref*{GGprop1,GGprop2,,GGprop3} will also satisfy
\cref*{GGprop4,GGprop5,,GGprop6}.
\end{prop}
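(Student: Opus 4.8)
The plan is to derive each of \cref{GGprop4,GGprop5,GGprop6} from the three defining properties \cref{GGprop1,GGprop2,GGprop3}, using only linear-algebra bookkeeping with the grading and the map $\ad e$, never invoking an $\fsl_2$-triple directly. First I would establish \cref{GGprop4}: suppose $x \in \liez{e}$ has a nonzero component $x_j$ in $\fg_j$ for some $j \le -1$; since $\ad e$ raises degree by $2$ and $[e,x]=0$, each graded component satisfies $[e,x_j]=0$, so $x_j \in \ker(\ad e|_{\fg_j})$, contradicting the injectivity in \cref{GGprop2}. Hence $\liez{e} \subseteq \bigoplus_{j \ge 0} \fg_j$.

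Next, \cref{GGprop5}: this is essentially the statement that the Killing form pairs $\fg_i$ with $\fg_{-i}$ and kills all other pairs. I would use associativity of the Killing form together with \cref{lem:ZgradSS}, which supplies a semisimple $h_\Gamma$ with $\fg_j$ its $j$-eigenspace. For $x \in \fg_i$, $y \in \fg_j$ we get $(i+j)\,\killing{x}{y} = \killing{[h_\Gamma,x]}{y} + \killing{x}{[h_\Gamma,y]} = \killing{[h_\Gamma,x]}{y} - \killing{[x,h_\Gamma]}{y}$; rearranging via associativity $\killing{[h_\Gamma,x]}{y} = -\killing{x}{[h_\Gamma,y]}$ forces $(i+j)\killing{x}{y} = 0$, so the pairing vanishes unless $i+j=0$. (Alternatively one can cite the general fact that graded pieces of a Lie algebra with invariant form are orthogonal except in complementary degrees.)

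The substantive step is \cref{GGprop6}, $\dim \liez{e} = \dim \fg_0 + \dim \fg_1$, and this is where I expect the real work to be. The idea is a dimension count built from \cref{GGprop2,GGprop3}. Using rank–nullity on $\ad e \colon \fg_j \to \fg_{j+2}$: for $j \le -1$ the map is injective, so $\dim \ker(\ad e|_{\fg_j}) = 0$; for $j \ge -1$ it is surjective, so $\dim \operatorname{coker} = 0$, i.e. $[\fg_j, e]$ wait—$\dim[\ad e(\fg_j)] = \dim \fg_{j+2}$. Summing $\dim \fg_{j+2} - \dim \fg_j$ over appropriate ranges telescopes. Concretely, $\dim \liez{e} = \sum_j \dim \ker(\ad e|_{\fg_j})$, and by \cref{GGprop4} this sum runs over $j \ge 0$; for each such $j$, \cref{GGprop3} gives surjectivity of $\ad e|_{\fg_j}$ so $\dim\ker(\ad e|_{\fg_j}) = \dim\fg_j - \dim\fg_{j+2}$. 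Thus $\dim\liez{e} = \sum_{j \ge 0}(\dim\fg_j - \dim\fg_{j+2}) = \dim\fg_0 + \dim\fg_1$, the telescoping sum collapsing because $\dim\fg_j \to 0$. I would double-check the edge case $j=0$ and $j=1$ and confirm that injectivity for $j\le -1$ is consistent (it guarantees no centraliser contribution from negative degrees, matching \cref{GGprop4}). The main obstacle is getting the telescoping indices exactly right and verifying that \cref{GGprop2} is genuinely needed (it is, to pin down \cref{GGprop4}, which restricts the sum to $j \ge 0$) rather than redundant.
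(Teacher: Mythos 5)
Your proof is correct and follows essentially the same route as the paper's: GG4 comes from the injectivity in GG2 applied to graded components, GG5 from the semisimple element $h_\Gamma$ of \cref{lem:ZgradSS} together with invariance of the Killing form, and GG6 from a rank--nullity dimension count for $\ad e$. The only difference is one of packaging and sits in GG6: you telescope $\sum_{j\ge 0}\bigl(\dim\fg_j-\dim\fg_{j+2}\bigr)$ degree by degree to land directly on $\dim\fg_0+\dim\fg_1$, whereas the paper aggregates the same count into a single short exact sequence $0\to\liez{e}\to\fg_{\ge -1}\xrightarrow{\ad e}\fg_{\ge 1}\to 0$, obtains $\dim\fg_{-1}+\dim\fg_0$, and then converts using the bijectivity of $\ad e\colon\fg_{-1}\to\fg_1$ coming from GG2 and GG3; both are the same computation organised slightly differently, with your version avoiding that final substitution.
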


\begin{proof}
Note that \cref*{GGprop2} implies that $\liez{e} = \ker \ad e$ must not have any
component in $\fg_j$ for $j \le -1$, whence \cref*{GGprop4}. To prove
\cref*{GGprop5}, we consider the semisimple element $h_\Gamma$ coming from
\cref{lem:ZgradSS}: taking $x \in \fg_i$ and $y \in \fg_j$ we can note that
$\killing{[x,h]}{y} = \killing{x}{[h,y]}$, and hence
$-i \killing{x}{y} = j \killing{x}{y}$. As a result, either $i+j=0$ or
$\killing{x}{y} = 0$, proving \cref*{GGprop5}. \Cref*{GGprop6} follows from the
following sequence, which is short exact by \cref*{GGprop3,GGprop4}:
\begin{equation*}
0 \lra \liez{e} \xrightarrow{\hspace{2.5ex}} \fg_{-1} \oplus \fg_0 \oplus \fg_{>0}
  \xrightarrow{\ad e} \fg_{>0} \lra 0,
\end{equation*}
where $\fg_{>0} = \bigoplus_{j \ge 0} \fg_j$.
Dimension counting then gives that $\dim \liez{e} = \dim \fg_{-1} + \dim \fg_0$,
which gives \cref*{GGprop6} when combined with the fact that
$\ad e \colon \fg_{-1} \to \fg_1$ is a bijection from \cref*{GGprop2,GGprop3}.
\end{proof}

\begin{rem}
\label{rem:GGprop23}
We can actually make a stronger statement. Note that \cref*{GGprop5} holds for
any $\ZZ$\grading\ $\Gamma$; we can therefore use \cref*{GGprop5} and the
non-degeneracy of the Killing form to prove that \cref*{GGprop2,GGprop3} are
equivalent for any $\ZZ$\grading\ $\Gamma$.
\end{rem}

It turns out that we need slightly more flexibility than the Dynkin gradings
are able to provide us, but the above \namecref{prop:GGprops} tells us that many
of the desirable properties of Dynkin gradings can be obtained from arbitrary
$\ZZ$\gradings\ which satisfy \crefrange*{GGprop1}{GGprop3}. This motivates the
following definition.

\begin{defn}
Given a semisimple Lie algebra $\fg$ with chosen nilpotent element $e$, a
$\ZZ$\grading\ $\Gamma \colon \fg = \bigoplus_{j \in \ZZ} \fg_j$ is called a \emph{good
grading} for $e$ if it satisfies \crefrange{GGprop1}{GGprop3}.
\end{defn}

\begin{rem}
\Cref{lem:ZgradSS} implies that any good grading $\Gamma$ for a nilpotent $e \in \fg$
can be expressed as the eigenspaces of a semisimple element $h_\Gamma$ for which
$[h_\Gamma,e] = 2e$; however, this does not mean that any good grading comes from an
\sltriple\ $\set{e,h_\Gamma,f}$. In particular, there may not exist an element
$f$ which completes $\set{e,h_\Gamma}$ to an \sltriple. Hence, though every
Dynkin grading is a good grading, there exist good gradings which are not
Dynkin, as we shall see later in \cref{sec:TypeAPyramids}.
\end{rem}

\begin{note}
A $\ZZ$\grading\ $\Gamma$ of $\fg$ is said to be \emph{good} if there exists a nilpotent
$e \in \fg$ for which it is a good grading; further, such a nilpotent is
\emph{good} for $\Gamma$. The $\ZZ$\grading\ $\Gamma$ is said to be an \emph{even grading}
if $\fg_{2j+1} = \set{0}$ for any integer $j$.
\end{note}

\begin{defn} \label{def:Gammasl2triple}
Let $\Gamma$ be a good grading for the nilpotent element $e \in \fg$. A $\Gamma$\graded\
\sltriple\ is an \sltriple\ $\set{e,h,f}$ such that
$e \in \fg_2$, $h \in \fg_0$ and $f \in \fg_{-2}$.
\end{defn}

\begin{note}
It should again be emphasised that a $\Gamma$\graded\ \sltriple\ is only
compatible with the good grading $\Gamma$ in the sense outlined in the definition. In
particular, the Dynkin grading coming from $\ad h$ is not generally the same as
$\Gamma$.
\end{note}

\begin{lem}
For any non-zero nilpotent $e \in \fg$ and good $\ZZ$\grading\ $\Gamma$, there exists a
$\Gamma$\graded\ \sltriple\ $\set{e,h,f}$.
\end{lem}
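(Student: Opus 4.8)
The plan is to reduce the existence of a $\Gamma$-graded $\fsl_2$-triple to the existence of an ordinary $\fsl_2$-triple (guaranteed by the \JMthm) together with a conjugation argument inside a suitable reductive subgroup. First I would invoke \cref{lem:ZgradSS} to obtain a semisimple element $h_\Gamma \in \fg_0$ whose eigenspaces recover the grading $\Gamma$; since $\Gamma$ is good for $e$, we have $e \in \fg_2$, so $[h_\Gamma, e] = 2e$. By the \JMthm\ there is also some $\fsl_2$-triple $\set{e, h', f'}$ (not a priori compatible with $\Gamma$). The goal is to modify $h', f'$ so that $h' = h_\Gamma$ while keeping $e$ fixed, which will force $f'$ into $\fg_{-2}$ by weight considerations (as in the final paragraph of the \JMthm\ proof: solving $[e, f] = h_\Gamma$ and projecting onto eigenspaces shows a solution can be taken in $\fg_{-2}$).

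The key step is that $h_\Gamma$ and $h'$ are conjugate by an element of the centraliser $Z_G(e)$. Both lie in the image $[\fg, e]$ — for $h'$ this is part of the \JMthm\ (\cref{lem:JMhlem}), and for $h_\Gamma$ it follows because $h_\Gamma - h' \in \ker(\ad e \text{ on } \mathfrak{g}/\text{stuff})$... more precisely, $[e, h_\Gamma] = [e, h'] = 2e$ implies $h_\Gamma - h' \in \liez{e}$, so $h_\Gamma = h' + z$ with $z \in \liez{e}$. Now both $h'$ and $h_\Gamma$ are semisimple elements of $\liez{e} + \text{(something)}$; the right framework is that the set of $\fsl_2$-triples extending $e$ is a single orbit under $Z_G(e)^\circ$ (a standard consequence of Kostant's theorem, or of the fact that $\liez{e}$ acts on the space of such triples transitively at the Lie-algebra level — the relevant cohomology vanishes). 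Hence there is $g \in Z_G(e)$ with $\Ad(g) h' = h_\Gamma$; since good gradings for $e$ with a given $h$ in degree $0$ are rigid, setting $h = h_\Gamma = \Ad(g) h'$ and $f = \Ad(g) f'$ gives $[e,f] = h_\Gamma$, $e \in \fg_2$, $h_\Gamma \in \fg_0$, and $f \in \fg_{-2}$ by the weight argument, as desired.

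I expect the main obstacle to be justifying that $h_\Gamma$ is $Z_G(e)$-conjugate to the Dynkin $h'$, i.e.\ that all semisimple elements $h$ with $[h,e]=2e$ lying in $[\fg,e]$ are conjugate under $Z_G(e)$. One clean way around it: show directly that one can choose $f$ with $[e,f]=h_\Gamma$ and $[h_\Gamma, f] = -2f$ without passing through $h'$ at all. Given any $f_0$ with $[e, f_0] = h_\Gamma$, decompose $f_0 = \sum_j f_j$ into $\ad h_\Gamma$-eigenspaces; since $h_\Gamma \in \fg_0 = \mathfrak{g}_0$ and $[e, \fg_k] \subseteq \fg_{k+2}$, the equation forces $[e, f_{-2}] = h_\Gamma$, so we may assume $f \in \fg_{-2}$ and $[h_\Gamma, f] = -2f$ automatically. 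It then remains to upgrade this to a genuine $\fsl_2$-triple, i.e.\ to arrange $[h_\Gamma, e] = 2e$ (already true) and the bracket relations — this is exactly the classical argument that a pair $(e, h)$ with $[h,e] = 2e$, $h \in [\fg, e]$ extends to a triple, applied with the extra bookkeeping that $h = h_\Gamma \in \fg_0$ keeps $f$ in $\fg_{-2}$. So the proof is essentially a grading-aware rerun of the \JMthm\ proof, and the genuinely new content is just tracking the three degree constraints through that argument; the subtlety is only that $h_\Gamma$ need not be Dynkin, so one cannot quote the Dynkin triple verbatim but must re-solve $[e, \cdot] = h_\Gamma$ directly.
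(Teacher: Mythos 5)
There is a genuine gap. Both routes you sketch hinge on taking $h = h_\Gamma$ as the middle element of the $\Gamma$\graded\ triple, and both then need to solve $[e,f]=h_\Gamma$. But $h_\Gamma$ need not lie in $[\fg,e]$, so no such $f$ exists in general. Concretely, take $\fg=\fsl_3$, $e=E_{12}$, and $\Gamma$ the even good grading from the right-aligned pyramid of shape $(2,1)$, so $h_\Gamma = \operatorname{diag}(4/3,\,-2/3,\,-2/3)$. Here $[\fg,e]\cap\fh$ is the line spanned by $\operatorname{diag}(1,-1,0)$, which does not contain $h_\Gamma$; since $[\fg,e]=(\liez{e})^\bot$ and $\operatorname{diag}(1,1,-2)\in\liez{e}$ pairs non-trivially with $h_\Gamma$, the equation $[e,f]=h_\Gamma$ has no solution. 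The conjugation variant fails for the same reason: $Z_G(e)$ preserves $[\fg,e]$, so $h_\Gamma$ cannot be $Z_G(e)$\nobreakdash-conjugate to the Dynkin $h'$.

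The underlying misreading is that a $\Gamma$\graded\ \sltriple\ requires only $h\in\fg_0$ (see \cref{def:Gammasl2triple}), not $h=h_\Gamma$. The paper instead starts from a \JMthm\ triple $\set{e,h',f'}$, decomposes $h'$ and $f'$ with respect to $\Gamma$, and sets $h\coloneqq h_0$, the degree-zero component of $h'$. This $h$ lies in $\fg_0$ and automatically satisfies $[h,e]=2e$ and $h=[e,f'_{-2}]\in[\fg,e]$, which is exactly what your $h_\Gamma$ lacked. The weight-space bookkeeping you describe in the final paragraph (projecting onto the $-2$ eigenspace of $\ad h$, then onto $\Gamma$-degree $-2$, and using \cref{GGprop2} to show nothing is lost) is indeed the second half of the paper's argument, so your instincts about the mechanics were sound; the single wrong choice of $h$ is what breaks the proof.
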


\begin{proof}
By the \JMthm, the nilpotent $e$ can be completed to an \sltriple\
$\set{e,h',f'}$. Let $h' = \sum_{j \in \ZZ} h_j$ and $f' = \sum_{j \in \ZZ} f'_j$ be the
decompositions with respect to $\Gamma$, and define $h = h_0$: it follows that
$[h,e] = 2e$ and $h = [e,f'_{-2}]$. Next, construct $\tilde{f}$ as the component
of $f'_{-2}$ which lies in the $-2$ eigenspace of $\ad h$. We now have an
\sltriple\ $\set{e,h,\tilde{f}}$, though it may no longer be the case that
$\tilde{f} \in \fg_{-2}$; however, taking $f$ to be the $-2$ component of
$\tilde{f}$ with respect to $\Gamma$ provides the required \sltriple. In fact
we note that $\liebr[\big]{e,f-\tilde{f}} = 0$, which implies that
$f = \tilde{f}$ by \cref{GGprop2}.
\end{proof}

\subsubsection{The characteristic of a grading}

We will be interested in determining what the possible good gradings are in a
given Lie algebra. The answer is known, though quite complicated in general;
however, we can make some preliminary remarks greatly narrowing down the
possibilities. We begin by defining the \emph{characteristic} of a
$\ZZ$\grading. We note that $\fg_0$ is a reductive subalgebra of $\fg$, and a
Cartan subalgebra $\fh$ of $\fg_0$ is also a Cartan subalgebra of $\fg$; we
consider the root space decomposition $\fg = \fh \oplus \bigoplus_\alpha \fg_\alpha$. Let
$\Delta_0^+$ be a system of positive roots of pure degree in $\fg_0$; the set
$\Delta^+ \coloneqq \Delta_0^+ \cup \set{\alpha \st \fg_\alpha \subseteq \fg_{>0}}$ forms a system of positive roots in
$\fg$. Choose a set $\Phi$ of simple roots in $\Delta^+$ and let $\Phi_j \coloneqq \Phi \cap \fg_j$ for
each $j \ge 0$. 

\begin{defn}
The \emph{characteristic} of a $\ZZ$\grading\ is the decomposition
$\Phi = \bigcup_{j\ge0} \Phi_j$.
\end{defn}

We note that there is a bijection between the set of $\ZZ$\gradings\ on $\fg$ up
to conjugation and the set of all possible characteritics.

\begin{prop}
If $\Gamma$ is a good grading for a nilpotent $e$, then $\Phi = \Phi_0 \cup \Phi_1 \cup \Phi_2$.
\end{prop}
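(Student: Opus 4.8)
The plan is to show that a good grading can only place simple roots in degrees $0$, $1$, or $2$. The key input is \cref{GGprop1}, which says $e \in \fg_2$, together with \cref{GGprop3}, which says $\ad e \colon \fg_j \to \fg_{j+2}$ is surjective for $j \ge -1$. First I would note that $e$ decomposes along the root spaces contained in $\fg_2$: write $e = \sum_{\alpha \st \fg_\alpha \subseteq \fg_2} c_\alpha x_\alpha$ plus possibly a Cartan component, though in fact $e$ nilpotent forces the Cartan part to vanish and the sum to run over positive roots. The essential point is that the set of roots $\alpha$ with $\fg_\alpha \subseteq \fg_{>0}$, equivalently the positive roots, is generated as a monoid by the simple roots $\Phi$; and the degree function $\partial$ coming from $h_\Gamma$ (via \cref{lem:ZgradSS}) is additive, so $\partial(\alpha) = \sum_i n_i \partial(\alpha_i)$ when $\alpha = \sum_i n_i \alpha_i$ with $n_i \ge 0$.

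Next I would argue that each simple root has non-negative degree: since $\Delta^+ = \Delta_0^+ \cup \set{\alpha \st \fg_\alpha \subseteq \fg_{>0}}$ was chosen precisely so that $\fg_\alpha \subseteq \fg_{\ge 0}$ for every $\alpha \in \Delta^+$, and $\Phi \subseteq \Delta^+$, we have $\Phi = \bigcup_{j \ge 0}\Phi_j$ immediately from the definition of the characteristic; so the content is ruling out $\Phi_j$ for $j \ge 3$. Suppose for contradiction that some simple root $\beta$ lies in $\fg_j$ with $j \ge 3$. Then $\fg_{-\beta} \subseteq \fg_{-j}$ with $-j \le -3$, so $\ad e \colon \fg_{-j} \to \fg_{-j+2}$ should be injective by \cref{GGprop2} — that is fine — but the real obstruction comes from surjectivity lower down. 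The cleanest route: use that $\ad e\colon \fg_{-1}\to\fg_1$ is a bijection and, more generally, that $(\ad e)^k \colon \fg_{-k} \to \fg_k$ is surjective for all $k\ge 0$ by iterating \cref{GGprop3}. But $e$ itself lies in $\fg_2$ and, being a sum of root vectors for positive roots of degree $2$, the image $[\fg,e]$ is spanned by root vectors $x_\gamma$ with $\gamma$ of the form $\alpha + (\text{degree-2 root})$ or by adjusting Cartan directions. I would make this precise by showing $\fg_{j}$ for $j \ge 3$ must be nonzero whenever $\Phi_j \neq \emptyset$, and then use that $\ad e$ surjects onto $\fg_{j}$ from $\fg_{j-2}$, pushing the obstruction down inductively to a degree in $\set{0,1,2}$ where it becomes visible as a statement about simple roots.

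The hard part will be pinning down the inductive descent cleanly rather than waving at it: the honest argument is that if $\beta \in \Phi_j$ with $j \ge 3$, then since every root vector in $\fg_{\ge 3}$ lies in the image of $\ad e$ from $\fg_{\ge 1}$ by \cref{GGprop3}, one can write $x_\beta = [e, y]$ with $y \in \fg_{j-2}$; but $x_\beta$ is a simple root vector, so $[e,y]$ being a nonzero multiple of $x_\beta$ forces a root $\delta$ with $\fg_\delta \subseteq \fg_2$ and $\delta + (\text{weight of } y) = \beta$ — and since $\delta$ is a positive root of degree $2$ hence a non-negative combination of simple roots with at least one appearing, while $\beta$ is simple, the only way $\beta - \delta$ can be a weight of $\fg$ is if $\beta - \delta$ is zero or a root; a short case analysis on heights, using that $\delta \ne 0$ has height $\ge 1$, forces $\deg \beta = \deg \delta = 2$ or lower, contradicting $j \ge 3$. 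I would also remark (via \cref{rem:GGprop23}) that \cref{GGprop5} holds for any grading, which is what legitimises comparing degrees across $\pm\beta$, and note that the even-grading case is strictly easier. The expected length is short once the monoid-generation observation is in place; the only genuine subtlety is handling the possibility that $e$ has components in several degree-$2$ root spaces simultaneously, which I would dispatch by working with the full span $[\fg,e] = (\liez{e})^\perp$ rather than a single root vector.
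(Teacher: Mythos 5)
Your final argument (the third paragraph) is correct, but it takes a noticeably longer route than the paper, and you actually brush past the direct observation along the way. The paper's proof is a two-liner exploiting \cref{GGprop2} directly: since $e \in \fg_2$ is a sum of root vectors $e_\delta$ with $\deg \delta = 2$, and $\deg \alpha_j > 2$, additivity of the degree means no such $\delta$ can have $\alpha_j$ as a simple summand; hence $\delta - \alpha_j$ is never a root, so $[e, e_{-\alpha_j}] = 0$, and $e_{-\alpha_j}$ is then a nonzero element of $\ker(\ad e\colon \fg_{-j} \to \fg_{-j+2})$ with $-j \le -3 \le -1$, contradicting injectivity. You glance at exactly this point when you write ``$\ad e\colon\fg_{-j}\to\fg_{-j+2}$ should be injective by \cref{GGprop2} --- that is fine'' --- but it is not fine: you have just produced the kernel element. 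Your alternative, using \cref{GGprop3} to write $x_\beta = [e,y]$ with $y \in \fg_{j-2}$ and then running a weight-and-height analysis to show no $\delta$ of degree $2$ can satisfy $\beta - \delta \in \Delta \cup \set{0}$, does close the argument; and the two routes are of course dual via \cref{rem:GGprop23}. Still, the injectivity direction is tighter because the witness $e_{-\beta}$ is already in hand, whereas yours requires decomposing an arbitrary preimage $y$ and handling each weight component. One further caution: your intermediate assertion that $(\ad e)^k\colon \fg_{-k} \to \fg_k$ is surjective ``by iterating \cref{GGprop3}'' is not justified by iteration alone, since the early steps start from degrees below $-1$ where only injectivity is guaranteed; fortunately your final argument does not rely on this, only on the single surjection $\ad e\colon \fg_{j-2} \to \fg_j$ with $j-2 \ge 1$. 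Lastly, the ``subtlety'' you flag about $e$ having components in several degree-$2$ root spaces is a non-issue in either approach, since both arguments are linear over the $\delta$-components of $e$.
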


\begin{proof}
Let $\Phi = \set{\alpha_1, \dotsc \alpha_r}$, and assume there exists some simple root
$\alpha_j \notin \Phi_0 \cup \Phi_1 \cup \Phi_2$; $\alpha_j$ must therefore lie in $\Phi_k$ for some $k>2$.
Let $e_\alpha$ be a generator of the weight space $\fg_\alpha$. Since $e \in \fg_2$, it must
lie in the subalgebra generated by $\set{\alpha_i \st i \neq j}$. Hence
$[e,e_{-\alpha_j}] = 0$, which violates \cref{GGprop2}.
\end{proof}

\begin{cor}
If $\Gamma$ is an even good grading for a nilpotent $e$, then $\Phi = \Phi_0 \cup \Phi_2$.
\end{cor}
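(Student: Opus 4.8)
The plan is to deduce this directly from the preceding \namecref{prop:GGprops}-style result, namely that a good grading $\Gamma$ for $e$ satisfies $\Phi = \Phi_0 \cup \Phi_1 \cup \Phi_2$. The only additional ingredient needed is the definition of an even grading: $\fg_{2j+1} = \set{0}$ for every integer $j$. Taking $j = 0$ gives $\fg_1 = \set{0}$, and since $\Phi_1 \coloneqq \Phi \cap \fg_1$ by definition, this forces $\Phi_1 = \emptyset$.

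First I would invoke the proposition to write $\Phi = \Phi_0 \cup \Phi_1 \cup \Phi_2$. Then I would observe that $\Phi_1 \subseteq \fg_1 = \set{0}$ by evenness, so the middle term drops out, leaving $\Phi = \Phi_0 \cup \Phi_2$. There is no real obstacle here — the corollary is immediate once the proposition is in hand, and the proof is a one-line specialisation. If anything, the only thing worth spelling out is why $\fg_1 = \set{0}$ follows from the stated evenness condition (it is the $j=0$ case), which I would mention explicitly so the reader does not have to chase the definition in the preceding \texttt{note}.
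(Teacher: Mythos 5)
Your proof is correct and is the only sensible route: the paper states this corollary without proof, as an immediate consequence of the preceding proposition, and your one-line specialisation is exactly what is intended. One tiny point of care worth making explicit: the notation $\Phi_1 \coloneqq \Phi \cap \fg_1$ is shorthand for the set of simple roots $\alpha$ with $\fg_\alpha \subseteq \fg_1$; since each root space $\fg_\alpha$ is nonzero while $\fg_1 = \set{0}$ by evenness, no such $\alpha$ exists, so $\Phi_1 = \emptyset$ (rather than merely $\Phi_1 \subseteq \set{0}$).
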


Thus we can conclude that there are only a finite number of good gradings
possible up to conjugacy: the total number is bounded by $3^{\rank \fg}$.
Unfortunately this bound is not sharp, and there are in general many fewer good
gradings than would be suggested here.

\subsection{A bijection between nilpotent orbits and
\texorpdfstring{$\fsl_2$}{sl₂}-triples}

Our objective in this \namecref{sec:Nilorbits} has been to introduce the tools
necessary for the study of nilpotent orbits in $\nilcone$. The \JMthm\ states
that any nilpotent element $e$ can be completed to an \sltriple, however
it doesn't tell us how many ‘different’ such triples there are: we don't know whether the
triple is ‘essentially unique’, or if there a number of different ‘inequivalent’
\sltriples\ possible. More concretely, we consider two \sltriples\
equivalent if they are conjugate (i.e.\ in the same orbit) under the adjoint
action of $G$. We can then construct a map from the set of equivalence classes
of \sltriples\ to the set of non-zero nilpotent orbits in~$\fg$.
\begin{equation*}
\begin{split}
\Omega \colon \set{\fsl_2\text{-triples in }\fg}/G
  & \to \set{\text{non-zero nilpotent orbits in }\fg} \\
[\set{e,h,f}] & \mapsto \orbit_e
\end{split}
\end{equation*}

\begin{thm}[Kostant]
\label{thm:orbitbij}
The map $\Omega$ is a bijection.
\end{thm}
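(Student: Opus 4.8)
The plan is to show that $\Omega$ is both surjective and injective. Surjectivity is immediate from the \JMthm: every non-zero nilpotent orbit $\orbit_e$ contains an element $e$ which, by \cref{thm:JM}, can be completed to an \sltriple\ $\set{e,h,f}$, so $[\set{e,h,f}]$ maps to $\orbit_e$. The content of the theorem is therefore entirely in injectivity: if $\set{e,h,f}$ and $\set{e',h',f'}$ are two \sltriples\ with $e$ and $e'$ in the same $G$-orbit, they must be $G$-conjugate. After replacing the second triple by a $G$-conjugate, we may assume $e = e'$, so it suffices to prove that any two \sltriples\ sharing the nilpositive element $e$ — say $\set{e,h,f}$ and $\set{e,h',f'}$ — are conjugate under the subgroup $Z_G(e)$ of $G$ stabilising $e$.

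The key step is a linearisation/cohomological argument showing that the set of \sltriples\ with fixed $e$ is a single orbit under (the identity component of) $Z_G(e)$. First I would show the statement at the level of the Lie algebra: given $\set{e,h,f}$ and $\set{e,h',f'}$, set $h'' = h' - h \in \liez{e}$ (it centralises $e$ since $[e,h'] = -2e = [e,h]$). Using the $\ZZ$-grading by $\ad h$ and \crefrange{GGprop1}{GGprop4}, together with the $\fsl_2$-representation theory of $\fg$ under $\set{e,h,f}$, one shows that $\liez{e} \subseteq \bigoplus_{j\ge 0}\fg_j$, so $h''$ has only non-negative $\ad h$-weights; since $[e,h''] = 0$ and $h''$ would have to have weight $0$ for $[h,\cdot]$ in a suitable sense, a short computation forces $h''$ into a subspace on which $\exp\ad(\,\cdot\,)$ acts to correct $h'$ to $h$ while fixing $e$. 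Concretely, one produces $x \in \liez{e}_{>0} := \bigoplus_{j>0}(\liez{e})_j$ (the positive part of the centraliser under $\ad h$) with $(\exp\ad x)(h') = h$, exploiting that $\ad e$ is surjective from $\fg_{j}$ to $\fg_{j+2}$ for $j \ge -1$ (\cref{GGprop3}) to solve the relevant equations degree by degree. Having matched $h' $ to $h$, the remaining element $f'$ automatically satisfies $[e, f' - f] = 0$ and $[h, f' - f] = -2(f'-f)$, i.e.\ $f' - f \in (\liez{e})_{-2}$; but by \cref{GGprop4} the centraliser $\liez{e}$ lives in non-negative degrees, so $f' = f$, and the two triples coincide.

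Then I would upgrade this infinitesimal statement to the group: the element $\exp\ad x$ constructed above lies in $Z_G(e)$ (as $x \in \liez{e}$), so it conjugates one triple to the other, proving injectivity. The main obstacle I anticipate is organising the degree-by-degree solvability cleanly — one wants to peel off the lowest positive $\ad h$-degree component of $h' - h$, kill it by conjugating with $\exp\ad x$ for $x$ in that degree of $\liez{e}$, and iterate, checking that each step strictly raises the minimal degree of the discrepancy so the process terminates (the grading is bounded since $\fg$ is finite-dimensional). This is exactly the kind of argument that appears in \cite[§3.3]{CM:Nilorbits}; it is essentially Kostant's original observation that the $\fsl_2$-theory rigidifies the completion of a nilpotent to a triple. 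An alternative, more conceptual route is to invoke the fact that two homomorphisms $\fsl_2 \to \fg$ agreeing on the nilpositive generator are conjugate because $\fsl_2$ has trivial deformation theory (its first cohomology with coefficients in any module vanishes by Whitehead's lemma), but the concrete grading argument is more self-contained given what has been set up in the preceding subsections.
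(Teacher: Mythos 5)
Your proposal follows the paper's approach: reduce injectivity to showing two \sltriples\ with common nilpositive $e$ are conjugate under $Z_G(e)$, by moving $h'$ to $h$ via $\exp\ad x$ for $x$ in the positive part of $\liez{e}$, then forcing $f' = f$ from \cref{GGprop4}. The paper packages the central step as the identity $U_e \cdot h = h + \fu_e$ with $\fu_e := \liez{e}\cap[\fg,e] = \liez{e}_{>0}$, and explicitly mentions both the constructive degree-by-degree argument (your route, cf.~\cite[Lemma~3.4.7]{CM:Nilorbits}) and a Zariski-density alternative, so the two write-ups agree modulo which variant of that step is fleshed out.

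There is, however, a real gap in your sketch of the inductive step. To start peeling off low-degree components you need $h'' := h' - h$ to lie in \emph{strictly} positive $\ad h$-degree, but what you derive from $h'' \in \liez{e}$ and \cref{GGprop4} is only non-negative degree; the degree-$0$ piece cannot be removed by conjugation (since $[x,h]=0$ for $x\in\fg_0$), so if it were nonzero the whole argument would fail. The missing ingredient is that $h'' = [e,f'-f] \in [\fg,e]$, and one then shows $\liez{e}\cap[\fg,e]\cap\fg_0 = 0$ by $\fsl_2$-representation theory (a weight-$0$ vector in the image of $\ad e$ lives in a string of length at least $3$, hence is not annihilated by $\ad e$). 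Your phrase \emph{a short computation forces $h''$ into a subspace on which $\exp\ad(\cdot)$ acts to correct $h'$ to $h$} glosses over exactly this point. Separately, the tool you invoke as driving the degree-by-degree correction is misattributed: it is not \cref{GGprop3} (surjectivity of $\ad e$) but rather the invertibility of $\ad h$ on the strictly positive graded pieces of $\liez{e}$, which is automatic once one knows $h'' \in \liez{e}_{>0}$. \Cref*{GGprop3} does enter, but one level back, in establishing $\liez{e}_{>0} \subseteq [\fg,e]$.
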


This \namecref{thm:orbitbij} tells us that when considering nilpotent orbits in
$\fg$, we are completely justified in constructing \sltriples, as there is
a unique conjugacy class of \sltriples\ for each orbit. Any constructions
we make for a nilpotent orbit can be made using a choice of \sltriple\
without worrying about different choices yielding different results.

\begin{proof} \cite[§3.4]{CM:Nilorbits}
The map $\Omega$ is well-defined, as two conjugate \sltriples\ will have
elements $e$ lying in the same nilpotent orbit. The fact that it is surjective
follows directly from the \JMthm. It remains only to show that $\Omega$ is injective.

Assume we have two \sltriples\ with the same image under $\Omega$; without loss
of generality, we can conjugate so the triples have the form $\set{e,h,f}$ and
$\set{e,h',f'}$. Consider the Lie algebra $\fu_e \coloneqq \liez{e} \cap [\fg,e]$: we
note that $\fu_e$ is an $\ad h$-invariant ideal of $\liez{e}$, and that 
$\fu_e = \liez{e}_{>0}$. This second identity follows from \cref{GGprop4},
and the fact that $[\fg,e] \cap \fg_0 = [e,\fg_{-2}]$, and hence does not commute
with $e$ by $\fsl_2$ representation theory (see e.g.~\cref{fig:sl2rep}). That
$\fu_e$ lies in strictly positive degree further implies that it is a
nilpotent ideal of $\liez{e}$.

Let $U_e$ be the connected subgroup of $G$ with Lie algebra $\fu_e$. Since
$\fu_e$ is nilpotent the exponential map is is a diffeomorphism, and the
adjoint action has a particularly simple expression: for $x \in \fu_e$ and
sufficiently large $n$,
\begin{equation*}
\exp(x) \cdot h = h + [x,h] + \frac{1}{2}[x,[x,h]] + \dotsb +
\frac{1}{n!}[x,[x,\dotso,[x,h]]].
\end{equation*}
The fact that $\fu_e$ is $\ad h$-invariant implies that
$U_e \cdot h \subseteq h + \fu_e$. We can show that this inclusion is in fact an
equality either by constructing an appropriate element of $U_e$ directly
(cf.~\cite[Lemma~3.4.7]{CM:Nilorbits}), or by observing that the orbit of the
unipotent group is a Zariski-closed dense subset of $h + \fu_e$, and hence
$h + \fu_e$ itself (cf.~\cite[Lemma~3.7.21]{CG:RepTheoryGeom}).

We can now use the fact that $h-h' \in \fu_e$ to
see that $h' \in U_e \cdot h$, and hence have proven that there exists an element
of $U_e \subseteq G$ which fixes $e$ under the adjoint action and sends $h$ to $h'$.
This allows us to conjugate $\set{e,h',f'}$ to $\set{e,h,f''}$, and the fact
that $[e,f-f''] = 0$ implies that $f = f''$ by \cref{GGprop2}, thus completing
the proof.
\end{proof}

\subsection{Slodowy slices}
\label{sec:SlodowySlices}

For a given nilpotent orbit $\orbit_e$, we will be interested in studying the
structure of certain transverse slices to $\orbit_e$ at a given point. While
$\orbit_e$ has many different transverse slices passing through any individual
point, there is a certain relatively natural class of such slices which we'll be
working with. Consider a nilpotent element $e \in \fg$ and complete it to an
\sltriple\ $\set{e,h,f}$ using the \JMthm.

\begin{defn}
The \emph{Slodowy slice} to $\orbit_e \subseteq \fg$ through $e$ is
$\slodowy_e \coloneqq e + \liez{f}$, where $\liez{f}$ is the centraliser of $f$.
\end{defn}

\begin{rem}
Though the varieties $\orbit_e$ and $\slodowy_e$ defined above lie in the
semisimple Lie algebra $\fg$, we can instead view them in $\fg^*$ using the
duality $\kappa \colon \fg \isoto \fg^*$
induced by the Killing form. In fact, many of the constructions developed later
are more naturally considered in $\fg^*$. Which ambient space we are considering
will generally be clear from the context, but when clarity is required we shall
consider $\chi = \killing{e}{\cdot} \in \fg^*$, $\orbit_\chi = G \cdot \chi \subseteq \fg^*$ its
orbit under the coadjoint action, and
$\slodowy_\chi \coloneqq \kappa(e + \liez{f}) = \chi + \ker \ad^* f$.
\end{rem}

\begin{prop} \label{prop:SContract}
The Slodowy slice $\slodowy_e$ has a contracting $\CC*$\action\ which fixes $e$.
\end{prop}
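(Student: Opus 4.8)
The plan is to exhibit an explicit algebraic torus action on $\fg$ that preserves the affine subspace $\slodowy_e = e + \liez{f}$, fixes $e$, and contracts the slice onto $e$ as the parameter goes to $0$. First I would recall the $\ZZ$\grading\ of $\fg$ coming from $\ad h$, writing $\fg = \bigoplus_{j \in \ZZ} \fg_j$, so that $e \in \fg_2$ and, since $f \in \fg_{-2}$, the centraliser $\liez{f}$ decomposes as $\liez{f} = \bigoplus_{j} \liez{f} \cap \fg_j$ with $\liez{f} \subseteq \bigoplus_{j \le 0} \fg_j$ (the analogue of \cref{GGprop4} for $f$, which follows from $\fsl_2$ representation theory as in \cref{fig:sl2rep}). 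I would then define the one-parameter family of linear maps $\gamma(t) \colon \fg \to \fg$ by $\gamma(t) = t^{2} \cdot \Ad(\rho(t))$, where $\rho \colon \CC^* \to G$ is the one-parameter subgroup with $\Ad(\rho(t))|_{\fg_j} = t^{-j} \cdot \id$ (i.e.\ $\rho(t) = \exp((\log t) h_\Gamma)$ for the Dynkin grading element); concretely $\gamma(t)$ acts on $\fg_j$ by the scalar $t^{2-j}$.

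The key steps are then as follows. On $e \in \fg_2$ the scalar is $t^{2-2} = t^0 = 1$, so $\gamma(t) e = e$ for all $t$, giving the fixed-point claim. On $\liez{f} \cap \fg_j$ we have $j \le 0$, so the exponent $2 - j \ge 2 > 0$, hence $\gamma(t)$ maps $\liez{f} \cap \fg_j$ into itself (scaling by $t^{2-j}$) and therefore preserves the subspace $\liez{f}$, and consequently preserves $\slodowy_e = e + \liez{f}$ as an affine subspace. Moreover, since every exponent $2 - j$ occurring on $\liez{f}$ is strictly positive, $\gamma(t) v \to 0$ as $t \to 0$ for every $v \in \liez{f}$, so $\gamma(t) x \to e$ for every $x = e + v \in \slodowy_e$; this is the contracting property. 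Finally I would note that $\gamma(t) = t^2 \Ad(\rho(t))$ is genuinely an action of $\CC^*$ on $\fg$ (the scalar factor $t^2$ and the group homomorphism $\Ad \circ \rho$ each being multiplicative in $t$), and under the identification $\fg \simeq \fg^*$ via the Killing form this transports to the asserted contracting $\CC^*$\action\ on $\slodowy_\chi$; one checks compatibility with $\kappa$ using \cref{GGprop5}, which says the Killing form pairs $\fg_i$ with $\fg_{-i}$, so $\kappa$ intertwines the $\ad h$\grading\ with the corresponding grading on $\fg^*$.

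I do not expect a serious obstacle here: the only point requiring a little care is the bookkeeping with the two gradings — one should be careful that it is the Dynkin grading from $\ad h$ (not an arbitrary good grading $\Gamma$) that is used to define $\rho$, since the argument that $\liez{f}$ sits in nonpositive degree relies on $\set{e,h,f}$ being a genuine \sltriple\ and hence on $\fsl_2$ representation theory. A cosmetic alternative is to rescale and use $\gamma(t)$ acting on $\fg_j$ by $t^{(2-j)/2}$ when the grading is even, or simply to record that any contracting action is unique up to reparametrisation, so the precise normalisation is immaterial. With that caveat, the verification that $\gamma$ fixes $e$, preserves $\slodowy_e$, and contracts it is immediate from the degree computation above.
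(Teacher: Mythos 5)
Your proof takes essentially the same route as the paper's: construct a cocharacter from the \sltriple, then scale by $t^{\pm 2}$ so that the net action is trivial on $\fg_2$ (fixing $e$) and is strictly contracting on $\liez{f}\subseteq\bigoplus_{j\le 0}\fg_j$ (you contract as $t\to 0$; the paper scales by $t^{-2}$ and contracts as $t\to\infty$, which is the same thing after $t\mapsto t^{-1}$). One small slip in your parenthetical: $\Ad(\exp((\log t)h))$ acts on $\fg_j$ by $t^{+j}$, not $t^{-j}$, so your $\rho(t)$ should be $\exp(-(\log t)h)$; but since the rest of the computation uses the stated defining property $\Ad(\rho(t))|_{\fg_j}=t^{-j}$ rather than the parenthetical formula, the argument goes through unchanged.
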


\begin{proof}
Consider an \sltriple\ containing $e$: this exponentiates to an embedding
$\iota \colon SL_2 \hookrightarrow G$. We can then choose a cocharacter
$\gamma \colon \CC* \to G$ by defining
$\gamma(t) = \iota \begin{psmallmatrix} t&0\\0&t^{-1} \end{psmallmatrix}$. Note that
$\Ad_{\gamma(t)} e = t^2 e$.

Consider the action of $\CC*$ on $\slodowy_e$ given by
$t \cdot (e + x) \coloneqq t^{-2} \Ad_{\gamma(t)} (e + x) = e + t^{-2} \Ad_{\gamma(t)} x$
for $x \in \liez{f}$. We can see from the second equality that this action fixes
$e$, so it just remains to show that $\lim_{t \to \infty} t \cdot (e+x) = e$ for any
$x \in \liez{f}$. This follows from version of \cref{GGprop4} for the nilpotent
$f$, which implies that $\Ad_{\gamma(t)}$ acts on $\liez{f}$ by negative powers of
$t$, and hence $t^{-2} \Ad_{\gamma(t)}$ acts by strictly negative powers.
\end{proof}

\begin{note}
In the course of the above proof, we showed that by completing any nilpotent
element to an \sltriple\ we can express $t^2 e = \Ad_{\gamma(t)} e$ for some
cocharacter $\gamma$. This means that non-zero scalar multiplication of a nilpotent
element preserves $G$-orbits.
\end{note}

\begin{prop} \label{prop:SOTrans}
The Slodowy slice $\slodowy_e$ is a transverse slice to $\orbit_e$ at the point
$e$, and in particular $T_e \slodowy_e \oplus T_e \orbit_e = T_e \fg$. Furthermore,
$e$ is the unique point of intersection:
$\slodowy_e \cap \orbit_e = \set{e}$.
\end{prop}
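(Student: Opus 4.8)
The plan is to establish the tangent space statement first, since the transversality of $\slodowy_e$ and the fact that $e$ is an isolated point of intersection both follow from it by standard arguments. First I would identify the tangent spaces at $e$. Since $\slodowy_e = e + \liez{f}$ is an affine subspace, its tangent space at $e$ is $T_e \slodowy_e = \liez{f}$. The orbit $\orbit_e = G \cdot e$ has tangent space $T_e \orbit_e = [\fg, e]$, the image of $\ad e \colon \fg \to \fg$, by differentiating the orbit map. The ambient space is $T_e \fg = \fg$. So the claim $T_e \slodowy_e \oplus T_e \orbit_e = T_e \fg$ amounts to the vector space decomposition $\fg = \liez{f} \oplus [\fg, e]$.

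To prove this decomposition, I would pass to the Dynkin grading $\fg = \bigoplus_j \fg_j$ coming from the \sltriple\ $\set{e,h,f}$. The version of \cref{GGprop4} applied to the nilpotent $f$ (with the grading reversed) gives $\liez{f} \subseteq \bigoplus_{j \le 0} \fg_j$, and more precisely $\ad f \colon \fg_j \to \fg_{j-2}$ is injective for $j \ge 1$ and surjective for $j \le 1$ by \cref{GGprop2,GGprop3} applied to $f$. A clean way to get the direct sum is a dimension count combined with showing the intersection is zero. For the dimension count: $\dim [\fg,e] = \dim \fg - \dim \liez{e}$ by rank--nullity, and $\dim \liez{e} = \dim \liez{f}$ since the \sltriple\ representation theory makes $\liez{e}$ and $\liez{f}$ isomorphic as vector spaces (each irreducible $\fsl_2$-summand contributes a one-dimensional highest-weight space to $\liez{e}$ and a one-dimensional lowest-weight space to $\liez{f}$). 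Hence $\dim \liez{f} + \dim [\fg,e] = \dim \fg$, so it suffices to show $\liez{f} \cap [\fg,e] = 0$. For this, suppose $x \in \liez{f}$ with $x = [e,y]$. Decomposing into $\ad h$-eigenspaces and using that $\ad e$ raises weight by $2$ while $\liez{f}$ sits in non-positive weights, one reduces to the weight-zero and negative-weight pieces; an element of $[\fg,e]$ of weight $\le 0$ lies in $[e, \fg_{\le -2}]$, and $\fsl_2$-theory shows such an element cannot be a lowest-weight vector (killed by $\ad f$) unless it is zero. This gives the transverse decomposition at $e$.

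For the final assertion $\slodowy_e \cap \orbit_e = \set{e}$, I would use the contracting $\CC^*$\action\ from \cref{prop:SContract}. The action $t \cdot (e+x) = e + t^{-2}\Ad_{\gamma(t)} x$ preserves $\slodowy_e$, fixes $e$, and satisfies $\lim_{t \to \infty} t \cdot p = e$ for every $p \in \slodowy_e$. Moreover, by the note following \cref{prop:SContract}, $\Ad_{\gamma(t)}$ preserves the nilpotent orbit $\orbit_e$, and scaling a nilpotent element keeps it in its $G$-orbit, so the $\CC^*$\action\ also preserves $\slodowy_e \cap \orbit_e$. Now $\orbit_e$ is locally closed of dimension $\dim [\fg,e]$, and transversality at $e$ forces $\slodowy_e \cap \orbit_e$ to be a locally closed subvariety of $\slodowy_e$ whose tangent space at $e$ is $\liez{f} \cap [\fg,e] = 0$; hence $e$ is an isolated point of the intersection. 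But every point of the intersection flows to $e$ under the contracting action, which is impossible for a point in a different component or at positive distance unless the intersection is exactly $\set{e}$: any other point $p$ would give an entire orbit-closure through $p$ and $e$ inside $\slodowy_e \cap \orbit_e$, contradicting that $e$ is isolated.

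The main obstacle is the linear-algebra core, namely verifying $\liez{f} \cap [\fg,e] = 0$ cleanly; the honest route is to decompose $\fg$ into irreducible $\fsl_2$-summands as in \cref{fig:sl2rep} and check the claim on each summand, where it is immediate, rather than manipulating the Killing form. Everything else is then either a dimension count or a formal consequence of the contracting torus action.
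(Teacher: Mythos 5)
Your proposal is correct and takes essentially the same route as the paper: identify $T_e\slodowy_e = \liez{f}$ and $T_e\orbit_e = [\fg,e]$, decompose $\fg$ into irreducible $\fsl_2$-summands to see that these two subspaces are complementary (the lowest-weight spaces versus the rest), and then use the contracting $\CC^*$-action from \cref{prop:SContract}, together with the fact that it preserves $G$-orbits and $\slodowy_e$, to rule out any intersection point other than $e$. The minor differences — your dimension count passes through $\dim\liez{e}=\dim\liez{f}$ and rank--nullity rather than observing directly that $[\fg,e]$ is the complement of the lowest-weight spaces, and your verification that $\liez{f}\cap[\fg,e]=0$ is a bit more laboured than the one-line $\fsl_2$ observation — are matters of exposition, not substance, and you correctly identify the cleaner route yourself in the final paragraph.
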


\begin{proof} \cite[Proposition~3.7.15]{CG:RepTheoryGeom}
To prove that $\slodowy_e$ is a transverse slice to $\orbit_e$, it suffices
to prove that $T_e \slodowy_e \oplus T_e \orbit_e = T_e \fg$; the stronger result in 
a neighbourhood of $e$ follows automatically by exponentiation.

Note that $T_e \slodowy_e = \liez{f}$ and $T_e \orbit_e = [\fg,e]$; that
$\liez{f} \cap [\fg,e] = {0}$ follows directly from $\fsl_2$ representation theory.
To show that they together span $T_e \fg = \fg$, we count dimensions. Consider
the decomposition of $\fg$ into irreducible $\fsl_2$\modules\
$\fg = \bigoplus_{j=1}^n V(\lambda_j)$. The centraliser $\liez{f}$ consists of
precisely those elements of $\fg$ which lie in the lowest weight spaces of the
irreducible components, and $[\fg,e]$ consists of those elements which do not
lie in the lowest weight spaces of the irreducible components (see
\cref{fig:sl2repker}). Thus $\dim \liez{f} + \dim [\fg,e] = \dim \fg$, and so
$T_e \slodowy_e \oplus T_e \orbit_e = T_e \fg$.

%% Irreducibility of sl2 representations ker ad f dimension counting

\begin{figure}[hf]

\tikzstyle{dot}=[circle,fill=black,inner sep=0,minimum size=3mm]

\centering
\begin{tikzpicture}[scale=.5,auto]

  % Draw axes and ticks
  \draw[<->] (-.5,7) -- (-.5,-7);
  \foreach \x in {6,4,...,-6} {
    \draw (-.8,\x) -- (-.5,\x);
    \node (tick \x)  at (-.5,\x) [label=left:$\x$] {};
  }

  \foreach \x in {6,4,...,-6}
    \node at (1,\x) [dot] {};
  \draw (1,6) -- (1,-6);

  \foreach \x in {5,3,...,-5}
    \node at (3,\x) [dot] {};
  \draw (3,5) -- (3,-5);

  \foreach \x in {5,3,...,-5}
    \node at (5,\x) [dot] {};
  \draw (5,5) -- (5,-5);

  \foreach \x in {3,1,-1,-3}
    \node at (7,\x) [dot] {};
  \draw (7,3) -- (7,-3);

  \foreach \x in {2,0,-2}
    \node at (9,\x) [dot] {};
  \draw (9,2) -- (9,-2);

  \draw (0,-5)                               -- (1,-5)
               .. controls (2,-5) and (2,-4) .. (3,-4)
                                             -- (5,-4)
               .. controls (6,-4) and (6,-2.5) .. (7,-2)
               .. controls (8,-1.5) and (8.5,-1) .. (9,-1)
                                               -- (10,-1);
  \node at (8,-5) {$\liez{f}$};
  \node at (8,5) {$[\fg,e]$};
\end{tikzpicture}

\caption{
The decomposition of a finite-dimensional $\fsl_2$-module as a sum of
simple modules. The dimension of $\fg$ is equal to the sum of the dimensions of
$\liez{f}$ and $[\fg,e]$.}

\label{fig:sl2repker}
\end{figure}
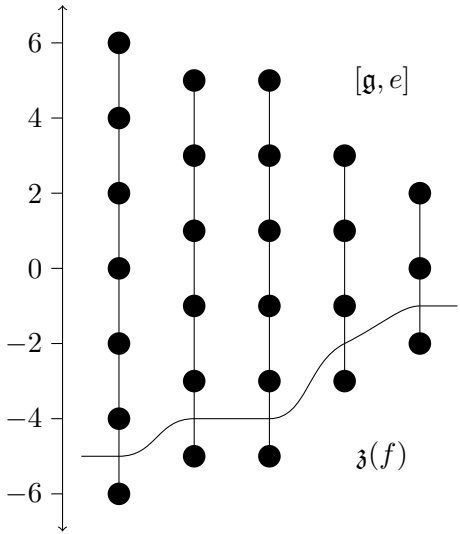

It remains to prove that $\slodowy_e \cap \orbit_e = \set{e}$. We proved above that
$\slodowy_e$ is a transverse slice to $\orbit_e$ in some sufficiently small
neighbourhood, so any other points of intersection must lie outside of that
neighbourhood. We note that the contracting action of \cref{prop:SContract}
preserves $G$-orbits, as it is composed of an honest adjoint action of $G$ followed by a
scaling of the resulting nilpotent, which can also be expressed as a $G$\action\ 
by the note following the \namecref{prop:SContract}. Thus we can contract any
point of $(\slodowy_e \cap \orbit_e) \smallsetminus \set{e}$ to another point in
the same set in an arbitrarily small neighbourhood of $e$, contradicting
transversality of $\slodowy_e$.
\end{proof}

\section{W-algebra basics}
\label{sec:WalgBasics}

We now turn to defining the \Walgebras\ themselves and establishing their basic
properties. Throughout this \namecref{sec:WalgBasics} we shall develop a
procedure for defining the \Walgebra\ $U(\fg,e)$ given a nilpotent $e$ and
good grading $\Gamma$, establish its identity as a non-commutative filtered algebra,
and remark on some of the geometry linking the \Walgebras\ to Slodowy slices. In
the process we shall prove the independence of the isomorphism class of the
\Walgebra\ $U(\fg,e)$ from the various choices our definition entails.

\subsection{Premet subalgebras}

For our definition of \Walgebras, we need to introduce a class of nilpotent
subalgebras compatible with the nilpotent element $e$. These subalgebras are
known as \emph{Premet subalgebras}, and they are almost entirely determined by a
choice of a good grading for $e$. In the case of an even good grading we can
unambiguously define a Premet subalgebra purely from the good grading itself,
but if the grading has non-zero odd component we need to be a bit more subtle.

\begin{lem}
Let $\fg = \bigoplus_{j \in \ZZ} \fg_j$ be a good grading for the nilpotent $e$.
The space $\fg_{-1}$ is a symplectic vector space with symplectic form
$\omega(x,y) \coloneqq \killing{e}{[x,y]}$.
\end{lem}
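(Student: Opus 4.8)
The plan is to check the three defining properties of a symplectic form in turn. Bilinearity is immediate, since $\omega$ is the composite of the Lie bracket $\fg_{-1} \times \fg_{-1} \to \fg_{-2}$ with the linear functional $\killing{e}{\cdot}$. Skew-symmetry is equally formal: $\omega(x,y) = \killing{e}{[x,y]} = -\killing{e}{[y,x]} = -\omega(y,x)$, using antisymmetry of the Lie bracket; working over $\CC$ this also gives $\omega(x,x) = 0$, so $\omega$ is alternating. All the content is therefore in non-degeneracy.

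To prove non-degeneracy, suppose $x \in \fg_{-1}$ satisfies $\omega(x,y) = 0$ for all $y \in \fg_{-1}$; I want to conclude $x = 0$. By invariance (``associativity'') of the Killing form, $\omega(x,y) = \killing{e}{[x,y]} = \killing{[e,x]}{y}$, and $[e,x] \in \fg_1$ since $e \in \fg_2$ by \cref*{GGprop1}. Now \cref*{GGprop5} gives $\killing{\fg_1}{\fg_j} = 0$ for every $j \neq -1$, so the functional $\killing{[e,x]}{\cdot}$ already vanishes on each graded piece $\fg_j$ with $j \neq -1$; by hypothesis it also vanishes on $\fg_{-1}$, hence on all of $\fg$. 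Since the Killing form on the semisimple Lie algebra $\fg$ is non-degenerate, this forces $[e,x] = 0$, i.e.\ $x \in \liez{e}$. But \cref*{GGprop4} gives $\liez{e} \subseteq \bigoplus_{j \ge 0} \fg_j$, so $\liez{e} \cap \fg_{-1} = 0$ (equivalently, $\ad e$ is injective on $\fg_{-1}$ by \cref*{GGprop2}), whence $x = 0$. Thus $\omega$ is non-degenerate, and $\fg_{-1}$ is symplectic.

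I do not expect a serious obstacle here: this is the standard computation underlying the theory of Premet subalgebras (cf.\ the work of Premet and of Gan--Ginzburg). The only point requiring a moment's care is the degree bookkeeping in the non-degeneracy step — one must first promote the vanishing of $\killing{[e,x]}{\cdot}$ from $\fg_{-1}$ to all of $\fg$ using \cref*{GGprop5}, and only then invoke non-degeneracy of the Killing form on $\fg$. Observe that the argument uses only \cref*{GGprop1,GGprop2,GGprop4,GGprop5}, so it applies to an arbitrary good grading, not merely a Dynkin one.
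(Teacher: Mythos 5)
Your proof is correct and follows essentially the same line as the paper's: use invariance of the Killing form to rewrite $\omega(x,\cdot) = \killing{[e,x]}{\cdot}$, use \cref*{GGprop5} to promote the vanishing on $\fg_{-1}$ to vanishing on all of $\fg$, invoke non-degeneracy of the Killing form, and finish with the injectivity of $\ad e$ on negative degrees. The only cosmetic difference is that the paper phrases the final step as a contradiction (assume $x\neq 0$, then $[e,x]\neq 0$ lies in the radical of the Killing form), while you first conclude $[e,x]=0$ and then apply \cref*{GGprop2}/\cref*{GGprop4}; the content is identical.
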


\begin{proof}
That the form $\omega$ is antisymmetric follows from the antisymmetry of the Lie
bracket. To prove that it is non-degenerate, we show that its radical
$\operatorname{rad} \omega \coloneqq \set{x \in \fg_{-1} \st \omega(x,y) = 0 \;\;\forall \; y \in \fg_{-1}}$,
is zero. Let $x \in \operatorname{rad} \omega$: since
$0 = \omega(x,\cdot) = \killing{e}{[x,\cdot]} = \killing{[e,x]}{\cdot}$,
we check to see when this vanishes as an operator on $\fg_{-1}$. 
By \cref{GGprop2,GGprop5}, $[e,x]$ is a non-zero element of
$\fg_1$ and $\killing{\fg_1}{\fg_j} = 0$ unless $j = -1$, so $[e,x]$ is a
non-zero element of the radical of the Killing form. However, $\fg$ is a
semisimple Lie algebra and so its Killing form is non-degenerate. Therefore $x =
0$, $\operatorname{rad} \omega = \set{0}$, and $\omega$ is non-degenerate.
\end{proof}

\begin{defn} \label{def:Premet}
Let $e$ be a nilpotent element in $\fg$. A \emph{Premet subalgebra} for $e$ is a
subalgebra $\fm \subseteq \fg$ constructed using a choice of a good grading
$\Gamma \colon \fg = \bigoplus_{j \in \ZZ} \fg_j$ along with a Lagrangian subspace
$\fl \subseteq \fg_{-1}$; it is defined as $\fm = \fl \oplus \bigoplus_{j\le-2} \fg_j$.
\end{defn}

Note that this is closed under the Lie bracket by the fact that $\Gamma$ is a Lie
algebra grading.
Premet subalgebras are closely tied to the structure of the nilpotent orbit
$\orbit_e$, and enjoy a number of useful properties.

\begin{prop} \label{prop:PremProp}
Let $\fm \subseteq \fg$ be a Premet subalgebra for a nilpotent $e$; then:
\begin{enumerate}
\item \label[property]{PremDim}
    $\dim \fm = \frac{1}{2} \dim \orbit_e$.
\item \label[property]{PremNil}
    $\fm$ is an ad-nilpotent, and in particular nilpotent, subalgebra of $\fg$.
\item \label[property]{PremChar}
    the linear functional $\chi = \killing{e}{\cdot}$ restricts to a character on
    $\fm$.
\end{enumerate}
\end{prop}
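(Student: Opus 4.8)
Proof proposal. The plan is to dispatch the three assertions independently. \Cref{PremDim} is a dimension count played off against the good-grading properties; \cref{PremNil} follows from the fact that $\fm$ lives in strictly negative degrees; and \cref{PremChar} reduces to the isotropy of the Lagrangian $\fl$. None of the three steps is genuinely hard; the only places that need care are the bookkeeping in \cref{PremDim} and the observation in \cref{PremChar} that $[\fl,\fl]$ is the unique source of degree $-2$ terms in $[\fm,\fm]$, so that the Lagrangian hypothesis is exactly what the computation calls for.

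For \cref{PremDim}, I would begin with $\dim \orbit_e = \dim T_e \orbit_e = \dim [\fg,e] = \dim \fg - \dim \liez{e}$ and rewrite the last term using \cref{GGprop6} as $\dim \fg - \dim \fg_0 - \dim \fg_1$. Since $e \in \fg_2$, \cref{GGprop5} together with non-degeneracy of the Killing form gives $\dim \fg_j = \dim \fg_{-j}$ for every $j$, so $\dim \fg = \dim \fg_0 + 2\sum_{j \ge 1} \dim \fg_j$ and hence $\dim \orbit_e = \dim \fg_1 + 2\sum_{j \ge 2} \dim \fg_j$. On the other side, $\fl$ is a Lagrangian subspace of the symplectic space $(\fg_{-1},\omega)$, so $\dim \fl = \frac{1}{2}\dim \fg_{-1} = \frac{1}{2}\dim \fg_1$, whence $\dim \fm = \dim \fl + \sum_{j \le -2}\dim \fg_j = \frac{1}{2}\dim \fg_1 + \sum_{j \ge 2}\dim \fg_j$; comparing the two expressions gives $\dim \fm = \frac{1}{2}\dim \orbit_e$.

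For \cref{PremNil}, note that $\fm \subseteq \bigoplus_{j \le -1}\fg_j$, so for any $x \in \fm$ the operator $\ad x$ maps $\fg_k$ into $\bigoplus_{j < k}\fg_j$; as only finitely many $\fg_j$ are non-zero, a sufficiently high power of $\ad x$ kills all of $\fg$, so $\ad x$ is nilpotent. Thus every element of $\fm$ is $\ad$-nilpotent on $\fg$, and a fortiori in the adjoint action of $\fm$ on itself; Engel's theorem then shows $\fm$ is a nilpotent Lie algebra.

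For \cref{PremChar}, it suffices to check that $\chi = \killing{e}{\cdot}$ vanishes on $[\fm,\fm]$. Because $e \in \fg_2$, \cref{GGprop5} forces $\killing{e}{\fg_j} = 0$ for $j \ne -2$, so $\chi$ is detected only by the $\fg_{-2}$-component of an element of $[\fm,\fm]$. Inspecting degrees, brackets involving the summands $\fg_j$ with $j \le -2$ land in degree $\le -3$, so the only contribution to $\fg_{-2}$ comes from $[\fl,\fl] \subseteq \fg_{-2}$; and for $x,y \in \fl$ one has $\killing{e}{[x,y]} = \omega(x,y) = 0$ since $\fl$ is isotropic. Therefore $\chi$ annihilates $[\fm,\fm]$, i.e. $\chi|_\fm$ is a character of $\fm$. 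The expected pinch point, such as it is, lies in this last argument: one must be confident that no bracket from the $\bigoplus_{j \le -2}\fg_j$ part sneaks into degree $-2$, and that the antisymmetry of $\omega$ built into the preceding lemma makes the Lagrangian (isotropic) condition suffice rather than merely being necessary.
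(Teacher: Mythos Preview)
Your proposal is correct and follows essentially the same approach as the paper: the same dimension count via \cref{GGprop6} and the Lagrangian condition for \cref{PremDim}, the same strictly-negative-degree argument for \cref{PremNil}, and the same reduction to $\omega(\fl,\fl)=0$ for \cref{PremChar}. The only differences are cosmetic (you route the dimension count through $\dim\fg_j=\dim\fg_{-j}$ and positive indices, and you invoke Engel's theorem explicitly where the paper is content to note ad-nilpotency).
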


\begin{proof}
\Cref*{PremDim} follows from the orbit--stabiliser theorem and \cref{GGprop6}:
\begin{multline*}
\textstyle
\dim \orbit_e = \dim \fg - \dim \liez{e}
  = \sum_{j \in \ZZ} \fg_j - \dim \fg_0 - \dim \fg_1 = \\
\textstyle
  = \dim \fg_{-1} + \sum_{j \le -2} \dim (\fg_j + \fg_{-j})
  = \dim \fg_{-1} + 2 \sum_{j \le -2} \fg_j = 2 \dim \fm.
\end{multline*}
\Cref*{PremNil} follows because $\fm \subseteq \bigoplus_{j \le -1} \fg_j$, and so
consists entirely of ad-nilpotent elements. For \cref*{PremChar} we note first
that $e \in \fg_2$, and so \cref{GGprop5} implies that $\chi$ vanishes except on
$\fg_{-2}$. Thus we know that $\chi([x,y]) = 0$ unless $x$ and $y$ both lie in
$\fl = \fm \cap \fg_{-1}$; but $\chi([x,y]) = \omega(x,y)$, which vanishes since $\fl$ was
chosen to be Lagrangian with respect to $\omega$.
\end{proof}

\subsubsection{Premet subalgebras for even good gradings}

Fortunately the situation is simpler for even good gradings, and we can give an
intrinsic characterisation of all Premet subalgebras which can be constructed
from an even good grading. We note that if $\Gamma$ is an even good grading, the
Premet subalgebra $\fm = \bigoplus_{j < 0} \fg_j$ is the nilradical of a
parabolic subalgebra $\fp^- = \bigoplus_{j\le0} \fg_j$
(cf.~\cite[Lemma~3.8.4]{CM:Nilorbits}). The converse is true with one additional
condition. Choose a Cartan subalgebra $\fh$ and set of simple roots $\Phi$ such
that $\fp^- = \fp_{-\Theta}$ for some subset $\Theta \subseteq \Phi$. Let $\Delta^+$ be the positive roots
and $\Delta^+_\Theta$ the elements of $\Delta^+$ with exactly one simple summand in $\Theta$.

\begin{thm}[Elashvili--Kac]{\normalfont \cite[Theorem~2.1]{EK:ClassGG}}
If $\fm$ is the nilradical of a parabolic subalgebra $\fp$ in $\fg$, then it
is a Premet subalgebra if and only if there exists a Richardson element --
that is, an element of the open dense orbit in $\fm$ under the adjoint action of
$P$ -- lying in the subspace generated by $\set{e_\alpha \st \alpha \in \Delta^+_\Theta}$.
\end{thm}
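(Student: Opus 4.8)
\section*{Proof proposal}

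The plan is to reduce the assertion to a pair of dimension counts, played off against the orbit--stabiliser theorem and using nothing beyond \cref{GGprop4,GGprop6}. Equip $\fg$ with the even $\ZZ$\grading\ attached to $\fp$ -- giving each simple root in $\Theta$ degree $2$ and the remaining simple roots degree $0$ -- so that $\fp = \bigoplus_{j\ge0}\fg_j$ has Levi $\fg_0$ and nilradical $\fm = \bigoplus_{j\ge2}\fg_j$. A root has degree $2$ exactly when a single simple root from $\Theta$ occurs in it with coefficient one, so the bottom graded piece $\fg_2$ is precisely the subspace spanned by $\set{e_\alpha \st \alpha\in\Delta^+_\Theta}$. (To match \cref{def:Premet}, where the Premet subalgebra lies in negative degrees, one conjugates $\fp$ to its opposite by an element fixing the common Levi; this carries $\fm$, $\fg_2$ and the dense $P$\nobreakdash-orbit to their negative-degree analogues, so it changes neither side of the claimed equivalence, and I work on the positive side.) By the discussion preceding the statement, $\fm$ is a Premet subalgebra exactly when this even grading is a good grading for some nilpotent; so it suffices to prove that the grading is good for some $e$ if and only if the dense $P$\nobreakdash-orbit in $\fm$ contains an element of $\fg_2$.

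For the forward implication, assume the grading is good for a nilpotent $e$, so $e\in\fg_2\subseteq\fm$ by \cref{GGprop1}. Since $[\fg_j,e]\subseteq\fg_{j+2}$, the operator $\ad e$ restricts to a linear map $\fp\to\fm$ with kernel $\liez[\fp]{e} = \fp\cap\liez{e}$; by \cref{GGprop4} we have $\liez{e}\subseteq\bigoplus_{j\ge0}\fg_j = \fp$, so the kernel is all of $\liez{e}$, and by \cref{GGprop6} with $\fg_1 = 0$ we get $\dim\liez{e} = \dim\fg_0$. The orbit--stabiliser theorem then gives
\begin{equation*}
\dim(P\cdot e) = \dim\fp - \dim\liez{e} = \dim\fp - \dim\fg_0 = \dim\fm ,
\end{equation*}
so the irreducible subvariety $P\cdot e$ and the irreducible variety $\fm$ have the same dimension; hence $\overline{P\cdot e} = \fm$, the orbit $P\cdot e$ is dense, and $e$ is a Richardson element lying in $\fg_2$.

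For the reverse implication, assume some $e\in\fg_2$ lies in the dense $P$\nobreakdash-orbit of $\fm$; we verify \crefrange{GGprop1}{GGprop3} for $e$. Property \cref{GGprop1} holds by hypothesis. Reversing the computation above, the kernel $\liez[\fp]{e}$ of $\ad e\colon\fp\to\fm$ has dimension $\dim\fp - \dim(P\cdot e) = \dim\fp - \dim\fm = \dim\fg_0$, so the image has dimension $\dim\fm$; being contained in $\fm$, the map $\ad e\colon\fp\to\fm$ is surjective, and decomposing by degree shows $\ad e\colon\fg_j\to\fg_{j+2}$ is surjective for every $j\ge0$. With the case $j = -1$ vacuous (here $\fg_{-1} = 0$, the grading being even), this is \cref{GGprop3}, and \cref{GGprop2} follows from it by \cref{rem:GGprop23}. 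Hence the grading is good for $e$, and $\fm$ is a Premet subalgebra.

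The hard part is not a calculation but the set-up. One must check that the even grading attached to $\fp$ really has minimal piece spanned by $\set{e_\alpha \st \alpha\in\Delta^+_\Theta}$, and that ``$\fm$ is a Premet subalgebra'' is genuinely interchangeable with ``the associated even grading is good'' -- the latter point absorbs the bookkeeping of opposite parabolics (and, implicitly, the fact that the reversed even grading is good whenever the original is, which itself drops out of the equivalence just proved). Granting this, the entire content is that \cref{GGprop4,GGprop6} collapse the good-grading hypothesis to the single numerical identity $\dim(P\cdot e) = \dim\fm$, which -- orbits being constructible and $\fm$ an affine space -- is precisely the Richardson condition.
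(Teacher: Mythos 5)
The paper does not supply a proof of this theorem — it is cited directly to Elashvili–Kac — so I can only assess your argument on its own terms. Your dimension count is the right idea and is carried out correctly: in the forward direction, \cref{GGprop4,GGprop6} reduce to $\dim\liez{e}=\dim\fg_0$, whence the orbit--stabiliser theorem gives $\dim(P\cdot e)=\dim\fm$ and irreducibility of $\fm$ finishes; in the backward direction the same count gives surjectivity of $\ad e\colon\fp\to\fm$, which decomposes by degree to give \cref{GGprop3}, and \cref{GGprop2} follows by \cref{rem:GGprop23}. That part is solid.

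There is, however, a genuine gap at the reduction step you flag with ``Granting this.'' You assert that ``$\fm$ is a Premet subalgebra'' is interchangeable with ``the even grading attached to $\fp$ is good for some nilpotent,'' but only one direction is obvious: if the even grading is good for some $e\in\fg_2$, then $\fm=\bigoplus_{j\le -2}\fg_j$ (with $\fl=0$) is a Premet subalgebra for $e$. For the converse, \cref{def:Premet} allows $\fm$ to be a Premet subalgebra for \emph{some} nilpotent $e'$ via \emph{some} good grading $\Gamma'$ together with a Lagrangian $\fl'\subseteq\fg_{-1}^{\Gamma'}$; without an argument, it does not follow that the particular even grading $\Gamma$ attached to the parabolic normalising $\fm$ is also good. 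When $\Gamma'$ is itself even, this does work out — an even good grading is determined by the set of degree-$2$ simple roots, which is determined by the parabolic $\mathrm{Norm}_\fg(\fm)$, so $\Gamma'$ is forced to equal $\Gamma$ — but for odd $\Gamma'$ the claim is not automatic. Your proof therefore establishes the theorem as stated only if one reads ``is a Premet subalgebra'' with the implicit restriction to Premet subalgebras built from even good gradings (which the paper's surrounding sentence, ``an intrinsic characterisation of all Premet subalgebras which can be constructed from an even good grading,'' suggests is the intended reading). Either make that interpretive restriction explicit up front, or supply the missing argument that a parabolic nilradical which is a Premet subalgebra in the general sense can always be realised from the even grading.
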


\begin{note}
Though Richardson elements exist for any parabolic subalgebra, it is not always
true that there exists one in satisfying the condition of the theorem. Such
parabolics are called \emph{nice parabolic subalgebras}, and their
classification is equivalent to the classification of good gradings.
\end{note}

\subsection{The Whittaker definition of W-algebras}
\label{sec:WalgDef}

We can now present a definition of the \Walgebra\ $U(\fg,e)$ associated to the
nilpotent $e$ using a Premet subalgebra. \Cref{prop:PremProp} states that
$\chi = \killing{e}{\cdot} \colon \fm \to \CC$ is a Lie algebra character, and so
defines a one-dimensional representation $\CC_\chi$. We can induce this
representation of $U(\fm)$ to a representation of $U(\fg)$, the result being
known as the \emph{generalised Gel'fand--Graev module} $Q_\chi$:
\begin{equation*}
Q_\chi \coloneqq U(\fg) \otimes_{U(\fm)} \CC_\chi
\end{equation*}
Note that the kernel of the induced morphism $\chi \colon U(\fm) \to \CC$ is
generated as a two-sided ideal in $U(\fm)$ by the shifted Lie algebra
$\fm_\chi = \set{a - \chi(a) \st a \in \fm}$. Considering the left ideal $U(\fg)\fm_\chi$
in $U(\fg)$, we can see that $Q_\chi \simeq U(\fg) \big/ U(\fg)\fm_\chi$.

\begin{prop}
The Lie algebra $\fm_\chi$ acts locally nilpotently on $Q_\chi$.
\end{prop}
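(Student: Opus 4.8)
The plan is to use the PBW basis of $U(\fg)$ adapted to the good grading together with the fact that $\fm$ sits in strictly negative degree. Since $\fg = \bigoplus_{j\in\ZZ}\fg_j$, choose a homogeneous PBW basis of $U(\fg)$ and note that $Q_\chi \simeq U(\fg)/U(\fg)\fm_\chi$ has a basis given by monomials in a complement of $\fm$ (for instance monomials in $\fn \coloneqq \fl' \oplus \bigoplus_{j \ge 0}\fg_j \oplus \bigoplus_{j \le -2}$-complement, more cleanly: pick a graded vector space complement $\fn$ to $\fm$ in $\fg$, so $\fg = \fm \oplus \fn$ and $\fn \subseteq \bigoplus_{j \ge -1}\fg_j$). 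Then $Q_\chi$ has a PBW-type spanning set of images of ordered monomials in $U(\fn)$, and in particular every element of $Q_\chi$ lies in a finite-dimensional subspace spanned by the images of monomials of bounded total degree with respect to the grading.

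The key step is to control the action of a single homogeneous element $a \in \fm \cap \fg_{-d}$ (with $d \ge 1$) on such a monomial. I would argue that left multiplication by $a$ on the image of a monomial $\bar u$ in $U(\fn)$, after rewriting back into PBW-normal form using the relations $a m = m a + [a,m]$ and the ideal relation $m \equiv \chi(m)$ for $m \in \fm$, produces a sum of monomials whose grading degree is strictly smaller than that of $\bar u$ — this is the crucial point and the main obstacle. Concretely: commuting $a$ past a monomial in $\fn$ either kills it (if $a$ ends up to the left, against $\CC_\chi$, contributing only via $\chi(a)$ which is $0$ unless $a \in \fg_{-2}$, in which case it contributes a scalar and lowers degree) or replaces a bracket $[a, x]$ with $x \in \fg_j$, $j \ge -1$, by an element of $\fg_{j-d}$; since $d \ge 1$ the total degree drops. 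One must be careful that reordering bracketed terms back into normal form only involves further brackets, which only decrease degree further, so the degree filtration is genuinely respected.

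Granting this, the argument concludes by a Noetherian/finite-dimensionality sweep: any $v \in Q_\chi$ lies in the span $F_N$ of images of monomials of grading degree $\le N$ for some $N$; the action of any $a \in \fm_\chi$ maps $F_N$ into $F_{N-1}$ (homogeneous pieces drop degree, and $a - \chi(a)$ has the same leading behaviour as $a$); hence $F_N$ is stable under $U(\fm_\chi)$ and $\fm_\chi^{N+1}$ annihilates $F_N$, so in particular $\fm_\chi^{N+1} \cdot v = 0$. Since $\fm_\chi$ generates the augmentation ideal of $U(\fm)$ and is itself a Lie subalgebra (closed under bracket, as $\fm$ is and $\chi$ restricts to a character by \cref{PremChar}), this is exactly the statement that $\fm_\chi$ acts locally nilpotently on $Q_\chi$. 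The one genuinely delicate verification is the degree-drop claim for the normal-ordering rewriting; everything else is bookkeeping with the PBW theorem and \cref{GGprop1}.
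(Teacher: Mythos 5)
Your strategy---pick a graded complement $\fn$ to $\fm$, represent elements of $Q_\chi$ by PBW monomials in $U(\fn)$, and show that $\fm_\chi$ drops some degree---is in the right spirit, but the degree you chose is the wrong one, and the step you flag as ``the one genuinely delicate verification'' is exactly where it fails. Rewriting $x_1 \cdots [a,x_i] \cdots x_k$ into PBW normal form does \emph{not} only produce further brackets that decrease $\Gamma$-degree: the component $m_i \in \fm$ of $[a,x_i]$ must be pushed to the right and evaluated against $\CC_\chi$, and when $\chi(m_i)\neq 0$ this yields $\chi(m_i)\,x_1 \cdots \widehat{x_i}\cdots x_k$, of $\Gamma$-degree $D - \deg(x_i)$. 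Since $x_i \in \fn$ may have $\deg(x_i) = -1$ or $0$, this term has $\Gamma$-degree $\geq D$, not $< D$. Already in the paper's $\fsl_2$ example ($e = E_{12}$, $\fm = \CC f$, $\chi(f)=1$, $\fn = \CC h \oplus \CC e$) one computes $(f-1)\overline{h} = 2\,\overline{1}$, where both $\overline{h}$ and $\overline{1}$ have $\Gamma$-degree $0$: no drop. There is a second problem in the concluding sweep: $Q_\chi$ is not $\Gamma$-graded (the left ideal $U(\fg)\fm_\chi$ is not homogeneous), the $\Gamma$-degree of monomials in $\fn$ is unbounded below once $\fn\cap\fg_{-1}\neq 0$, and the fixed-degree pieces are not finite-dimensional (in $\fsl_2$, $\{\overline{h^a}\}_{a\geq 0}$ all sit in degree $0$), so ``degree drops'' would not by itself give nilpotence.

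Both defects disappear if you replace the $\Gamma$-degree by the Kazhdan degree (twice PBW length plus $\Gamma$-degree): every term you generate strictly drops it (the $\fn$-component of $[a,x_i]$ drops it by $d$, the $\chi(m_i)$-term by $\deg(x_i)+2 = d$, the reordering brackets by at least $d+2$), and the Kazhdan filtration descends to a positive filtration on $Q_\chi$ with $F_0 Q_\chi = \CC$ (\cref{rem:KazhdanProps}); with that substitution your outline becomes a correct proof. The paper's own argument sidesteps the normal-form rewriting entirely. It uses only that $\fm$ is ad-nilpotent (\cref{PremNil} of \cref{prop:PremProp}): for a PBW monomial $u = u_1\cdots u_n$ one has $(\ad a)^{nN}(u) = 0$ once $(\ad a)^N$ kills each factor, and then the operator identity
\begin{equation*}
(a-\chi(a))^M u = \sum_{k=0}^M \binom{M}{k}\,\bigl((\ad a)^k u\bigr)\,(a-\chi(a))^{M-k}
\end{equation*}
shows $(a-\chi(a))^M\overline{u} = 0$ for $M > nN$, since every surviving summand has a factor $(a-\chi(a))^{M-k}$ with $M-k\geq 1$ landing in $U(\fg)\fm_\chi$. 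No choice of complement $\fn$, no normal ordering, and no graded bookkeeping is required.
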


\begin{proof}
Since $Q_\chi$ is a quotient of $U(\fg)$, it suffices to see how $\fm_\chi$ acts on
PBW monomials in $U(\fg)$. Let $\overline{u} \in Q_\chi$ be such that
$u = u_1 u_2 \dotsm u_n$ is a PBW monomial. Since $\fm$ consists of ad-nilpotent
elements, for each $a \in \fm$ we can consider $N \in \NN$ large enough so that
$(\ad a)^N (u_j) = 0$ for all $j$. Then $(\ad a)^{nN} (u) = 0$, and hence large
enough powers of $a$ will commute with $u$; it follows that $a - \chi(a)$ acts
locally nilpotently.
\end{proof}

\begin{defn} \label{def:Walg}
Given a nilpotent $e \in \fg$, good grading $\Gamma$ and Langrangian subspace
$\fl \subseteq \fg_{-1}$, the \emph{(finite) \Walgebra} $U(\fg,e)$ is the space of
Whittaker vectors in $Q_\chi$, that is:
\begin{equation} \label{eq:WalgDef}
U(\fg,e) \coloneqq \pp[\big]{Q_\chi}^{\fm_\chi} =
  \set{\overline{u} \in U(\fg) \big/ U(\fg)\fm_\chi \st (a - \chi(a)) u \in U(\fg)\fm_\chi
  \;\;\forall \;a \in \fm_\chi }.
\end{equation}
\end{defn}

\begin{rem}
In fact, the identity $(a-\chi(a)) u = [a,u] + u (a-\chi(a))$ allows us to further
identify 
\begin{equation} \label{eq:WalgAdDef}
U(\fg,e) = \pp[\big]{Q_\chi}^{\ad \fm} =
  \set{\overline{u} \in U(\fg) \big/ U(\fg)\fm_\chi \st [a,u] \in U(\fg)\fm_\chi \;\;\forall \;a \in
  \fm_\chi }.
\end{equation}
We can observe that $U(\fg,e)$ is not just a vector space, but also inherits an
algebra structure from $U(\fg)$, as
\begin{equation*}
[a, u_1 u_2] = [a, u_1] u_2 + u_1 [a, u_2] \in U(\fg)\fm_\chi u_2 + U(\fg)\fm_\chi =
  U(\fg)\fm_\chi,
\end{equation*}
where the final equality follows from \cref{eq:WalgDef}.
This is due solely from the fact that $\chi$ is a character of $\fm$; the quotient
of a non-commutative algebra by a left ideal does not not inherit an algebra
structure in general.
\end{rem}

There is another way to see the algebra structure on $U(\fg,e)$ by studying the
structure of the endomorphisms of the Gel'fand--Graev module $Q_\chi$. It is a
cyclic module, so to specify an endomorphism it suffices to define the image of
$\overline{1} \in Q_\chi$: as a vector space,
\begin{equation*}
\End_{U(\fg)}(Q_\chi) = \Hom_{U(\fg)}(U(\fg) \big/ U(\fg)\fm_\chi, Q_\chi) =
  \pp[\big]{Q_\chi}^{\fm_\chi}.
\end{equation*}
This isomorphism as vector spaces is furthermore an isomorphism of algebras
\begin{equation} \label{eq:WalgEndDef}
U(\fg,e) \simeq \End_{U(\fg)}(Q_\chi)\op.
\end{equation}
This can be taken as an alternate definition of the \Walgebra.

\begin{rem}
It should be noted that these definitions don't just depend on the nilpotent
$e$, but depend also on the choice of good grading $\Gamma$ and Lagrangian subspace
$\fl$. Fortunately, the resulting algebras are isomorphic for different choices
of $\Gamma$ and $\fl$. We will therefore omit them from the notation. The
independence of Lagrangian subspace $\fl$ will be shown in
\cref{sec:WalgGeometry}, while the independence of $\Gamma$ is shown by Brundan and
Goodwin in \cite{BG:GGPoly}.
\end{rem}

\begin{eg}
If we restrict to the case where $e=0$, the only good grading for $e$ is the
trivial grading $\fg = \fg_0$. We therefore have that $\chi = 0$, $\fm_\chi =
\set{0}$, $Q_\chi = U(\fg)$, and so $U(\fg,0) = U(\fg)$.
\end{eg}

\subsubsection{W-algebras for even good gradings}

The above definition can be significantly simplified in the case that the good
grading $\Gamma$ is even, as the Lagrangian subspace $\fl$ will vanish. Consider the
parabolic subalgebra $\fp \coloneqq \bigoplus_{j \ge 0} \fg_j$ and its opposite
nilradical $\fm = \bigoplus_{j \le -2} \fg_j$; we have a vector space
decomposition $\fg \simeq \fp \oplus \fm$. The PBW theorem then implies that we have an
algebra decomposition $U(\fg) = U(\fp) \oplus U(\fg)\fm_\chi$. This provides us with
a $\fm_\chi$\module\ isormorphism $U(\fg) \big/ U(\fg)\fm_\chi \simeq U(\fp)$, where $\fm_\chi$
acts by the $\chi$-twisted adjoint action. With these observations,
\cref{eq:WalgAdDef} reduces to
\begin{equation} \label{eq:WalgEvenDef}
U(\fg,e) = U(\fp)^{\ad \fm} \coloneqq \set{u \in U(\fp) \st [a,u] \in U(\fg)\fm_\chi
  \;\;\forall \; a \in \fm}.
\end{equation}
As a result, $U(\fg,e)$ can be identified as a subalgebra of $U(\fp)$,
rather than a subquotient of $U(\fg)$: this greatly simplifies calculations in
many examples.

\begin{eg}
A classical result of Kostant \cite{Kos:WhitRep} states that for a regular
nilpotent element $e_{\text{reg}} \in \fg$, $U(\fg,e_{\text{reg}}) \simeq Z(\fg)$. We
can demonstrate this here in an example.

Let $\fg = \fsl_2$,
$e = \begin{psmallmatrix} 0&1\\0&0 \end{psmallmatrix}$,
$h = \begin{psmallmatrix} 1&0\\0&-1 \end{psmallmatrix}$, and
$f = \begin{psmallmatrix} 0&0\\1&0 \end{psmallmatrix}$;
this implies that $\fp = \gen{e,h}$ and $\fm_\chi = \gen{f-1}$. It can be
checked that $\frac{1}{2}h^2 - h + 2e$ lies in $U(\fp)^{\ad \fm}$, and in
fact it freely generates it. We therefore have that
$U(\fg,e) = \Cpoly[\big]{\frac{1}{2}h^2 - h + 2e}$, which is isomorphic to
$Z(\fg) = \Cpoly[\big]{\frac{1}{2}h^2 + ef + fe}$ under the projection
$U(\fg) = U(\fp)\otimes U(\fm) \twoheadrightarrow U(\fp)$.
\end{eg}

\subsection{The Kazhdan filtration}

As it stands the \Walgebra\ is an algebraic construction, however it is closely
related to the geometry of Slodowy slices. In order to discuss this, we need to
introduce a filtration on $U(\fg,e)$.

Recall that $U(\fg)$ has a filtration known as the PBW filtration, where
$U_j(\fg)$ is spanned by the collection of all monomials $x_1 x_2 \dotsm x_i$
for $i \le j$, $x_k \in \fg$.
We would like to modify this filtration to take the good grading $\Gamma$ into
account; consider a semisimple element $h_\Gamma$ from \cref{lem:ZgradSS}.
We can extend the grading on $\fg$ to a grading on $U(\fg)$ by defining
$U(\fg)_i \coloneqq \set{u \in U(\fg) \st \ad h_\Gamma(u) = iu}$, and combine the two by
defining
\begin{equation*}
U_j(\fg)_i \coloneqq U_j(\fg) \cap U(\fg)_i = \set{u \in U_j(\fg) \st \ad h_\Gamma(u) = iu}.
\end{equation*}

\begin{defn}
The \emph{Kazhdan filtration} on $U(\fg)$ associated to the grading $\Gamma$ of $\fg$
is defined by
\begin{equation*}
F_n U(\fg) = \sum_{i + 2j \le n} U_j(\fg)_i.
\end{equation*}
\end{defn}

\begin{rem} \label{rem:KazhdanProps}
The Kazhdan filtration enjoys a number of useful properties:
\begin{enumerate}
\item 
  If we consider $x \in \fg_i$ and $y \in \fg_j$, it follows that $x \in F_{i+2}
  U(\fg)$ and $y \in F_{j+2} U(\fg)$, and therefore $[x,y] \in F_{i+j+2} U(\fg)$.
  Hence the associate graded $\gr U(\fg)$ is commutative, and is thus isomorphic
  to $\Sym(\fg) = \Cpoly{\fg^*}$.

\item
  The left ideal $U(\fg)\fm_\chi$ contains all the elements of $U(\fg)$ of strictly
  negative degree; therefore, the Kazhdan filtration descends to a positive
  filtration on the Gel'fand--Graev module $Q_\chi = U(\fg) \big/ U(\fg)\fm_\chi$.

\item
  Passing to the associated graded algebra, the left ideal $U(\fg)\fm_\chi$ passes
  to the two-sided ideal $\gr U(\fg)\fm_\chi \subseteq \Cpoly{\fg^*}$ consisting of all
  functions which vanish on $\chi + \fm^{*,\bot}$, where
  $\fm^{*,\bot} = \set{\xi \in \fg^* \st \xi(a) = 0 \;\;\forall \; a \in \fm}$.

\item
  The associated graded of the Gel'fand--Graev module is
  $\gr Q_\chi \simeq \gr U(\fg) \big/ \gr U(\fg)\fm_\chi$, which is a positively-graded
  commutative algebra in turn isomorphic to $\Cpoly[\big]{\chi + \fm^{*,\bot}}$.
  Furthermore, the natural map $\gr Q_\chi \to \Cpoly[\big]{\chi + \fm^{*,\bot}}$ is
  an algebra isomorphism.

\item
  The Kazhdan filtration descends to a positive filtration of the \Walgebra\ 
  $U(\fg,e) = \pp[\big]{Q_\chi}^{\fm_\chi}$, and furthermore $F_0 U(\fg,e) = \CC$.
\end{enumerate}
\end{rem}

\subsection{A geometric interpretation of W-algebras}
\label{sec:WalgGeometry}

These properties suggest a connection between the filtered algebras and
Gel'fand--Graev module associated to the \Walgebra\ and the functions on a number
of subvarieties of $\fg^*$. The question then remains: what is the associated
graded algebra of the \Walgebra\ $U(\fg,e)$? We shall show that it corresponds to
the ring of functions on the Slodowy slice $\Cpoly[\big]{\slodowy_\chi}$ (recall
\cref{sec:SlodowySlices}), however it will take a little effort to do so. The
relationship between the associated graded algebras can be summarised in the
following \namecref{thm:WalgQuant}. Recall the definition of a $\Gamma$\graded\ 
\sltriple\ (\cref{def:Gammasl2triple}).

\begin{thm}[Gan--Ginzburg]{\normalfont \cite{GG:QuantSlod}}
\label{thm:WalgQuant}
Consider a nilpotent $e \in \fg$ with an associated good grading~$\Gamma$ and Premet
subalgebra~$\fm$. Let $\set{e,h,f}$ be a $\Gamma$\graded\ \sltriple, and
$\slodowy_e \subseteq \fg$ and $\slodowy_\chi \subseteq \fg^*$ the corresponding Slodowy slices.
Then the following diagram of commutative algebras commutes:
%% Diagram demonstrating that W-algebras are a quantisation of Slodowy slices

%\tikzset{commutative diagrams/arrow style=texgyrepagella}.

\begin{equation*}
\begin{tikzcd}[]
\gr U(\fg)                         \rar[-, double equal sign distance]                \dar
  & \Cpoly{\fg^*}                  \rar[leftrightarrow]{\sim} \dar
    & \Cpoly{\fg}                                             \dar \\
\gr Q_\chi                            \rar[-, double equal sign distance]
  & \Cpoly[\big]{\chi + \fm^{*,\bot}} \rar[leftrightarrow]{\sim} \dar
    & \Cpoly[\big]{e + \fm^\bot}                              \dar \\
\gr U(\fg,e)                       \rar{\sim}                 \uar
  & \Cpoly[\big]{\slodowy_\chi}       \rar[leftrightarrow]{\sim}
    & \Cpoly[\big]{\slodowy_e}                                     \\
\end{tikzcd}
\end{equation*}

%\vspace{-5ex}
\end{thm}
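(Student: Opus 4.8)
The plan is to establish the three horizontal isomorphisms and check that the three squares commute. The top row is the content of \cref{rem:KazhdanProps}(1): the Kazhdan filtration makes $\gr U(\fg)$ commutative and the PBW theorem identifies it with $\Sym(\fg) = \Cpoly{\fg^*}$, while $\kappa$ supplies the identification $\Cpoly{\fg^*} \simeq \Cpoly{\fg}$. The middle row is \cref{rem:KazhdanProps}(3)--(4): passing to the associated graded, the left ideal $U(\fg)\fm_\chi$ becomes the ideal of functions vanishing on $\chi + \fm^{*,\bot}$, so $\gr Q_\chi \simeq \Cpoly{\chi + \fm^{*,\bot}}$, and under $\kappa$ this is $\Cpoly{e + \fm^\bot}$. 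The vertical maps in the top two rows are just the quotient maps (and their graded versions), so the top square commutes by construction. First I would spell out these identifications carefully and note that $\gr U(\fg)\fm_\chi$ is genuinely a prime/radical ideal (so that $\gr Q_\chi$ really is the coordinate ring of the affine subvariety, not merely surjects onto it); this uses that $\fm_\chi$ is spanned by affine-linear functions cutting out an affine subspace, so the associated graded ideal is generated by their leading terms, which are linear.

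The heart of the matter is the bottom row: showing $\gr U(\fg,e) \xrightarrow{\sim} \Cpoly{\slodowy_\chi}$. The inclusion $U(\fg,e) = (Q_\chi)^{\fm_\chi} \hookrightarrow Q_\chi$ induces, after taking associated graded with respect to the (positive, by \cref{rem:KazhdanProps}(5)) Kazhdan filtration, a map $\gr U(\fg,e) \to \gr Q_\chi \simeq \Cpoly{e + \fm^\bot}$; I would first argue this map is injective (the filtration on $U(\fg,e)$ is the one induced from $Q_\chi$, so passing to $\gr$ of a subspace is injective into $\gr$ of the ambient space). The real work is identifying the image with the subring $\Cpoly{\slodowy_e} \subseteq \Cpoly{e+\fm^\bot}$ corresponding to restriction along the inclusion $\slodowy_e \hookrightarrow e + \fm^\bot$. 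One direction — that $\gr U(\fg,e)$ lands inside functions on $\slodowy_e$ — should follow because $\ad f$-invariance (at the graded level) forces functions to be determined by their restriction to $e + \liez{f}$; here I would use the $\Gamma$-graded \sltriple\ $\set{e,h,f}$ to set up the contracting $\CC^*$-action (as in \cref{prop:SContract}) and a good-grading dimension count (\cref{GGprop6}, \cref{PremDim}) to see $\dim \slodowy_e = \dim \fm^\bot - \dim \fm = \dim(e+\fm^\bot) - \dim\orbit_e/2$... and match dimensions. The surjectivity — that every function on $\slodowy_e$ lifts to an element of $\gr U(\fg,e)$, equivalently that $U(\fg,e)$ is ``as big as it should be'' — is the classical Gan–Ginzburg transversality argument: $\slodowy_e$ meets the $M$-orbits (where $M = \exp\fm$) through $e + \fm^\bot$ transversally and each orbit exactly once, so restriction of functions gives an isomorphism $\Cpoly{e+\fm^\bot}^{M} \xrightarrow{\sim} \Cpoly{\slodowy_e}$, and $\Cpoly{e+\fm^\bot}^{M}$ is exactly what $\gr U(\fg,e)$ computes since the adjoint $M$-action integrates the $\ad\fm$-action used to define $U(\fg,e)$.

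I expect the \textbf{main obstacle} to be this last point: proving that $\gr$ of the invariants equals the invariants of $\gr$, i.e.\ that $\gr\bigl((Q_\chi)^{\fm_\chi}\bigr) = (\gr Q_\chi)^{\ad\fm}$. In general taking invariants does not commute with taking associated graded — there is always an inclusion $\gr\bigl((Q_\chi)^{\fm_\chi}\bigr) \subseteq (\gr Q_\chi)^{\ad\fm}$ from filtered-to-graded, but the reverse requires a lifting/vanishing argument. The standard resolution is a cohomology-vanishing statement: one shows that $H^{>0}(\fm_\chi, Q_\chi) = 0$ (or the analogous statement for the graded object), which forces the invariants functor to be exact on the relevant pieces and lets one lift graded invariants to filtered ones degree by degree. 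I would structure the proof around proving this vanishing — via the contracting $\CC^*$-action, which makes everything reduce to the associated graded and kills the obstruction — rather than trying to lift elements by hand. Once the bottom-row isomorphism is in place, commutativity of the two lower squares is a formality: the vertical maps are the inclusion $U(\fg,e)\hookrightarrow Q_\chi$ and its graded counterpart on one side, and restriction of functions $\Cpoly{e+\fm^\bot}\to\Cpoly{\slodowy_e}$ composed with the obvious inclusion on the other, and these agree by the very definition of the identifications.
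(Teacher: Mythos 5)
Your plan follows the same route as the paper: the crux in both is (i) the geometric isomorphism $M \times \slodowy_e \xrightarrow{\sim} e+\fm^\bot$ (proved via the contracting $\CC^*$-action), which identifies $(\gr Q_\chi)^{\ad\fm}$ with $\Cpoly{\slodowy_\chi}$ and kills higher graded Lie-algebra cohomology because $\Cpoly{M}$ is a polynomial ring, and (ii) the commutation of $\gr$ with invariants, which the paper establishes via a spectral sequence on the Kazhdan-filtered Chevalley--Eilenberg complex degenerating at $E_1$ --- precisely the degree-by-degree lifting you gesture at. Your aside about ``$\ad f$-invariance'' forcing functions onto $\slodowy_e$ is not quite the mechanism (the restriction is to $\ad\fm$-invariants, with the transversality isomorphism doing the rest), but the surrounding paragraph states the correct argument, so this is a cosmetic slip rather than a gap.
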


\noindent
We first summarise the arrows we already know.
\begin{itemize}
\item
  The maps in the first column arise from the fact that
  $Q_\chi \coloneqq U(\fg) \big/ U(\fg)\fm_\chi$ is a quotient module and
  $U(\fg,e) \coloneqq \pp[\big]{Q_\chi}^{\fm_\chi}$ is a submodule.  
\item
  The equalities between the first and second columns follow from
  \cref{rem:KazhdanProps}. 
\item
  The isomorphisms between the second an third columns are induced by the
  isomorphism $\kappa \colon \fg \isoto \fg^*$ coming from the Killing form.
\item
  The maps from the first to the second row in the second and third columns are
  both restriction of functions.
\end{itemize}
There are three arrows left to define: the bottom maps in the second and third
columns, and the first map in the third row. However, it suffices to define the
map $\Cpoly[\big]{e + \fm^\bot} \to \Cpoly[\big]{\slodowy_e}$, and the remaining
two follow:
\begin{itemize}
\item
  The map $\Cpoly[\big]{\chi + \fm^{*,\bot}} \to \Cpoly[\big]{\slodowy_\chi}$ shall be
  defined by passing through the isomorphism $\kappa$.
\item
   The final map $\gr U(\fg,e) \to \Cpoly[\big]{\slodowy_\chi}$ shall be defined as
   the composition of the arrows
   $\gr U(\fg,e) \to \gr Q_\chi \to \Cpoly[\big]{\chi + \fm^{*,\bot}} \to
     \Cpoly[\big]{\slodowy_\chi}$.
\end{itemize}
Therefore, after defining the map
$\Cpoly[\big]{e + \fm^\bot} \to \Cpoly[\big]{\slodowy_e}$,
to complete the proof of \cref{thm:WalgQuant} it only remains to show that the
map $\gr U(\fg,e) \to \Cpoly[\big]{\slodowy_\chi}$ is an isomorphism. Our proof
shall follow the exposition of Wang \cite{Wan:NilOrbWAlg}.

\begin{rem}
We will temporarily relax some of our definitions slightly to allow a bit more
flexibility than strictly necessary to explain \cref{thm:WalgQuant}. This does
not complicate the proof, and it allows us to prove the independence of choice
of Lagrangian subspace $\fl \subseteq \fg_{-1}$ in the definition of $U(\fg,e)$ as a
side benefit.

Let $\fl$ be an \emph{isotropic subspace} of $\fg_{-1}$ (i.e.~$\omega(\fl,\fl) = 0$),
rather than the stronger condition of being a Lagrangian subspace; we continue
to define the Premet subalgebra $\fm \coloneqq \fl \oplus \bigoplus_{j\le-2} \fg_j$. We 
introduce the new definitions:
\begin{align}
\label{eq:LagrangianPrime}
\fl' & \coloneqq \fl^{\bot_\omega} = \set{x \in \fg_{-1} \st \omega(x,\fl) = 0}, \\
\label{eq:PremetPrime}
\fm' & \coloneqq \fl' \oplus \bigoplus_{j\le-2} \fg_j.
\end{align}
Note that it we choose $\fl$ to be a Lagrangian (and hence isotropic) subspace,
then it follows from the definition that $\fl = \fl'$ and $\fm = \fm'$, so
everything we say for these spaces applies equally well with our original
definitions.

We continue to define $Q_\chi \coloneqq U(\fg) \big/ U(\fg)\fm_\chi$, however we may now denote it
$Q_\fl$ to emphasise the dependence on the choice of $\fl$. Finally, we define
\begin{equation}
U(\fg,e)_\fl \coloneqq (Q_\fl)^{\ad \fm'}.
\label{eq:WalgPrime}
\end{equation}
This retains an algebra structure by the same argument as for $U(\fg,e)$, and
the Kazhdan filtration is also defined for $Q_\fl$ and $U(\fg,e)_\fl$. Further,
all the maps of \cref{thm:WalgQuant} exist and satisfy the same properties when
substituting our new definitions $\fm$, $Q_\fl$ and $U(\fg,e)_\fl$,
\emph{mutatis mutandis}.
\end{rem}

\begin{lem} \label{lem:SlodowyAdjTangentIso}
For any $\Gamma$\graded\ \sltriple\ $\set{e,h,f}$, it follows that
$\fm^\bot = [\fm',e]\oplus \liez{f}$.
\end{lem}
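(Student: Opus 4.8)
The plan is to verify the claimed decomposition $\fm^\bot = [\fm',e] \oplus \liez{f}$ by a combination of $\fsl_2$-representation theory and dimension counting, working entirely inside the $\Gamma$-graded picture. First I would record the degree information: since $e \in \fg_2$ and the grading $\Gamma$ is compatible with the $\fsl_2$-triple $\set{e,h,f}$ in the weak sense of \cref{def:Gammasl2triple}, the operator $\ad e$ raises $\Gamma$-degree by $2$. The subspace $\fm' = \fl' \oplus \bigoplus_{j \le -2} \fg_j$ sits in degrees $\le -1$, so $[\fm',e]$ sits in degrees $\le 1$; meanwhile $\liez{f} \subseteq \bigoplus_{j \ge 0} \fg_j$ by the analogue of \cref{GGprop4} for the nilpotent $f$ (using that $\Gamma$ is also a good grading adapted to $f$ via the opposite grading). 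This already shows the two summands on the right-hand side are not obviously disjoint only in degrees $0$ and $1$, so I would isolate those degrees for separate attention.

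The second step is to establish that the sum $[\fm',e] + \liez{f}$ is direct. For this I would show $[\fm',e] \cap \liez{f} = 0$ directly: if $[x,e] \in \liez{f}$ for some $x \in \fm'$, then $\ad f \ad e \, x = 0$; decomposing $\fg$ into irreducible $\fsl_2$-submodules and writing $x$ in terms of weight vectors, the vanishing of $\ad f \ad e$ on a weight vector $v$ of $\ad h$-weight $w$ forces either $v$ to be a highest weight vector or $v$ to lie in the trivial (weight-zero, one-dimensional) component — but the former is excluded because $x$ has $\Gamma$-degree $\le -1$ while highest weight vectors of $\ad h$ in $\fg_{\le -2}$ would have to map nontrivially under $\ad e$ by \cref{GGprop2} applied to $\fm'$, and similarly for the degree $-1$ piece one uses that $\fl'$ pairs appropriately under $\omega$. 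The cleanest route is probably to observe $\ad e \colon \fm' \to \fg$ is injective (from \cref{GGprop2}, since $\fm'$ lies in degrees $\le -1$) and that $\ad f \colon [\fm',e] \to \fg$ is injective on the image, again by $\fsl_2$-theory since $[\fm',e]$ avoids lowest weight spaces.

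The third step is the dimension count. Using injectivity of $\ad e$ on $\fm'$, we get $\dim [\fm',e] = \dim \fm'$. Now $\dim \fm' = \dim \fl' + \sum_{j \le -2} \dim \fg_j = (\dim \fg_{-1} - \dim \fl) + \sum_{j \le -2}\dim \fg_j$, and by \cref{GGprop5} one has $\dim \fg_j = \dim \fg_{-j}$, so $\sum_{j \le -2} \dim \fg_j = \frac{1}{2}\sum_{|j| \ge 2} \dim \fg_j = \frac{1}{2}(\dim \fg - \dim \fg_0 - \dim \fg_{-1} - \dim \fg_1)$. On the other side, $\dim \fm^\bot = \dim \fg - \dim \fm = \dim \fg - \dim \fl - \sum_{j \le -2}\dim \fg_j$, and $\dim \liez{f} = \dim \liez{e} = \dim \fg_0 + \dim \fg_1$ by \cref{GGprop6} (applied to $f$ with the opposite grading, or directly since $\dim\liez f=\dim\liez e$). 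Assembling these identities, I would check that $\dim[\fm',e] + \dim\liez f = \dim\fm^\bot$; combined with directness of the sum and the inclusion $[\fm',e]\oplus\liez f \subseteq \fm^\bot$ (the inclusion $\liez f \subseteq \fm^\bot$ follows from \cref{GGprop5} since $\liez f$ has nonnegative degree and $\fm$ has degree $\le -1$, while $[\fm',e]\subseteq\fm^\bot$ requires checking $\killing{[\fm',e]}{\fm}=0$, which reduces via associativity of the Killing form to $\killing{\fm'}{[e,\fm]}$ and the Lagrangian/isotropic condition $\omega(\fl,\fl)=0$ together with degree reasons), this forces equality.

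The main obstacle I anticipate is the verification of the inclusion $[\fm',e] \subseteq \fm^\bot$ in the degree-$0$ and degree-$1$ slices, where the subtle interplay between the isotropic subspace $\fl$, its symplectic complement $\fl'$, and the pairing $\omega$ on $\fg_{-1}$ genuinely matters — this is precisely the place where the relaxation from Lagrangian to isotropic subspaces is doing work, and where one must use $\fl' = \fl^{\bot_\omega}$ rather than $\fl$ itself. Everything else is bookkeeping with $\fsl_2$-modules and the properties \crefrange{GGprop1}{GGprop6}; I would handle the degree slices $\le -1$ and $\ge 2$ quickly and concentrate the argument on degrees $0$ and $1$.
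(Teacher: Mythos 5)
Your overall plan --- establish $\liez{f} \subseteq \fm^\bot$ and $[\fm',e] \subseteq \fm^\bot$, show the sum is direct via $\fsl_2$-representation theory together with \cref{GGprop2}, and conclude by a dimension count --- is the same as the paper's, and your dimension count, though routed through $\dim\fg - \dim\fm$ rather than the paper's direct formula $\dim\fm^\bot = \dim\fm' + \dim\fg_0 + \dim\fg_{-1}$, is equivalent.

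However, there is a sign error that would break one step as written. The analogue of \cref{GGprop4} for $f$ (using the reversed grading, in which $f$ lies in degree $+2$) gives $\liez{f} \subseteq \bigoplus_{j\le 0}\fg_j$, \emph{not} $\bigoplus_{j\ge0}\fg_j$: lowest-weight vectors for the $\fsl_2$-action lie in nonpositive $\ad h$-weight. Your deduction that $\liez{f} \subseteq \fm^\bot$ follows from \cref{GGprop5} plus nonnegativity of the degrees in $\liez{f}$ does not actually go through, since $\fg_1$ pairs nontrivially with $\fg_{-1} \supseteq \fl$ under the Killing form; what is needed is precisely that $\liez{f}$ sits in nonpositive degree, making it Killing-orthogonal to $\fm \subseteq \bigoplus_{j\le -1}\fg_j$ by \cref{GGprop5}. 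The same slip underlies your claim that the two summands can only overlap in degrees $0$ and $1$ (the degree overlap is in fact all of $\fg_{\le 0}$, although your $\fsl_2$-theoretic directness argument does not rely on this). A smaller point: the vanishing you need for $[\fm',e]\subseteq\fm^\bot$ is $\omega(\fl,\fl') = 0$, which is simply the definition $\fl' = \fl^{\bot_\omega}$, rather than the isotropy condition $\omega(\fl,\fl)=0$ per se. Once these are corrected, your argument matches the paper's step for step.
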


\begin{proof}
We note first that $\liez{f} \subseteq \fm^\bot$, as
$\fm^\bot \supseteq \bigoplus_{j\le0} \fg_j \supseteq \liez{f}$, where the first inclusion follows
from \cref{GGprop5} and the second is a version of version of \cref{GGprop4} for
$f$. We also have that $[\fm',e] \subseteq \fm^\bot$, as
$\killing[\big]{\fm}{[\fm',e]} = \killing[\big]{[\fm,\fm']}{e} = 0$. In
addition, $[\fm',e] \cap \liez{f} = \set{0}$, which follows from $\fsl_2$
representation theory (see \cref{fig:sl2rep}). Finally, note that 
$\dim \fm^\bot \cap \fg_1 = \dim \fl'$, and so
$\dim \fm^\bot = \dim \fm' + \dim \fg_0 + \dim \fg_{-1}$; then
$\dim \fm' = \dim [\fm',e]$ by \cref{GGprop2}, and
$\dim \fg_0 + \dim \fg_{-1} = \dim \liez{f}$ by the version of \cref{GGprop6}
for $f$.
\end{proof}

\begin{rem}
Note that this \namecref{lem:SlodowyAdjTangentIso} shows that
$\slodowy_e = e + \liez{f}$ is a subvariety of $e + \fm^\bot$, and hence the
remaining map $\Cpoly[\big]{e + \fm^\bot} \to \Cpoly{\slodowy_e}$ in
\cref{thm:WalgQuant} can be defined as restriction of functions.
\Cref*{thm:WalgQuant} therefore follows from the following
\namecref{thm:WalgQuantIso}.
\end{rem}

\begin{thm} \label{thm:WalgQuantIso}
The map $\nu \colon \gr U(\fg,e)_\fl \to \Cpoly[\big]{\slodowy_\chi}$, defined as the
composition
$\gr U(\fg,e)_\fl \to \gr Q_\fl \to \Cpoly[\big]{e + \fm^{*,\bot}} \to
  \Cpoly[\big]{\slodowy_\chi}$, is an isomorphism.
\end{thm}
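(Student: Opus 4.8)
The plan is to exhibit $\nu$ as a composite
\[
\gr U(\fg,e)_\fl \hookrightarrow (\gr Q_\fl)^{\ad\fm'} \isoto \Cpoly[\big]{\slodowy_\chi},
\]
to observe that $\nu$ is a homomorphism of nonnegatively Kazhdan-graded algebras which is injective once the two pieces are understood, and to reduce surjectivity to the statement that forming $\ad\fm'$-invariants commutes with passing to the Kazhdan associated graded. For the first arrow: since $U(\fg,e)_\fl = (Q_\fl)^{\ad\fm'}$ is a Kazhdan-filtered submodule of $Q_\fl$, taking associated graded yields an injection into $\gr Q_\fl$; moreover every $a\in\fm'\subseteq\bigoplus_{j\le-1}\fg_j$ lies in Kazhdan degree at most $1$, and its induced ($\chi$-twisted Poisson) operator on $\gr Q_\fl$ strictly lowers Kazhdan degree, so the symbol of any $u$ with $[a,u]=0$ in $Q_\fl$ for all $a\in\fm'$ is annihilated by that operator and the image lands in $(\gr Q_\fl)^{\ad\fm'}$. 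By \cref{rem:KazhdanProps}, applied to $Q_\fl$ in place of $Q_\chi$, the natural map identifies $\gr Q_\fl$ with $\Cpoly{\chi+\fm^{*,\bot}}$ as graded algebras; and by the remark following \cref{lem:SlodowyAdjTangentIso} the Slodowy slice $\slodowy_\chi$ is a linear subvariety of $\chi+\fm^{*,\bot}$, so restriction of functions supplies the last arrow and $\nu$ is their composite.

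The middle isomorphism is geometric. Let $M'$ be the connected unipotent subgroup of $G$ with Lie algebra $\fm'$; since $\fm$ is an ideal of $\fm'$ and $\chi$ vanishes on $[\fm',\fm]$ (its only possible $\fg_{-2}$-contribution being $[\fl,\fl^{\bot_\omega}]$, on which $\chi$ restricts to $\omega(\fl,\fl^{\bot_\omega})=0$), the $\chi$-twisted coadjoint action of $M'$ preserves $\chi+\fm^{*,\bot}$. By \cref{lem:SlodowyAdjTangentIso}, the identity $\fm^\bot=[\fm',e]\oplus\liez{f}$ says precisely that the $M'$-orbit through $e$ is transverse to $\slodowy_\chi$ at $e$ and exhausts the tangent space. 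I would upgrade this to an isomorphism of affine varieties
\[
M'\times\slodowy_\chi \isoto \chi+\fm^{*,\bot}, \qquad (g,s)\mapsto g\cdot s,
\]
by the contracting-$\CC*$ argument used for \cref{prop:SContract} and \cref{prop:SOTrans}: the cocharacter of \cref{prop:SContract} acts contractingly on $M'$ (by conjugation), on $\slodowy_\chi$, and on $\chi+\fm^{*,\bot}$; the action map is equivariant; it is \'etale at the base point $(1,e)$ by the tangent-space splitting above; and it is injective because distinct points of $\slodowy_\chi$ lie in distinct $M'$-orbits (the transverse-slice property, argued as in the proof of \cref{prop:SOTrans}). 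A $\CC*$-equivariant injective \'etale morphism between affine spaces carrying contracting actions with a common fixed point is an isomorphism. Pulling functions back then gives $\Cpoly{\chi+\fm^{*,\bot}}\simeq\Cpoly{M'}\otimes\Cpoly{\slodowy_\chi}$ with $\fm'$ acting only through the regular action on the first factor, whence $(\gr Q_\fl)^{\ad\fm'}\simeq\Cpoly{M'}^{\fm'}\otimes\Cpoly{\slodowy_\chi}=\Cpoly{\slodowy_\chi}$; the same decomposition shows restriction to $\slodowy_\chi$ is injective on the $M'$-invariants, so $\nu$ is injective.

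It remains to prove the first inclusion is onto, i.e.\ that $\gr\bigl((Q_\fl)^{\ad\fm'}\bigr) = (\gr Q_\fl)^{\ad\fm'}$. I would deduce this from acyclicity: by the product decomposition just obtained, $\gr Q_\fl\simeq\Cpoly{M'}\otimes\Cpoly{\slodowy_\chi}$ is an acyclic $\fm'$-module -- indeed $\Cpoly{M'}$ is the graded dual of $U(\fm')$, hence co-induced from the trivial representation, so $H^{>0}(\fm',\gr Q_\fl)=0$ while $H^0(\fm',\gr Q_\fl)=\Cpoly{\slodowy_\chi}$ -- and then a standard filtered-complex argument on the Chevalley--Eilenberg complex $C^\bullet(\fm',Q_\fl)$ (convergent because the Kazhdan filtration on $Q_\fl$ is exhaustive, nonnegative, and has finite-dimensional quotients) transports this to the conclusion $\gr\bigl((Q_\fl)^{\ad\fm'}\bigr)=(\gr Q_\fl)^{\ad\fm'}$, which is exactly the missing surjectivity. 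The two places I expect genuine work are the variety isomorphism $M'\times\slodowy_\chi\isoto\chi+\fm^{*,\bot}$ -- in particular its injectivity, where one must use that the $M'$-orbits meet $\slodowy_\chi$ in single points and not merely match tangent dimensions -- and the final interchange of $\gr$ with invariants, which is the real content of the theorem and where the finiteness properties of the Kazhdan filtration have to be used with care.
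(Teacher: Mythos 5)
Your proposal is correct and takes essentially the same route as the paper: both factor $\nu$ through the same variety isomorphism $M'\times\slodowy_\chi\isoto\chi+\fm^{*,\bot}$ (the paper's \cref{lem:WalgQuantAdjDecomp}), derive acyclicity of $\gr Q_\fl$ as an $\fm'$-module (you via coinduction/Shapiro, the paper by identifying the Chevalley--Eilenberg complex with the de~Rham complex of the affine space $M'$ -- two packagings of the same fact), and close with a filtered-complex/spectral-sequence argument to interchange $\gr$ with $H^\bullet(\fm',-)$, which is precisely the paper's \cref{thm:WalgQuantLieCoh}.
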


\subsubsection{Proof of \cref*{thm:WalgQuantIso}}

Recall from \cref{prop:SContract} the contracting $\CC*$\action\ on
$\slodowy_e$, denoted $\rho(t) \coloneqq t^{-2} \Ad_{\gamma(t)}$. We note that this acts not
just on $\slodowy_e$ but also on the whole of $\fg$, and that furthermore it
stabilises not only $\liez{f}$ but also $\fm^\bot$. In addition, the action of
$\CC*$ on $e + \fm^\bot$ is contracting for the same reasons as in
\cref{prop:SContract}.

Let $M'$ be the closed subgroup of $G$ such that $\Lie M' = \fm'$. We define a
$\CC*$\action\ on the variety
$M' \times \slodowy_e = M' \times \pp[\big]{e + \liez{f}}$ by the equation
\begin{equation}
t \cdot (g, e+x) = \pp[\big]{\gamma(t)g\gamma(t^{-1}), \rho(t)(e+x)}.
\label{eq:MSContract}
\end{equation}
Note that the action is still a contracting action, and in particular
\begin{equation}
\lim_{t \to \infty} t \cdot (g, e+x) = (1, e).
\label{eq:MSContractLim}
\end{equation}

\begin{lem}
\label{lem:WalgQuantAdjDecomp}
The adjoint action map $\alpha \colon M' \times \slodowy_e \to e + \fm^\bot$ is a
$\CC*$-equivariant isomorphism of affine varieties.
\end{lem}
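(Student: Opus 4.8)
The plan is to realise $\alpha$ as an isomorphism by presenting it as a $\CC*$\equivariant\ morphism between two affine varieties that each carry a contracting $\CC*$\action\ with a single fixed point, and then checking that $\alpha$ is an isomorphism on tangent spaces at that fixed point. First I would record that $\fm'$ is ad-nilpotent, so $M'$ is a connected unipotent group, $\exp\colon\fm'\isoto M'$ is an isomorphism of varieties, and $M'\times\slodowy_e$ is thereby identified with the affine space $\fm'\oplus\liez{f}$; by \cref{eq:MSContractLim} the $\CC*$\action\ of \cref{eq:MSContract} contracts it to the unique fixed point $(1,e)$. On the other side, $e+\fm^\bot$ is an affine space carrying the contracting $\CC*$\action\ $\rho$ of \cref{prop:SContract}, whose unique fixed point is $e$. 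That $\alpha$ is well-defined, i.e.\ has image in $e+\fm^\bot$, follows from the remark after \cref{lem:SlodowyAdjTangentIso}, which places $\slodowy_e$ inside $e+\fm^\bot$, together with the observation that $[\fm',\fm]\subseteq\bigoplus_{j\le-2}\fg_j\subseteq\fm$, so that $\fm$ is an ideal of $\fm'$, $\fm^\bot$ is $\Ad_{M'}$\invariant, and $e+\fm^\bot$ is $\Ad_{M'}$-stable. Equivariance is a one-line substitution: using $\rho(t)=t^{-2}\Ad_{\gamma(t)}$ and \cref{eq:MSContract},
\begin{equation*}
\alpha\bigl(t\cdot(g,e+x)\bigr)
  = \Ad_{\gamma(t)g\gamma(t)^{-1}}\bigl(\rho(t)(e+x)\bigr)
  = t^{-2}\Ad_{\gamma(t)}\Ad_g(e+x)
  = \rho(t)\,\alpha(g,e+x),
\end{equation*}
where the middle step is because the two factors $\Ad_{\gamma(t)^{\pm1}}$ coming from the conjugation cancel, and the right-hand side is exactly the $\CC*$\action\ on $e+\fm^\bot$.

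Next I would compute the differential of $\alpha$ at the fixed point $(1,e)$, which maps to $e$. Identifying $T_{(1,e)}(M'\times\slodowy_e)=\fm'\oplus\liez{f}$ and $T_e(e+\fm^\bot)=\fm^\bot$, differentiating $\tau\mapsto\Ad_{\exp(\tau y)}(e+\tau x)$ at $\tau=0$ gives $d\alpha_{(1,e)}(y,x)=[y,e]+x$. By \cref{lem:SlodowyAdjTangentIso} the target decomposes as $\fm^\bot=[\fm',e]\oplus\liez{f}$, and $\ad e\colon\fm'\to[\fm',e]$ is injective by \cref{GGprop2} (since $\fm'\subseteq\bigoplus_{j\le-1}\fg_j$), hence a bijection onto $[\fm',e]$; therefore $d\alpha_{(1,e)}$ is an isomorphism. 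The same \namecref{lem:SlodowyAdjTangentIso} also gives $\dim(M'\times\slodowy_e)=\dim\fm'+\dim\liez{f}=\dim\fm^\bot=\dim(e+\fm^\bot)$, and both varieties are irreducible.

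The last step, which is the real content, is to promote this local isomorphism to a global one. The mechanism is the standard fact that a $\CC*$\equivariant\ morphism $\phi\colon X\to Y$ of affine varieties, both carrying a $\CC*$\action\ that contracts to a single fixed point, with $\phi$ sending one fixed point to the other, is an isomorphism whenever $d\phi$ is an isomorphism at the fixed point. I would prove this inline: contraction to a single fixed point forces $\Cpoly{X}$ and $\Cpoly{Y}$ to be graded by the $\CC*$\action\ with degree-zero part $\CC$ and all other homogeneous components in degrees of one fixed sign (the invariants are just the constants), and $\phi^*$ is graded; the induced map on Zariski cotangent spaces at the fixed points is the transpose of $d\phi$, hence surjective, so graded Nakayama forces $\phi^*$ surjective, i.e.\ $\phi$ is a closed immersion; its image is then a closed irreducible subvariety of $Y$ of dimension $\dim X=\dim Y$, hence all of $Y$, and as $Y$ is reduced $\phi^*$ is injective as well. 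Applying this to $\alpha$ completes the proof. I expect the only point needing real care to be confirming that the two $\CC*$\actions\ contract to their fixed points in the graded sense Nakayama requires — which is precisely the content of \cref{prop:SContract} and \cref{eq:MSContractLim}.
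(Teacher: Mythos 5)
Your proof is correct and follows the same strategy as the paper: check $\CC*$\equivariance\ by direct computation, show the tangent map at the fixed point is an isomorphism via \cref{lem:SlodowyAdjTangentIso}, and invoke the general principle that a $\CC*$\equivariant\ map of affine varieties with contracting actions which is an isomorphism on tangent spaces at the fixed points is an isomorphism. The only difference is that you supply the graded-Nakayama proof of that general principle, which the paper cites without proof, and you also spell out the well-definedness of $\alpha$ and the explicit form $d\alpha_{(1,e)}(y,x)=[y,e]+x$; these are welcome elaborations rather than a different route.
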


\begin{proof}
The proof that the map is $\CC*$-equivariant is a direct computation. Note that
the map induces an isomorphism
$T_{(1,e)} \pp{M' \times \slodowy_e} \isoto T_e \pp[\big]{e + \fm^\bot}$ by
\cref{lem:SlodowyAdjTangentIso}.
The lemma then follows from the following general result: any
$\CC*$\nobreakdash-equivariant map of smooth affine varieties with contracting
$\CC*$\actions\ which induces an isomorphism on tangent spaces at the fixed
points must be an isomorphism.
\end{proof}

To complete the proof, we note that $U(\fg)$ and $Q_\fl$ are $\fm'$\modules\ via
the adjoint action, and that the map $U(\fg) \to Q_\fl$ is an $\fm'$\module\
homomorphism. The adjoint $\fm'$\action\ preserves the Kazhdan filtration, and so
the associated graded map $\gr U(\fg) \to \gr Q_\fl$ is also an $\fm'$\module\
homomorphism. Noting that
$U(\fg,e)_\fl = \pp{Q_\fl}^{\fm'} = H^0(\fm', Q_\fl)$, we can therefore
reformulate \cref{thm:WalgQuantIso} in terms of Lie algebra cohomology.

\begin{thm}
\label{thm:WalgQuantLieCoh}
The map $\nu \colon \gr U(\fg,e)_\fl \to \Cpoly[\big]{\slodowy_\chi}$ can be
decomposed as
\begin{equation*}
\gr H^0(\fm', Q_\fl) \xrightarrow{\nu_1} H^0(\fm', \gr Q_\fl)
  \xrightarrow{\nu_2} \Cpoly[\big]{\slodowy_\chi},
\end{equation*}
where $\nu_1$ and $\nu_2$ are isomorphisms. Furthermore,
$H^i (\fm', Q_\fl) = H^i (\fm', \gr Q_\fl) = 0$ for $i > 0$.
\end{thm}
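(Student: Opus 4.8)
The plan is to analyze the Lie algebra cohomology of $\fm'$ acting on $Q_\fl$ via the adjoint action, using the filtration and the geometric identification from \cref{lem:WalgQuantAdjDecomp}. First I would equip $Q_\fl$ with the Kazhdan filtration, which (as noted in \cref{rem:KazhdanProps}) is bounded below and exhausts $Q_\fl$, so that the associated graded module $\gr Q_\fl \simeq \Cpoly{\chi + \fm^{*,\bot}} \simeq \Cpoly{e + \fm^\bot}$ is a polynomial ring. The Chevalley--Eilenberg complex computing $H^\bullet(\fm', Q_\fl)$ inherits a filtration from the Kazhdan filtration on $Q_\fl$ (since the adjoint action of $\fm'$ preserves the filtration), and I would consider the resulting spectral sequence. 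Its $E_1$-page is $H^\bullet(\fm', \gr Q_\fl)$, so it suffices to show this vanishes in positive degree and that the map $\nu_1$ can be identified with the edge map; the convergence is unproblematic because the filtration is bounded below and exhaustive and, in each fixed Kazhdan degree, the relevant graded pieces are finite-dimensional.

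The key step is therefore to compute $H^\bullet(\fm', \gr Q_\fl)$ and show it is concentrated in degree zero, where it equals $\Cpoly{\slodowy_e}$. Here I would use \cref{lem:WalgQuantAdjDecomp}: the adjoint action map $\alpha \colon M' \times \slodowy_e \isoto e + \fm^\bot$ is an $M'$-equivariant isomorphism of affine varieties, where $M'$ acts on the left factor by left translation (composed with the $\gamma$-conjugation, which is irrelevant for the group cohomology) and trivially on $\slodowy_e$. Passing to coordinate rings, $\Cpoly{e + \fm^\bot} \simeq \Cpoly{M'} \otimes \Cpoly{\slodowy_e}$ as $\fm'$-modules, with $\fm'$ acting only on the first tensor factor through the left-regular representation. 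Since $M'$ is a unipotent group (its Lie algebra $\fm'$ sits in strictly negative Kazhdan degree, hence is nilpotent and ad-nilpotent), $\Cpoly{M'}$ is a free, hence acyclic, $\fm'$-module: concretely, $M' \simeq \mathbb{A}^{\dim \fm'}$ via the exponential map, and the Koszul-type complex $\Cpoly{M'} \otimes \Lambda^\bullet \fm'^*$ is exact except in degree zero, where its cohomology is $\CC$. Therefore $H^i(\fm', \gr Q_\fl) = 0$ for $i > 0$ and $H^0(\fm', \gr Q_\fl) = \Cpoly{\slodowy_e} \simeq \Cpoly{\slodowy_\chi}$, which gives the isomorphism $\nu_2$.

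With $E_1$ concentrated in a single row, the spectral sequence degenerates, yielding $H^i(\fm', Q_\fl) = 0$ for $i > 0$ and an isomorphism $\gr H^0(\fm', Q_\fl) \isoto H^0(\fm', \gr Q_\fl)$, which is $\nu_1$; composing with $\nu_2$ recovers $\nu$, and tracing through the identifications shows this composite is the map defined in the statement. The main obstacle I anticipate is the bookkeeping needed to make the spectral sequence argument rigorous in infinite dimensions --- one must check that the Kazhdan filtration on the Chevalley--Eilenberg complex is such that each graded piece in a fixed total degree is finite-dimensional (this follows because $\fm'$ sits in degrees $\le -1$, so $\Lambda^k \fm'^*$ lives in Kazhdan degree $\ge k$, and $\gr Q_\fl$ is finite-dimensional in each Kazhdan degree) so that the spectral sequence converges and the passage between $\gr H^0$ and $H^0$ of the graded is valid. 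The identification of $\fm'$-module structures under $\alpha$ is a direct computation and should not present real difficulty, nor should the acyclicity of $\Cpoly{M'}$, which is standard for unipotent groups.
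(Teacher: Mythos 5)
Your proposal follows the paper's own proof: decompose $\gr Q_\fl \simeq \Cpoly{M'} \otimes \Cpoly[\big]{\slodowy_\chi}$ via \cref{lem:WalgQuantAdjDecomp}, identify the Chevalley--Eilenberg complex for $\Cpoly{M'}$ with the de Rham complex of the affine space $M'$ to get $\nu_2$ and the vanishing of $H^{>0}(\fm',\gr Q_\fl)$, then filter the complex for $Q_\fl$ by the Kazhdan filtration and use the degenerating spectral sequence to obtain $\nu_1$ and the vanishing of $H^{>0}(\fm', Q_\fl)$. Your explicit check that graded pieces in a fixed total degree are finite-dimensional is a welcome sharpening of the paper's more cursory ``if it converges'' step.
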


\begin{proof}
Transferring \cref{lem:WalgQuantAdjDecomp} through the isomorphism $\kappa \colon \fg
\isoto \fg^*$ allows us identify the $\fm'$\module\ isomorphisms
\begin{equation*}
\gr Q_\fl \simeq \Cpoly[\big]{\chi + \fm^{*,\bot}} \simeq \Cpoly{M'} \otimes \Cpoly[\big]{\slodowy_\chi},
\end{equation*}
where the $\fm'$\action\ on $\Cpoly{M'} \otimes \Cpoly[\big]{\slodowy_\chi}$ comes by
identifying $\fm' = T_1 M'$ as the derivations of $\Cpoly{M'}$ and acting on the
first tensor component.

Recall the standard co-chain complex for calculating Lie algebra cohomology
$H^i(\fm', X)$:
\begin{equation}
0 \to X \to \fm'^* \otimes X \to \extp^2 \fm'^* \otimes X \to \dotsb \to
    \extp^k \fm'^* \otimes X \to \dotsb
\label{eq:LieAlgCohom}
\end{equation}
In the case that $X = \Cpoly{M'}$ this is just the de Rham complex
$\Omega^\bullet M'$, and so we've reduced to calculating the de Rham cohomology of
$M'$. But since $M'$ is a unipotent group and therefore an affine space, we
simply have that $H^0 \pp[\big]{\fm', \Cpoly{M'}} = \CC$ and
all higher cohomology groups vanish. It follows that
\begin{equation*}
H^0 (\fm', \gr Q_\fl) \simeq
  H^0 \pp[\big]{\fm', \Cpoly{M'}} \otimes \Cpoly[\big]{\slodowy_\chi} =
  \Cpoly[\big]{\slodowy_\chi},
\end{equation*}
and hence the map $\nu_2$ is an isomorphism and all higher cohomology groups
$H^k \pp[\big]{\fm', \gr Q_\fl}$ vanish for $k > 0$.

\begin{rem}
\label{rem:HamRedIntro}
Note that we could further remark that
\begin{equation*}
H^0 (\fm', \gr Q_\fl) \simeq H^0 \pp[\big]{\fm', \Cpoly{\chi + \fm^{*,\bot}}}
  = \Cpoly[\big]{\chi + \fm^{*,\bot}}^{M'},
\end{equation*}
that is to say the Slodowy slice is isomorphic to the quotient variety
$\slodowy_\chi \simeq \pp[\big]{\chi + \fm^{*,\bot}} \big/ M'$. This interpretation will be
very important for our future considerations.
\end{rem}

We now define a filtration on the complex \labelcref{eq:LieAlgCohom} for
$X = Q_\fl$. Note that $\fm'$ is a strictly-negatively graded subalgebra of
$\fg$ under the grading $\Gamma$, and so $\fm'^*$ is strictly-positively graded; we
write this grading as $\fm'^* = \bigoplus_{j\ge1} \fm'^*_j$. We can combine this
with the Kazhdan filtration on $Q_\fl$ to define the filtration
$F_p \pp[\Big]{\extp^k \fm'^* \otimes Q_\fl}$ as spanned by all
$(x_1 \wedge \dotsb \wedge x_k) \otimes v$ for which $x_j \in \fm'^*_{i_j}$, $v \in F_n Q_\fl$
and $n + \sum_{j = 1}^k i_j \le p$.
Taking the associated graded of this filtered complex gives us the standard
complex for computing the cohomology of $\gr Q_\fl$, as $\fm'^*$ is already a
graded Lie algebra.

Consider the spectral sequence with
$E_0^{p,q} \coloneqq F_p \pp[\Big]{\extp^{p+q} \fm'^* \otimes Q_\fl} \big/ 
              F_{p-1} \pp[\Big]{\extp^{p+q} \fm'^* \otimes Q_\fl}$.
We note that the first page of this spectral sequence is just
$E_1^{p,q} = H^{p+q} \pp[\big]{\fm', \gr_p Q_\fl}$, and since this is zero
for $p+q \neq 0$, the spectral sequence degenerates at the first page. Hence the
spectral sequence will converge to $H^{p+q} \pp[\big]{\fm', \gr_p Q_\fl}$.
However, by general results this spectral sequence, if it converges, must
converge to
$E_\infty^{p,q} = F_p H^{p+q} (\fm', Q_\fl) \big/ F_{p-1} H^{p+q} (\fm', Q_\fl)$
(see a standard reference, e.g.~\cite{McC:UserGuideSS}). Hence
it must be that $H^i (\fm', \gr Q_\fl) \simeq \gr H^i (\fm', Q_\fl)$, completing the
proof of \cref{thm:WalgQuantLieCoh}, and therefore proving
\cref{thm:WalgQuantIso,thm:WalgQuant}.
\end{proof}

\begin{cor}
The \Walgebra\ $U(\fg,e)$ does not depend up to isomorphism on the choice of
Lagrangian (or isotropic) subspace $\fl$.
\end{cor}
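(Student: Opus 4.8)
The plan is to bootstrap from \cref{thm:WalgQuantIso}, which already gives $\gr U(\fg,e)_\fl \cong \Cpoly{\slodowy_\chi}$ for \emph{every} isotropic subspace $\fl \subseteq \fg_{-1}$, with the right-hand side manifestly independent of $\fl$; what is missing is a filtered algebra map between the various $U(\fg,e)_\fl$ realising this common associated graded, since a filtered algebra map that is an isomorphism on associated graded is itself an isomorphism of (positively) filtered algebras. I would first reduce to comparing two \emph{nested} isotropic subspaces: $\set{0}$ is isotropic and is contained in every subspace, so it suffices to show that any isotropic pair $\fl_1 \subseteq \fl_2$ yields $U(\fg,e)_{\fl_1} \isoto U(\fg,e)_{\fl_2}$; taking $\fl_1 = \set{0}$ and $\fl_2$ an arbitrary isotropic (in particular Lagrangian) subspace then links every $U(\fg,e)_{\fl_2}$ to the single object $U(\fg,e)_{\set{0}}$, and transitivity finishes the job.

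To build the comparison map, note that $\fl_1 \subseteq \fl_2$ with $\fl_2$ isotropic produces a chain of subalgebras $\fm_1 \subseteq \fm_2 \subseteq \fm_2' \subseteq \fm_1'$ (using that $\fl_2$ is isotropic for the middle inclusion and that orthogonal complement reverses inclusions for the last). The inclusion $\fm_1 \subseteq \fm_2$ gives $U(\fg)\fm_{1,\chi} \subseteq U(\fg)\fm_{2,\chi}$, hence a $U(\fg)$-linear — in particular $\ad\fg$-equivariant and Kazhdan-filtered — surjection $\pi \colon Q_{\fl_1} \twoheadrightarrow Q_{\fl_2}$. Since $\fm_2' \subseteq \fm_1'$, this $\pi$ carries $U(\fg,e)_{\fl_1} = (Q_{\fl_1})^{\ad\fm_1'}$ into $(Q_{\fl_2})^{\ad\fm_2'} = U(\fg,e)_{\fl_2}$; call the restriction $\phi$. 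As the multiplications on both $W$-algebras are inherited from $U(\fg)$ and $\pi$ is just reduction modulo a larger left ideal, $\phi$ is a homomorphism of filtered algebras sending $\overline{1}$ to $\overline{1}$.

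It then remains to check that $\gr\phi$ is an isomorphism. The Kazhdan filtration on $U(\fg,e)_{\fl_i}$ is the one induced from $Q_{\fl_i}$, so $\gr\phi$ is the restriction of $\gr\pi \colon \gr Q_{\fl_1} \to \gr Q_{\fl_2}$ to the subspaces $\gr U(\fg,e)_{\fl_i} \hookrightarrow \gr Q_{\fl_i}$. Under the identifications from the proof of \cref{thm:WalgQuantLieCoh}, $\gr Q_{\fl_i} \cong \Cpoly{\chi + \fm_i^{*,\bot}} \cong \Cpoly{M_i'} \otimes \Cpoly{\slodowy_\chi}$, and because the adjoint-action isomorphisms of \cref{lem:WalgQuantAdjDecomp} for $\fl_1$ and $\fl_2$ are compatible ($\alpha_1$ restricts to $\alpha_2$ on $M_2' \times \slodowy_e \subseteq M_1' \times \slodowy_e$), the map $\gr\pi$ becomes restriction of functions along $M_2' \hookrightarrow M_1'$ in the first tensor factor and the identity on $\Cpoly{\slodowy_\chi}$. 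Passing to $\ad\fm_i'$-invariants, i.e.\ to $H^0$, which leaves only the constants of $\Cpoly{M_i'}$, turns $\gr\phi$ into the identity of $\Cpoly{\slodowy_\chi}$, which is an isomorphism. Hence $\phi$ is an isomorphism of filtered algebras, and in particular of algebras, as desired.

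The substantive content is entirely in \cref{thm:WalgQuantIso}; the remaining work is bookkeeping — tracking the nested complements $\fm_1 \subseteq \fm_2 \subseteq \fm_2' \subseteq \fm_1'$, checking the compatibility of the decompositions $\gr Q_{\fl_i} \cong \Cpoly{M_i'}\otimes\Cpoly{\slodowy_\chi}$, and verifying that $\gr\phi$ is the identity on the Slodowy factor — so the main obstacle is merely keeping these several identifications straight rather than any new idea.
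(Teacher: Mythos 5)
Your argument is exactly the paper's: pass from $\fl \subseteq \fl'$ to a surjection $Q_\fl \twoheadrightarrow Q_{\fl'}$, observe it restricts to a filtered algebra map $U(\fg,e)_\fl \to U(\fg,e)_{\fl'}$, check via \cref{thm:WalgQuantIso} that the associated graded map is an isomorphism, and finish by taking $\fl = \set{0}$ as a universal base point. The paper's proof is terser and leaves the compatibility of the two identifications $\gr U(\fg,e)_{\fl_i} \cong \Cpoly{\slodowy_\chi}$ implicit; your explicit tracking of $\fm_1 \subseteq \fm_2 \subseteq \fm_2' \subseteq \fm_1'$ and of the restriction of $\alpha_1$ to $M_2' \times \slodowy_e$ closes that small gap but does not change the route.
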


\begin{proof}
Let $\fl \subseteq \fl'$ be two isotropic subspaces of $\fg_{-1}$. The inclusion of $\fl$
into $\fl'$ induces a map\linebreak $Q_\fl \to Q_{\fl'}$, which itself induces a map
$U(\fg,e)_\fl \to U(\fg,e)_{\fl'}$. By \cref{thm:WalgQuantIso} this map
descends to an isomorphism at the associated graded level, and is therefore an
isomorphism itself. Choosing $\fl = \set{0}$ we get an isomorphism 
$U(\fg,e)_\fl \isoto U(\fg,e)_{\fl'}$ for any isotropic subspace $\fl'$ (and
therefore any Lagrangian subspace).
\end{proof}

\section{Hamiltonian reduction and its relation to W-algebras}
\label{sec:HamReductionWalg}

The observation made in \cref{rem:HamRedIntro} gives us an important insight
into the geometry of the Slodowy slice $\slodowy_\chi$: it can be expressed as a
quotient of a certain affine subvariety of $\fg^*$. This recalls the
construction of Hamiltonian reduction, where a Poisson variety with a
Hamiltonian group action can be reduced at a regular value of the moment map. In
fact, both $\fg^*$ and $\slodowy_\chi$ have the structure of a Poisson variety, and 
we can describe a Hamiltonian group action on $\fg^*$ so that $\slodowy_\chi$ is
the Hamiltonian reduction by this action. To develop this point of view, we need
to describe the Poisson structures on $\fg^*$ and $\slodowy_\chi$.

\subsection{The Slodowy slice as a Poisson variety}

For any Lie algebra $\fg$, the dual space $\fg^*$ has the natural structure of a
Poisson variety coming from the Lie bracket on $\fg$. To define this, we first
note that for any function $f \in \Cpoly{\fg^*}$, its differential
$\d f \in T^*(\fg^*)$ can be viewed at any point $\xi \in \fg^*$ as lying naturally in
the Lie algebra~$\fg$; this comes from the natural identification
$T^*_\xi (\fg^*) = (\fg^*)^* \simeq \fg$.

\begin{defn}
\label{def:LPBracket}
For a Lie algebra $\fg$, the ring of functions $\Cpoly{\fg^*}$ has a natural
Poisson bracket, called the \emph{Lie--Poisson} or \emph{Kostant--Kirillov}
bracket. Given $f,g \in \Cpoly{\fg^*}$, the Poisson bracket is defined as
$\poibr{f,g} \in \Cpoly{\fg^*}$, where
$\poibr{f,g}(\xi) \coloneqq \xi \pp[\big]{ \liebr{\d f_\xi, \d g_\xi} }$.
\end{defn}

\begin{note}
The Lie--Poisson bracket has an extremely simple expression. Any two elements
$x,y \in \fg$ we can be interpreted as functions in $\Cpoly{\fg^*}$ using the
evaluation map: the Poisson bracket~$\poibr{x,y}$ is then just $[x,y]$. This
can be extended to all polynomials in $\Cpoly{\fg^*}$ using the Leibniz rule.
\end{note}

\begin{thm}
The symplectic leaves of $\fg^*$ with the Lie--Poisson bracket are the co-adjoint
orbits.
\end{thm}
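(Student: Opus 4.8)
The plan is to identify the \emph{characteristic distribution} of the Lie--Poisson structure --- the distribution spanned at each point by the values of the Hamiltonian vector fields $X_f$, $f \in \Cpoly{\fg^*}$ --- with the tangent distribution to the coadjoint orbits $G \cdot \xi$, and then to invoke the general fact that the symplectic leaves of a Poisson variety are precisely the maximal integral submanifolds of this distribution.

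First I would compute the Hamiltonian vector field of a function. For $f \in \Cpoly{\fg^*}$ and $\xi \in \fg^*$, the differential $\d f_\xi$ lies in $\fg$ under the identification $T^*_\xi(\fg^*) \simeq \fg$ used in \cref{def:LPBracket}, and the note following that definition shows that the derivation $\poibr{f,\cdot}$ evaluated at $\xi$ sends a linear function $y \in \fg$ to $\xi(\liebr{\d f_\xi, y})$; equivalently $X_f(\xi) = \ad^*_{\d f_\xi} \xi$, viewed in $T_\xi(\fg^*) = \fg^*$. Letting $f$ range over the linear functions, $\d f_\xi$ ranges over all of $\fg$, so the values of the Hamiltonian vector fields at $\xi$ span exactly $\set{\ad^*_x \xi \st x \in \fg}$. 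Differentiating $t \mapsto \Ad^*_{\exp(tx)}\xi$ at $t = 0$ identifies this same subspace with the tangent space $T_\xi(G \cdot \xi)$. Hence the characteristic distribution agrees pointwise with the tangent spaces of the coadjoint orbits, and each coadjoint orbit is an integral submanifold of it.

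Next I would verify that the Lie--Poisson bracket induces a nondegenerate (hence, together with closedness, symplectic) form on each coadjoint orbit. The candidate form on $T_\xi(G \cdot \xi)$ is $\omega_\xi(\ad^*_x \xi, \ad^*_y \xi) \coloneqq \xi(\liebr{x,y})$; since $\xi(\liebr{x,y}) = -(\ad^*_x \xi)(y)$, this expression depends only on the tangent vectors $\ad^*_x \xi$ and $\ad^*_y \xi$, so $\omega_\xi$ is well defined, and it is nondegenerate because $\xi(\liebr{x,y})$ vanishes for all $y \in \fg$ exactly when $\ad^*_x \xi = 0$, i.e.\ exactly when the tangent vector $\ad^*_x \xi$ is zero. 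Closedness of the resulting $2$-form follows from the Jacobi identity for the Poisson bracket; this is the Kirillov--Kostant--Souriau form.

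Finally I would assemble the pieces: the coadjoint orbits are locally closed smooth submanifolds that partition $\fg^*$, each carries a symplectic form induced by the Poisson structure, and their tangent spaces recover the characteristic distribution; by the uniqueness of the symplectic foliation of a Poisson manifold they must be precisely the symplectic leaves. The part requiring the most care is foliation-theoretic rather than Lie-algebraic: the distribution spanned by the Hamiltonian vector fields is singular, since $\dim(G \cdot \xi)$ jumps as $\xi$ varies, so one cannot invoke the ordinary Frobenius theorem and must instead appeal to the generalised integrability theorem of Stefan and Sussmann (or simply cite the standard structure theorem that every Poisson variety admits a symplectic foliation characterised in this way); one must also note that a coadjoint orbit, being locally closed and of locally constant dimension inside a connected leaf on which it is open, cannot be a proper open piece of a larger leaf, so it is the whole leaf. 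I would quote these general facts rather than reprove them here.
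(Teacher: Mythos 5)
Your proof is correct and follows essentially the same approach as the paper's: both hinge on the computation that $\poibr{x,\cdot}(\xi) = \ad^*_x(\xi)(\cdot)$ for $x \in \fg$ viewed as a linear function on $\fg^*$, which simultaneously identifies the characteristic distribution with the coadjoint-orbit tangent distribution and shows the induced form on $T_\xi \orbit_\xi \simeq \fg/\fg^\xi$ is nondegenerate. Where you differ is in the final step: the paper goes directly from ``the radical of the bracket at $\xi$ is $\fg^\xi$'' and the orbit--stabiliser identification $\orbit_\xi \simeq G/G^\xi$ to the conclusion that the orbits are the symplectic leaves, treating the leaves somewhat loosely as ``maximal subvarieties on which the Poisson bracket is non-degenerate.'' You are more careful here, invoking the Stefan--Sussmann integrability theorem (or the symplectic foliation theorem) to justify that the singular distribution spanned by Hamiltonian vector fields integrates, and you explicitly check that an orbit, being open and of constant dimension inside a connected leaf, exhausts that leaf. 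This extra care is warranted --- the distribution has varying rank, so ordinary Frobenius does not apply --- and your version is the more rigorous of the two, while the computational core is identical.
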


\begin{proof}
Choosing a co-adjoint orbit $\orbit \subseteq \fg*$, we need to show that the Poisson
bracket restricts to a symplectic form on $\orbit$. To do this, we only need to
show that for each $\alpha \in \orbit$ the induced Poisson bracket on $T_\alpha \orbit$ is
non-degenerate. 

We note that for $x \in \fg$, viewed as an element of $\Cpoly{\fg^*}$,
\begin{equation*}
\poibr{x,\cdot}(\xi) = \xi([x,\cdot]) = \xi \pp[\big]{\ad_x(\cdot)}
  = \ad^*_x(\xi) (\cdot).
\end{equation*}
So at the point $\xi \in \fg^*$, the radical of the Poisson bracket is the set of
all $x \in \fg$ which annihilate~$\xi$ under the co-adjoint action. This is the
tangent space of $G^\xi$, the stabiliser of $\xi$ in $G$. Since $\orbit_\xi \simeq G / G^\xi$
by the orbit--stabiliser theorem, the co-adjoint orbits are the maximal
subvarieties on which the Poisson bracket is non-degenerate, and are hence the
symplectic leaves.
\end{proof}

\begin{thm}{\normalfont \cite{GG:QuantSlod}}
\label{thm:SlodowyIndPoisson}
The Slodowy slice $\slodowy_\chi$ inherits a Poisson bracket from $\fg^*$.
\end{thm}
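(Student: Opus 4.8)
The plan is to realise the Slodowy slice $\slodowy_\chi$ as a \emph{Poisson reduction} of $\fg^*$ by the unipotent group $M'$, exploiting the identification $\slodowy_\chi \simeq \pp[\big]{\chi + \fm^{*,\bot}} \big/ M'$ from \cref{rem:HamRedIntro}. The key point is that the quotient map already exhibits $\Cpoly{\slodowy_\chi}$ as a subalgebra of $\Cpoly{\chi + \fm^{*,\bot}}$ (the $M'$-invariants), so it suffices to show that the Poisson bracket on $\Cpoly{\fg^*}$ descends: concretely, I would show that the subalgebra $A \coloneqq \Cpoly{\chi+\fm^{*,\bot}}^{M'} \subseteq \Cpoly{\fg^*}/I$ — where $I$ is the ideal of functions vanishing on $\chi + \fm^{*,\bot}$ — inherits a well-defined bracket because the normaliser of $I$ in the Poisson algebra, when intersected with the $M'$-invariants, surjects onto $A$ and $I \cap (\text{normaliser})$ is a Poisson ideal there. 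In other words, the classical analogue of the computation already done for the $W$-algebra.

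First I would recall that $I \subseteq \Cpoly{\fg^*}$ is generated by $\fm_\chi$ (the functions $a - \chi(a)$ for $a \in \fm$, or equivalently for $a \in \fm'$ after the relaxation), as noted in \cref{rem:KazhdanProps}. Next I would observe that for $a \in \fm'$ and $u \in \Cpoly{\fg^*}$ one has $\poibr{a - \chi(a), u} = \poibr{a,u}$, which is (up to sign) the infinitesimal $\ad^* \fm'$-action on $u$; hence a function $u$ lies in the Poisson normaliser of $I$ modulo $I$ precisely when $\poibr{\fm_\chi, u} \subseteq I$, i.e.\ when $u$ represents an $\fm'$-invariant function on $\chi + \fm^{*,\bot}$. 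This is exactly the condition $u \in A$ after passing to the quotient, using that $M'$ is connected so that $\fm'$-invariance and $M'$-invariance agree. The bracket of two such representatives again normalises $I$ by the Jacobi/Leibniz identities, and is independent of the chosen representatives modulo $I$ because $I$ itself is $\ad^*\fm'$-stable; so the bracket descends to $A \simeq \Cpoly{\slodowy_\chi}$, and the Jacobi identity and Leibniz rule are inherited.

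A cleaner packaging, which I would probably adopt in the writeup, is to cite the general principle of Poisson reduction: if a Lie group $H$ acts on a Poisson variety $X$ with a moment-map-like condition making a level set $\mu^{-1}(\text{pt})$ coisotropic and the $H$-action on it free with affine quotient, then the quotient is Poisson. Here $X = \fg^*$, $H = M'$ with $\fm' \to \fg^*$, the relevant "level set" is $\chi + \fm^{*,\bot}$, and the freeness and affineness of $M' \times \slodowy_e \isoto e+\fm^\bot$ come from \cref{lem:WalgQuantAdjDecomp}; coisotropy of $\chi + \fm^{*,\bot}$ follows because $\fm' \subseteq \fg$ is a subalgebra (so the ideal $I$ generated by $\fm_\chi$ is closed under bracket with itself modulo $I$, since $\poibr{a-\chi(a), b-\chi(b)} = [a,b] - \chi([a,b]) \in I$ using that $\chi$ is a character of $\fm'$ — here $\fm' $ may not be a subalgebra in the isotropic-but-not-Lagrangian case, but $\fm$ is, and $\chi$ restricts to a character of $\fm$ by \cref{prop:PremProp}, which is enough to run the argument with $\fm$ and then identify invariants).

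The main obstacle is bookkeeping around the distinction between $\fm$ and $\fm'$ in the relaxed (isotropic) setting: one must be careful that the subalgebra used to cut out the level set (where the character condition $\chi([x,y]) = 0$ is needed for coisotropy) is $\fm$, while the group whose invariants one takes is $M'$ — and to check these fit together so that the reduced bracket lands on $\Cpoly{\slodowy_\chi}$ rather than some larger or smaller algebra. This is precisely parallel to the quantum situation already resolved in the proof of \cref{thm:WalgQuantIso}, so the argument is structurally available; the work is in transcribing it to the commutative Poisson level and verifying that "normalises $I$ modulo $I$" translates correctly to "$M'$-invariant function on $\chi + \fm^{*,\bot}$". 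Everything else — Leibniz, antisymmetry, Jacobi — is then automatic from the ambient Lie--Poisson structure on $\fg^*$.
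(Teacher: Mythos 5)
Your argument is correct but follows a genuinely different route from the paper's proof of this particular theorem. The paper argues via the transverse-slice (Dirac) criterion, citing \cite{Vai:LecGeoPoisson}: it reduces the claim to showing that for any coadjoint orbit $\orbit$ and any $\xi \in \orbit \cap \slodowy_\chi$, the Kirillov--Kostant symplectic form on $T_\xi \orbit$ restricts non-degenerately to $T_\xi(\orbit \cap \slodowy_\chi)$, and then verifies this by an explicit $\fsl_2$ representation-theoretic computation showing that the radical $\kappa\big([\kappa^{-1}(\xi),[f,\fg]] \cap \liez{f}\big)$ is zero. You instead argue by coisotropic reduction, identifying $\Cpoly[\big]{\slodowy_\chi}$ with the Poisson normaliser of the ideal $I$ of $\chi + \fm^{*,\bot}$ modulo $I$; this is essentially the content of \cref{thm:SlodowyHamRed} and the discussion following it, which the paper develops separately and then remarks agrees with the bracket constructed here. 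Your approach has the virtue of an explicit descent formula for the bracket on representatives and a clean parallel with the quantum argument of \cref{thm:WalgQuantIso}; the paper's has the virtue of being manifestly intrinsic to $\slodowy_\chi$, requiring no choice of $\fl$, $\fm$, or the group $M$. One small correction: $\fm'$ is always a Lie subalgebra (one has $[\fl',\fl'] \subseteq \fg_{-2} \subseteq \fm'$ and $[\fl',\fg_j] \subseteq \fg_{j-1} \subseteq \fm'$ for $j \le -2$), contrary to your parenthetical worry; the genuine subtlety in the isotropic case is rather that $\chi$ need not be a character of $\fm'$, so coisotropy of $I$ is checked against $\fm$ while invariants are taken under $M'$. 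As you anticipate, the cleanest resolution is to take $\fl$ Lagrangian so that $\fm = \fm'$, which suffices since $\slodowy_\chi$ itself does not depend on $\fl$.
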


\begin{proof}
We recall from \cref{prop:SOTrans} that the Slodowy slice $\slodowy_\chi$ is
transverse to $\orbit_\chi$ at~$\chi$. By standard results
(cf.~\cite[Proposition~3.10]{Vai:LecGeoPoisson}), it suffices to show that for
any co-adjoint orbit $\orbit$ and $\xi \in \orbit \cap \slodowy_\chi$, the restriction of
the symplectic form on $T_\xi \orbit$ to $T_\xi \pp[\big]{\orbit \cap \slodowy_\chi}$
is non-degenerate.

We will work in $\fg$, using the Killing isomorphism $\kappa \colon \fg \isoto \fg^*$
to pass to $\fg^*$ when necessary. As
$T_\xi \pp[\big]{\orbit \cap \slodowy_\chi} = T_\xi \orbit \cap T_\xi \slodowy_\chi$, we will
examine the spaces $T_\xi \orbit$ and $T_\xi \slodowy_\chi$. Since $\orbit$ is a
co-adjoint orbit, its tangent space is simply the image of
$\ad^*_\xi$, which expressed in $\fg$ is $[\kappa^{-1}(\xi), \fg]$, the image of
$\ad_{\kappa^{-1}(\xi)}$. The tangent space of $\slodowy_\chi = \kappa(e + \liez{f})$ can be
seen to be $\kappa(\liez{f})$.

The symplectic form at $\xi$ can be seen to be $\omega_\xi (x,y) = \xi([x,y])$, so we need
to determine the radical of $\omega_\xi$ restricted to
$T_\xi \pp[\big]{\orbit \cap \slodowy_\chi}$. The annihilator of $T_\xi \slodowy_\chi$ in
$\fg$ is $[f,\fg]$, which can be seen from associativity of the Killing form
$\killing[\big]{\liez{f}}{[f,\fg]} = \killing[\big]{[\liez{f},f]}{\fg}$, and so
$\operatorname{rad} \omega$ is
\begin{equation*}
\kappa \pp*{ \liebr[\big]{ \kappa^{-1}(\xi), [f,\fg] } \cap \liez{f} }
\end{equation*}
Since $\kappa^{-1}(\xi) \in e + \liez{f}$, $\fsl_2$ representation theory tells us that
this space is $\set{0}$ (see \cref{fig:sl2rep}).
\end{proof}

\subsection{Slodowy slices and Hamiltonian reduction}

Since we now know that the space $X = \fg^*$ is a Poisson variety, we can ask
whether the co\nobreakdash-adjoint action of $G$ on $X$ is a Hamiltonian action. The answer
is yes, and in order to demonstrate this we will exhibit a co-moment map
$\mu^* \colon \fg \to \Cpoly{X}$, where $\fg = \Lie(G)$. Note that this is
equivalent to exhibiting a moment map $\mu \colon X \to \fg^*$ by the
equation $\inn{\mu(\xi)}{x} = \mu^*(x)(\xi)$, where $x \in \fg$, $\xi \in X$ and
$\inn{\cdot}{\cdot}$ is the canonical pairing of $\fg^*$ with $\fg$.

Note that $G$ acts on $X$ by the co-adjoint action $\Ad^*$, which induces an
action of $\fg$ on $X$, the co-adjoint action $\ad^*$. This in turn induces an
action on $\Cpoly{X} = \Sym(\fg)$, which is just the adjoint action of $\fg$ on
$\Sym(\fg)$. To produce a co-moment map, we need a function
$\mu^* \colon \fg \to \Cpoly{X}$ such that $\poibr{\mu^*(x),\cdot} = \ad_x$;
we claim that the map given by $\mu^*(x) = x$ suffices. To check this, note
that $\poibr{\mu^*(x), \cdot}(\alpha) = \inn{[x, \cdot]}{\alpha} = \inn{\ad_x(\cdot)}{\alpha}$,
and hence $\poibr{\mu^*(x), \cdot} = \ad_x$. The corresponding moment map
$\mu \colon \fg^* \to \fg^*$ is the identity map.

As a special case of the above, note that given a choice of Premet subalgebra
$\fm$, its corresponding algebraic group $M \subseteq G$ acts on $\fg^*$ by the
co-adjoint action; the resulting moment map $\mu \colon \fg^* \to \fm^*$ is
restriction of functions. Recall that in \cref{rem:HamRedIntro}, we noted that
$\slodowy_\chi \simeq \pp[\big]{\chi + \fm^{*,\bot}}/M$. There are three important
remarks to be made about this equation:
\begin{itemize}
\item that $\chi$ is a character of $\fm^*$ implies that it is fixed under the
      co-adjoint action of $M$;
\item the space $\chi + \fm^{*,\bot}$ is the pre-image of $\chi$ under the moment map
      $\mu$; and
\item the character $\chi$ is furthermore a regular value of the moment map $\mu$, as
      $\fm^* \subseteq \fm^{*,\bot}$.
\end{itemize}
This proves the following \namecref{thm:SlodowyHamRed}, which is fundamental to
our further work.

\begin{thm}
\label{thm:SlodowyHamRed}
The Slodowy slice $\slodowy_\chi \subseteq \fg^*$ can be expressed as a Hamiltonian
reduction of $\fg^*$ at the regular value $\chi \in \fm^*$ by the co-adjoint action
of $M$. Concretely,
\begin{align*}
\slodowy_\chi & \simeq \mu^{-1} (\chi) \big/ M  & \text{and} \quad\quad\quad
\Cpoly[\big]{\slodowy_\chi} & \simeq \pp*{ \Cpoly{\fg^*} \big/ I\pp[\big]{\mu^{-1} (\chi)} }^{M}.
\end{align*}
\end{thm}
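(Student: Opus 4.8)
The statement is, at this point, largely a matter of recognising that the three observations recorded just above, together with \cref{rem:HamRedIntro} and the definition of Hamiltonian reduction, already assemble into the claim, so the plan is mostly bookkeeping. First I would recall the relevant notion in the algebraic setting: for a Poisson variety $X$ carrying a Hamiltonian action of an algebraic group $M$, with co-moment map $\mu^* \colon \fm \to \Cpoly{X}$ and moment map $\mu \colon X \to \fm^*$, and for a point $\chi \in \fm^*$ that is fixed by the co-adjoint action of $M$ and is a regular value of $\mu$, the Hamiltonian reduction of $X$ at $\chi$ is the quotient $\mu^{-1}(\chi)/M$; its coordinate ring is $\pp*{\Cpoly{\mu^{-1}(\chi)}}^M = \pp*{\Cpoly{X}/I(\mu^{-1}(\chi))}^M$, and it inherits the Poisson bracket descended from that of $X$.

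Next I would check the three hypotheses for $X = \fg^*$, for $M \subseteq G$ the algebraic group of the Premet subalgebra $\fm$, and for $\mu \colon \fg^* \to \fm^*$ the restriction map identified above. (i)~Since $\mu$ is that restriction, $\mu^{-1}(\chi) = \set{\xi \in \fg^* \st \xi|_\fm = \chi}$ is precisely the affine subspace $\chi + \fm^{*,\bot}$, so the fibre is smooth. (ii)~By \cref{PremChar}, $\chi = \killing{e}{\cdot}$ restricts to a character of $\fm$, that is $\chi$ annihilates $[\fm,\fm]$, so $\ad^*_a \chi = 0$ for every $a \in \fm$; as $M$ is connected this shows $M$ fixes $\chi$, so it is legitimate to quotient the whole fibre $\mu^{-1}(\chi)$ by the whole group $M$. (iii)~The regularity condition needed for reduction is exactly the inclusion $\fm^* \subseteq \fm^{*,\bot}$ — the dual form of $\fm \subseteq \fm^\bot$, itself immediate from \cref{GGprop5} since $\fm$ sits in strictly negative degree — and it makes $M$ act freely on $\mu^{-1}(\chi)$; as $M$ is unipotent the geometric quotient $\mu^{-1}(\chi)/M$ then exists as an affine variety with $\Cpoly{\mu^{-1}(\chi)/M} = \Cpoly{\mu^{-1}(\chi)}^M$. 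Combining (i) with the isomorphism $\slodowy_\chi \simeq \pp[\big]{\chi + \fm^{*,\bot}}/M$ of \cref{rem:HamRedIntro} yields $\slodowy_\chi \simeq \mu^{-1}(\chi)/M$; passing to coordinate rings, and using $\Cpoly{\chi + \fm^{*,\bot}} = \Cpoly{\fg^*}/I(\mu^{-1}(\chi))$ together with the invariants description of the quotient's coordinate ring, yields the second isomorphism.

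Since every ingredient has in effect been constructed in the preceding sections, the only point that is not purely formal — and the one I would take care over — is the \emph{compatibility of Poisson structures}: the Hamiltonian-reduction machinery (and the reduction-by-stages theory of \cite{MMO:HamRed} that we ultimately want to quantise) places a canonical Poisson bracket on $\mu^{-1}(\chi)/M$, and one must check this agrees with the bracket placed on $\slodowy_\chi$ intrinsically in \cref{thm:SlodowyIndPoisson} through transversality of the slice. Both are ultimately induced from the single Lie--Poisson bracket on $\fg^*$ of \cref{def:LPBracket}, so I would deduce the compatibility by verifying that the isomorphism $\slodowy_\chi \simeq \pp[\big]{\chi + \fm^{*,\bot}}/M$ is realised by the genuine restriction and quotient maps (not by some auxiliary identification), and I would use the contracting action of \cref{prop:SContract} to rule out any stray scaling. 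Everything else — the identification of $\mu$ and of its fibre, the character and regularity conditions, and the passage between quotients and invariants — is routine and has already been set up above.
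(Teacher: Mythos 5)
Your proof is correct and follows exactly the route the paper itself takes: the three bullet observations (fibre is $\chi + \fm^{*,\bot}$, $\chi$ is $M$\nobreakdash-fixed, $\chi$ is a regular value) plus the identification $\slodowy_\chi \simeq \pp[\big]{\chi + \fm^{*,\bot}}/M$ from \cref{rem:HamRedIntro} are precisely what the text presents immediately before the theorem as "proving" it. Your final paragraph on compatibility of Poisson structures goes slightly beyond the paper, which merely asserts after the theorem statement that the reduced bracket "agrees with that of \cref{thm:SlodowyIndPoisson}" without elaborating; flagging this as the only non-formal point is a reasonable and slightly more careful reading of the same argument.
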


As a Hamiltonian reduction of a Poisson variety, the Slodowy slice $\slodowy_\chi$
inherits a Poisson bracket from $\fg^*$, defined as follows. Consider the
following natural maps:
\begin{align*}
\iota & \colon \pp*{ \Cpoly{\fg^*} \big/ I \pp[\big] {\mu^{-1} (\chi)} } ^M
  \hookrightarrow \Cpoly{\fg^*} \big/ I \pp[\big] {\mu^{-1} (\chi)} \\
\pi     & \colon \Cpoly{\fg^*}
  \twoheadrightarrow \Cpoly{\fg^*} \big/ I \pp[\big] {\mu^{-1} (\chi)}
\end{align*}
Considering $f,g \in \Cpoly[\big]{\slodowy_\chi}$, we define the Poisson bracket
$\poibr{f,g}$ by lifting $\iota(f)$ and $\iota(g)$ to functions
$\tilde{f}, \tilde{g} \in \Cpoly{\fg^*}$, and requiring that
$\iota \pp[\big]{\poibr{f,g}} = \pi \pp[\big]{\poibr{\tilde{f},\tilde{g}}}$.
That this is well-defined follows from the conditions of Hamiltonian reduction.
This Poisson bracket agrees with that of \cref{thm:SlodowyIndPoisson}.

\subsection{Quantum Hamiltonian reduction}

Up to this point, we have been developing two related but distinct threads:
Slodowy slices and \Walgebras. In fact, these two subjects are much more
intimately related than has been presented so far, and their correspondence
forms the backbone of this thesis. In short, it can be stated that the
\Walgebra\ $U(\fg,e)$ is a quantisation of the ring of functions on the Slodowy
slice~$\slodowy_\chi$, and every statement about Slodowy slices has a corresponding
quantisation which can be applied to \Walgebras.

Recalling the Whittaker definition of \Walgebras\ from \cref{eq:WalgAdDef}, the
similarity to the Hamil\-tonian reduction equation of \cref{thm:SlodowyHamRed} can
be seen:
\begin{align*}
U(\fg,e) & = \pp[\big]{U(\fg) \big/ U(\fg)\fm_\chi}^\fm    &
\Cpoly[\big]{\slodowy_\chi} & \simeq \pp*{ \Cpoly{\fg^*} \big/ I\pp[\big]{\mu^{-1} (\chi)} }^{M}
\end{align*}
In the definition of the \Walgebra, we consider the invariants under the adjoint
action of $\fm$ in the quotient of a non-commutative filtered algebra by a left ideal.
This corresponds exactly in the Hamiltonian reduction expression to taking the
invariants under the adjoint action of $M$ in the quotient of the corresponding
associated graded spaces.

This observation is more than a curiosity: there is a very precise sense in which
we can translate concepts of Hamiltonian reduction of Poisson varieties to apply
to the \Walgebra\ $U(\fg,e)$. This will allow us to express $U(\fg,e)$ as a
\emph{quantum Hamiltonian reduction} of the universal enveloping algebra
$U(\fg)$. This can be expressed using the formalism of \emph{deformation
quantisation}, which encodes the structure of a Poisson algebra in a
non-commutative filtered algebra. Many of the structures of Poisson geometry
have quantum analogues which can be applied to the W-algebraic context,
including a quantum co-moment map which quantises the classical co-moment map.
This furnishes us with all the tools necessary to define a quantum Hamiltonian
reduction which descends gracefully to classical Hamiltonian reduction.

The technical details of this correspondence shall be addressed in detail in
\cref{chap:CatOWalg}, however for current purposes we can use this formalism as
a motivation. This point of view will allow for the application of the
techniques of Poisson geometry to \Walgebras, providing us with powerful tools
for solving problems.

\chapter{W-algebras in \texorpdfstring{\typeA*}{type A}}
\label{chap:TypeA}

Up to this point we've discussed the background for \Walgebras\ in an arbitrary
semisimple Lie algebra $\fg$. Much of what has been discussed has a much simpler
and more concrete realisation in the \emph{classical} Lie algebras, and in
particular in the Lie algebras of \typeA. We shall discuss what this background
looks like when we restrict our attention to the Lie algebras of \typeA.

In this chapter, we continue working over an algebraically closed field of
characteristic zero, which can be taken to be $\CC$. We shall fix a number
$n \in \NN_+$, and shall fix~$\fg$ to be a simple Lie algebra of \typeA{n-1}. This
is to say we shall let $\fg = \fsl_n$, the Lie algebra of $n \times n$ traceless
matrices with the commutator Lie bracket $[x,y] \coloneqq xy - yx$.

\section{Nilpotent orbits}

The nilpotent orbits in \typeA\ have a particularly simple characterisation.
Since every matrix in $\Mat_n(\CC)$ has a Jordan canonical form,
every element of $\fg$ can be conjugated to some matrix consisting of Jordan
blocks on the diagonal. In particular, for a nilpotent element $e \in \fg$, every
generalised eigenvalue is $0$, so its conjugacy class is entirely determined by
the sizes of the Jordan blocks. As a result, the nilpotent orbits in $\fg$ are
in bijection with the set of \emph{partitions} of $n$, i.e.~tuples
$\lambda = (\lambda_1, \dotsc, \lambda_k)$ satisfying $\sum_{i=1}^k \lambda_i = n$ and $\lambda_1 \ge \dotsb \ge \lambda_k$.
Partitions can be indicated by \emph{Young diagrams}.
\begin{align*}
e & = \begin{psmallmatrix} 0&1& & &  \\
                           0&0& & &  \\
                            & &0&1&  \\
                            & &0&0&  \\
                            & & & &0
      \end{psmallmatrix}  &
e & = \begin{psmallmatrix} 0&1&0& & &  \\
                           0&0&1& & &  \\
                           0&0&0& & &  \\
                            & & &0&1&  \\
                            & & &0&0&  \\
                            & & & & &0
    \end{psmallmatrix}  \\
\text{Partition:} & \qquad\, (2,2,1) &
                  & \qquad\;\, (3,2,1) \\
\text{Young diagram:} & \qquad \ydiagram{2,2,1} &
                      & \qquad \ydiagram{3,2,1}
\end{align*}

The set of nilpotent orbits has a natural partial ordering, where
$\orbit' \le \orbit$ if and only if $\orbit' \subseteq \overline{\orbit}$. Since
nilpotent orbits correspond to partitions, this also imposes a partial ordering
on the partitions of $n$. However, there already exists a well-known ordering on
the set of partitions of $n$: the \emph{dominance ordering}.

\begin{defn}
Let $\lambda = (\lambda_1, \dotsc, \lambda_k)$ and $\mu = (\mu_1, \dotsc, \mu_\ell)$ be two partitions
of $n$ satisfying $\lambda_1 \ge \dotsb \ge \lambda_k > 0$ and $\mu_1 \ge \dotsb \ge \mu_\ell > 0$. The
\emph{dominance ordering} is the partial ordering on the set of partitions of
$n$ where $\lambda \ge \mu$ if and only if
\begin{equation*}
\sum_{i=1}^j \lambda_i \ge \sum_{i=1}^j \mu_i
\end{equation*}
for every $j$ between $1$ and $\max(k,\ell)$ (here we declare $\lambda_i = \mu_j = 0$
for all $i > k$ and $j > \ell$). In this case, we say that $\lambda$ \emph{dominates}
$\mu$.
\end{defn}

\begin{thm}[Gerstenhaber, Hesselink]{\normalfont \cite[Theorem~6.2.5]{CM:Nilorbits}}
The partial ordering on orbits corresponds to the dominance ordering under the
equivalence between nilpotent orbits and partitions.
\end{thm}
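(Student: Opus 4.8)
The plan is to reduce the statement to a purely combinatorial fact about partitions by first recalling the explicit geometry of nilpotent orbit closures in $\fsl_n$. First I would observe that for a nilpotent $e \in \fsl_n$ with Jordan type $\lambda$, the rank of $e^k$ depends only on $\lambda$: writing $\lambda^T$ for the transpose partition, one has $\operatorname{rank} e^k = n - \sum_{i=1}^k \lambda^T_i = \sum_{i > k} \lambda^T_i$, since a single Jordan block $J_m(0)$ contributes $\max(m-k, 0)$ to the rank of its $k$-th power and $\sum_i \max(\lambda_i - k, 0) = \sum_{j > k} \lambda^T_j$. Hence the conjugacy class of $e$ is cut out inside $\fsl_n$ by the locally closed conditions $\operatorname{rank} e^k = r_k(\lambda)$ for all $k$, where $r_k(\lambda) := \sum_{j>k}\lambda^T_j$.

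The second step is to pass from the class to its closure. Since $x \mapsto \operatorname{rank} x^k$ is lower semicontinuous on $\fsl_n$ (the condition $\operatorname{rank} x^k \le r$ is closed, being the vanishing of all $(r+1)\times(r+1)$ minors of $x^k$), the closure $\overline{\orbit}_\lambda$ is contained in the closed set $\{ x : \operatorname{rank} x^k \le r_k(\lambda) \text{ for all } k\}$. In fact equality holds — this is a standard fact, but for the proof one only needs the inclusion $\orbit_\mu \subseteq \overline{\orbit}_\lambda \Rightarrow \operatorname{rank} e_\mu^k \le r_k(\lambda)$ for one direction and an explicit degeneration for the other. So $\orbit_\mu \le \orbit_\lambda$ in the closure order iff $r_k(\mu) \le r_k(\lambda)$ for all $k \ge 1$, i.e. $\sum_{j > k}\mu^T_j \le \sum_{j>k}\lambda^T_j$ for all $k$.

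The third step is the combinatorial translation. Since $\sum_j \mu^T_j = n = \sum_j \lambda^T_j$, the inequality $\sum_{j>k}\mu^T_j \le \sum_{j>k}\lambda^T_j$ is equivalent to $\sum_{j \le k}\mu^T_j \ge \sum_{j\le k}\lambda^T_j$ for all $k$, i.e. $\mu^T \ge \lambda^T$ in the dominance order. Finally I would invoke the elementary self-duality of the dominance order under transposition: $\mu^T \ge \lambda^T$ iff $\lambda \ge \mu$. This last equivalence is itself proved by a short argument counting boxes in Young diagrams (for instance, $\sum_{i \le j}\lambda_i$ equals $jn$ minus the number of boxes of $\lambda$ strictly to the right of column $j$ plus a correction, or more cleanly by the formula $\sum_{i\le j}\lambda_i = \sum_k \min(\lambda^T_k, j)$, whose monotonicity in $\lambda^T$ runs the opposite way). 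Chaining these equivalences gives $\orbit_\mu \le \orbit_\lambda \iff \lambda \ge \mu$, which is the theorem.

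The main obstacle is the second step — establishing that $\overline{\orbit}_\lambda$ is exactly the rank variety $\{x : \operatorname{rank} x^k \le r_k(\lambda)\}$, rather than merely contained in it. The easy inclusion is the semicontinuity argument above; the reverse requires exhibiting, for each $\mu$ with $r_k(\mu) \le r_k(\lambda)$ for all $k$, an explicit one-parameter family of matrices degenerating from Jordan type $\lambda$ to Jordan type $\mu$, or equivalently checking irreducibility of the rank variety together with a dimension count. In a thesis treatment one would most likely quote this from the cited reference \cite{CM:Nilorbits} rather than reproving it; given that \cref{thm:orbitbij} and the structure of \typeA\ orbits have already been set up, citing Gerstenhaber--Hesselink for the closure description and then supplying only the combinatorial steps (semicontinuity, the arithmetic with transposes, and self-duality of dominance) is the cleanest route.
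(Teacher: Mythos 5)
The paper offers no proof of this statement at all; it simply cites \cite[Theorem~6.2.5]{CM:Nilorbits} and moves immediately to the covering-relation refinement (\cref{prop:OrbitCover}), which it does prove. Your proposal supplies the standard argument for the full theorem: express $\rank e^k$ in terms of the transpose partition, use closedness of the rank loci (vanishing of minors) for the easy inclusion of $\overline{\orbit}_\lambda$ into the rank variety $\{x : \rank x^k \le r_k(\lambda)\}$, invoke Gerstenhaber--Hesselink or an explicit one-parameter degeneration for the reverse inclusion, and then chain the resulting inequalities through the elementary fact that transposition reverses the dominance order. Everything you wrote is correct: the formula $\rank e^k = \sum_{j>k}\lambda^T_j$, the semicontinuity step, and the combinatorial translation via $n - \sum_{j\le k}(\cdot)$. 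You have also correctly isolated where the real work lies, namely the reverse inclusion, which is exactly the content of the Gerstenhaber and Hesselink theorems and is the piece one would quote rather than reprove in a thesis. Since the paper itself is purely citational here, your fuller sketch is a sound expansion, and your concluding remark --- that the degeneration step should be sourced from \cite{CM:Nilorbits} while the surrounding combinatorics can be supplied --- is an honest and accurate accounting of where the argument stops being elementary.
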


Recall that in a partial ordering $\ge$, we say that $\lambda$ \emph{covers} $\mu$ if
$\lambda > \mu$ and there exists no element $\nu$ such that $\lambda > \nu > \mu$, i.e.~$\lambda$ is a
minimal element satisfying $\lambda > \mu$. We will need the following result on the
fine structure of the dominance ordering.

\begin{prop}[Gerstenhaber]{\normalfont \cite[Lemma~6.2.4]{CM:Nilorbits}}
\label{prop:OrbitCover}
Let $\lambda = (\lambda_1, \dotsc, \lambda_k)$ and $\mu$ be two partitions of $n$. The partition $\lambda$
covers $\mu$ if and only if $\mu$ can be obtained from $\lambda$ by the following
procedure. Let $i$ be an index and $j > i$ be the smallest index such that
$0 \le \lambda_j < \lambda_i - 1$, where we again declare $\lambda_j = 0$ for $j > k$. Assume that
either $\lambda_j = \lambda_i - 2$ or $\lambda_\ell = \lambda_i$ whenever $i < \ell < j$. Then the
components of $\mu$ are obtained from the components of $\lambda$ by replacing $\lambda_i$ and
$\lambda_j$ by $\lambda_i - 1$ and $\lambda_j + 1$, respectively, and re-arranging if necessary.
\end{prop}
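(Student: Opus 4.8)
The plan is to translate everything into the language of partial sums. Writing $\sigma_k(\lambda) = \lambda_1 + \cdots + \lambda_k$ (with $\lambda_k = 0$ once $k$ exceeds the length of $\lambda$), a sequence $(s_k)_{k \ge 0}$ is of the form $(\sigma_k(\lambda))$ for a partition of $n$ exactly when $s_0 = 0$, the $s_k$ are eventually constant equal to $n$, and the consecutive differences $s_k - s_{k-1}$ are non-increasing; moreover $\lambda \ge \mu$ in the dominance ordering precisely when $\sigma_k(\lambda) \ge \sigma_k(\mu)$ for every $k$. Thus $\lambda$ covers $\mu$ if and only if $\sigma(\lambda) \ge \sigma(\mu)$ pointwise, $\sigma(\lambda) \ne \sigma(\mu)$, and no sequence with non-increasing differences lies strictly between them. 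I would prove the two implications separately.

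For the direction ``the prescribed move produces a cover'', I would start from $\mu$ obtained from $\lambda$ by moving one box from row $i$ to row $j$ and then re-sorting, and compute $\sigma(\mu)$ explicitly. The hypothesis — that either $\lambda_j = \lambda_i - 2$ or every part strictly between positions $i$ and $j$ equals $\lambda_i$ — is exactly what forces the re-sorted $\mu$ to satisfy $\sigma_k(\lambda) - \sigma_k(\mu) = 1$ for $k$ in a single interval $I$ and $0$ otherwise: $I$ is a one-point interval in the first case, and an interval of the form $[p, j-1]$ in the second, where $p$ is the index of the last part of $\lambda$ equal to $\lambda_i$. Then any partition $\nu$ with $\mu \le \nu \le \lambda$ has $\sigma_k(\nu) = \sigma_k(\lambda)$ off $I$ and $\sigma_k(\nu) \in \{\sigma_k(\mu), \sigma_k(\lambda)\}$ on $I$; a short check using the fact that the differences of $\sigma(\nu)$ must be non-increasing — invoking the hypothesis on the parts of $\lambda$ near positions $i$ and $j$ — shows the only admissible choices are $\nu = \lambda$ and $\nu = \mu$. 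Hence $\lambda$ covers $\mu$.

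For the converse, suppose $\lambda$ covers $\mu$. Let $i$ be the first index with $\lambda_i \ne \mu_i$; since $\sigma(\lambda) \ge \sigma(\mu)$ and the two sequences agree below $i$, we get $\lambda_i > \mu_i$, and comparing total sums rules out $\lambda_i = 1$, so $\lambda_i \ge 2$ and the least index $j > i$ with $\lambda_j < \lambda_i - 1$ exists. I would form $\nu$ from $\lambda$ by moving one box from row $i$ to row $j$ (and re-sorting), and verify: $\nu$ is a genuine partition; $\nu < \lambda$, by the standard fact that replacing parts $a > b$ of a partition by $a-1,\,b+1$ decreases the dominance order; and $\nu \ge \mu$ — the key point is that for $i < k < j$ one has $\mu_k \le \mu_i \le \lambda_i - 1 \le \lambda_k$, so $\sigma(\lambda) - \sigma(\mu)$ is non-decreasing on $[i-1, j-1]$ and therefore at least $1$ on the explicit interval where $\sigma(\nu)$ dips below $\sigma(\lambda)$. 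Since $\lambda$ covers $\mu$, these three facts force $\nu = \mu$, so $\mu$ is obtained from $\lambda$ by exactly this move. It remains to see that the ``or'' condition holds: if it failed — i.e.\ if there were a part equal to $\lambda_i - 1$ between positions $i$ and $j$ and also $\lambda_j \le \lambda_i - 3$ — I would interpose the partition $\rho$ obtained from $\lambda$ by moving a box from the last row of size $\lambda_i - 1$ down to row $j$; a direct partial-sum computation gives $\mu < \rho < \lambda$, contradicting that $\lambda$ covers $\mu$.

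The main obstacle, where essentially all the work sits, is the re-sorting bookkeeping: in each of the three constructions ($\mu$, $\nu$, $\rho$) one must track precisely how the partial-sum sequence changes from $\sigma(\lambda)$ after re-ordering the parts — in every case it changes by the indicator of an explicit interval — and then confirm that the intermediate sequences really have non-increasing differences, i.e.\ genuinely come from partitions. Pinning down exactly when the ``non-increasing differences'' constraint leaves no room for an interpolating partition, which is what makes the ``or'' dichotomy of the statement appear, is the delicate step; each individual verification is elementary, but the case analysis must be carried out carefully.
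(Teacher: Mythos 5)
Your converse direction mirrors the paper's brief argument: choose $i$ as the first index with $\lambda_i \neq \mu_i$, take $j$ as in the statement, form $\nu$ by the box move, show $\mu \le \nu < \lambda$, and conclude $\nu = \mu$; you additionally justify $\nu \ge \mu$ and verify the ``or'' condition by interposing $\rho$, steps the paper leaves to the cited reference. The forward direction is likewise the route the paper's ``by construction'' implicitly intends, but it contains a conceptual slip that you should correct before working out the bookkeeping. You claim the ``or'' hypothesis is what forces $\sigma_k(\lambda) - \sigma_k(\mu)$ to equal $1$ on a single interval and $0$ elsewhere; in fact this is automatic for any $i$ and any minimal $j$ with $\lambda_j < \lambda_i - 1$: the re-sorted move always changes the partial sums by exactly $1$ on $[p,\, j-1]$, where $p$ is the largest index with $\lambda_p = \lambda_i$, and by $0$ off it, with no hypothesis needed.

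What the hypothesis actually does is rule out interpolants. Once you know a candidate $\nu$ with $\mu \le \nu \le \lambda$ has $\sigma_k(\nu) = \sigma_k(\lambda)$ off $[p,\, j-1]$ and $\sigma_k(\nu) \in \{\sigma_k(\mu), \sigma_k(\lambda)\}$ on it, the requirement that $\nu$ have non-increasing parts forces $\sigma(\nu) = \sigma(\lambda)$ or $\sigma(\nu) = \sigma(\mu)$ precisely when either $p = j - 1$, so the interval is a single point — which is the ``$\lambda_\ell = \lambda_i$ for all $i < \ell < j$'' clause, not the ``$\lambda_j = \lambda_i - 2$'' clause, so you have the parenthetical in your proposal backwards — or $\lambda_{j-1} - \lambda_j = 1$, which is where $\lambda_j = \lambda_i - 2$ enters (it forces $\lambda_{j-1} = \lambda_i - 1$ when the second clause fails). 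When both clauses fail the constraint at position $j-1$ has enough slack that dipping the partial sums at $j-1$ alone yields a genuine partition strictly between $\mu$ and $\lambda$, and this is exactly the $\rho$ you construct in the converse. These are corrections to the framing rather than gaps — your ``short check'' does invoke the hypothesis in the right place — but getting straight where and why the hypothesis enters will make the case analysis you rightly identify as the main obstacle substantially less delicate.
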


\begin{eg}
\hspace{0pt} % Put the first example on a new line
\begin{enumerate}[label=(\alph*)]
\item The partition $(3,1)$ covers the partition $(2,2)$.
\item The partition $(3,3)$ covers the partition $(3,2,1)$.
\item The partition $(3,2,1)$ covers \emph{both} the partitions $(2,2,2)$ and
      $(3,1,1,1)$.
\item The partition $(4,3,1)$ \emph{does not} cover the partition $(3,3,2)$.
      Taking $i = 1$ and $j = 3$ satisfies neither of the two conditions
      $\lambda_j = \lambda_i - 2$ and $\lambda_\ell = \lambda_i$ for all $i < \ell < j$.
      Instead, it can be seen that:
      \begin{enumerate}[label=(\roman*),topsep=0pt]
      \item The partition $(4,3,1)$ covers the partition $(4,2,2)$.
      \item The partition $(4,2,2)$ covers the partition $(3,3,2)$.
      \end{enumerate}
\end{enumerate}
\end{eg}

\begin{proof}[Proof of \cref*{prop:OrbitCover}]
By construction, $\lambda$ will cover $\mu$ for any $\lambda$ and $\mu$ related by the procedure
in the \namecref{prop:OrbitCover}, so it remains to show the converse. Assume
that $\lambda$ covers $\mu$; let $i$ be the least integer such that $\lambda_\ell > \mu_\ell$,
and $j$ be as in the \namecref{prop:OrbitCover}. Applying the procedure we
obtain a new partition $\nu$ satisfying $\lambda > \nu \ge \mu$, and hence $\nu = \mu$ since $\lambda$
covers $\mu$.
\end{proof}

Furthermore, recall from \cref{eg:TypeAsltriple} that there is an algorithm for
constructing \sltriples\ from a nilpotent $e$ in \typeA. This not only allows us
to obtain ordinary \sltriples, but combining it with the characterisation of
good gradings $\Gamma$ from the following \namecref{sec:TypeAPyramids} will allow us
to find $\Gamma$\graded\ \sltriples\ as well. Hence all the concepts whose existence
was proven in \cref{sec:Nilorbits} have concrete constructions in \typeA. This
shall be used extensively within this chapter.

\section{Pyramids}
\label{sec:TypeAPyramids}

We are interested in determining what all possible good gradings are in \typeA.
This has been accomplished by Elashvili and Kac, who have furthermore given a
complete classification of all good gradings in every simple Lie algebra
\cite{EK:ClassGG}. In the classical types, this is accomplished using a
combinatorial structure known as a \emph{pyramid}, which is closely related to a
Young diagram with additional information to encode the data of the grading.
In \typeA\ there is a particularly simple description of the pyramids.

\begin{defn}
\label{def:typeApyramid}
Let $\lambda = (\lambda_1, \lambda_2, \dotsc, \lambda_k)$ be a partition of $n$, that is to say a
sequence of strictly positive integers $\lambda_1 \ge \lambda_2 \ge \dotsb \ge \lambda_k > 0$ where
$\sum_{j=1}^k \lambda_j = n$. A \emph{pyramid of shape $\lambda$} is a collection of $n$~boxes
of size~$2 \times 2$ centred at integer points $(i,j)$ of the plane, satisfying
the following conditions:
\begin{enumerate}
\item the number of boxes in the $\ell$th row (which corresponds to having
      second co-ordinate $2\ell-1$) is $\lambda_\ell$ for each $1 \le \ell \le k$.
\item the first co-ordinates of the boxes in the $\ell$th row form an arithmetic
      sequence of difference~$2$, i.e.~$f_\ell, f_\ell + 2, \dotsc, F_\ell$.
\item the first row is centred at $0$, i.e.~$f_1 = - F_1$.
\item \label[property]{typeAPyramidCond}
      the first co-ordinates of the first and last boxes in each row form
      increasing and decreasing sequences, respectively,
      i.e.~$f_1 \le \dotsb \le f_k$ and $F_1 \ge \dotsb \ge F_k$.
\end{enumerate}

\noindent
More generally, a \emph{pyramid of size $n$} is a pyramid of shape $\lambda$ for some
partition $\lambda$ of $n$.
\end{defn}

\begin{rem}
We shall illustrate this definition by constructing all possible pyramids of
shape $\lambda = (4,3)$.
We begin by constructing a row of $\lambda_1 = 4$ boxes, each of width 2 and centred
at integer values, such that the whole row is centred at zero.
\begin{equation*}
\begin{picture}(120,42)
% \lambda = (4)
\put(0,12) {\line(1,0){120}} \put(0,42)  {\line(1,0){120}}
\put(0,12) {\line(0,1){30}}  \put(120,12){\line(0,1){30}}
\put(30,12){\line(0,1){30}} \put(60,12){\line(0,1){30}}
\put(90,12){\line(0,1){30}}
\put(60,12){\circle*{3}}\put(57.5,0){0}
\put(10,0){-3} \put(40,0){-1} \put(72,0){1} \put(102,0){3}
\put(25,0){-2} \put(88,0){2}
\end{picture}
\end{equation*}

We now place a second row of $\lambda_2 = 3$ boxes on top of the first row (again with
each box centred on an integer) according to the following rule: the leftmost
box of the second row cannot be further left than the leftmost box of the first
row, and the rightmost box of the second row cannot be further right than the
rightmost box of the first row. In our example, this gives us three pyramids:
\begin{equation*}
\begin{picture}(270,40)
% \lambda = (4,3), left
\put(0,0) {\line(1,0){80}} \put(0,20){\line(1,0){80}}
\put(0,40){\line(1,0){60}}
\put(0,0) {\line(0,1){40}} \put(20,0){\line(0,1){40}}
\put(40,0){\line(0,1){40}} \put(60,0){\line(0,1){40}}
\put(80,0){\line(0,1){20}}
\put(40,0){\circle*{3}}

% \lambda = (4,3), Dynkin
\put(95,0)  {\line(1,0){80}} \put(95,20) {\line(1,0){80}}
\put(105,40){\line(1,0){60}}
\put(95,0)  {\line(0,1){20}} \put(115,0) {\line(0,1){20}}
\put(135,0) {\line(0,1){20}} \put(155,0) {\line(0,1){20}}
\put(175,0) {\line(0,1){20}}
\put(105,20){\line(0,1){20}} \put(125,20){\line(0,1){20}}
\put(145,20){\line(0,1){20}} \put(165,20){\line(0,1){20}}
\put(135,0) {\circle*{3}}

% \lambda = (4,3), right
\put(190,0) {\line(1,0){80}} \put(190,20){\line(1,0){80}}
\put(210,40){\line(1,0){60}}
\put(190,0) {\line(0,1){20}} \put(210,0) {\line(0,1){40}}
\put(230,0) {\line(0,1){40}} \put(250,0) {\line(0,1){40}}
\put(270,0) {\line(0,1){40}}
\put(230,0){\circle*{3}}
\end{picture}
\end{equation*}
If there were further $\lambda_j$ in the partition, we would repeat this rule until
all the rows were constructed.
\end{rem}

Given a pyramid $P$ with $n$ boxes we can construct a \emph{filling} of the
pyramid by labelling each box with one of the numbers $\set{1,\dotsc,n}$ such
that there are no repeated labels. Most often we shall choose the labelling so
that it increases first up columns and then left to right.

\begin{equation}
\begin{picture}(60,60)
% \lambda = (3,2,2)
\put(0,0)  {\line(1,0){60}} \put(0,20) {\line(1,0){60}}
\put(10,40){\line(1,0){40}} \put(10,60){\line(1,0){40}}
\put(0,0)  {\line(0,1){20}} \put(20,0) {\line(0,1){20}}
\put(40,0) {\line(0,1){20}} \put(60,0) {\line(0,1){20}}
\put(10,20){\line(0,1){40}} \put(30,20){\line(0,1){40}}
\put(50,20){\line(0,1){40}}
 \put(30,0){\circle*{3}}
 \put(10,10){\makebox(0,0){{1}}}
 \put(30,10){\makebox(0,0){{4}}}
 \put(50,10){\makebox(0,0){{7}}}
 \put(20,30){\makebox(0,0){{2}}}
 \put(40,30){\makebox(0,0){{5}}}
 \put(20,50){\makebox(0,0){{3}}}
 \put(40,50){\makebox(0,0){{6}}}
\end{picture}
\label{eq:FilledPyramid}
\end{equation}

Let $P$ be a pyramid with a filling consisting of the numbers
$\set{1,\dotsc,n}$. If the box labelled $k$ is centred at the point
$(i,2j-1)$, then we define:
\begin{itemize}
\item $\col(k) = i$, the column number of the box labelled $k$.
\item $\row(k) = j$, the row number of the box labelled $k$.
\end{itemize}
Furthermore, we say $\ell$ is right-adjacent to $k$, denoted $k \to \ell$,
if the box labelled $\ell$ lies in the same row as and immediately adjacent to
the right of the box labelled $k$, i.e.~$\row(\ell) = \row(k)$ and
$\col(\ell) = \col(k) + 2$.

\begin{eg}
In the above pyramid \labelcref{eq:FilledPyramid}:
$\row(2) = 2$, $\col(2) = -1$, $\row(7) = 1$, $\col(7) = 2$, while
$1 \to 4$,\; $4 \to 7$,\; $2 \to 5$ and $3 \to 6$.
\end{eg}

\subsection{A bijection between pyramids and good gradings}

To any filled pyramid $P$ of size $n$, one can associate a nilpotent element
$e_P$ and a $\ZZ$\grading\ $\Gamma_P$ by the following construction. Fix a standard
basis of $\CC^n$, and let $E_{ij}$ be the matrix which maps the $i$th standard
basis vector to the $j$th and maps all other basis vectors to zero. Note that
this is the matrix with $(j,i)$-entry $1$ and all other entries $0$.
Define the nilpotent~$e_P$ and the $\ZZ$\grading~$\Gamma_P$ by declaring
\begin{align*}
e_P & \coloneqq \sum_{i \to j} E_{ij}  & \text{and}\qquad\quad
\deg E_{ij} & \coloneqq \col(j) - \col(i).
\end{align*}

One can check that the element $e_P$ is nilpotent, as
\begin{equation*}
e_P^k = \sum_{i_1 \to \dotsb \to i_k} E_{i_1 i_k},
\end{equation*}
which vanishes for large enough $k$ as every row is of finite length. A
straightforward calculation shows that $\Gamma_P$ is a Lie algebra grading.

\begin{note}
The element $e_P$ and the $\ZZ$\grading~$\Gamma_P$ can equally well be viewed in the
context of the Lie algebra $\fsl_n$ and the Lie algebra $\fgl_n$. In fact, the
following \namecref{prop:PyramidGood} holds equally well for both $\fsl_n$ and
$\fgl_n$. The proofs in the two cases are virtually identical, so we shall
proceed assuming the Lie algebra is $\fgl_n$ for computational simplicity. The
only substantive difference arises in determining the dimension of $\liez{e_P}$,
which includes the centre of $\fgl_n$, but is of dimension one less when
considering the centreless $\fsl_n$. Correcting for this discrepancy, the proof
remains identical.
\end{note}

\begin{prop}[Elashvili--Kac]{\normalfont \cite[Theorem~4.1]{EK:ClassGG}}]
\label{prop:PyramidGood}
The grading $\Gamma_P$ is good for $e_P$.
\end{prop}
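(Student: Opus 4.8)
The plan is to verify the three defining conditions \cref{GGprop1,GGprop2,GGprop3} of a good grading, reducing the work as far as possible to a single statement about the centraliser of $e_P$. Write $V = \CC^n$ and grade it by placing the basis vector $v_i$ indexed by the box labelled $i$ in degree $\col(i)$; then $\Gamma_P$ is the restriction to $\fsl_n$ of the grading induced on $\fgl_n = \End(V)$, because $E_{ij}$ raises degree by $\col(j) - \col(i) = \deg E_{ij}$. Condition \cref{GGprop1} is then immediate: if $i \to j$ then $\col(j) = \col(i) + 2$, so every summand of $e_P = \sum_{i \to j} E_{ij}$ lies in $\fg_2$.

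The first substantive step is to reduce \cref{GGprop2,GGprop3} to a statement we can attack directly. By \cref{rem:GGprop23} these two conditions are equivalent for any $\ZZ$\grading of a semisimple Lie algebra, so it suffices to prove \cref{GGprop2}. Since $e_P \in \fg_2$, the operator $\ad e_P$ has degree $2$ and so its kernel $\liez{e_P}$ is a graded subspace of $\fg$; then \cref{GGprop2} — injectivity of $\ad e_P$ on $\fg_j$ for $j \le -1$ — is precisely the statement that $\liez{e_P}$ has no component in negative degree, i.e.\ condition \cref{GGprop4}. Since the centraliser of $e_P$ in $\fsl_n$ is contained in its centraliser in $\fgl_n$, it is enough to prove that the latter lies in $\bigoplus_{j \ge 0} \fg_j$.

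For that I would use that $e_P$ acts on $V$ by shifting right along the rows of $P$, so that as a graded $\CC[e_P]$\module\ $V$ decomposes into the Jordan strings $S_1, \dotsc, S_k$ indexed by the rows, where $S_\ell$ is cyclic with generator in degree $f_\ell$ and occupies degrees $f_\ell, f_\ell + 2, \dotsc, F_\ell$. Then the centraliser of $e_P$ in $\fgl_n$ is $\End_{\CC[e_P]}(V) = \bigoplus_{a,b} \Hom_{\CC[e_P]}(S_a, S_b)$ as graded spaces, and each summand has a homogeneous basis consisting of the maps sending the generator of $S_a$ to $e_P^s$ applied to the generator of $S_b$, for $\max(\lambda_b - \lambda_a, 0) \le s \le \lambda_b - 1$; such a map has degree $f_b - f_a + 2s$. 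Hence the smallest degree occurring in $\Hom_{\CC[e_P]}(S_a, S_b)$ is $f_b - f_a$ when $\lambda_b \le \lambda_a$ and $F_b - F_a$ when $\lambda_b > \lambda_a$, and in both cases this equals $\max(f_b - f_a,\, F_b - F_a)$.

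It only remains to see that $\max(f_b - f_a, F_b - F_a) \ge 0$ for all $a, b$, and this is exactly where the pyramid conditions $f_1 \le \dotsb \le f_k$ and $F_1 \ge \dotsb \ge F_k$ from \cref{def:typeApyramid} are used: if $a \le b$ then $f_b - f_a \ge 0$, while if $a \ge b$ then $F_b - F_a \ge 0$. This yields \cref{GGprop4}, hence \cref{GGprop2} and \cref{GGprop3}, which with \cref{GGprop1} completes the proof. The only delicate point — essentially the sole obstacle — is the bookkeeping in the third paragraph: correctly identifying the homogeneous basis of $\Hom_{\CC[e_P]}(S_a, S_b)$ and checking that its minimal degree really is $\max(f_b - f_a, F_b - F_a)$ in terms of $f_\ell, F_\ell$ and $\lambda_\ell$; once that identity is in hand, the nesting of the rows of the pyramid forces the conclusion.
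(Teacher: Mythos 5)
Your proof is correct and takes a genuinely cleaner route than the paper's, though both rest on the same Jordan-string picture. Both arguments reduce via \cref{rem:GGprop23} to showing $\liez{e_P} \subseteq \bigoplus_{j\geq 0}\fg_j$, working in $\fgl_n$. The paper proves this by explicitly listing commuting endomorphisms (the arrow diagrams of \cref{fig:ggnilcommute}), using \cref{typeAPyramidCond} of \cref{def:typeApyramid} to see that each one is non-negatively graded, and then verifying that the list spans $\liez{e_P}$ by a dimension count against $\sum_i \lambda^*_i{}^2$ via the block-tower identity \labelcref{eq:blocktower}. You instead identify $\liez{e_P}=\End_{\CC[e_P]}(V)=\bigoplus_{a,b}\Hom_{\CC[e_P]}(S_a,S_b)$ as graded vector spaces, determine a homogeneous basis of each $\Hom$-space together with its degrees, and read off that the minimum degree $\max(f_b-f_a,\,F_b-F_a)$ is non-negative from the pyramid inequalities $f_1\le\cdots\le f_k$ and $F_1\ge\cdots\ge F_k$. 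The module-theoretic framing makes the spanning automatic, sidestepping the combinatorial dimension count entirely; the paper's explicit endomorphisms are of course exactly your homogeneous basis elements in disguise. The bookkeeping in your third paragraph checks out: the range $\max(\lambda_b-\lambda_a,0)\le s\le\lambda_b-1$ is right, and $f_b-f_a\geq F_b-F_a$ holds precisely when $\lambda_b\leq\lambda_a$ since $F_\ell = f_\ell+2(\lambda_\ell-1)$, so the two cases you distinguish really do merge into the single formula $\max(f_b-f_a,F_b-F_a)$.
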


\begin{proof}
It is clear by construction that $\deg e_P = 2$, so one only needs to show that
$\ad e_P \colon \fg_j \to \fg_{j+2}$ is injective for $j \le -1$ by
\cref{rem:GGprop23}. Since $\ker \ad e_P = \liez{e_P}$, to show that $\Gamma_P$ is
good for the nilpotent $e_P$, it suffices to prove the following claim.

\begin{claim}
Every element of $\fg = \fgl_n$ which commutes with $e_P$ lies in the subspace
$\bigoplus_{j\ge0} \fg_j$.
\end{claim}

It will be useful to express elements of $\fgl_n$, i.e.~endomorphisms of
$\CC^n$, as arrow diagrams in the pyramid~$P$. To the endomorphism $E_{ij}$ we
will assign the diagram consisting of the pyramid~$P$ with an arrow originating
from the box labelled $i$ and ending at the box labelled $j$. The diagram
representing the nilpotent $e_P$ is shown in \cref{fig:ggnilpotent}.

%% Action of the nilpotent associated to a pyramid

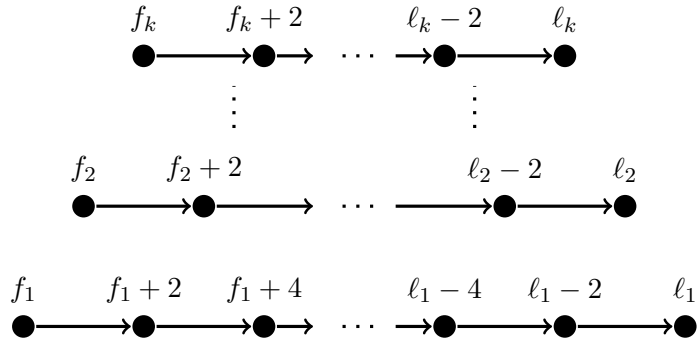
\begin{figure}[hf]

\tikzstyle{dot}=[circle,fill=black,inner sep=0,minimum size=3mm]

\centering
\begin{tikzpicture}[very thick,scale=.8]

  \path node (n1) at (0,0)  [dot,label=above:$f_1  $] {}
        node (n2) at (2,0)  [dot,label=above:$f_1+2$] {}
          edge [<-] (n1)
        node (n3) at (4,0)  [dot,label=above:$f_1+4$] {}
          edge [<-] (n2)
        node (d1) at (5,0)  {} edge [<-] (n3)
        node (d2) at (6,0)  {} edge [loosely dotted] (d1)
        node (n4) at (7,0)  [dot,label=above:$\ell_1-4$] {}
          edge [<-] (d2)
        node (n5) at (9,0)  [dot,label=above:$\ell_1-2$] {}
          edge [<-] (n4)
        node (n6) at (11,0) [dot,label=above:$\ell_1$] {}
          edge [<-] (n5);

  \path node (p1) at (1,2)  [dot,label=above:$f_2  $] {}
        node (p2) at (3,2)  [dot,label=above:$f_2+2$] {}
          edge [<-] (p1)
        node (c1) at (5,2)  {} edge [<-] (p2)
        node (c2) at (6,2)  {} edge [loosely dotted] (c1)
        node (p4) at (8,2)  [dot,label=above:$\ell_2-2$] {}
          edge [<-] (c2)
        node (p5) at (10,2) [dot,label=above:$\ell_2$] {}
          edge [<-] (p4);

  \draw [loosely dotted] (3.5,3.3) -- (3.5,4.0);
  \draw [loosely dotted] (7.5,3.3) -- (7.5,4.0);

  \path node (q1) at (2,4.5)  [dot,label=above:$f_k  $] {}
        node (q2) at (4,4.5)  [dot,label=above:$f_k+2$] {}
          edge [<-] (q1)
        node (d1) at (5,4.5)  {} edge [<-] (q2)
        node (d2) at (6,4.5)  {} edge [loosely dotted] (d1)
        node (q3) at (7,4.5)  [dot,label=above:$\ell_k-2$] {}
          edge [<-] (d2)
        node (q4) at (9,4.5)  [dot,label=above:$\ell_k$] {}
          edge [<-] (q3);

\end{tikzpicture}

\caption{The nilpotent map $e_P$, where an arrow from $f_i+2p$ to $f_j+2q$
denotes that the standard basis vector corresponding to $f_i+2p$ (i.e.~the
vector $e_m$, where $m$ is the label of the box centred at $(f_i+2p,2i-1)$) is
mapped to the basis vector corresponding to $f_j+2q$.}

\label{fig:ggnilpotent}
\end{figure}

%% Commuting endomorphisms

\begin{figure}[p]
\centering

\tikzstyle{dot}=[circle,fill=black,inner sep=0,minimum size=3mm]
\tikzstyle{e}=[color=gray!50]

\begin{subfigure}[b]{\textwidth}
%% Commuting endomorphisms 1

\begin{tikzpicture}[very thick,bend angle=45,scale=.8]

  \path node (n1) at (0,0)  [dot,label=below:$f_j  $] {}
        node (n2) at (2,0)  [dot,label=below:$f_j+2$] {}
          edge [<-,e] (n1)
        node (d1) at (3,0)  {}
          edge [<-,e] (n2)
        node (d2) at (4,0)  {} edge [loosely dotted] (d1)
        node (n3) at (5,0)  [dot,label=below:$f_j+2r$] {}
          edge [<-,e] (d2)
          edge [<-, bend right] (n1)
        node (n4) at (7,0)  [dot,label=below:$f_j+2r+2$] {}
          edge [<-,e] (n3)
          edge [<-, bend right] (n2)
        node (d3) at (8,0)  {}
          edge [<-,e] (n4)
        node (d4) at (11,0) {} edge [loosely dotted] (d3)
        node (n5) at (12,0) [dot,label=below:$\ell_j-2r-2$] {}
          edge [<-,e] (d4)
        node (n6) at (14,0) [dot,label=below:$\ell_j-2r$] {}
          edge [<-,e] (n5)
        node (d5) at (15,0) {}
          edge [<-,e] (n6)
        node (d6) at (16,0) {} edge [loosely dotted] (d5)
        node (n7) at (17,0) [dot,label=below:$\ell_j-2$] {}
          edge [<-,e] (d6)
          edge [<-, bend right] (n5)
        node (n8) at (19,0) [dot,label=below:$\ell_j$] {}
          edge [<-,e] (n7)
          edge [<-, bend right] (n6);

  \begin{scope}
    \clip (n3) rectangle +(2.3,1.2);
    \draw [->] (n3) to [bend left] +(5,0);
  \end{scope}

  \begin{scope}
    \clip (n4) rectangle +(1.5,1);
    \draw [->] (n4) to [bend left] +(5,0);
  \end{scope}

  \begin{scope}
    \clip (n5) rectangle +(-1.5,1);
    \draw [<-] (n5) to [bend right] +(-5,0);
  \end{scope}

  \begin{scope}
    \clip (n6) rectangle +(-2.3,1.2);
    \draw [<-] (n6) to [bend right] +(-5,0);
  \end{scope}

\end{tikzpicture}

\caption{The endomorphism $E_j[2r]$ on row $j$ with rightward shift $2r$ for
         $r\ge0$. Note that $e_P = \sum_j E_j[2]$.}
\label{fig:ggnilcommute1}
\end{subfigure}
\\
\begin{subfigure}[b]{\textwidth}
%% Commuting endomorphisms 2

\begin{tikzpicture}[very thick,scale=1.2]

  \path node (n1) at (0,0)  [dot,label=below:$f_i  $] {}
        node (n2) at (2.6,0)  [dot,label=below:$f_i+2$] {}
          edge [<-,e] (n1)
        node (d1) at (3.6,0)  {}
          edge [<-,e] (n2)
        node (d2) at (4.6,0)  {} edge [loosely dotted] (d1)
        node (n3) at (5.6,0)  [dot,label=below:$f_i+\ell_j-f_j-2r-2$] {}
          edge [<-,e] (d2)
        node (n4) at (8.2,0)  [dot,label=below:$f_i+\ell_j-f_j-2r$] {}
          edge [<-,e] (n3)
        node (d3) at (9.2,0)  {}
          edge [<-,e] (n4)
        node (d4) at (11.6,0) {} edge [loosely dotted] (d3)
        node (n5) at (12.6,0) [dot,label=below:$\ell_i$] {}
          edge [<-,e] (d4);

  \path node (m1) at (.7,2)  [dot,label=above:$f_j  $] {}
        node (c1) at (1.7,2)  {}
          edge [<-,e] (m1)
        node (c2) at (2.7,2)  {} edge [loosely dotted] (c1)
        node (m2) at (3.7,2)  [dot,label=above:$f_j+2r$] {}
          edge [<-,e] (c2)
          edge [<-] (n1)
        node (m3) at (6.3,2)  [dot,label=above:$f_j+2r+2$] {}
          edge [<-,e] (m2)
          edge [<-] (n2)
        node (c3) at (7.3,2)  {}
          edge [<-,e] (m3)
        node (c4) at (8.3,2) {} edge [loosely dotted] (c3)
        node (m4) at (9.3,2)  [dot,label=above:$\ell_j-2$] {}
          edge [<-,e] (c4)
          edge [<-] (n3)
        node (m5) at (11.9,2) [dot,label=above:$\ell_j$] {}
          edge [<-,e] (m4)
          edge [<-] (n4);

\end{tikzpicture}

\caption{The endomorphism $E_i^j[2r]$ from row~$i$ to row~$j$ for $j>i$ with
rightward shift~$2r$ for $r\ge0$.} 
\label{fig:ggnilcommute2}
\end{subfigure}
\\
\begin{subfigure}[b]{\textwidth}
%% Commuting endomorphisms 3

\begin{tikzpicture}[very thick,scale=1.2]

  \path node (n1) at (0,0)  [dot,label=below:$f_i  $] {}
        node (d1) at (1,0)  {}
          edge [<-,e] (n1)
        node (d2) at (3.4,0)  {} edge [loosely dotted] (d1)
        node (n2) at (4.4,0)  [dot,label=below:$\ell_i+f_j-\ell_j+2r$] {}
          edge [<-,e] (d2)
        node (n3) at (7,0)  [dot,label=below:$\ell_i+f_j-\ell_j+2r+2$] {}
          edge [<-,e] (n2)
        node (d3) at (8,0)  {}
          edge [<-,e] (n3)
        node (d4) at (9,0) {} edge [loosely dotted] (d3)
        node (n4) at (10,0)  [dot,label=below:$\ell_i-2$] {}
          edge [<-,e] (d4)
        node (n5) at (12.6,0)  [dot,label=below:$\ell_i$] {}
          edge [<-,e] (n4);

  \path node (m1) at (.7,2)  [dot,label=above:$f_j  $] {}
          edge [->] (n2)
        node (m2) at (3.3,2)  [dot,label=above:$f_j+2$] {}
          edge [<-,e] (m1)
          edge [->] (n3)
        node (c1) at (4.3,2)  {}
          edge [<-,e] (m2)
        node (c2) at (5.3,2)  {} edge [loosely dotted] (c1)
        node (m3) at (6.3,2)  [dot,label=above:$\ell_j-2r-2$] {}
          edge [<-,e] (c2)
          edge [->] (n4)
        node (m4) at (8.9,2)  [dot,label=above:$\ell_j-2r$] {}
          edge [<-,e] (m3)
          edge [->] (n5)
        node (c3) at (9.9,2)  {}
          edge [<-,e] (m4)
        node (c4) at (10.9,2) {} edge [loosely dotted] (c3)
        node (m5) at (11.9,2) [dot,label=above:$\ell_j$] {}
          edge [<-,e] (c4);

\end{tikzpicture}

\caption{The endomorphism $E_j^i[2r]$ from row~$j$ to row~$i$ for $j>i$ with
rightward shift~$2r$ for $r\ge0$.} 
\label{fig:ggnilcommute3}
\end{subfigure}

\begin{note}
The endomorphism $e_P$ is also shown in each diagram, consisting of the
lighter horizontal lines. It can be checked from the diagram that all such
endomorphisms commute with $e_P$.
\end{note}

\caption{Endomorphisms of $\CC^n$ commuting with $e_P$.}
\label{fig:ggnilcommute}

\end{figure}

\Cref{fig:ggnilcommute} contains a collection of endomorphisms, represented by
arrow diagrams, which commute with $e_P$. The endomorphisms in
\cref{fig:ggnilcommute1} are positively graded by construction, and
\cref{typeAPyramidCond} of \cref{def:typeApyramid}, i.e.~the pyramid condition
of the definition, ensures that the endomorphisms in
\cref{fig:ggnilcommute2,fig:ggnilcommute3} are also positively graded.
The remainder of this proof consists of showing these endomorphisms form a basis
of $\liez{e_P}$. They are linearly independent by construction, so it remains
only to show they span the kernel.

Let $\lambda = (\lambda_1,\dotsc,\lambda_k)$ be the shape of $P$, and let
$\lambda^* = (\lambda^*_1,\dotsc,\lambda^*_\ell)$ be the dual partition. Recall that the dimension
of $\liez{e_P}$ is $\sum_{i=1}^\ell \lambda^*_i{}^2$, as any endomorphism which
commutes with $e_P$ has a simultaneous Jordan basis, and hence is related to
$e_P$ by a collection of endomorphisms of the spaces generated by the $i$th
elements of the Jordan strings.

Counting the endomorphisms of \cref{fig:ggnilcommute}, we can see that there are
$\sum_{i=1}^k \lambda_i$ of those in \cref{fig:ggnilcommute1}, and
$\sum_{i=1}^k \sum_{j=i+1}^k \lambda_j = \sum_{i=1}^k (i-1)\lambda_i$ each of those in
\cref{fig:ggnilcommute2,fig:ggnilcommute3}. Hence the dimension of the space
spanned by the endomorphisms is
\begin{equation*}
\sum_{i=1}^k \lambda_i + 2 \sum_{i=1}^k (i-1)\lambda_i = n + 2\sum_{i=1}^k (i-1)\lambda_i
  = \sum_{i=1}^{\lambda_1} \lambda^*_i{}^2,
\end{equation*}
where the second equality follows by counting cubes in a tower of blocks
associated to $\lambda$ as in \cref{fig:blocktower}. Thus the endomorphisms of
\cref{fig:ggnilcommute} span, and hence form a basis for, $\liez{e_P}$. It
therefore follows that $\Gamma_P$ is a good grading for $e_P$.
\end{proof}

%% Block tower for the identity n + 2 \sum (i-1)\lambda_i = \sum \lambda*_i ^2

\begin{figure}[htbp]

\tikzstyle{top}=[yslant=0.5,xslant=-1]
\tikzstyle{left}=[yslant=-0.5]
\tikzstyle{right}=[yslant=0.5]
\tikzstyle{shade}=[fill=gray!50]

\centering
\begin{tikzpicture}[on grid,scale=.8]

% Bottom row
\draw[top]    (-3,-3) grid +(3,3);
\draw[left]   (-3,-4) grid +(3,1);
\draw[right]  ( 0,-4) grid +(3,1);
\draw[top,shade]   (-1,-1) rectangle +(1,1);
\draw[top,shade]   (-2,-2) rectangle +(1,1);
\draw[top,shade]   (-3,-3) rectangle +(1,1);
\draw[left,shade]  (-1,-4) rectangle +(1,1);
\draw[right,shade] ( 0,-4) rectangle +(1,1);

\draw[->,very thick,left] (-4,-3.7) -- +(.8,0) node[above,midway] {$\lambda^*_\ell$};

% Bottom row, lower left
\begin{scope}[xshift=20,yshift=10]
\draw[top]    (-7,-3) grid +(2,3);
\draw[left]   (-7,-8) grid +(3,1);
\draw[right]  (-4,-4) grid +(2,1);
\end{scope}

% Bottom row, lower
\begin{scope}[yshift=20]
\draw[top]    (-7,-7)  grid +(2,2);
\draw[left]   (-2,-8) grid +(2,1);
\draw[right]  ( 0,-8) grid +(2,1);
\draw[top,shade]   (-6,-6) rectangle +(1,1);
\draw[top,shade]   (-7,-7) rectangle +(1,1);
\draw[left,shade]  (-1,-8) rectangle +(1,1);
\draw[right,shade] ( 0,-8) rectangle +(1,1);
\end{scope}

% Bottom row, lower right
\begin{scope}[xshift=-20,yshift=10]
\draw[top]    (-3,-7) grid +(3,2);
\draw[left]   ( 2,-4) grid +(2,1);
\draw[right]  ( 4,-8) grid +(3,1);

\draw[->,very thick,left] (6.7,1) -- +(0,-.8) node[auto,swap,near start] {$\lambda_k$};

\draw[<->,very thick,top] (-6,-8.5) -- +(4.5,0) node[auto,swap,midway] {$k-1$};

\end{scope}

\draw[very thick,loosely dotted] (-2,-.7) -- (-2,.8);
\draw[very thick,loosely dotted] ( 2,-.7) -- ( 2,.8);

% Upper tower
\begin{scope}[yshift=5]

\draw[left]  (-3, 0) grid +(3,1);
\draw[top]   ( 1, 1) grid +(1,3);
\draw[right] ( 0, 0) grid +(3,1);
\draw[top]   ( 1, 1) grid +(3,1);
\draw[top,shade]   ( 1, 1) rectangle +(1,1);
\draw[left,shade]  (-1, 0) rectangle +(1,1);
\draw[right,shade] ( 0, 0) rectangle +(1,1);

\draw[left]   (-2, 2) grid +(2,2);
\draw[right]  ( 0, 2) grid +(2,2);
\draw[top]    ( 4, 4) grid +(2,2);
\draw[top,shade]   ( 4, 4) rectangle +(1,1);
\draw[top,shade]   ( 5, 5) rectangle +(1,1);
\draw[left,shade]  (-1, 2) rectangle +(1,1);
\draw[left,shade]  (-1, 3) rectangle +(1,1);
\draw[right,shade] ( 0, 2) rectangle +(1,1);
\draw[right,shade] ( 0, 3) rectangle +(1,1);

\draw[->,very thick,left] (-3,3.3) -- +(.8,0) node[above,midway] {$\lambda^*_1$};
\draw[->,very thick,left] (-3,2.2) -- +(.8,0) node[above,midway] {$\lambda^*_2$};
\draw[->,very thick,left] (-4,0.3) -- +(.8,0) node[above,midway] {$\lambda^*_3$};

\draw[->,very thick,left] (0.7,7) -- +(0,-.8) node[auto,swap,near start] {$\lambda_1$};
\draw[->,very thick,left] (1.7,7) -- +(0,-.8) node[auto,swap,near start] {$\lambda_2$};
\draw[->,very thick,left] (2.3,5) -- +(0,-.8) node[auto,near end] {$\lambda_3$};
\end{scope}

\end{tikzpicture}
\begin{equation}
n + 2 \sum_{i=1}^k (i-1)\lambda_i = \sum_{i=1}^\ell \lambda^*_i{}^2
\label{eq:blocktower}
\end{equation}

\caption{A tower of blocks associated to the partition
$\lambda = \seq{\lambda_1,\dotsc,\lambda_k}$ of $n$, with dual partition
$\lambda^* = \seq{\lambda^*_1,\dotsc,\lambda^*_\ell}$. This demonstrates the identity
\cref{eq:blocktower}: the number of blocks in the $i$th horizontal plane is
$\lambda^*_i{}^2$, the number of shaded blocks is $n$, and the number of blocks in the
vertical planes on each side is $(i-1)\lambda_i$.}

\label{fig:blocktower}
\end{figure}

\begin{eg}
For pyramid \cref{eq:FilledPyramid}, the corresponding nilpotent and good
grading are
\begin{align*}
e_P & =
\begin{pmatrix}
0 & 0 & 0 & 1 & 0 & 0 & 0 \\
0 & 0 & 0 & 0 & 1 & 0 & 0 \\
0 & 0 & 0 & 0 & 0 & 1 & 0 \\
0 & 0 & 0 & 0 & 0 & 0 & 1 \\
0 & 0 & 0 & 0 & 0 & 0 & 0 \\
0 & 0 & 0 & 0 & 0 & 0 & 0 \\
0 & 0 & 0 & 0 & 0 & 0 & 0 \\
\end{pmatrix}  &  \text{and}\quad
\Gamma_P & \colon
\begin{pmatrix}
 0 & 1  &  1 &  2 &  3 &  3 & 4 \\
-1 & 0  &  0 &  1 &  2 &  2 & 3 \\
-1 & 0  &  0 &  1 &  2 &  2 & 3 \\
-2 & -1 & -1 &  0 &  1 &  1 & 2 \\
-3 & -2 & -2 & -1 &  0 &  0 & 1 \\
-3 & -2 & -2 & -1 &  0 &  0 & 1 \\
-4 & -3 & -3 & -2 & -1 & -1 & 0 \\
\end{pmatrix},
\end{align*}
where we here denote the grading $\Gamma_P$ using a matrix whose $(i,j)$-entry is
$\deg E_{ij}$.
\end{eg}

It therefore follows that every pyramid corresponds, using some filling and
choice of standard basis, to a pair of a nilpotent element and good grading
$(e,\Gamma)$. Choosing a different filling or a different standard basis will produce
a different pair $(e',\Gamma')$, however the two will be conjugate under some change
of basis. Hence a pyramid corresponds to a conjugacy class of pairs $(e,\Gamma)$
under the adjoint action of $G$. It remains to ask whether every conjugacy class
of pairs $(e,\Gamma)$ with $\Gamma$ good for $e$ comes from some pyramid. This question
was also answered affirmatively by Elashvili and Kac.

\begin{thm}[Elashvili--Kac]{\normalfont \cite[Theorem~4.2]{EK:ClassGG}}
There is a bijection between the pyramids of size $n$ and the set of pairs
$(e, \Gamma)$ up to conjugacy, where $e \in \fgl_n$ is a nilpotent element and $\Gamma$ is a
good grading for $e$.
\begin{align*}
\set{\text{Pyramids of size $n$}} & \leftrightarrow
 \set{(e,\Gamma) \st \Gamma \text{ is a good grading for } e}/GL_n \\
P & \mapsto (e_P, \Gamma_P)
\end{align*}
This holds equally well for $\fsl_n$.
\end{thm}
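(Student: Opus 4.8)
The plan is to prove that $P\mapsto(e_P,\Gamma_P)$ is a bijection by reformulating it in terms of graded vector spaces. The discussion preceding the theorem already shows this assignment is well defined on $GL_n$-conjugacy classes, and \cref{prop:PyramidGood} shows it lands in the stated target, so only bijectivity remains. First I would observe that, after fixing the standard basis and the standard filling, the data of a pyramid $P$ of size $n$ is the same as the data of a graded vector space $V=\bigoplus_{m\in\ZZ}V_m$ of total dimension $n$ together with a nilpotent endomorphism $e$ of degree $2$, subject to two conditions: (a) \emph{goodness} -- every endomorphism of $V$ commuting with $e$ has non-negative degree; this is \cref{GGprop4}, equivalently \cref{GGprop2}, for $\fgl_n=\End(V)$, with \cref{GGprop1} built into ``degree $2$'' and \cref{GGprop3} following by \cref{rem:GGprop23} (these good-grading properties apply to the reductive Lie algebra $\fgl_n$ as in the note preceding \cref{prop:PyramidGood}); and (b) \emph{normalisation} -- the least and greatest $m$ with $V_m\neq0$ are negatives of one another, i.e.\ the longest row is centred at $0$. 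The pair $(e_P,\Gamma_P)$ records exactly this data, $V_m$ being the $m$-eigenspace of $h_{\Gamma_P}$, so the theorem reduces to the statement that every such $(V,e)$ arises from a unique pyramid.

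For surjectivity I would start from an arbitrary good pair $(e,\Gamma)$ on $V=\CC^n$ and reconstruct a pyramid. Choose a homogeneous Jordan basis for $e$ -- one exists because $e$ is a homogeneous nilpotent, and the uniqueness of the combinatorial data extracted below is the uniqueness of invariant factors of $V$ regarded as a graded module over the graded PID $\CC[t]$, with $t$ acting by $e$ and $\deg t=2$. This decomposes $V$ into $e$-strings $W_s=\gen{w_s^{(1)},\dots,w_s^{(\lambda_s)}}$ with $e\,w_s^{(t)}=w_s^{(t+1)}$ and each $w_s^{(t)}$ homogeneous; since $\deg e=2$, the degrees occurring in $W_s$ form an arithmetic progression $f_s,f_s+2,\dots,F_s$ with $F_s=f_s+2(\lambda_s-1)$. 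Order the strings so that $\lambda_1\ge\lambda_2\ge\dots$. The \textbf{main claim} is that \cref{typeAPyramidCond} of \cref{def:typeApyramid} holds, i.e.\ the $W_s$ can be ordered so that $f_1\le f_2\le\dots$ and $F_1\ge F_2\ge\dots$, with $f_s=f_{s'}$ and $F_s=F_{s'}$ whenever $\lambda_s=\lambda_{s'}$. Goodness is precisely what forces this: if $\lambda_s\ge\lambda_{s'}$ but $f_s>f_{s'}$, the ``start-aligned'' map sending the first $\lambda_{s'}$ vectors of $W_s$ to the corresponding vectors of $W_{s'}$ and all other basis vectors to $0$ is an endomorphism of $V$ commuting with $e$ (a direct check, in which $\lambda_s\ge\lambda_{s'}$ is used) which is homogeneous of degree $f_{s'}-f_s<0$, contradicting (a); symmetrically the configuration $\lambda_s\ge\lambda_{s'}$, $F_s<F_{s'}$ is excluded by the ``end-aligned'' map $w_{s'}^{(\lambda_{s'}-a)}\mapsto w_s^{(\lambda_s-a)}$. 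Combined with (b), which pins down $f_1=-F_1$, the resulting intervals are the rows of a pyramid $P$, and in the standard basis $(e_P,\Gamma_P)$ is literally $(e,\Gamma)$, so $(e,\Gamma)$ is conjugate to $(e_P,\Gamma_P)$.

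For injectivity I would note that this reconstruction is canonical: the multiset of intervals $\{[f_s,F_s]\}$, equivalently the graded invariant factors of $(V,e)$, is an isomorphism invariant of the graded module and hence a $GL_n$-conjugacy invariant of $(e_P,\Gamma_P)$, determined up only to a common integer translation of the $f_s$ (the freedom in $h_\Gamma$ modulo the centre of $\fgl_n$), and that translation is fixed by the normalisation (b). This multiset then recovers $P$: the lengths of the intervals give the shape, and by \cref{typeAPyramidCond} the intervals form a chain $[f_{(1)},F_{(1)}]\supseteq[f_{(2)},F_{(2)}]\supseteq\dots$ which lays out the rows of $P$. Hence $(e_P,\Gamma_P)\simeq(e_{P'},\Gamma_{P'})$ forces $P=P'$. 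The case of $\fsl_n$ is identical: a good grading of $\fsl_n$ for $e$ extends uniquely to one of $\fgl_n$ by placing the centre in degree $0$, with the same $e$ and the same pyramid, and the only change -- the one-dimensional discrepancy in $\dim\liez{e}$ -- is handled exactly as in the note preceding \cref{prop:PyramidGood}.

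The step I expect to be the main obstacle is the main claim, that goodness forces \cref{typeAPyramidCond}, and specifically the verification that the start- and end-aligned maps genuinely commute with $e$, where the length inequalities between strings enter essentially; the remainder is bookkeeping with the graded structure theorem and the normalisation, including the routine observation that the $f_s$ are integers because the grading on $\End(V)$ records only differences of $h_\Gamma$-weights, which are integers.
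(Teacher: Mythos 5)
The thesis does not give a proof of this theorem: it cites \cite[Theorem~4.2]{EK:ClassGG} and leaves the argument to the reference, having established beforehand that $P \mapsto (e_P, \Gamma_P)$ is well defined on conjugacy classes and lands in good gradings (\cref{prop:PyramidGood}). So there is no in-paper argument to compare against; what you have written is a self-contained proof of a result the thesis takes as given.

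Your argument is correct. The reformulation is accurate: goodness in your form is \cref{GGprop4} for $\fgl_n = \End(V)$, equivalent to \cref{GGprop2,GGprop3} via the argument of \cref{rem:GGprop23} run with the non-degenerate trace form on $\fgl_n$ in place of the Killing form, as the note before \cref{prop:PyramidGood} permits. The existence and uniqueness of the decomposition into homogeneous Jordan strings is the graded structure theorem for torsion $\CC[t]$-modules with $\deg t = 2$ (equivalently, Krull--Schmidt for interval modules over the $A_\infty$ quiver), and that uniqueness is exactly what carries your injectivity argument. The main claim, which you rightly flag as the crux, checks out: the start-aligned map $w_s^{(t)} \mapsto w_{s'}^{(t)}$ for $t \le \lambda_{s'}$, extended by zero, commutes with $e$ precisely because $\lambda_s \ge \lambda_{s'}$ ensures that $e$ annihilates $w_{s'}^{(\lambda_{s'})}$ at the step where the source string $W_s$ may continue; it is homogeneous of degree $f_{s'} - f_s < 0$, contradicting \cref{GGprop4}, and the end-aligned map handles $F_s < F_{s'}$ symmetrically. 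Running both arguments both ways when $\lambda_s = \lambda_{s'}$ forces equality of the intervals, so the reconstructed rows satisfy \cref{typeAPyramidCond}; the normalisation then pins the endpoints of the longest row at $\mp(\lambda_1 - 1)$, forcing all columns to be integers since they differ from these by $\ZZ$-graded degrees. The passage to $\fsl_n$ is also fine: a grading of $\fsl_n$ extends uniquely to $\fgl_n$ with the centre in degree zero, this extension preserves \cref{GGprop2}, and $GL_n$- and $SL_n$-conjugacy agree on these data since scalars act trivially.
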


\begin{rem}
The pyramids which are symmetric under reflection about the zero column are
called \emph{symmetric pyramids}, and correspond to Dynkin gradings under this
bijection. \emph{Even pyramids}, those for which $\col(j) - \col(i)$ is even for
every pair of boxes $j$ and~$i$, correspond to even good gradings. Since not
every pyramid is symmetric, this demonstrates that not every good grading is
Dynkin. Furthermore, since there always exists an even pyramid of given shape
(for example, choosing the pyramid for which all rows are right-aligned), there
always exists an even good grading for any nilpotent~$e$.
\end{rem}

\section{Hamiltonian reduction by stages}
\label{sec:ReductionStages}

In \cref{sec:HamReductionWalg}, we discussed the relationship between Slodowy
slices and Hamiltonian reduction. In particular it was shown that, for any good
grading $\Gamma$, the nilpotent $e \in \fg$ can be completed to a $\Gamma$\graded\ 
\sltriple\ $\set{e,h,f}$, and that the Slodowy slice $\slodowy_e \coloneqq e + \liez{f}$
can be expressed as a Hamiltonian reduction of the Poisson variety $\fg^*$ under
the co-adjoint action of the unipotent group~$M \subseteq G$, after identifying $\fg$
with $\fg^*$. However, $\fg^*$ is itself a Slodowy slice $\slodowy_0$, taking
the trivial grading $\Gamma$ with $\fg_0 = \fg$ and the $\Gamma$\graded\ triple
$h = e = f = 0$. Given two different Slodowy slices $\slodowy_e$ and
$\slodowy_{e'}$ with associated good gradings and \sltriples, there is therefore
a pair of reductions:
\begin{equation*}
\begin{tikzcd}[column sep=tiny]
\slodowy_0
  \ar{dr}[swap]{\text{Reduction by } M}
  \ar{drrr}{\text{Reduction by } M'} \\
 & \slodowy_e 
 & \hspace{3em} & \slodowy_{e'}
\end{tikzcd}
\end{equation*}

We shall see that this diagram can actually be completed in \typeA\ under
certain conditions on the nilpotents $e$ and~$e'$, with $\slodowy_{e'}$
expressible as a Hamiltonian reduction of $\slodowy_e$ by the action of some
unipotent group $U$:

\begin{equation}
\begin{tikzcd}[column sep=tiny]
\slodowy_0
  \ar{dr}[swap]{\text{Reduction by } M}
  \ar{drrr}{\text{Reduction by } M'} \\
 & \slodowy_e \ar[dashed]{rr}[swap]{\text{Reduction by } U}
 & \hspace{5em} & \slodowy_{e'}
\end{tikzcd}
\label{eq:SlodowyReductions}
\end{equation}

This type of procedure, decomposing a Hamiltonian reduction into a sequence of
smaller reductions, is known as \emph{Hamiltonian reduction by stages}. This is
a general technique which applies in the context of any Poisson variety, and in
particular does not require us to assume that our Poisson variety is $\fg^*$,
for $\fg$ a simple Lie algebra of \typeA. A general reference for this material
can be found in \cite{MMO:HamRed}.

\subsection{Semidirect products and reduction by stages}

Let $G$ be an algebraic group which can be expressed as a semidirect product
$G \simeq H \rtimes K$, where $H$ and $K$ are closed subgroups and $H$ is normal in
$G$. Let $X$ be a Poisson variety on which $G$ acts, and assume that the action
is Hamiltonian with equivariant moment map $\mu \colon X \to \fg^*$.
Under these circumstances, one can consider the Hamiltonian reduction of $X$ by
the action of~$G$ at a regular value $\gamma \in \fg^*$ of $\mu$; this shall be denoted
$X \qq{\gamma} G \coloneqq \mu^{-1}(\gamma) / G$.

Since $G$ decomposes as a semidirect product $H \rtimes K$, the closed $H$ also
acts on $X$ by the inclusion of $H$ into $G$. This action is also Hamiltonian,
and its moment map $\mu_H \colon X \to \fh^*$ is the composition of the moment
map~$\mu$ with the restriction of functions $j \colon \fg^* \to \fh^*$. The
subgroup~$K$ also has this property, but this is not important for our purposes.
Note that $\mu_H$ is equivariant not only with respect to the action of $H$, but
also with the respect to the action of $G$, as both $\mu$ and~$j$ are
$G$\equivariant\ maps.

Identifying $\fg^*$ with $\fh^* \times \fk^*$, the regular value $\gamma \in \fg^*$
can be decomposed as $\gamma = (\eta,\kappa)$, and further $\eta \in \fh^*$ is a regular value of
the moment map $\mu_H$. This allows one to consider the Hamiltonian reduction of
$X$ by $H$ at the regular value $\eta$, $X \qq{\eta} H \coloneqq \mu_H^{-1}(\eta) / H$.
We would like to relate $X \qq{\gamma} G$ and $X \qq{\eta} H$, and in particular
would like to establish a Hamiltonian action of $K$ on $X \qq{\eta} H$ so that the
subsequent reduction by this action produces a Poisson variety isomorphic to
$X \qq{\gamma} G$. This can be done under certain conditions on the group $K$ and the
values $\eta$ and $\kappa$.

By the definition of the semidirect product, there is an action of $K$ on $H$, and
hence an induced action of $K$ on $\fh^*$. We shall assume that the action of
$K$ stabilises $\eta \in \fh^*$. In this case, there is an induced action of $K$ on
$X \qq{\eta} H$, where the well-definedness of the action follows from the
normality of $H$, the $G$\nobreakdash-equivariance of $\mu_H$, and the fact that
$K$ stabilises $\eta$. Furthermore, this action is Hamiltonian with induced moment
map $\mu_K \colon X \qq{\eta} H \to \fk^*$ defined by $\mu_K([x]) = \mu(x)$. We
finally assume that $\kappa \in \fk^*$ is a regular value of $\mu_K$.

\begin{thm}
\label{thm:2StageReduction}
Let $X$ be a Poisson variety with a Hamiltonian action by the algebraic group
$G \simeq H \rtimes K$ satisfying the above hypotheses. There is a Poisson
isomorphism between the space $X \qq{\gamma} G$ and the two-stage reduction
$\pp{X \qq{\eta} H} \qq{\kappa} K$.
\end{thm}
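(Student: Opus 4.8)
The plan is to move everything to the level of coordinate rings and exploit the algebraic description of Hamiltonian reduction from \cref{sec:HamReductionWalg}: all three reductions appearing in the statement are subquotients of the single Poisson algebra $\CC[X]$, and the asserted isomorphism will be an identity of such subquotients together with a matching of the induced Poisson brackets. So there are really two things to establish: that the underlying affine varieties agree, and that the two Poisson structures coincide.

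For the underlying varieties I would start from the set-theoretic picture. Since $\mu_H$ is the composite of $\mu$ with the restriction $j\colon\fg^*\to\fh^*$, a point $x$ lies in $\mu^{-1}(\gamma)$ exactly when $\mu_H(x)=\eta$ and the $\fk^*$-component of $\mu(x)$ equals $\kappa$; writing $p\colon\mu_H^{-1}(\eta)\to X\qq{\eta}H$ for the quotient map, this says precisely that $p^{-1}\pp{\mu_K^{-1}(\kappa)}=\mu^{-1}(\gamma)$. Hence $\mu^{-1}(\gamma)$ is $H$-stable with $\mu^{-1}(\gamma)/H\cong\mu_K^{-1}(\kappa)$, and since $G/H\simeq K$ and the $K$-action on $X\qq{\eta}H$ is the one descended from the $G$-action on $X$, a further quotient by $K$ yields $\mu^{-1}(\gamma)/G\cong\mu_K^{-1}(\kappa)/K$. (This is where the hypothesis that $K$ stabilises $\eta$ enters: it is what makes the $K$-action on $X\qq{\eta}H$, hence the moment map $\mu_K$, well defined.) To see this point-set bijection is an isomorphism of varieties I would pass to functions: since $\gamma,\eta,\kappa$ are regular values the fibres $\mu^{-1}(\gamma)$ and $\mu_H^{-1}(\eta)$ are smooth, the $\fk^*$-component equations of $\mu$ cut out $\mu^{-1}(\gamma)$ cleanly inside $\mu_H^{-1}(\eta)$ and restrict there to the defining equations of $\mu_K^{-1}(\kappa)\subseteq X\qq{\eta}H$; combining this with the elementary fact that for $G\simeq H\rtimes K$ one has $\pp{(\,\cdot\,)^H}^K=(\,\cdot\,)^G$ (with $K$ acting on $(\,\cdot\,)^H$ because it normalises $H$) gives the isomorphism of coordinate rings, and hence of affine varieties.

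Finally I would check that the two Poisson brackets agree. Both are defined, as in \cref{sec:HamReductionWalg}, by lifting an invariant function on the relevant fibre to a representative in $\CC[X]$ lying in the appropriate Poisson normaliser, bracketing in $\CC[X]$, and pushing back down. The content of the theorem is that this construction is \emph{associative}: a $K$-invariant function on $\mu_K^{-1}(\kappa)$ lifts first to an $H$-invariant, $\eta$-admissible representative in $\CC[X]$, and the further invariance and admissibility conditions imposed at the $K$-stage unwind to exactly the $G$-invariance and $\gamma$-admissibility conditions of the one-stage reduction; since the ambient Poisson algebra $\CC[X]$ and its bracket do not change between the stages, the bracket one computes is literally the same element of $\CC[X]$ in both cases, so the isomorphism above is Poisson. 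Well-definedness at each stage is the usual consequence of the Jacobi identity for the ambient bracket.

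The technical heart — and the step I expect to be the main obstacle — is this last associativity claim, i.e.\ proving that forming the subalgebra of ``admissible'' (Poisson-normalising, invariant) functions commutes with performing the reduction in stages; one must identify the two-stage normaliser subalgebra of $\CC[X]$ with the one-stage one and verify the bracket descends compatibly. A secondary difficulty in the stated generality is that $H$ and $K$ need not be reductive, so the invariant-ring manipulations above require that the geometric quotients involved actually exist; in every case in which we apply the theorem the relevant groups are unipotent and the quotient maps are affine bundles, so this is automatic, and alternatively one can deduce the whole statement by transporting the symplectic reduction-by-stages theorem of \cite{MMO:HamRed} through the Poisson structure.
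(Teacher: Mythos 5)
The paper does not actually supply a proof of this theorem: the note immediately following the statement observes that it is a special case of \cite[Theorem~5.2.9]{MMO:HamRed} (proved there for symplectic manifolds, with the Poisson case following \emph{mutatis mutandis}), and remarks that under the simplifying hypotheses in force here --- $G \simeq H \rtimes K$ with $K$ stabilising $\eta$ --- the ``stages hypothesis'' of that reference holds automatically and the residual moment map $\mu_K$ is obtained by lifting to $X$ and applying $\mu$. Your proposal is therefore a genuinely different route: rather than citing the literature, you sketch a self-contained algebraic argument, first identifying $\mu^{-1}(\gamma)/G$ with $\mu_K^{-1}(\kappa)/K$ at the level of sets and coordinate rings, and then showing the two reduced Poisson brackets agree because both ultimately lift to, bracket in, and descend from the single ambient Poisson algebra $\CC[X]$. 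Your sketch is sound and correctly isolates the two substantive points: the ``associativity'' of the lift--bracket--descend construction, which is exactly the content of the stages hypothesis that the paper declares automatic, and the non-reductivity of $H$ and $K$, which you rightly discharge by noting that the relevant groups are unipotent in all intended applications so the quotients are affine-space bundles. The tradeoff is clear: the paper's citation route is shorter and defers the technical work, while your direct argument makes visible precisely where the semidirect-product and $K$-stabilisation hypotheses are used, at the cost of having to re-verify the well-definedness and compatibility of the reduced brackets rather than inheriting them from the reference; your closing remark that one could alternatively transport the symplectic statement of \cite{MMO:HamRed} is, of course, exactly the move the paper itself makes.
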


\begin{note}
There are many versions of this \namecref{thm:2StageReduction} with varying sets
of hypotheses. In particular, it holds in much greater generality, as in
\cite[Theorem~5.2.9]{MMO:HamRed}. Though presented there for symplectic
varieties, the proof for Poisson varieties follows identically making the
necessary changes. Since we're making the simplifying assumptions that $G$ is a
semidirect product $H \rtimes K$ for which $K$ stabilises $\eta$, most of the
details of the theorem simplify; in particular the \emph{stages hypothesis} is
automatically satisfied and the moment map of the action of $K$ on $X \qq{\eta} H$
is obtained by lifting to $X$ and applying $\mu$.
\end{note}

\section{Reduction by stages for Slodowy slices}
\label{sec:SlodowyReduction}

As has been previously established in \cref{thm:SlodowyHamRed}, the Slodowy
slices $\slodowy_\chi$ can be expressed as Hamiltonian reductions of the dual Lie
algebra $\fg^*$. This can and will be equivalently stated in the Lie algebra
itself, rather than its dual, by applying the Killing isomorphism: $\slodowy_e$
can be expressed as the Hamiltonian reduction of $\fg$. Since $\fg$ is the Slodowy
slice through the zero nilpotent, one might ask whether different Slodowy slices
can be expressed as Hamiltonian reductions of other Slodowy slices. The
objective of this \namecref{sec:SlodowyReduction} is to provide a conjecture
regarding under which conditions this can be done, and to provide a construction
to accomplish this.

\begin{obj}
\label{thm:SlodowyReductionStages}
Let $\fg$ be a Lie algebra of \typeA, and $e_1$ and $e_2$ be two nilpotent
elements of $\fg$ such that~$\orbit_{e_1} < \orbit_{e_2}$, with Slodowy slices
$\slodowy_{e_1}$ and $\slodowy_{e_2}$, respectively. Then we would like to
exhibit an algebraic group $K$ with a Hamiltonian action on
$\slodowy_{e_1}$, along with a regular value $\kappa$ of the moment map
$\mu \colon \slodowy_{e_1} \to \fk^*$, such that $\slodowy_{e_2}$ can be expressed
as a Hamiltonian reduction of $\slodowy_{e_1}$,
i.e.~$\slodowy_{e_2} \simeq \slodowy_{e_1} \qq{\kappa} K$.
\end{obj}

This will produce a collection of commuting reductions of Slodowy slices for
every edge in the Hasse diagram of the partial ordering on nilpotent orbits, as
in \cref{fig:sl6Hasse}.
Since every pair of nilpotent orbits $\orbit_1 < \orbit_2$ can be filled in by a
sequence of covering relations $\orbit_1 < \dotsb < \orbit_2$, it will suffice
to construct the reductions assuming that $\orbit_{e_1}$ is covered by
$\orbit_{e_2}$. \Cref{thm:2StageReduction} provides the tools needed to
construct this. For any pair of nilpotent orbits with $\orbit_1$ covering
$\orbit_2$, we will choose
\begin{itemize}
\item nilpotent elements $e_1 \in \orbit_1$ and $e_2 \in \orbit_2$ with Killing
      duals $\chi_1$ and $\chi_2$, respectively;
\item a good grading $\Gamma_1$ for $e_1$ with Premet subalgebra $\fm_1$ and
      algebraic group $M_1$; and
\item a subalgebra $\fm_2 \supseteq \fm_1$ with corresponding algebraic group $M_2$
\end{itemize}
which satisfy the following conditions:
\begin{enumerate}[label=\textsf{SR\arabic*}.,ref=\textsf{SR\arabic*},labelindent=\parindent,leftmargin=*]
\item \label[condition]{cond:SubalgDecomp}
      the subalgebra $\fm_2$ decomposes as a semidirect product
      $\fm_2 = \fm_1 \rtimes \fk$;
\item \label[condition]{cond:SubalgCharacter}
      the functional $\chi_2$ restricts to a character of $\fm_2$ and decomposes as
      $(\chi_1, \kappa)$ in the above decomposition;
\item \label[condition]{cond:KStabilise}
      the subalgebra $\fk$ annihilates $\chi_1$; and
\item \label[condition]{cond:KRegular}
      the value $\kappa \in \fk^*$ is a regular value of the moment map
      $\mu_K \colon \fg^* \qq{\chi_1} M_1 \to \fk^*$ of the action of $K$, the
      algebraic group corresponding to $\fk$, on the Hamiltonian reduction
      $\fg^* \qq{\chi_1} M_1$, and the action of $K$ on $\mu_K^{-1}(\kappa)$ is free and
      proper.
\end{enumerate}

With these choices it follows that $M_2$ decomposes as a semidirect product
$M_2 = M_1 \rtimes K$, and there is a corresponding reduction by stages
construction
\begin{equation*}
X = \fg^* \qq{\chi_2} M_2 \simeq \pp{\fg^* \qq{\chi_1} M_1} \qq{\kappa} K \simeq \slodowy_{\chi_1} \qq{\kappa} K
\end{equation*}
provided by \cref{thm:SlodowyHamRed,thm:2StageReduction}. We will therefore
provide a construction satisfying these conditions, and conjecture that the
Poisson variety $X$ obtained is isomorphic to the Slodowy slice
$\slodowy_{\chi_2}$. Provided this conjecture holds, this will accomplish
\cref{thm:SlodowyReductionStages}.

\begin{prop}
\label{prop:Premet2Reduction}
If $\fm_2$ is a Premet subalgebra for a good grading of $e_2$ then
$X \simeq \slodowy_{\chi_2}$. Furthermore, \cref*{cond:SubalgDecomp,cond:SubalgCharacter}
imply \cref*{cond:KStabilise,cond:KRegular}. \Cref*{thm:SlodowyReductionStages}
therefore follows under these conditions.
\end{prop}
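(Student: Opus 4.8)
\emph{Overview and the isomorphism $X \simeq \slodowy_{\chi_2}$.}
The plan is to deduce everything from \cref{thm:SlodowyHamRed} and \cref{thm:2StageReduction}, the only genuine work being the verification of \cref*{cond:KRegular}. First I would note that, since $\fm_2$ is assumed to be a Premet subalgebra for some good grading of $e_2$, \cref{thm:SlodowyHamRed} applies verbatim with $(e,\fm,M,\chi)$ replaced by $(e_2,\fm_2,M_2,\chi_2)$: it gives that $\chi_2$ is a regular value of the restriction moment map $\mu_2 \colon \fg^* \to \fm_2^*$, that the $M_2$\action\ on $\mu_2^{-1}(\chi_2) = \chi_2 + \fm_2^{*,\bot}$ is free and proper (the freeness being exactly the Gan--Ginzburg decomposition $M_2 \times \slodowy_{\chi_2} \isoto \mu_2^{-1}(\chi_2)$, the image under $\kappa$ of \cref{lem:WalgQuantAdjDecomp}), and that $\slodowy_{\chi_2} \simeq \mu_2^{-1}(\chi_2)/M_2 = \fg^* \qq{\chi_2} M_2 = X$. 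This settles the first assertion, and also records the freeness and properness facts used below.

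\emph{\cref*{cond:SubalgDecomp} and \cref*{cond:SubalgCharacter} imply \cref*{cond:KStabilise}.}
Next I would write $\fm_2 = \fm_1 \rtimes \fk$ and observe: since $\fm_1$ is an ideal of $\fm_2$, $[\fk,\fm_1] \subseteq \fm_1$; since $\chi_2$ restricts to a character of $\fm_2$, $\chi_2([\fk,\fm_1]) = 0$; and since the decomposition $\chi_2 = (\chi_1,\kappa)$ means $\chi_2|_{\fm_1} = \chi_1$, this reads $\chi_1([\fk,\fm_1]) = 0$, which is precisely the assertion that the coadjoint action of $\fk$, hence of $K$, fixes $\chi_1 \in \fm_1^*$. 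That is \cref*{cond:KStabilise}.

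\emph{\cref*{cond:KRegular} --- the hard part.}
Composing $\mu_2$ with the restriction $\fm_2^* \to \fm_1^*$ gives the $\fm_1$-moment map $\mu_1 \colon \fg^* \to \fm_1^*$, so $\chi_2 = (\chi_1,\kappa)$ gives $\mu_2^{-1}(\chi_2) = \mu_1^{-1}(\chi_1) \cap \mu_{\fk}^{-1}(\kappa)$ and in particular $\mu_2^{-1}(\chi_2) \subseteq \mu_1^{-1}(\chi_1)$. Passing to $M_1$-orbits inside $\slodowy_{\chi_1} = \mu_1^{-1}(\chi_1)/M_1$, the stage-two moment map $\mu_K$ is by construction the one whose fibre over $\kappa$ is $\mu_2^{-1}(\chi_2)/M_1$. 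By the first step $M_2$ acts freely and properly on $\mu_2^{-1}(\chi_2)$, hence so does its closed subgroup $M_1$, so this fibre is a smooth affine variety; and if $k \in K$ fixes $[x] \in \mu_K^{-1}(\kappa)$ then $k \cdot x = m \cdot x$ for some $m \in M_1$, so $m^{-1}k$ fixes $x$, forcing $m^{-1}k = 1$ and $k \in M_1 \cap K = \{1\}$. Thus $K$ acts freely on $\mu_K^{-1}(\kappa)$, which I would invoke to conclude that the stabiliser Lie algebra vanishes at every point there, equivalently that $\d\mu_K$ is surjective there, i.e.\ $\kappa$ is a regular value; properness of the $K$\action\ descends from that of the $M_2$\action. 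I expect this is where the care is needed --- both in the bookkeeping identifying $\mu_K^{-1}(\kappa)$ with $\mu_2^{-1}(\chi_2)/M_1$, and in translating ``regular value'', ``free'' and ``proper'' between the algebraic-geometric and symplectic-geometric pictures.

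\emph{Conclusion.}
Finally, with \cref*{cond:SubalgDecomp}--\cref*{cond:KRegular} in hand, \cref{thm:2StageReduction} applies to the Hamiltonian action of $M_2 = M_1 \rtimes K$ on $\fg^*$ and gives $X = \fg^* \qq{\chi_2} M_2 \simeq \bigl(\fg^* \qq{\chi_1} M_1\bigr) \qq{\kappa} K$; combining this with \cref{thm:SlodowyHamRed} for $e_1$ (so $\fg^* \qq{\chi_1} M_1 \simeq \slodowy_{\chi_1}$) and with the first step ($X \simeq \slodowy_{\chi_2}$) yields $\slodowy_{\chi_2} \simeq \slodowy_{\chi_1} \qq{\kappa} K$ with $\kappa$ a regular value, which is exactly \cref{thm:SlodowyReductionStages}. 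As remarked after \cref{thm:2StageReduction}, its statement and the freeness/properness arguments above, though phrased for symplectic varieties, carry over to Poisson varieties with the evident changes.
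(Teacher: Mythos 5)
Your proof is correct, and for the first two claims it coincides with the paper's: both get $X \simeq \slodowy_{\chi_2}$ directly from \cref{thm:SlodowyHamRed} applied to $(e_2,\fm_2,M_2,\chi_2)$, and both get \cref*{cond:KStabilise} from the fact that a character vanishes on $[\fm_2,\fm_2]$. You take a genuinely different route for the regularity part of \cref*{cond:KRegular}, however. The paper computes the tangent map of $\mu_K$ directly, identifying $T_{[\xi]}\mu_K$ with the restriction $\fm_1^{*,\bot}/\fm_1^* \to \fk^*$ and reading off surjectivity from $\fk \cap \fm_1 = \{0\}$; the free-and-proper requirement is then noted separately to descend from the $M_2$\action. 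You instead establish freeness of the $K$\action\ first — making explicit what the paper leaves tacit, namely that if $k$ fixes $[x]$ then $m^{-1}k \in M_2$ fixes $x \in \mu_2^{-1}(\chi_2)$, so $m^{-1}k = 1$ by the Gan--Ginzburg freeness and $k \in M_1 \cap K = \{1\}$ — and then derive regularity from freeness via the moment-map identity. That derivation is sound even though $\slodowy_{\chi_1}$ is only Poisson: the infinitesimal $K$\action\ factors through $(d\mu_K)^*$ and $\Pi^\sharp$, so an injective infinitesimal action forces $(d\mu_K)^*$ injective, i.e.\ $d\mu_K$ surjective. Be aware, though, that your word ``equivalently'' overstates this: on a Poisson (rather than symplectic) variety the implication runs only from locally free to regular value, not conversely, since $\Pi^\sharp$ may have kernel; but the direction you invoke is precisely the one that holds. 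Your route is more conceptual and isolates the role of the free $M_2$\action; the paper's is a shorter direct linear-algebra verification that sidesteps any appeal to the Poisson bivector.
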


\begin{proof}
Since Slodowy slices can be expressed as Hamiltonian reductions of $\fg^*$ by
Premet subgroups, that $\slodowy_{\chi_2} \simeq \fg^* \qq{\chi_2} M_2$ follows directly
from \cref{thm:SlodowyHamRed}.

Recall that by \cref{prop:PremProp}, $\chi_1$ and $\chi_2$ are characters of $\fm_1$
and $\fm_2$, respectively. Therefore $\chi_2$ vanishes on $\liebr{\fm_2,\fm_2}$,
and so is annihilated by $\fm_2$ in general, and $\fk \subseteq \fm_2$ in particular,
establishing \labelcref*{cond:KStabilise}.

To prove \labelcref*{cond:KRegular}, note that
$\mu_K \colon \slodowy_{\chi_1} \to \fk^*$ has tangent map
\begin{equation*}
T_{[\xi]} \mu_K \colon T_{[\xi]} \pp[\big]{ \fg^* \qq{\chi_1} M_1 }
  \lra T_{\res*{\xi}{\fk}} \fk^*.
\end{equation*}
This map, expressed more concretely, is the restriction of functions
$\operatorname{res} \colon \fm_1^{*,\bot} / \fm_1^* \to \fk^*$. This is
well-defined as $\fk \subseteq \fm_2$, and therefore
$\fk^* \subseteq \fm_2^{*,\bot} \subseteq \fm_1^{*,\bot}$, and surjective as 
$\fk \cap \fm_1 = \set{0}$. That the action is locally free and proper follows from
the freeness and properness of the action of $M_2$.
\end{proof}

\begin{note}
Keeping in mind the parallels between Slodowy slices and \Walgebras, these
constructions should be applicable not only to Hamiltonian reduction of Slodowy
slices, but also to quantum Hamiltonian reduction of \Walgebras.  We shall
therefore work with Lie algebras instead of algebraic groups, and the
corresponding statements for groups follow directly by exponentiation.
\end{note}

\subsection{A construction in examples}

Before providing the general construction satisfying 
\crefrange*{cond:SubalgDecomp}{cond:KRegular}, it will be  useful to see the
construction in a number of examples. The main features can be seen in some
concrete cases, and the general construction follows in a straightforward manner
from these examples.

\subsubsection{Reducing from a subregular nilpotent in $\fsl_3$}
\label{eg:sl3subreg}
Let $\fg = \fsl_3$, and let $\orbit_1$ and $\orbit_2$ be the subregular and
regular nilpotent orbits, respectively. First, we construct a right-aligned
pyramid for the subregular element in Jordan canonical form. Note that a
right-aligned pyramid is automatically even, and so uniquely specifies a Premet
subalgebra.
\begin{align*}
P_1 & = \raisebox{0.2em}{\large \ytableaushort{\none3,12}} &
e_1 & =
\begin{pmatrix}
0 & 1 & 0 \\
0 & 0 & 0 \\
0 & 0 & 0
\end{pmatrix}
&
\fm_1 & =
\begin{pmatrix}
0 & 0 & 0 \\
* & 0 & 0 \\
* & 0 & 0
\end{pmatrix}
\intertext{If box~3 were moved to the bottom row, the resulting pyramid would be
the standard pyramid for the regular nilpotent element in $\fg$ in Jordan
canonical form, $e_2$.}
P_2 & = \raisebox{-0.5em}{\large \ytableaushort{123}} &
e_2 & =
\begin{pmatrix}
0 & 1 & 0 \\
0 & 0 & 1 \\
0 & 0 & 0
\end{pmatrix}
&
\fm_2 & =
\begin{pmatrix}
0 & 0 & 0 \\
* & 0 & 0 \\
* & * & 0
\end{pmatrix}
\end{align*}

Since $\fm_2$ is a Premet subalgebra, it suffices to check 
\cref*{cond:SubalgDecomp,cond:SubalgCharacter}. One can check that
$\fm_1$ is an ideal of $\fm_2$, and the complementary subalgebra $\fk$ can be
chosen to be
\begin{equation*}
\fk = 
  \begin{pmatrix}
  0 & 0 & 0 \\
  0 & 0 & 0 \\
  0 & * & 0
  \end{pmatrix},
\end{equation*}
verifying \cref*{cond:SubalgDecomp}. \Cref*{cond:SubalgCharacter} can be checked
by observing that $\res{\chi_2}{\fm_1} = \chi_1$.

\subsubsection{Reducing from the middle nilpotent in $\fsl_4$}
\label{eg:sl4middle}

Let $\fg = \fsl_4$ and let $\orbit_1$ be the middle nilpotent orbit, i.e.~the
orbit consisting of all nilpotent elements of type~(2,2). This covers the subregular
nilpotent orbit $\orbit_2$. We again construct a right-aligned pyramid for a
middle nilpotent, specifying a unique Premet subalgebra.
\begin{align*}
P_1 & = \raisebox{0.35em}{\large \ytableaushort{24,13}} &
e_1 & = E_{13} + E_{24} = 
\begin{pmatrix}
0 & 0 & 1 & 0 \\
0 & 0 & 0 & 1 \\
0 & 0 & 0 & 0 \\
0 & 0 & 0 & 0
\end{pmatrix}
&
\fm_1 & =
\begin{pmatrix}
0 & 0 & 0 & 0 \\
0 & 0 & 0 & 0 \\
* & * & 0 & 0 \\
* & * & 0 & 0
\end{pmatrix}
\end{align*}
By sliding box 4 to the bottom row we would obtain a subregular nilpotent of
type~(3,1), however instead of choosing this pyramid we shall construct a
subregular nilpotent and good grading as follows:
\begin{itemize}
\item Let the subregular nilpotent element $e_2$ be the sum of the original
  nilpotent $e_1$ and all matrices $E_{ij}$ where $i$ is a box in the first row
  and $j$ is the box immediately above it in the second row,
  i.e.~$\row(i) = 1$, $\row(j) = 2$ and $\col(i) = \col(j)$.
  \begin{equation*}
    e_2 = e_1 + E_{12} + E_{34} = 
      \begin{pmatrix}
      0 & 1 & 1 & 0 \\
      0 & 0 & 0 & 1 \\
      0 & 0 & 0 & 1 \\
      0 & 0 & 0 & 0 \\
      \end{pmatrix}.
  \end{equation*}
\item Let $\fm_2$ be the Lie algebra generated by $\fm_1$ with the additional
  generator $E_{21} + E_{43}$, that is the sum of all $E_{ij}$ such that
  $\row(i) = 2$, $\row(j) = 1$ and $\col(i) = \col(j)$.
\begin{equation*}
\fm_2 = \set*{
\begin{pmatrix}
0 & 0 & 0 & 0 \\
a & 0 & 0 & 0 \\
* & * & 0 & 0 \\
* & * & a & 0
\end{pmatrix}
\st a \in \CC }.
\end{equation*}
\end{itemize}

A direct computation shows that \crefrange*{cond:SubalgDecomp}{cond:KRegular}
are satisfied, with the choice of complementary subalgebra
\begin{equation*}
\fk = \gen[\big]{E_{21} + E_{43}} =
\set*{
\begin{pmatrix}
0 & 0 & 0 & 0 \\
a & 0 & 0 & 0 \\
0 & 0 & 0 & 0 \\
0 & 0 & a & 0
\end{pmatrix}
\st a \in \CC }.
\end{equation*}

\subsubsection{Reducing from the zero nilpotent in $\fsl_3$}
\label{eq:sl3zero}
Let $\fg = \fsl_3$ and let $\orbit_1 = \set{0}$ be the zero orbit and $\orbit_2$
be the minimal orbit (which is also the subregular orbit).
\begin{align*}
P_1 & = \raisebox{1.0em}{\large \ytableaushort{3,2,1}} &
e_1 & =
\begin{pmatrix}
0 & 0 & 0 \\
0 & 0 & 0 \\
0 & 0 & 0
\end{pmatrix}
&
\fm_1 & =
\begin{pmatrix}
0 & 0 & 0 \\
0 & 0 & 0 \\
0 & 0 & 0
\end{pmatrix}
\end{align*}
If box~3 were moved to the bottom row, the result would be a pyramid
corresponding to a minimal nilpotent element. The nilpotent $e_2$ and subalgebra
$\fm_2$ shall be chosen as follows:
\begin{itemize}
\item Let $e_2$ be the sum of the original nilpotent $e_1$ and all matrices
  $E_{ij}$ where $i$ is in the first row and $j$ is the box immediately above it
  in the third row, i.e.~$\row(i) = 1$, $\row(j) = 3$ and $\col(i) = \col(j)$.
  \begin{equation*}
    e_2 = e_1 + E_{13} = 
      \begin{pmatrix}
      0 & 0 & 1 \\
      0 & 0 & 0 \\
      0 & 0 & 0
      \end{pmatrix}.
  \end{equation*}
\item Let $\fm_2$ be the Lie algebra generated by $\fm_1$ with the additional
  generators $E_{21} + E_{32}$ and~$E_{31}$. These are the generators
  corresponding to the integers $k = 1,2$, where the generator corresponding to
  $k$ consists of the sum of all $E_{ij}$ such that
  $\row(i) \le 3$, $\row(j) \ge 1$, $\col(i) = \col(j)$ and $\row(i)-\row(j) = k$.
\begin{equation*}
\fm_2 = \gen[\big]{E_{21} + E_{32}, E_{31}} =
\set*{
\begin{pmatrix}
0 & 0 & 0 \\
a & 0 & 0 \\
b & a & 0 \\
\end{pmatrix}
\st a,b \in \CC }.
\end{equation*}
\end{itemize}

\subsection{The general construction}
\label{sec:GeneralConstruction}

Let $\fg = \fsl_n$ be the Lie algebra of \typeA{n-1}. The conjugacy classes of
nilpotent elements correspond to Jordan types, and are hence indexed by
partitions of $n$. Consider a pair of nilpotent conjugacy classes indexed by
$\lambda = (\lambda_1, \lambda_2, \dotsc, \lambda_m)$ and $\mu = (\mu_1, \dotsc, \mu_{m+1})$ where $\lambda$
covers~$\mu$, i.e.~$\lambda>\mu$ and no partition lies intermediate of the two. Let $i$
and~$j$ be the integers used for obtaining $\mu$ from $\lambda$ as in
\cref{prop:OrbitCover}.

Construct a right-aligned pyramid for $\mu$ in the usual way, numbering the
boxes from bottom to top and left to right. This determines a nilpotent
$e_1 \in \orbit_\mu$ and $\fm_1 \subseteq \fg$, and hence determines a Slodowy
slice $\slodowy_e$. What remains is to choose $e_2 \in \orbit_{\lambda}$ and
$\fm_2$ which satisfy \crefrange{cond:SubalgDecomp}{cond:KRegular}.

\begin{thm} \label{thm:typeAred}
In the above circumstances, let $e_2$ and $\fm_2 \subseteq \fg$ be as follows:
\begin{align*}
e_2 & = e_1 + \mspace{-15mu}
  \sum_{\substack{
    \row(k) = i, \row(\ell) = j \\
    \col(k) = \col(\ell)}}
      \mspace{-35mu} E_{k\ell}
\qquad\quad & \text{and} \qquad\quad \fm_2 & = \fm_1 + \gen[\big]{ E_m }_{m=1}^{j-i},
& \text{where} \quad
E_m & = \mspace{-30mu}
  \sum_{\substack{
  i \leq \row(k) < \row(\ell) \leq j \\
  \row(\ell) - \row(k) = m           \\
  \col(k) = \col(\ell)}}
    \mspace{-30mu} E_{\ell k}.
\end{align*}
Further, let $\chi_k = \killing{e_k}{\cdot}$ for $k=1,2$ and
$\fk = \gen[\big]{E_m}_{m=1}^{j-i}$. Then $e_2$ is a nilpotent element of Jordan
type~$\lambda$ in~$\orbit_\lambda$, and \crefrange{cond:SubalgDecomp}{cond:KRegular} hold.
\end{thm}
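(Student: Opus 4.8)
The plan is to verify the three substantive claims head‑on — that $e_2$ has Jordan type $\lambda$, that $\fm_2 = \fm_1 \rtimes \fk$ (which is \cref{cond:SubalgDecomp}), and that $\chi_2$ restricts to a character of $\fm_2$ decomposing as $(\chi_1,\kappa)$ with $\kappa \coloneqq \res{\chi_2}{\fk}$ (which is \cref{cond:SubalgCharacter}) — and then to invoke \cref{prop:Premet2Reduction}, which says that \cref{cond:SubalgDecomp,cond:SubalgCharacter} already imply \cref{cond:KStabilise,cond:KRegular}. Everything is done on the arrow diagrams attached to the right‑aligned pyramid $P_1$ of shape $\mu$, and two structural observations carry most of the weight. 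First, a right‑aligned pyramid is even, so $\fm_1 = \bigoplus_{d < 0} \fg_d$ with respect to $\Gamma_1 = \Gamma_{P_1}$; moreover each $E_m$ joins \emph{vertically aligned} boxes, so $\deg E_m = 0$, whence $\fk \subseteq \fg_0$, and likewise $w \coloneqq e_2 - e_1 \in \fg_0$. Second, each $E_m$ commutes with $e_1$: restricting its source and target rows to the range $i,\dotsc,j$, it is one of the centraliser elements of \cref{fig:ggnilcommute} with zero horizontal shift, and the verification that it lies in $\liez{e_1}$ uses exactly the pyramid condition that the left‑hand ends of the rows weakly increase. Thus $\fk \subseteq \liez{e_1}$.

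For the Jordan type, I would observe that the correction term leaves every row other than $i$ and $j$ alone, so those rows still contribute blocks of sizes $\mu_\ell = \lambda_\ell$. On the $(\mu_i + \mu_j)$‑dimensional subspace spanned by rows $i$ and $j$ — which is $e_2$‑invariant — right‑alignment places the $\mu_j$ boxes of row $j$ directly above the rightmost $\mu_j$ boxes of row $i$; labelling the row‑$i$ basis $u_1, \dotsc, u_d$ (with $d = \mu_i - \mu_j$) then $w_1, \dotsc, w_{\mu_j}$, and the row‑$j$ basis $c_1, \dotsc, c_{\mu_j}$, one reads off that $e_2$ acts by $u_t \mapsto u_{t+1}$, $u_d \mapsto w_1$, $w_s \mapsto w_{s+1} + c_s$, $w_{\mu_j} \mapsto c_{\mu_j}$, $c_s \mapsto c_{s+1}$, $c_{\mu_j} \mapsto 0$. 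A short calculation gives $\dim \ker e_2 = 2$ (or $1$ when $\mu_j = 1$) on this subspace, and that $e_2^{\mu_i} u_1$ is a nonzero multiple of $c_{\mu_j}$ while $e_2^{\mu_i + 1} u_1 = 0$; hence the blocks here have sizes $\mu_i + 1 = \lambda_i$ and $\mu_j - 1 = \lambda_j$, and $e_2$ has Jordan type $\lambda$, so $e_2 \in \orbit_\lambda$.

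For \cref{cond:SubalgDecomp}: $\fk \subseteq \fg_0$ and $\fm_1 = \bigoplus_{d < 0} \fg_d$ give $\fm_1 \cap \fk = 0$, and $[\fg_0, \fg_d] \subseteq \fg_d$ makes $\fm_1$ an ideal of $\fm_1 + \fk$; the generators satisfy $E_a E_b = E_{a+b}$ as endomorphisms (a step‑$a$ vertical shift followed by a step‑$b$ one is a step‑$(a+b)$ shift, with $E_m = 0$ once $m > j - i$), so $[E_a, E_b] = 0$, $\fk$ is an abelian subalgebra, and $\fm_2 = \fm_1 \oplus \fk = \fm_1 \rtimes \fk$. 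For \cref{cond:SubalgCharacter}: since $w \in \fg_0$ and $\fm_1 \subseteq \bigoplus_{d < 0} \fg_d$, \cref{GGprop5} gives $\killing{w}{\fm_1} = 0$, so $\res{\chi_2}{\fm_1} = \chi_1$ and $\chi_2$ decomposes as $(\chi_1, \kappa)$ with $\kappa = \res{\chi_2}{\fk}$. The character condition $\chi_2([\fm_2, \fm_2]) = 0$ is equivalent to $[e_2, \fm_2] \perp \fm_2$ for the Killing form; writing $e_2 = e_1 + w$ and $\fm_2 = \fm_1 \oplus \fk$, each of the four resulting pieces $[e_1, \fm_1]$, $[e_1, \fk]$, $[w, \fm_1]$, $[w, \fk]$ is orthogonal to both $\fm_1$ and $\fk$: the $e_1$‑pieces because $[e_1, \fk] = 0$ and $\chi_1$ is a character of $\fm_1$ (\cref{prop:PremProp}), and the $w$‑pieces because $w \in \fg_0$, $[\fm_1, \fk] \subseteq \fm_1 \subseteq \bigoplus_{d \le -2} \fg_d$, $[\fk, \fk] = 0$, and \cref{GGprop5}. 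Hence $\chi_2$ restricts to a character of $\fm_2$.

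With \cref{cond:SubalgDecomp,cond:SubalgCharacter} established, \cref{prop:Premet2Reduction} yields \cref{cond:KStabilise,cond:KRegular} and the theorem follows. The main obstacle, and where the real care is needed, is the combinatorial bookkeeping that must be performed uniformly for the general indices $i < j$ of \cref{prop:OrbitCover}: on arrow diagrams one must simultaneously control the merge of rows $i$ and $j$ (to pin down the block sizes $\lambda_i, \lambda_j$), the identity $E_a E_b = E_{a+b}$, and the relation $\fk \subseteq \liez{e_1}$, while keeping track of the intermediate rows $i < \ell < j$ and of any re‑indexing of the parts of $\mu$ forced by the covering move. Each individual step is elementary, but the argument only goes through because the generators $E_m$ and the correction $w$ are built from aligned‑box arrows in precisely the way dictated by \cref{prop:OrbitCover}.
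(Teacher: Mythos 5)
Your proof follows the same overall structure as the paper's: establish the Jordan type of $e_2$, verify \cref{cond:SubalgDecomp,cond:SubalgCharacter} directly, and invoke \cref{prop:Premet2Reduction} for \cref{cond:KStabilise,cond:KRegular}; the differences are in presentation rather than in the route taken. For the Jordan type, the paper exhibits two explicit Jordan strings for the merged rows $i$ and $j$, whereas you compute $\dim\ker e_2$ on the two-row subspace together with the length of the $e_2$-string through $u_1$. This determines the block sizes, but only once one also records that $u_1 \notin \operatorname{im} e_2$ (visible from your explicit description of $e_2$, since no basis vector maps to $u_1$), or equivalently that $e_2^{\mu_i+1}$ annihilates the whole two-row subspace; the bare facts $\dim\ker e_2 = 2$ and a single string of length $\mu_i+1$ through some vector do not by themselves pin down the partition, so this small point is worth stating. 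For \cref{cond:SubalgDecomp}, your identity $E_a E_b = E_{a+b}$ is a cleaner route to $[\fk,\fk]=0$ than the paper's appeal to the arrow diagrams. For \cref{cond:SubalgCharacter}, you verify $\chi_2([\fm_2,\fm_2])=0$ via the four-piece orthogonality decomposition, resting on $[e_1,\fk]=0$, $[\fk,\fk]=0$ and \cref{GGprop5}; the paper instead computes $\chi_2([E_m,E_{\ell k}])=0$ from the identity $[e_2,E_m]=0$. Both are correct, and both ultimately rest on $\fk \subseteq \liez{e_1}$.
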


\begin{rem}
\label{rem:NilpGens}
Note that the nilpotent $e_2$ and the generators $E_m$ can be rewritten using
the generators of $\liez{e_1}$ as in \cref{fig:ggnilcommute3}:
\begin{align}
e_2 & = e_1 + E^j_i[0] &
E_m & = \sum_{i \le k \le j-m} E^k_{k+m} [0].
\label{eq:GenNilcommute}
\end{align}
In particular, it can be seen from the figure that the generators $E_m$ commute
amongst themselves, and hence $\fk$ is an abelian Lie algebra. To see that
$\fm_2$ is indeed closed under the Lie bracket, observe that, using the good
grading from the pyramid for $\mu$,
\begin{equation*}
\liebr{\fm_2, \fm_2} = \liebr{\fm_2, \fm_1} \subseteq
  \liebr[\big]{\bigoplus_{k\le0} \fg_k, \bigoplus_{\ell\le-2} \fg_\ell} \subseteq
  \bigoplus_{k\le-2} \fg_k = \fm_1 \subseteq \fm_2.
\end{equation*}
\end{rem}

\begin{proof}
To prove that $e_2$ has the correct Jordan type, it suffices to exhibit a Jordan
basis. Note that a Jordan basis can be read off the rows of the pyramid,
proceeding from left to right. Let the Jordan basis for $e_1$ in row $i$ be
given by
\begin{equation*}
e_{i_1} \mapsto e_{i_2} \mapsto \dotsb \mapsto e_{i_{\mu_i}} \mapsto 0.
\end{equation*}
The Jordan basis for $e_2$ is identical to that of $e_1$ except for those
strings corresponding to rows $i$ and~$j$. Specifically, we have the two strings
\begin{gather*}
\pp[\big]{e_{i_1}} \mapsto \dotsb \mapsto \pp[\big]{e_{i_{\mu_i-\mu_j}}}
  \mapsto \pp[\big]{e_{i_{\mu_i-\mu_j+1}} + e_{j_1}} \mapsto \dotsb 
  \mapsto \pp[\big]{e_{i_{\mu_i}} + (\mu_j-1)e_{j_{\mu_j-1}}} \mapsto \pp[\big]{\mu_je_{j_{\mu_j}}} \mapsto 0, \\
\pp[\big]{e_{i_{\mu_i-\mu_j+1}} - (\mu_j-1)e_{j_1}} \mapsto \pp[\big]{e_{i_{\mu_i-\mu_j+2}} - (\mu_j-2)e_{j_2}}
  \mapsto \dotsb \mapsto \pp[\big]{e_{i_{\mu_i}} - e_{j_{\mu_j-1}}} \mapsto 0,
\end{gather*}
of lengths $\mu_i+1 = \lambda_i$ and $\mu_j-1 = \lambda_j$, respectively.

\begin{eg}
Consider partitions $\mu = (3,2)$ and $\lambda = (4,1)$. Choosing a right-aligned
pyramid for $\mu$, one obtains the pyramid and Jordan basis of type $\mu$
\begin{align*}
& \smash{\raisebox{-.75em}{\ytableaushort{\none35,124}}}  &
   e_1 \mapsto e_2 \mapsto e_4 & \mapsto 0 \\
&&  e_3 \mapsto e_5 & \mapsto 0.
\end{align*}
The corresponding Jordan basis of type $\lambda$ is
\begin{gather*}
(e_1) \mapsto (e_2) \mapsto (e_4 + e_3) \mapsto (2e_5) \mapsto 0 \\
(e_4 - e_3) \mapsto 0.
\end{gather*}
\end{eg}

\Cref{cond:SubalgDecomp}, that $\fm_2 \simeq \fm_1 \rtimes \fk$, follows by showing
that $\fm_1$ is a Lie ideal of $\fm_2$ and $\fk$ is a complementary subalgebra,
both of which are shown in \cref{rem:NilpGens}.

To check \cref{cond:SubalgCharacter}, it can be seen that
$\res{\chi_2}{\fm_1} = \chi_1$ by construction. To confirm that $\chi_2$ is a character
of $\fm_2$, note that $\liebr{\fm_2,\fm_2} = \liebr{\fm_1 + \fk, \fm_1}$ by
\cref{rem:NilpGens}. However, since
$\chi_2 \pp[\big]{\liebr{\fm_1,\fm_1}} = \chi_1 \pp[\big]{\liebr{\fm_1,\fm_1}} = 0$,
it remains only to check that $\chi_2(\liebr{\fk, \fm_1}) = 0$. We shall check this
on a basis
$\set[\big]{\liebr{E_m, E_{\ell k}} \st
  \col(k) < \col(\ell), 1 \leq m \leq j-i}$.
First, note that
\begin{equation*}
\chi_2 \pp[\big]{ \liebr{E_m, E_{\ell k}} }
  = \inn[\big]{e_2}{\liebr{E_m, E_{\ell k}}}
  = \inn[\big]{\liebr{e_2, E_m}}{E_{\ell k}}.
\end{equation*}
This vanishes, as \cref{eq:GenNilcommute} implies that
\begin{equation*}
\liebr{e_2, E_m} = \liebr[\Big]{e_1 + E^j_i[0], \sum_{i\le k\le j-m} E^k_{k+m}[0]} = 0.
\end{equation*}
This further establishes the stronger claim that $\fk$ annihilates $\chi_2$, and
also $\chi_1$: hence \cref{cond:KStabilise} also holds. \Cref{cond:KRegular}
follows by the same argument as in \cref{prop:Premet2Reduction}.\qedhere
\end{proof}

\begin{conj}
\label{conj:typeAred}
For nilpotents $e_1, e_2 \in \fg$ and subalgebras $\fm_1, \fm_2 \subseteq \fg$ as defined
in \cref{thm:typeAred}, the reduced space $\fg^* \qq{\chi_2} M_2$ is isomorphic to
the Slodowy slice $\slodowy_{\chi_2}$ as a Poisson variety.
\end{conj}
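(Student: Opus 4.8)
The plan is to deduce \cref{conj:typeAred} from the single assertion that the subalgebra $\fm_2$ constructed in \cref{thm:typeAred} is itself a Premet subalgebra for the nilpotent $e_2$; granting this, \cref{prop:Premet2Reduction} applies verbatim and produces the Poisson isomorphism $\fg^* \qq{\chi_2} M_2 \simeq \slodowy_{\chi_2}$. Most of the supporting data is already in place. \Cref{thm:typeAred} shows that $\fm_2$ is ad-nilpotent and that $\chi_2 = \killing{e_2}{\cdot}$ restricts to a character of $\fm_2$; and since $\dim \fm_2 = \dim \fm_1 + (j-i)$, the dimension count reduces to the combinatorial identity $\dim \orbit_{e_2} - \dim \orbit_{e_1} = 2(j-i)$ for a covering pair, which is a short computation from the Gerstenhaber procedure of \cref{prop:OrbitCover} and, together with \cref{prop:PremProp}, gives $\dim \fm_2 = \frac{1}{2} \dim \orbit_{e_2}$, the dimension a Premet subalgebra for $e_2$ must have. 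What is left is the grading structure of \cref{def:Premet}: a good grading $\Gamma_2$ of $\fg$ for $e_2$ together with a subspace $\fl_2$ of its degree $-1$ piece, Lagrangian for the form $\omega(x,y) = \killing{e_2}{[x,y]}$, such that $\fm_2 = \fl_2 \oplus \bigoplus_{k \le -2} \fg_k$ with respect to $\Gamma_2$.

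The core of the problem is thus to produce $\Gamma_2$. A natural candidate is the good grading attached, under the Elashvili--Kac bijection, to the pyramid $P_\lambda$ of shape $\lambda$ obtained from the right-aligned pyramid $P_\mu$ by moving one box out of row $j$ and into row $i$ (up to re-normalising so that the bottom row is centred) — but this grading has to be transported into the Jordan basis for $e_2$ exhibited in the proof of \cref{thm:typeAred}, where $e_2 = e_1 + E^j_i[0]$ in the notation of \cref{rem:NilpGens}, rather than being read off the standard basis. Concretely I would write down the semisimple element $h_2$ by altering the grading element of $\Gamma_1$ only along the two $e_2$-Jordan strings running through rows $i$ and $j$, so that those strings acquire the column coordinates of $P_\lambda$, and then check: (i) $[h_2, e_2] = 2e_2$; (ii) $\liez{e_2}$ lies in the non-negative part of $\Gamma_2$, equivalently $\ad e_2 \colon \fg_k \to \fg_{k+2}$ is injective for $k \le -1$, so that $\Gamma_2$ is a good grading for $e_2$ (using \cref{rem:GGprop23}); and (iii) $\fm_2$ is homogeneous for $\Gamma_2$, is contained in $\bigoplus_{k \le -1} \fg_k$, contains $\bigoplus_{k \le -2} \fg_k$, and meets degree $-1$ in a Lagrangian subspace for $\omega$. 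Step (ii) should be tractable by writing the arrow-diagram basis of $\liez{e_2}$ supplied by \cref{fig:ggnilcommute} in the transported basis and doing the degree bookkeeping as in the proof of \cref{prop:PyramidGood}; step (iii) comes down to verifying that the abelian algebra $\fk = \gen{E_m}$ supplies, under $\Gamma_2$, exactly a Lagrangian complement inside the degree $-1$ space.

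I expect the main obstacle to be precisely the interaction of this box move with the re-sorting concealed in \cref{prop:OrbitCover}: when $j - i > 1$ the intermediate parts $\lambda_{i+1}, \dotsc, \lambda_{j-1}$ can equal either $\lambda_i$ or $\lambda_i - 1$, the pyramid $P_\lambda$ need not be right-aligned, and the two cases of \cref{prop:OrbitCover} (whether $\lambda_j = \lambda_i - 2$, or $\lambda_\ell = \lambda_i$ for all $i < \ell < j$) must be handled separately to confirm that $h_2$ still satisfies (i)--(iii), and in particular that $\fm_2$ stays homogeneous. If it should turn out that $\fm_2$ is homogeneous for no good grading of $e_2$, this approach fails and one must instead sidestep \cref{prop:Premet2Reduction} and re-run the argument of \cref{thm:WalgQuant} directly for the pair $(e_2, \fm_2)$: exhibit an \sltriple\ $\set{e_2, h_2, f_2}$ and a contracting cocharacter for which the adjoint-action map $M_2 \times \slodowy_{e_2} \to e_2 + \fm_2^\bot$ is an isomorphism (the analogues of \cref{lem:SlodowyAdjTangentIso,lem:WalgQuantAdjDecomp}), and then repeat the Lie-algebra cohomology computation of \cref{thm:WalgQuantLieCoh}. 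A more elementary fallback, since reduction by stages (\cref{thm:2StageReduction}) already identifies $\fg^* \qq{\chi_2} M_2$ with $\slodowy_{\chi_1} \qq{\kappa} K$, is to build an explicit change of coordinates between $\slodowy_{\chi_1} \qq{\kappa} K$ and $\slodowy_{\chi_2}$ in \typeA\ and check by hand that it intertwines the Poisson brackets; the two sides already have the same dimension, so only the map and the bracket comparison remain to be supplied.
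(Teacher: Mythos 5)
Your primary approach — showing that $\fm_2$ is a Premet subalgebra for $e_2$ and then invoking \cref{prop:Premet2Reduction} — fails, and this failure is precisely why the statement is a conjecture rather than a corollary of that proposition. Already the first nontrivial case exhibits the obstruction. Take $\fg = \fsl_4$, $\mu = (2,2)$ covered by $\lambda = (3,1)$, exactly as worked out in the paper's example: the construction gives $e_2 = E_{21}+E_{31}+E_{42}+E_{43}$ and $\fm_2 = \operatorname{span}\{E_{12}+E_{34},\, E_{13},\, E_{23},\, E_{14},\, E_{24}\}$. The unique traceless diagonal $h$ with $[h,e_2]=2e_2$ is $h = \operatorname{diag}(2,0,0,-2)$; in this grading $\deg E_{12} = \deg E_{34} = -2$ and $\deg E_{23} = 0$. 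For $\fm_2$ to be a Premet subalgebra, the full degree-$(-2)$ space must lie inside $\fm_2$, yet $E_{12}$ and $E_{34}$ are not individually in $\fm_2$, only their sum — while $E_{23} \in \fm_2$ sits in degree $0$. Running through all three pyramidal good gradings of a $(3,1)$-nilpotent, transported into the Jordan basis $2e_1,\,e_2+e_3,\,e_4,\,e_2-e_3$ of $e_2$, confirms that none of their Premet subalgebras coincides with $\fm_2$: their elements either acquire nonzero diagonal entries in the standard basis or miss $E_{23}$. So $\fm_2$ is homogeneous for no good grading of $e_2$; the contingency you flagged is the actual state of affairs.

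The paper itself does not prove \cref{conj:typeAred}. In the remark immediately following the conjecture it verifies that $\fm_2$ satisfies the three hypotheses of a more general conjecture of Premet — $\chi_2$ is a character of $\fm_2$, $\fm_2 \cap \liez{e_2} = \{0\}$, and $\dim \fm_2 = \tfrac{1}{2}\dim \orbit_{e_2}$ — thereby reducing the statement to Premet's conjecture, which remains open in characteristic zero (though proved by Premet in characteristic $p$). The dimension and character checks you rederive are exactly those hypotheses, but the step from there to the Poisson isomorphism is the open part, and it is strictly harder than the Premet-subalgebra case since the latter is already settled by \cref{thm:SlodowyHamRed}. The one case the paper proves, \cref{prop:typeAsubreg} and its mild extension in the following remark, is precisely the exceptional case where $\fm_2$ happens to be a Premet subalgebra. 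Your two fallbacks are the right shape: the explicit Poisson-preserving coordinate change is exactly what the paper carries out for the $\fsl_4$ example at the end of \cref{sec:GeneralConstruction}, and re-running the argument of \cref{thm:WalgQuant} for the non-Premet pair $(e_2,\fm_2)$ is essentially what a characteristic-zero proof of Premet's conjecture in \typeA\ would require — but neither is completed in general in the paper, and you should not expect a short argument to exist.
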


\begin{rem}
This conjecture is a special case of a more general conjecture due to Premet,
based on his work in \cite{Pre:TransSlice}
(cf.~\cite[Question~1]{Sad:PairesAdmissibles}). Specifically, Premet conjectures
that for any ad-nilpotent subalgebra $\fm \subseteq \fg$ satisfying the conditions:
\begin{enumerate}
\item \label[condition]{cond:PreConjChar}
      $\chi(\fm,\fm) = 0$, i.e.~$\chi$ is a character of $\fm$;
\item \label[condition]{cond:PreConjCentre}
      $\fm \cap \liez[\fg]{e} = \set{0}$; and
\item \label[condition]{cond:PreConjDim}
      $\dim \fm = (\dim G \cdot e) / 2$;
\end{enumerate}
the \Walgebras\ $U(\fg,e)$, and therefore the classical reduced spaces
$\fg \qq{\chi} M$, are all isomorphic.

Concretely, the algebra $\fm_2$ defined in \cref{thm:typeAred} satisfies
\cref*{cond:PreConjChar,cond:PreConjDim} by construction, and it can be checked
that it satisfies \cref*{cond:PreConjCentre} by modifying the diagrams in
\cref{fig:ggnilcommute}.
Premet's conjecture would therefore imply that the
Hamiltonian reduction by $\fm_2$ at $\chi_2$ is isomorphic to the Slodowy slice
$\slodowy_{\chi_2}$

In fact, Premet has proven this conjecture in the case that the base field is of
characteristic~$p$ \cite{Pre:TransSlice}. Hence \cref*{conj:typeAred}, and the
construction presented here in general, is actually a theorem when working over
a field of non-zero characteristic. 
\end{rem}

\begin{prop}
\label{prop:typeAsubreg}
\Cref{conj:typeAred} holds for $e_1$ a subregular nilpotent and $e_2$ a regular
nilpotent.
\end{prop}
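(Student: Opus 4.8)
The plan is to carry out the general construction of \cref{thm:typeAred} in this particular case and then to recognise the subalgebra $\fm_2$ it produces as a Premet subalgebra for a good grading of the regular nilpotent~$e_2$. Once that is known, \cref{prop:Premet2Reduction} applies verbatim and identifies $X = \fg^* \qq{\chi_2} M_2$ with $\slodowy_{\chi_2}$, which is exactly the assertion of \cref{conj:typeAred} for this pair.

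I would instantiate \cref{thm:typeAred} with $\mu = (n-1,1)$ (subregular) and $\lambda = (n)$ (regular). By \cref{prop:OrbitCover} the partition $(n)$ covers $(n-1,1)$ with $i = 1$ and $j = 2$, so $j - i = 1$ and $\fk = \gen{E_1}$ is one-dimensional. The right-aligned pyramid for $\mu$ has a bottom row of boxes $1, \dotsc, n-1$ and a single box~$n$ directly above box~$n-1$, so $\col(k) = 2k-n$ for $k \le n-1$ and $\col(n) = n-2$. Reading off the formulas: $e_1$ is the subregular nilpotent whose only non-trivial Jordan string runs along the bottom row; $e_2 = e_1 + E^2_1[0]$ attaches box~$n$ onto the end of that string, so $e_2$ is a single Jordan block of size~$n$ (as \cref{thm:typeAred} already guarantees); and $\fm_2 = \fm_1 \oplus \gen{E_1}$, where $\fm_1 = \bigoplus_{j \le -2} \fg_j$ is the (even) Premet subalgebra of $e_1$ attached to this pyramid and $E_1 = E^1_2[0]$.

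The heart of the proof is to pin down $\fm_2$. A direct computation of the degrees $\deg E_{ab} = \col(b) - \col(a)$ from the column data above shows that $\fm_1$ is spanned by precisely the matrix units $E_{ab}$ with $b < a$ \emph{other than} $E_1$, which sits in degree~$0$; adjoining $E_1$ therefore completes $\fm_1$ to $\fn_-$, the nilradical of a Borel subalgebra of $\fg$ opposite to a Borel containing $e_2$. Equivalently, $\fm_2 = \bigoplus_{j \le -2} \fg_j$ for the principal (even, Dynkin) good grading of the regular nilpotent $e_2$ --- the only good grading it has, the pyramid of shape $(n)$ being unique. In particular $\fm_2$ is a Premet subalgebra for a good grading of $e_2$, so \cref{prop:Premet2Reduction} gives $\fg^* \qq{\chi_2} M_2 \simeq \slodowy_{\chi_2}$ and the proof is complete. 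As a sanity check on the identification, $\dim \fm_2 = \tfrac{1}{2} \dim \orbit_\mu + \dim \fk = \tfrac{1}{2}(n-2)(n+1) + 1 = \binom{n}{2} = \dim \fn_-$. The only real work is this degree bookkeeping, which amounts to checking that the pair $(e_2,\fm_2)$ is carried to the standard pair (principal nilpotent, opposite nilradical); alternatively, once $M_2$ is matched with the standard Premet subgroup one can quote \cref{thm:SlodowyHamRed} directly, since it already realises $\slodowy_{\chi_2}$ as the Hamiltonian reduction of $\fg^*$ by that subgroup.
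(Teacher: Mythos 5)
Your proof is correct and follows the same route as the paper: identify $\fm_2$ with the strictly lower-triangular nilradical $\fn_-$ and observe that this is the Premet subalgebra for the regular nilpotent, then invoke \cref{prop:Premet2Reduction}. The paper simply asserts these two facts in two sentences, whereas you supply the column/degree bookkeeping (and a dimension sanity check) that verifies them — a useful expansion, but not a different argument.
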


\begin{proof}
The subalgebra $\fm_2$ constructed is simply the maximal nilpotent subalgebra of
lower-triangular matrices $\fn_-$. This is a Premet subalgebra for $e_2$.
\end{proof}

\begin{rem}
The construction detailed in this \namecref{sec:GeneralConstruction} can be
modified slightly to give a stronger version of \cref*{prop:typeAsubreg}.
Instead of choosing a right-aligned pyramid of shape $\mu$, one can choose a
pyramid which is right-aligned but for a leftward shift of 1 row~$i$ and another
leftward shift of 1 at row~$j+1$. This necessitates a choice of Lagrangian
$\fl \subseteq \fg_{-1}$; this choice can be made so that the resulting Premet
subalgebra can be extended to a Premet subalgebra for a pyramid of shape $\lambda$
which is right-aligned but for a leftward shift of 1 at row~$i+1$ and another
leftward shift of 2 at row~$j$.
\begin{equation*}
\begin{picture}(200,113)(0,-13)
% \lambda = (3,2,2,2,1)
\put(0,0)  {\line(1,0){60}} \put(0,20) {\line(1,0){60}}
\put(10,40){\line(1,0){40}} \put(10,60){\line(1,0){40}}
\put(10,80){\line(1,0){40}} \put(20,100){\line(1,0){20}}
\put(0,0)  {\line(0,1){20}} \put(20,0) {\line(0,1){20}}
\put(40,0) {\line(0,1){20}} \put(60,0) {\line(0,1){20}}
\put(10,20){\line(0,1){60}} \put(30,20){\line(0,1){60}}
\put(50,20){\line(0,1){60}}
\put(20,80){\line(0,1){20}} \put(40,80){\line(0,1){20}}
 \put(10,10){\makebox(0,0){{1}}}
 \put(30,10){\makebox(0,0){{5}}}
 \put(50,10){\makebox(0,0){{10}}}
 \put(20,30){\makebox(0,0){{2}}}
 \put(40,30){\makebox(0,0){{7}}}
 \put(20,50){\makebox(0,0){{3}}}
 \put(40,50){\makebox(0,0){{8}}}
 \put(20,70){\makebox(0,0){{4}}}
 \put(40,70){\makebox(0,0){{9}}}
 \put(30,90){\makebox(0,0){{6}}}
 \put(30,-13){\makebox(0,0){{$\mu = (3,2,2,2,1)$}}}

\put(100,40){\makebox(0,0){{<}}}

% \lambda = (3,3,2,1,1)
\put(140,0)  {\line(1,0){60}} \put(140,20) {\line(1,0){60}}
\put(140,40) {\line(1,0){60}} \put(150,60){\line(1,0){40}}
\put(150,80){\line(1,0){20}} \put(150,100){\line(1,0){20}}
\put(140,0)  {\line(0,1){40}} \put(160,0) {\line(0,1){40}}
\put(180,0) {\line(0,1){40}} \put(200,0) {\line(0,1){40}}
\put(150,40){\line(0,1){60}} \put(170,40){\line(0,1){60}}
\put(190,40){\line(0,1){20}}
 \put(150,10){\makebox(0,0){{1}}}
 \put(170,10){\makebox(0,0){{5}}}
 \put(190,10){\makebox(0,0){{10}}}
 \put(150,30){\makebox(0,0){{2}}}
 \put(170,30){\makebox(0,0){{7}}}
 \put(190,30){\makebox(0,0){{9}}}
 \put(160,50){\makebox(0,0){{3}}}
 \put(180,50){\makebox(0,0){{8}}}
 \put(160,70){\makebox(0,0){{4}}}
 \put(160,90){\makebox(0,0){{6}}}
 \put(170,-13){\makebox(0,0){{$\lambda = (3,3,2,1,1)$}}}
\end{picture}
\label{eq:AlternateConstruction}
\end{equation*}

For this new pyramid and compatible choice of Lagrangian, \cref*{thm:typeAred}
remains true. Furthermore, \cref*{prop:typeAsubreg} and its proof hold not only
for $e_1$ a subregular nilpotent and $e_2$ a regular nilpotent, but more
generally for any pair of nilpotent elements $e_1$ and $e_2$ of Jordan types
$\mu = (\mu_1,\dotsc,\mu_k,1)$ and $\lambda = (\mu_1,\dotsc,\mu_k+1)$, respectively.
\end{rem}

\begin{eg}
Let $\fg = \fsl_4$, $\orbit_1$ be the middle nilpotent orbit and $\orbit_2$ be
the subregular nilpotent orbit as in page~\pageref{eg:sl4middle}. The Slodowy slice
$\slodowy_{\chi_2}$ and a presentation for the reduced space $\fg^* \qq{\chi_2} M_2$
thus obtained are:
\begin{align*}
\slodowy_{\chi_2} & = \set*{
\begin{pmatrix}
a       & 1      & 0 & 0 \\
b-3a^2  & a      & 1 & 0 \\
c+20a^3 & b-3a^2 & a & d \\
f       & 0      & 0 & -3a
\end{pmatrix}
\st a,b,c,d,f \in \CC}, &
\Cpoly[\big]{\slodowy_{\chi_2}} & = \Cpoly{a,b,c,d,f} \\
\fg^* \qq{\chi_2} M_2 & \simeq \set*{
\begin{pmatrix}
0                  & 1              & 1                 & 0 \\
x + \frac{u+v}{4}  & 0              & 0                 & 1 \\
\frac{-3u+v}{4}    & -2y            & 0                 & 1 \\
z + \frac{u+v}{2}y & \frac{u-3v}{4} & x + \frac{u+v}{4} & 0
\end{pmatrix}
\st u,v,x,y,z \in \CC}, &
\Cpoly[\big]{\fg^* \qq{\chi_2} M_2} & = \Cpoly{u,v,x,y,z}.
\end{align*}
The non-zero Poisson brackets are given by the formulae:
\begin{align*}
\poibr{a,d} & = \tfrac{-1}{24} d     & \poibr{c,d} & = \tfrac{1}{6} bd  & &&
  \poibr{u,y} & = \tfrac{1}{8}  (u + x + y^2) &
  \poibr{u,z} & = \tfrac{1}{4} x (u + x + y^2) \\
\poibr{a,f} & = \tfrac{1}{24} f      & \poibr{c,f} & = \tfrac{-1}{6} bf & &&
  \poibr{v,y} & = \tfrac{-1}{8} (v + x + y^2) &
  \poibr{v,z} & = \tfrac{-1}{4} x (v + x + y^2) \\
\poibr{d,f} & = \mathrlap{\tfrac{-27}{2} a^3 + ab - \tfrac{1}{8} c} & & & &&
  \poibr{u,v} & = \mathrlap{\tfrac{-1}{4} \pp[\big]{ z + xy + 2(u+v)y }}.
\end{align*}

\noindent
Consider the ring homomorphism
$\varphi \colon \Cpoly[\big]{\slodowy_{\chi_2}} \to \Cpoly[\big]{\fg^* \qq{\chi_2} M_2}$
defined on generators by
\begin{align*}
\varphi(a) & = \tfrac{-1}{3} y   & \varphi(b) & = x   & \varphi(c) & = 2z - \tfrac{8}{3} xy   &
\varphi(d) & = v + x + y^2       & \varphi(f) & = -u - x - y^2.
\end{align*}
It can be checked that this map is a ring isomorphism and also preserves the
Poisson bracket; it hence induces an isomorphism of the Poisson varieties
$\slodowy_{\chi_2} \simeq \fg^* \qq{\chi_2} M_2$. Furthermore, this map preserves the
characteristic polynomial, and is the unique map satisfying all these
properties, up to the automorphism $\alpha$ of $\Cpoly[\big]{\slodowy_{\chi_2}}$
($\alpha(d) = -d$, $\alpha(f) = -f$, and all other generators fixed).
\end{eg}

\section{Quantum Hamiltonian reduction by stages for W-algebras}
\label{sec:QHRStages}

The construction of \cref{sec:GeneralConstruction} gives a method for
constructing semidirect product decompositons $M_2 \simeq M_1 \rtimes K$ suitable for
constructing Slodowy slices as Hamiltonian reductions of Slodowy slices
corresponding to more singular nilpotents. However, it has been presented in
such a way that its generalisation to the quantum case and \Walgebras\ is
straightforward. In particular, we shall construct a notion of \emph{quantum
Hamiltonian reduction by stages} which generalises the classical Hamiltonian
reduction by stages, and present a construction expressing \Walgebras\
as quantum Hamiltonian reductions of \Walgebras\ corresponding to more singular
nilpotent elements. What follows is a quantum version of
\cref{thm:2StageReduction}, the Hamiltonian reduction by stages theorem.

\begin{thm}
\label{thm:Quantum2StageReduction}
Let $\fg$ be a Lie algebra with universal enveloping algebra $U(\fg)$, and let
$\fm_1$, $\fm_2$ and $\fk$ be ad-nilpotent subalgebras of $\fg$. Furthermore,
let $\fm_2 = \fm_1 \rtimes \fk$, and let $\chi_2$ be a character of $\fm_2$ which
decomposes as $\chi_2 = \pp{\chi_1,\kappa}$. Denoting by $\fm_{1,\chi_1}$, $\fm_{2,\chi_2}$ and
$\fk_\kappa$ the corresponding shifted Lie algebras, define the quantum Hamiltonian
reductions by
\begin{align*}
U_1 \coloneqq U(\fg) \qq{\chi_1} \fm_1 & = \pp[\big]{U(\fg) \big/ U(\fg) \fm_{1,\chi_1}}^{\fm_1} &
\text{and} \quad\quad
U_2 \coloneqq U(\fg) \qq{\chi_2} \fm_2 & = \pp[\big]{U(\fg) \big/ U(\fg) \fm_{2,\chi_2}}^{\fm_2},
\end{align*}
where the invariants are equivalently either taken with respect to the adjoint
action or left multiplication by the shifted Lie algebras. Then $U_2$ can be
expressed as a quantum Hamiltonian reduction of $U_1$:
\begin{equation*}
U_2 \simeq \pp[\big]{ U_1 \big/ U_1\fk_\kappa }^\fk.
\end{equation*}
\end{thm}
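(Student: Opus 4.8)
The plan is to reduce the statement to two facts already available for ordinary \Walgebras: that the endomorphism algebra of a cyclic quotient $R/R\fa$ is the opposite of $(R/R\fa)^{\fa}$, and that the Gel'fand--Graev bimodule $Q_1 \coloneqq U(\fg)/U(\fg)\fm_{1,\chi_1}$ induces a fully faithful functor $Q_1 \otimes_{U_1}(-) \colon \Mod{U_1} \to \Mod{U(\fg)}$ (Skryabin's equivalence, which is available in the applications, where $\fm_1$ is a Premet subalgebra).

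First I would unwind the two left ideals. Since $\fm_2 = \fm_1 \rtimes \fk$ and $\chi_2 = (\chi_1,\kappa)$ in the vector-space decomposition $\fm_2 = \fm_1 \oplus \fk$, the shifted subspace satisfies $\fm_{2,\chi_2} = \fm_{1,\chi_1} \oplus \fk_\kappa$ inside $U(\fg)$, hence $U(\fg)\fm_{2,\chi_2} = U(\fg)\fm_{1,\chi_1} + U(\fg)\fk_\kappa$ and, writing $Q_i \coloneqq U(\fg)/U(\fg)\fm_{i,\chi_i}$, the module $Q_2$ is the quotient of $Q_1$ by the image of $U(\fg)\fk_\kappa$. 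The crucial observation is that the composite $\fk \hookrightarrow \fg \to U(\fg) \to Q_1$ lands inside $U_1 = (Q_1)^{\ad\fm_1}$: for $a \in \fk$ and $x \in \fm_1$ one has $[x,a] \in \fm_1$ by normality, and the image of $[x,a]$ in $Q_1$ is the scalar $\chi_1([x,a]) = \chi_2([x,a]) = 0$ since $\chi_2$ is a character of $\fm_2$. This gives a Lie algebra map $\fk \to U_1$, $a \mapsto \bar a$, which is a quantum co-moment map: in $U_1$ one has $[\bar a, \bar u] = \overline{[a,u]} = \ad_a(\bar u)$, so the residual $\ad\fk$-action on $U_1$ is inner. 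In particular $\fk_\kappa$ makes sense as a subspace of $U_1$; $(U_1/U_1\fk_\kappa)^\fk$ is an algebra by the same computation that makes $U(\fg,e)$ an algebra (using that $\kappa$ is a character of $\fk$); and, since $Q_1$ is a $(U(\fg),U_1)$-bimodule and $Q_1 \otimes_{U_1}(-)$ is right exact, there is an isomorphism of $U(\fg)$-modules $Q_2 \cong Q_1 \otimes_{U_1}(U_1/U_1\fk_\kappa)$.

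Next I would identify both algebras as endomorphism rings. Exactly as in \cref{eq:WalgEndDef}, $U_2 \cong \End_{U(\fg)}(Q_2)\op$, and likewise $(U_1/U_1\fk_\kappa)^\fk \cong \End_{U_1}(U_1/U_1\fk_\kappa)\op$ — the latter because $(U_1/U_1\fk_\kappa)^\fk = (U_1/U_1\fk_\kappa)^{\fk_\kappa}$ via the identity $(a-\kappa(a))u = [a,u] + u(a-\kappa(a))$. Since $Q_2 \cong Q_1 \otimes_{U_1}(U_1/U_1\fk_\kappa)$ and the Skryabin functor $Q_1 \otimes_{U_1}(-)$ is fully faithful, it induces an isomorphism $\End_{U_1}(U_1/U_1\fk_\kappa) \isoto \End_{U(\fg)}(Q_2)$; passing to opposite algebras gives $U_2 \cong (U_1/U_1\fk_\kappa)^\fk$, and tracing through the construction shows this is the map induced by $U(\fg) \to U_1 \to U_1/U_1\fk_\kappa$, hence canonical and compatible with the Kazhdan filtrations. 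Alternatively, to avoid citing Skryabin directly, one can use $\fm_2 = \fm_1 \rtimes \fk$ to get $Q_2^{\ad\fm_2} = (Q_2^{\ad\fm_1})^{\ad\fk}$ and then prove the lemma $Q_2^{\ad\fm_1} \cong U_1/U_1\fk_\kappa$ from the long exact $\fm_1$-cohomology sequence attached to $0 \to Q_1\fk_\kappa \to Q_1 \to Q_2 \to 0$, using that $\ad\fm_1$ acts $U_1$-linearly on all three terms, that $Q_1\fk_\kappa \cong Q_1 \otimes_{U_1} U_1\fk_\kappa$, and the acyclicity $H^{>0}(\fm_1, Q_1) = 0$, which is the analogue of \cref{thm:WalgQuantLieCoh}.

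The main obstacle is precisely this acyclicity / full-faithfulness input, which is where the structure of $\fm_1$ enters: for the \Walgebra\ applications ($\fm_1$ a Premet subalgebra of a good grading) it is Skryabin's theorem, so there is nothing extra to prove, but for the bare ad-nilpotent $\fm_1$ of the statement one must either assume $\fm_1$ admissible or reprove the vanishing in that generality — e.g.\ by passing to the Kazhdan-associated graded, where it becomes the classical reduction-by-stages picture of \cref{thm:2StageReduction,thm:SlodowyHamRed}. Everything else — the module identification, the quantum co-moment map, and the reduction to iterated invariants — is routine manipulation mirroring the W-algebra formalism already developed.
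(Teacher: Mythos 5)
The paper states \cref{thm:Quantum2StageReduction} without supplying a proof, so there is nothing to compare against; I evaluate your argument on its own terms.

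The plan is sound and the routine steps check out: the decomposition $U(\fg)\fm_{2,\chi_2} = U(\fg)\fm_{1,\chi_1} + U(\fg)\fk_\kappa$, hence $Q_2 \cong Q_1/Q_1\fk_\kappa$; the quantum co-moment map $\fk \to U_1$, which lands in $Q_1^{\ad\fm_1}$ precisely because $\chi_2$ is a character of $\fm_2$ and so kills $[\fm_1,\fk]$, and which is a Lie algebra map making the residual $\ad\fk$-action on $U_1$ inner; the bimodule identification $Q_2 \cong Q_1 \otimes_{U_1}(U_1/U_1\fk_\kappa)$ by right-exactness alone; and the description $U_2 = Q_2^{\fm_2} = (Q_2^{\ad\fm_1})^{\ad\fk}$ using normality of $\fm_1$ in $\fm_2$. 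You also correctly isolate the one non-formal ingredient: for the natural map $U_1/U_1\fk_\kappa \to Q_2^{\ad\fm_1}$ to be an isomorphism (equivalently, for $Q_1\otimes_{U_1}(-)$ to induce an isomorphism $\End_{U_1}(U_1/U_1\fk_\kappa) \to \End_{U(\fg)}(Q_2)$) one needs acyclicity on the $\fm_1$ side. One gets an injection $U_1/(U_1 \cap Q_1\fk_\kappa) \hookrightarrow Q_2^{\ad\fm_1}$ for free, but the equality $U_1 \cap Q_1\fk_\kappa = U_1\fk_\kappa$ and the surjectivity genuinely require the vanishing $H^{>0}(\fm_1, Q_1\fk_\kappa)=0$ together with $(Q_1\fk_\kappa)^{\ad\fm_1} = U_1\fk_\kappa$, which is the content of the Skryabin equivalence (\cref{thm:SkryEquiv}) or of \cref{thm:WalgQuantLieCoh}.

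This is precisely where you should be sharper, and you half-say it. As written, the theorem hypothesises only that $\fm_1$ is an ad-nilpotent subalgebra of $\fg$; the acyclicity (or full-faithfulness of $Q_1\otimes_{U_1}(-)$) is not available at that level of generality. In the paper it is proved by exhibiting a contracting torus action compatible with the Kazhdan filtration, using that $\fm_1$ is a Premet subalgebra for a good grading, and your proposed workaround of passing to the Kazhdan-associated graded presupposes exactly such a filtration. So your proof is complete under the hypotheses under which the theorem is actually applied in \cref{sec:QHRStages} (there $\fm_1$ is always a Premet subalgebra), but for the bare statement you would need either to add such a hypothesis or to supply a new argument for the cohomological vanishing. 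Apart from this, which you flag honestly, the argument is correct; the gap you identify is a gap in the statement of the theorem as much as in your proof of it.
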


With this \namecref{thm:Quantum2StageReduction}, it is now possible to approach
the problem of expressing \Walgebras\ as intermediate quantum Hamiltonian
reductions. Specifically, since the construction of
\cref{sec:GeneralConstruction} is phrased in terms of Lie algebras and
characters, one can apply this \namecref{thm:Quantum2StageReduction} directly to
the construction to obtain a quantum Hamiltonian reduction of the \Walgebra\
$U(\fg,e_1)$. Choosing $\fm_1$ to be a Premet subalgebra for $e_1$, the algebra
$U_1$ is just the \Walgebra\ $U(\fg,e_1)$, and the construction gives a
nilpotent $e_2$ with a subalgebra $\fm_2$ satisfying
\begin{equation*}
U(\fg,e_1) \qq{\kappa} \fk \simeq U(\fg) \qq{\chi_2} \fm_2.
\end{equation*}

\begin{conj}
\label{conj:typeAQred}
The reduced space $U(\fg) \qq{\chi_2} \fm_2$ is isomorphic to the \Walgebra\
$U(\fg,e_2)$.
\end{conj}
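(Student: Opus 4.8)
The plan is to deduce \cref{conj:typeAQred} from its classical counterpart \cref{conj:typeAred} by a \emph{quantisation commutes with reduction} argument: once one knows that the reduced Poisson variety $\fg^* \qq{\chi_2} M_2$ is the Slodowy slice $\slodowy_{\chi_2}$, it follows that the filtered algebra $U(\fg) \qq{\chi_2} \fm_2$ is a deformation quantisation of $\Cpoly[\big]{\slodowy_{\chi_2}}$, and by the uniqueness up to isomorphism of such a quantisation (the Gan--Ginzburg statement underlying \cref{thm:WalgQuant}) it must be isomorphic to the \Walgebra\ $U(\fg,e_2)$. The role of the conditions \cref{cond:PreConjChar,cond:PreConjCentre,cond:PreConjDim}, all verified for $\fm_2$ in \cref{thm:typeAred} and the remark after it, is precisely to guarantee that this reduction is as well-behaved as the one defining $U(\fg,e)$ itself.

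Carrying this out, I would first note that $U(\fg) \qq{\chi_2} \fm_2$ inherits an algebra structure because $\chi_2$ is a character of $\fm_2$ (as in the discussion after \cref{def:Walg}), and that by \cref{thm:Quantum2StageReduction}, taking $\fm_1$ to be a Premet subalgebra of $e_1$, it is also realised as the quantum reduction $U(\fg,e_1) \qq{\kappa} \fk$ of the \Walgebra\ $U(\fg,e_1)$, whose Kazhdan filtration satisfies $\gr U(\fg,e_1) \simeq \Cpoly[\big]{\slodowy_{\chi_1}}$. The crux is then to show that $\gr\pp[\big]{U(\fg) \qq{\chi_2} \fm_2} \simeq \Cpoly[\big]{\fg^* \qq{\chi_2} M_2}$ as Poisson algebras. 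I would establish this by the cohomological method of the proof of \cref{thm:WalgQuantLieCoh}: after passing to a cocharacter for which $\fm_2$ is negatively graded — which, following Premet, may require first replacing $\fm_2$ by its associated graded $\bar{\fm}_2$ for the $\ad h_2$-filtration of an \sltriple\ of $e_2$ and verifying by a one-parameter-family (Rees algebra) argument that neither $U(\fg) \qq{\chi_2} \fm_2$ nor $\fg^* \qq{\chi_2} M_2$ changes up to isomorphism — one identifies $\gr$ of the associated Gel'fand--Graev module with $\Cpoly{\bar{M}_2} \otimes \Cpoly[\big]{\fg^* \qq{\chi_2} \bar{M}_2}$, uses that $\bar{M}_2$ is unipotent and hence an affine space with vanishing higher cohomology, establishes the analogue of \cref{lem:WalgQuantAdjDecomp} (the adjoint-action map $\bar{M}_2 \times \pp[\big]{\fg^* \qq{\chi_2} \bar{M}_2} \to \chi_2 + \bar{\fm}_2^{*,\bot}$ is a $\CC*$-equivariant isomorphism of affine varieties, now using \cref{cond:PreConjCentre,cond:PreConjDim} in place of \cref{lem:SlodowyAdjTangentIso}), and runs the filtered spectral sequence, which degenerates at the first page exactly as before.

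With that in hand, \cref{conj:typeAred} identifies $\Cpoly[\big]{\fg^* \qq{\chi_2} M_2}$ with $\Cpoly[\big]{\slodowy_{\chi_2}}$ as a Poisson algebra, so $U(\fg) \qq{\chi_2} \fm_2$ is a filtered deformation quantisation of $\Cpoly[\big]{\slodowy_{\chi_2}}$ and hence isomorphic to $U(\fg,e_2)$. In the special case where $\fm_2$ is already a Premet subalgebra for $e_2$ — as for $e_1$ subregular and $e_2$ regular by \cref{prop:typeAsubreg}, and in the generalisation noted after it — there is nothing to prove: $U(\fg) \qq{\chi_2} \fm_2$ is then literally the \Walgebra\ $U(\fg,e_2)$ attached to a particular good grading and Lagrangian, and \cref{conj:typeAQred} follows from the independence of those choices.

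The hard part is \cref{conj:typeAred} itself over a field of characteristic zero: this is an instance of Premet's conjecture, proven by Premet in characteristic $p$ but open in general, and the graded-deformation step above (the characteristic-zero form of Premet's lemma replacing $\fm$ by a homogeneous subalgebra) lies in the same circle of ideas. The most promising attack is to descend the characteristic-$p$ theorem: since $e_2$ and $\fm_2$ are defined over $\ZZ$ by the explicit matrix formulae of \cref{thm:typeAred}, a spreading-out argument ought to reduce the Poisson isomorphism over $\CC$ to its known validity modulo almost every prime, the remaining difficulty being to control the reduced scheme uniformly in $p$ so that its special fibres are exactly the Slodowy slices.
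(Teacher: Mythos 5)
The paper treats this statement as a genuine conjecture and does not prove it; the entirety of the paper's discussion is the paragraph following the conjecture, which notes that it is a quantum version of \cref{conj:typeAred} and that a proof would amount to lifting a Poisson isomorphism $\varphi \colon \fg^* \qq{\chi_2} M_2 \to \slodowy_{\chi_2}$ to a filtered algebra homomorphism $\widetilde{\varphi} \colon U(\fg,e_2) \to U(\fg) \qq{\chi_2} \fm_2$, which would then be an isomorphism by the general theory of filtered algebras. Your proposal correctly recognises this conditional status and, at its core, pursues essentially the same strategy — deduce the quantum isomorphism from the classical one via filtered/graded considerations — while supplying substantially more detail than the paper does. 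Two of your additions are genuine contributions. First, you isolate the step the paper elides: before any lifting argument can even be stated, one needs to know that $\gr\pp[\big]{U(\fg) \qq{\chi_2} \fm_2} \simeq \Cpoly[\big]{\fg^* \qq{\chi_2} M_2}$, i.e.\ that quantisation commutes with this particular reduction; the paper's remark quietly assumes this. Second, the spreading-out/characteristic-$p$ descent idea does not appear anywhere in the paper and is a reasonable independent attack on the underlying classical \cref{conj:typeAred}. You also appeal to uniqueness of the deformation quantisation of $\Cpoly[\big]{\slodowy_{\chi_2}}$ rather than directly lifting $\varphi^*$; these are logically equivalent routes, though the uniqueness statement is due to Losev (cited later in the thesis as \cite[Corollary~3.3.3]{Los:QuantSymplWalg}) rather than to the Gan--Ginzburg result behind \cref{thm:WalgQuant}, which only identifies $\gr U(\fg,e)$ and says nothing about uniqueness.

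One technical point deserves a sharper warning than you give it. The cohomological argument of \cref{thm:WalgQuantLieCoh} relies on $\fm'^*$ being \emph{strictly} positively graded, so that the filtration on the Lie-algebra cochain complex is exhaustive and the spectral sequence degenerates. But the subalgebra $\fk$ you adjoin to $\fm_1$ sits in degree $0$ under the good grading $\Gamma_1$ of $e_1$ (the generators $E_m$ involve only $E_{\ell k}$ with $\col(k) = \col(\ell)$), so $\fm_2$ is only non-positively graded under $\Gamma_1$, not strictly negatively graded. This is precisely why you reach for a replacement cocharacter or a Rees-algebra deformation of $\fm_2$ to $\bar{\fm}_2$, and that is the right instinct, but it is not a cosmetic step: it is the step on which the whole $\gr$ computation hinges, and both the existence of a suitable $\theta$ and the invariance of the reduction along the one-parameter family would need to be proved. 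Absent that, even granting \cref{conj:typeAred}, the deduction of \cref{conj:typeAQred} is not complete. In short: your proposal is a faithful and usefully more explicit elaboration of the paper's remark, but the two hard steps — the classical conjecture and the $\gr$ computation for a non-negatively-graded $\fm_2$ — are both still open, and you should present them as such rather than as routine.
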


This \namecref{conj:typeAQred} is a quantum version of \cref{conj:typeAred}, and
its veracity is closely related. Specifically, since the \Walgebras\
$U(\fg,e_i)$ are filtered algebras whose associated graded algebras are
$\Cpoly[\big]{\slodowy_{\chi_i}}$, to prove that
$U(\fg,e_2) \simeq U(\fg) \qq{\chi_2} \fm_2$ would require lifting an isomorphism
$\varphi \colon \fg \qq{\chi_2} M_2 \to \slodowy_{\chi_2}$ to a homomorphism $\widetilde{\varphi}
\colon U(\fg,e_2) \to U(\fg) \qq{\chi_2} \fm_2$, which would then automatically be
an isomorphism by the general properties of filtered algebras.

\chapter[Category~\texorpdfstring{$\cO$}{O} for W-algebras]
{Category~\texorpdfstring{$\mathbfcal{O}$}{O} for W-algebras}
\label{chap:CatOWalg}

The BGG category~$\cO$, hereafter referred to just as \emph{category $\cO$}, is
a certain full subcategory of the category of all $\fg$\modules\ which satisfies
three conditions. Given a triangular decomposition $\fg = \fn_- \oplus \fh \oplus \fn_+$,
category~$\cO$ consists of all modules in $\Mod{\fg}$ which:
\begin{enumerate}
\item \label[condition]{cond:CatOfg}
      are finitely-generated,
\item \label[condition]{cond:CatOss}
      are acted upon semi-simply by $\fh$,
\item \label[condition]{cond:CatOln}
      are acted upon locally nilpotently by $\fn_+$.
\end{enumerate}
This category was originally defined by Bernstein, Gel'fand and Gel'fand in
\cite{BGG:CatO}, and has since proven extremely useful to mathematicians, in
particular for its relation to a number of categorification constructions
(cf.~\cite{BFK:CatTLAlg,KMS:CatSpecht}).

A similar subcategory exists in the category of all modules over a \Walgebra\
$U(\fg,e)$. As $U(\fg,e)$ is a subquotient of $U(\fg)$, it can be equipped with
a similar triangular decomposition, and analogues of
\crefrange*{cond:CatOfg}{cond:CatOln} can be formulated.
In \cite{Los:CatOWAlg}, Losev investigates the structure of this analogue of
category~$\cO$ for \Walgebras, and constructs an equivalence between it and a
certain subcategory of $\Mod{\fg}$.

The objective of this \namecref{chap:CatOWalg} is to prove a similar
equivalence in type~A, relating the categories~$\cO$ for \Walgebras\ to the
categories~$\cO$ for \Walgebras\ for more singular nilpotent elements, using the
quantum Hamiltonian reduction by stages construction developed in
\cref{sec:QHRStages}. In particular, this \namecref{chap:CatOWalg} shall assume
that \cref{conj:typeAQred} holds.

\section{The Skryabin equivalence}
\label{sec:Skyrabin}

To work with category~$\cO$ for a \Walgebra, it is first necessary to consider
the category of all modules over a \Walgebra: $\Mod{U(\fg,e)}$. Although
\Walgebras\ are difficult to work with on their own, the situation can be
significantly simplified by constructing an equivalence between $\Mod{U(\fg,e)}$
and a certain subcategory of $\Mod{\fg}$ corresponding to what are known as
\emph{Whittaker modules}.

Recall that the \Walgebra\ $U(\fg,e)$ can be defined as the invariants under the
adjoint action of $\fm$ in the \emph{generalised Gel'fand--Graev module}
$Q_\chi \coloneqq U(\fg) \big/ U(\fg)\fm_\chi$.
\begin{equation*}
U(\fg,e) = \pp[\big]{Q_\chi}^\fm.
\end{equation*}
The definition of $Q_\chi$ allows one to note that it both has the usual left
action of $U(\fg)$ by multiplication, but it further has a well-defined right
action of $U(\fg,e)$. Hence, $Q_\chi$ can be seen to be a $\pp{U(\fg),
U(\fg,e)}$\bimodule, and therefore induces an adjoint pair of functors between
the associated module categories:
\begin{align*}
 Q_\chi \otimes_{U(\fg,e)} \text{--} & \colon \Mod{U(\fg,e)} \to \Mod{U(\fg)} \\
 \Hom_{U(\fg)} \pp[\big]{Q_\chi,\text{--}} & \colon \Mod{U(\fg)} \to \Mod{U(\fg,e)}
\end{align*}

\begin{defn}
The functor $Q_\chi \otimes_{U(\fg,e)} \text{--}$ is called the \emph{Skryabin functor}.
\end{defn}

The essential image of the Skryabin functor can be described in intrinsic terms:
it is the full subcategory of $\Mod{U(\fg)}$ consisting of all \emph{Whittaker
modules} for the pair $(\fm,\chi)$.

\begin{defn}
A module $M \in \Mod{U(\fg)}$ is called a \emph{Whittaker module} for $(\fm,\chi)$ if
the Lie algebra $\fm$ acts by the generalised character $\chi$, or equivalently if
the shifted Lie algebra $\fm_\chi$ acts locally nilpotently on $M$. The category of
Whittaker modules is denoted $\Whit_{\fm,\chi}(\fg)$.
\begin{equation*}
\Whit_{\fm,\chi} (\fg) \coloneqq
 \set[\big]{ M \in \Mod{U(\fg)} \st \forall \; m \in M, y \in \fm, \;\exists \; n > 0
               \text{ such that } (y - \chi(y))^n m = 0 }
\end{equation*}

\noindent
A \emph{Whittaker vector} in a Whittaker module $M$ is a vector $m \in M$ on which
$\fm$ acts strictly by the character $\chi$. The space of Whittaker vectors of $M$
is denoted $\Wh(M)$.
\begin{equation*}
\Wh(M) \coloneqq \set{ m \in M \st \forall \; y \in \fm, (y - \chi(y)) m = 0 }
\end{equation*}
\end{defn}

\begin{lem}
For any $M \in \Mod{U(\fg,e)}$, $Q_\chi \otimes_{U(\fg,e)} M$ is a Whittaker module for
$(\fm,\chi)$. Furthermore, for a Whittaker module $M \in \Whit_{\fm,\chi}(\fg)$, the
space of Whittaker vectors $\Wh(M)$ is naturally a $U(\fg,e)$\module.
\end{lem}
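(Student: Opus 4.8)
The plan is to verify the two assertions of the lemma separately, using the generalised Gel'fand--Graev module $Q_\chi = U(\fg)/U(\fg)\fm_\chi$ as the bridge.

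\textbf{First assertion: $Q_\chi \otimes_{U(\fg,e)} M$ is Whittaker.} First I would recall that $Q_\chi$ itself is a Whittaker module for $(\fm,\chi)$: this is essentially the content of the proposition earlier in the excerpt showing that $\fm_\chi$ acts locally nilpotently on $Q_\chi$, since local nilpotence of the $\fm_\chi$\action\ is exactly the condition defining $\Whit_{\fm,\chi}(\fg)$. It then suffices to observe that the left $U(\fg)$\module\ structure on $Q_\chi \otimes_{U(\fg,e)} M$ acts only through the left tensor factor, so for $y \in \fm$, $m \in M$ and $q \otimes m \in Q_\chi \otimes_{U(\fg,e)} M$ we have $(y - \chi(y))(q \otimes m) = \bigl((y-\chi(y)) q\bigr) \otimes m$; picking $n$ with $(y - \chi(y))^n q = 0$ (which exists since $q$ lies in a finitely generated, hence locally nilpotent, $\fm_\chi$\module, or simply directly on PBW monomials) shows $(y - \chi(y))^n (q \otimes m) = 0$. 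A general element of the tensor product is a finite sum of such elementary tensors, and one takes the maximum of the corresponding exponents.

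\textbf{Second assertion: $\Wh(M)$ is a $U(\fg,e)$\module.} Here I would use the description $U(\fg,e) \simeq \End_{U(\fg)}(Q_\chi)\op$ from \cref{eq:WalgEndDef}, together with the identification of $\Wh(M)$ as a $\Hom$-space. Since $Q_\chi$ is cyclic, generated by $\overline{1}$, and $\overline{1}$ is a Whittaker vector, a $U(\fg)$\module\ homomorphism $Q_\chi \to M$ is determined by sending $\overline{1}$ to a Whittaker vector of $M$, and conversely every Whittaker vector of $M$ arises this way (one must check that $\overline{u_1} \mapsto \overline{u_1} \cdot m$ is well-defined, i.e.\ that $U(\fg)\fm_\chi$ kills $m$, which holds precisely because $m \in \Wh(M)$). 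This gives a natural linear isomorphism $\Wh(M) \simeq \Hom_{U(\fg)}(Q_\chi, M)$. The latter is a right $\End_{U(\fg)}(Q_\chi)$\module\ by precomposition, hence a left $\End_{U(\fg)}(Q_\chi)\op \simeq U(\fg,e)$\module, and transporting this structure through the isomorphism equips $\Wh(M)$ with the desired $U(\fg,e)$\action. Naturality in $M$ is immediate from the $\Hom$ description.

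\textbf{Anticipated obstacle.} The routine calculations (local nilpotence on $Q_\chi$, well-definedness of the evaluation map, naturality) are all straightforward; the only point requiring a little care is pinning down exactly which $\Hom$-space identification to use and being consistent about left versus right actions and the opposite algebra, so that the $U(\fg,e)$\action\ on $\Wh(M)$ is genuinely a left action compatible with the Skryabin functor being left adjoint to $\Hom_{U(\fg)}(Q_\chi, \text{--})$. I expect no serious difficulty, and indeed much of this is standard Morita-theoretic bookkeeping for the bimodule $Q_\chi$.
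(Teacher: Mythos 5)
Your proof is correct. For the first assertion you take essentially the same route as the paper: reduce to local nilpotence of $\fm_\chi$ on $Q_\chi$ itself and transport it along the left tensor factor. For the second assertion your route is genuinely different from the paper's. The paper argues directly from the subquotient description: for $m \in \Wh(M)$ and $\overline{u} \in U(\fg,e)$ one defines $\overline{u} \cdot m \coloneqq u \cdot m$ for any lift $u$, observes that the ambiguity $U(\fg)\fm_\chi$ annihilates $m$, and (implicitly) uses the $\ad\fm$\invariance\ of $\overline{u}$ to see the result again lies in $\Wh(M)$. You instead identify $\Wh(M) \simeq \Hom_{U(\fg)}(Q_\chi,M)$ and pull back the precomposition action of $\End_{U(\fg)}(Q_\chi)\op \simeq U(\fg,e)$ from \cref{eq:WalgEndDef}. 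One can check, by unwinding the isomorphism $\phi \leftrightarrow \phi(\overline{1})$, that the two actions agree, so both are correct; your approach has the virtue of making naturality in $M$ and the adjointness with the Skryabin functor automatic, while the paper's has the virtue of staying entirely inside the Whittaker-subquotient model and not invoking the endomorphism-algebra description.
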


\begin{proof} To prove the first claim, it suffices to check that $y - \chi(y)$
acts locally nilpotently on $Q_\chi$ for all $y \in \fm$. Since $\fm$ is strictly
negatively graded with respect to the good grading $\Gamma$, it acts locally
nilpotently on the Lie algebra $\fg$. The induced adjoint action on $U(\fg)$ is
also locally nilpotent, which can be seen by induction on the length in the PBW
filtration. Hence for a sufficiently large $n > 0$, one can commute
$(y - \chi(y))^n u = u (y - \chi(y))^n \in U(\fg) \fm_\chi$, and so $\fm_\chi$ acts locally
nilpotently on $Q_\chi$.

To prove the second claim, it suffices to note that for $m \in \Wh(M)$,
\begin{equation*}
(y + U(\fg)\fm_\chi) \cdot m = y \cdot m + U(\fg) \fm_\chi \cdot m = y \cdot m
\end{equation*}
and so lifting $y + U(\fg)\fm_\chi \in U(\fg,e)$ to $y \in U(\fg)$ results in a
well-defined action of $U(\fg,e)$ on~$M$.
\end{proof}

\begin{thm}[The Skryabin equivalence]{\normalfont \cite[Appendix~1, due to Skryabin]{Pre:TransSlice}}
\label{thm:SkryEquiv}
\hspace*{\fill}\\
The Skryabin functor is an equivalence of categories
\begin{align*}
Q_\chi \otimes_{U(\fg,e)} \text{--} & \colon \Mod{U(\fg,e)} \to \Whit_{\fm,\chi}(\fg) \\
\shortintertext{with quasi-inverse functor}
\Wh & \colon \Whit_{\fm,\chi}(\fg) \to \Mod{U(\fg,e)}.
\end{align*}
\end{thm}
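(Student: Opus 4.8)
### Proof Proposal for the Skryabin Equivalence

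The plan is to establish the two natural transformations witnessing the adjoint equivalence and then check they are isomorphisms. We already have the adjoint pair $(Q_\chi \otimes_{U(\fg,e)} -\,,\, \Hom_{U(\fg)}(Q_\chi, -))$ coming from the bimodule structure on $Q_\chi$; the first step is to identify $\Hom_{U(\fg)}(Q_\chi, -)$ restricted to $\Whit_{\fm,\chi}(\fg)$ with the Whittaker-vectors functor $\Wh$. Since $Q_\chi \simeq U(\fg)/U(\fg)\fm_\chi$ is cyclic, $\Hom_{U(\fg)}(Q_\chi, M) = \{m \in M \st \fm_\chi m = 0\} = \Wh(M)$ as established in the preceding \namecref{lem}; so the claim is really that the unit $\eta_N \colon N \to \Wh(Q_\chi \otimes_{U(\fg,e)} N)$ and counit $\varepsilon_M \colon Q_\chi \otimes_{U(\fg,e)} \Wh(M) \to M$ are isomorphisms for $N \in \Mod{U(\fg,e)}$ and $M \in \Whit_{\fm,\chi}(\fg)$.

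For the unit, I would use the Kazhdan filtration and the fact (from \cref{thm:WalgQuant} and \cref{rem:HamRedIntro}) that, after passing to associated graded, $Q_\chi$ becomes $\Cpoly{M'} \otimes \Cpoly[\big]{\slodowy_\chi}$ as an $\fm'$-module, so that $H^0(\fm', \gr Q_\chi) = \gr U(\fg,e)$ and all higher cohomology vanishes. This gives a kind of flatness/freeness of $Q_\chi$ over $U(\fg,e)$ on the level of associated graded: concretely $\gr Q_\chi$ is free as a $\gr U(\fg,e)$-module (one can choose a PBW-type section using a basis of $\fm'$-complement). Filtering $N$ and comparing associated graded pieces, $\gr(Q_\chi \otimes_{U(\fg,e)} N) \simeq \gr Q_\chi \otimes_{\gr U(\fg,e)} \gr N$, and then taking $\fm'$-invariants (again using vanishing of higher $\fm'$-cohomology so that $\Wh$ is exact on the relevant subcategory) recovers $\gr N$; a standard filtered-to-graded argument upgrades this to $\eta_N$ being an isomorphism. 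For the counit, the key point is that any $M \in \Whit_{\fm,\chi}(\fg)$ is \emph{generated} by its Whittaker vectors $\Wh(M)$ as a $U(\fg)$-module — this follows because $\fm_\chi$ acts locally nilpotently, so every $m \in M$ lies in a finite-dimensional $\fm_\chi$-stable subspace, whose socle (the kernel of $\fm_\chi$) is nonzero and consists of Whittaker vectors, and one induces up; then $\varepsilon_M$ is surjective. Injectivity of $\varepsilon_M$ again uses the freeness of $Q_\chi$ over $U(\fg,e)$ together with the vanishing of $H^{>0}(\fm', Q_\chi)$ from \cref{thm:WalgQuantLieCoh}, which forces $\Tor$-type obstructions to disappear.

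The main obstacle I expect is the injectivity of the counit, equivalently the exactness of the Skryabin functor: one must rule out that some nonzero element of $Q_\chi \otimes_{U(\fg,e)} \Wh(M)$ dies in $M$. This is precisely where the cohomological input from \cref{thm:WalgQuantLieCoh} — that $Q_\chi$ has no higher $\fm'$-cohomology, not merely $\gr Q_\chi$ — does the real work, since it says $Q_\chi$ behaves like a (relatively) free module over $U(\fg,e)$ and the functor $\Wh = (-)^{\fm'} = H^0(\fm', -)$ is exact on Whittaker modules. I would organize the argument so that everything reduces to this homological vanishing, which has already been proved, plus the elementary observation that Whittaker modules are generated by Whittaker vectors. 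Once both $\eta$ and $\varepsilon$ are isomorphisms, the adjoint pair restricts to an equivalence $\Mod{U(\fg,e)} \simeq \Whit_{\fm,\chi}(\fg)$ with quasi-inverse $\Wh$, and it only remains to check that $Q_\chi \otimes_{U(\fg,e)} N$ genuinely lands in $\Whit_{\fm,\chi}(\fg)$ — but that is exactly the first half of the preceding \namecref{lem}.
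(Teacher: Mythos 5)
Your overall architecture is right and agrees with the paper at the macro level: identify $\Hom_{U(\fg)}(Q_\chi,-)$ with $\Wh$ via cyclicity of $Q_\chi$, prove the unit is an isomorphism by a filtered/graded argument resting on the $\fm'$-cohomology vanishing from \cref{thm:WalgQuantLieCoh}, and then handle the counit. But your treatment of the counit has a genuine gap.

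You assert that ``any $M \in \Whit_{\fm,\chi}(\fg)$ is generated by $\Wh(M)$'' and call this an elementary observation following from local nilpotency plus a socle argument. What local nilpotency gives you for free is only that $\Wh(M)\neq 0$ whenever $M\neq 0$: every finite-dimensional $\fm_\chi$-stable subspace has a nonzero $\fm_\chi$-socle, and that socle consists of Whittaker vectors. It does \emph{not} give you that an arbitrary $m\in M$ lies in the $U(\fg)$-module generated by $\Wh(M)$. If $N \coloneqq U(\fg)\cdot\Wh(M) \subsetneq M$, then $M/N$ is a nonzero Whittaker module and hence $\Wh(M/N)\neq 0$ — but an element of $M$ that is Whittaker only modulo $N$ need not lie in $N+\Wh(M)$, so there is no contradiction yet. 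In fact ``Whittaker modules are generated by Whittaker vectors'' is essentially equivalent to the surjectivity of the counit, i.e.\ it \emph{is} the theorem (or a large part of it); declaring it elementary and then ``inducing up'' is where the argument would stall. The paper avoids this by a sharper use of the cohomology: once injectivity is known, the cokernel $V''$ sits in a short exact sequence $0\to Q_\chi\otimes_{U(\fg,e)}\Wh(V)\to V\to V''\to 0$, and the long exact sequence in $\fm$-cohomology, together with $H^1\pp[\big]{\fm, Q_\chi\otimes_{U(\fg,e)}\Wh(V)}=0$ and the fact that $\gamma^*$ is a left inverse to the (already established) unit isomorphism, forces $\Wh(V'')=0$ and hence $V''=0$.

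Your injectivity argument is also too thin to stand: ``freeness plus vanishing of $H^{>0}$ forces Tor-type obstructions to disappear'' does not actually produce the injectivity of $\varepsilon_M$. Freeness of $Q_\chi$ over $U(\fg,e)$ gives exactness of $Q_\chi\otimes_{U(\fg,e)}-$, which is about preservation of monomorphisms, not about a particular map being a monomorphism. The paper's injectivity argument is simpler and doesn't touch Tor at all: if $V'=\ker\varepsilon_M$, then $\Wh(V') = V'\cap\Wh\pp[\big]{Q_\chi\otimes_{U(\fg,e)}\Wh(V)} = V'\cap\Wh(V) = 0$, using that the counit restricted to Whittaker vectors is an isomorphism (a consequence of the first half of the proof), and then $\Wh(V')=0\Rightarrow V'=0$ by the socle observation. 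I'd recommend replacing your counit discussion with this two-step argument (kernel killed by lack of Whittaker vectors; cokernel killed by the long exact sequence and $H^1=0$), which is exactly what makes the cohomological vanishing from \cref{thm:WalgQuantLieCoh} do the real work, as you correctly intuited.
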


\begin{proof} \cite[Theorem~6.1]{GG:QuantSlod}
We begin by showing that $\Wh \pp[\big]{Q_\chi \otimes_{U(\fg,e)} M} \simeq M$ for all modules
$M \in \Mod{U(\fg,e)}$, and to this end note that
$\Wh \pp[\big]{Q_\chi \otimes_{U(\fg,e)} M} \simeq H^0 \pp[\big]{\fm, \pp{Q_\chi \otimes_{U(\fg,e)} M}}$,
where $Q_\chi \otimes_{U(\fg,e)} M$ is regarded as a $\fm$-module with action twisted
by~$\chi$. The proof then proceeds in a similar manner as the proof of
\cref{thm:WalgQuantIso}: by demonstrating an isomorphism on the associated
graded modules.

\begin{claim}
Assume that $V$ is generated by a finite-dimensional subspace $V_0$, and that
$V$ is therefore a filtered $U(\fg,e)$\module\ with filtration
$F_n V \coloneqq (F_n U(\fg,e)) \cdot V_0$. Then 
\begin{equation*}
\gr H^0 \pp[\big]{\fm, Q_\chi \otimes_{U(\fg,e)} V } \simeq
  H^0 \pp[\big]{\fm, \gr \pp[\big]{Q_\chi \otimes_{U(\fg,e)} V} } \simeq \gr V,
\end{equation*}
and all higher cohomology vanishes,
i.e.~$H^i \pp[\big]{\fm, Q_\chi \otimes_{U(\fg,e)} V } = 0$ for all $i > 0$.
\end{claim}

The second isomorphism follows from \cref{thm:WalgQuant,lem:WalgQuantAdjDecomp}:
\begin{equation*}
\gr \pp[\big]{Q_\chi \otimes_{U(\fg,e)} V} \simeq \gr Q_\chi \otimes_{\gr U(\fg,e)} \gr V \simeq
  \pp[\big]{ \Cpoly{M'} \otimes \gr U(\fg,e) } \otimes_{\gr U(\fg,e)} \gr V \simeq
  \Cpoly{M'} \otimes \gr V.
\end{equation*}
The first isomorphism follows directly by noting that $Q_\chi \otimes_{U(\fg,e)} V$ is
positively-graded and $\fm$ is strictly-negatively graded, and then following
through the proof of \cref{thm:WalgQuantLieCoh} \emph{mutatis mutandis}.

To prove the \namecref{thm:SkryEquiv}, it show to prove that for any
$V \in \Whit_{\fm,\chi}$, the canonical map
\begin{align*}
\gamma & \colon Q_\chi \otimes_{U(\fg,e)} \Wh(V) \to V,  &
\pp[\big]{u + U(\fg)\fm_\chi} \otimes m & \mapsto u \cdot m
\end{align*}
is an isomorphism. Consider an exact sequence of $U(\fg)$\modules
\begin{equation}
0 \lra V' \lra Q_\chi \otimes_{U(\fg,e)} \Wh(V) \overset{\gamma}{\lra} V \lra V'' \lra 0.
\label{eq:SkryLES}
\end{equation}

To see $\gamma$ is injective, note that
$\Wh(V') = V' \cap \Wh \pp[\big]{ Q_\chi \otimes_{U(\fg,e)} \Wh(V) } = V' \cap \Wh(V) = 0$,
where the first equality follows from the fact that $V'$ is a submodule, the
second equality follows from the first part of this \namecref{thm:SkryEquiv},
and the third follows from the definition of $\gamma$. But $\Wh(V)$ cannot be zero
without $V$ also being zero, hence $\gamma$ is injective.

To prove $\gamma$ is surjective, we aim to show that $V'' = 0$. The short exact
sequence (in view of the fact that $V' = 0$) \cref*{eq:SkryLES} gives rise to a
long exact sequence on cohomology
\begin{equation*}
0 \lra H^0 \pp[\big]{\fm, Q_\chi \otimes_{U(\fg,e)} \Wh(V) } \overset{\gamma^*}{\lra}
  H^0(\fm, V) \lra H^0(\fm, V'') \lra H^1 \pp[\big]{\fm, Q_\chi \otimes_{U(\fg,e)} V } = 0,
\end{equation*}
where the last equality follows from the above claim. Re-writing this sequence
as an exact sequence of Whittaker vectors yields
\begin{equation*}
0 \lra \Wh \pp[\big]{Q_\chi \otimes_{U(\fg,e)} \Wh(V) } \overset{\gamma^*}{\lra} \Wh(V)
  \lra \Wh(V'') \lra 0.
\end{equation*}
But the first two terms are equal, and hence $\gamma^*$ is an isomorphism and
$V'' = 0$, completing the proof that $\gamma$ is surjective and therefore an
isomorphism.
\end{proof}

\section[The definition of category~\texorpdfstring{$\cO$}{O} for W-algebras]
{The definition of category~\texorpdfstring{$\mathbfcal{O}$}{O} for W-algebras}

We now seek to define an analogue of category~$\cO$ for the
\Walgebras~$U(\fg,e)$. To do this, we need to make a choice of parabolic
subalgebra~$\fp$ satisfying certain conditions. Let $\fp$ be a parabolic
subalgebra of~$\fg$ satisfying the two conditions:
\begin{enumerate}
\item $e$ is a distinguished nilpotent in the Levi subalgebra~$\fl$ of~$\fp$,
      i.e.~the centraliser $\liez[\fl]{e}$ contains no semisimple elements not
      lying in $\liez{\fl}$;
\item $\fp$ contains a fixed maximal torus $\ft$ of the centraliser
      $\liez[\fg]{e}$.
\end{enumerate}

The choice of parabolic $\fp$ allows one to define a pre-order on the weights of
$\ft$ as follows: $\lambda \ge \mu$ if and only if $\lambda - \mu$ is a linear combination of the
weights of $\ft$ acting on $\fp$.

\begin{eg}
When $e = 0$, then $\fp$ can be taken to be a Borel subalgebra $\fb$ with Levi
a fixed Cartan subalgebra $\fh$. As the centraliser $\liez[\fg]{e} = \fg$, the
maximal torus $\ft$ can be taken to be~$\fh$. The pre-order is the classical
partial order on weights: $\lambda \ge \mu$ if and only if $\lambda - \mu$ is a positive linear
combination of simple roots.
\end{eg}

\begin{eg}
When $e$ is a regular nilpotent in $\fg$, then $\fp$ can be taken to be $\fg$
itself with Levi subalgebra $\fg$. The maximal torus is then just
$\ft = \set{0}$. The only weight $\ft$ is the zero weight, and the pre-order is
trivial.
\end{eg}

\begin{eg}
Let $\fg = \fsl_3$ and
$e = \begin{psmallmatrix} 0&1&0 \\ 0&0&0 \\ 0&0&0 \end{psmallmatrix}$. The
parabolic subalgebra $\fp$, Levi subalgebra $\fl$ and maximal torus $\ft$ can be
taken to be
\begin{align*}
\fp & = \begin{psmallmatrix} *&*&* \\ *&*&* \\ 0&0&* \end{psmallmatrix} &
\fl & = \begin{psmallmatrix} *&*&0 \\ *&*&0 \\ 0&0&* \end{psmallmatrix} &
\ft & = \set*{\begin{psmallmatrix} a&0&0 \\ 0&a&0 \\ 0&0&-2a \end{psmallmatrix}
                \st a \in \CC}
\end{align*}
As $\ft$ is a one-dimensional, its weights are just complex numbers. Further,
the set of weights of~$\ft$ acting on~$\fp$ is the set $\set{0,3}$.
\end{eg}

There is furthermore an embedding $U(\ft) \hookrightarrow U(\fg,e)$, and hence
any $U(\fg,e)$\module\ can be decomposed into its generalised
weight spaces with respect to~$\ft$. This can be seen either by choosing a
$\ft$\invariant\ Premet subalgebra $\fm$, and hence the image of $U(\ft)$ in
$U(\fg) \big/ U(\fg)\fm_\chi$ can be seen to be $\ad \fm$\invariant, or one can prove
it for any choice of Premet subalgebra by a more careful argument, as in
\cite[Theorem~3.3]{BGK:HWTWAlg}. This allows one to formulate a version of
category~$\cO$ for \Walgebras.

\begin{defn}
Given a nilpotent element $e \in \fg$ with a compatible choice of parabolic
subalgebra $\fp$ and maximal torus $\ft$ in $\liez{e}$ as above, we define a
number of versions of \emph{category~$\cO$} for the \Walgebra\ $U(\fg,e)$. Each
is a full subcategory of the finitely-generated modules in $\Mod{U(\fg,e)}$
satisfying certain conditions. The weakest version is denoted
$\wO~*{e,\fp}$, and consists of those modules $M$ satisfying
\begin{enumerate}
\item \label[condition]{cond:CatOWln}
      the weights of $M$ are contained in a finite union of of sets of the form
      $\set{\mu \st \mu \le \lambda}$.
\end{enumerate}
Another subcategory, denoted $\wO*{e,\fp}$, is defined as consisting of modules
which further satisfy the condition
\begin{enumerate}
\setcounter{enumi}{1}
\item \label[condition]{cond:CatOWss}
      $\ft$ acts on $M$ with finite-dimensional generalised weight spaces.
\end{enumerate}

For each of these versions of category~$\cO$, we define a further subcategory
consisting of those modules on which $\ft$ acts semi-simply, and denote them
$\wO~{e,\fp}$ and $\wO{e,\fp}$, respectively. Note that
$\wO{e,\fp} = \wO~{e,\fp} \cap \wO*{e,\fp}$.
\end{defn}

\begin{rem}
This forms an analogue of the classical BGG category~$\cO$ defined at the
beginning of this \namecref{chap:CatOWalg}. \Cref*{cond:CatOWss} here
corresponds to \cref{cond:CatOss} in that definition, and \cref*{cond:CatOWln}
here corresponds to both \cref{cond:CatOfg,cond:CatOln}.
\end{rem}

\begin{eg}
If $e = 0$ and $\fb$ is a chosen Borel subalgebra, then $\wO{0,\fb}$ is just the
classical category~$\cO$ for the chosen Borel.
\end{eg}

\begin{eg}
If $e_\text{reg} \in \fg$ is a regular nilpotent, then $\fp = \fg$ and
$\wO{e_\text{reg},\fg}$ is just the category of all finite-dimensional modules
over $U(\fg,e_\text{reg}) \simeq Z(\fg)$:
\begin{equation*}
\wO{e_{\text{reg}}, \fg} \simeq \FMod{Z(\fg)}.
\end{equation*}
\end{eg}

\section{Generalised Whittaker modules and a category equivalence}

The Skryabin equivalence gives an equivalence between the category of all
modules for a \Walgebra\ and the category of Whittaker modules for the Premet
subalgebra~$\fm$ and character~$\chi$. In \cite[Theorem~4.1]{Los:CatOWAlg}, Losev
introduces the concept of a \emph{generalised Whittaker module}, and proves an
equivalence between the categories~$\cO$ for \Walgebras\ and appropriate
categories of generalised Whittaker modules.

Let $e \in \fg$ be a nilpotent with good grading
$\fg = \bigoplus_{i \in \ZZ} \fg_i$. Given a choice of parabolic $\fp$ with $e$
regular in the Levi $\fl$, one can construct an analogue of the Premet
subalgebra $\fm \subseteq \fg$. Let $\fl = \bigoplus_{i \in \ZZ} \fl_i$ be the induced
grading on $\fl$ coming from $\fg$, and choose a Premet subalgebra
$\underline{\fm} \subseteq \fl$ in the usual way. Note that this allows one to define
the \Walgebra\ 
$U(\fl,e) \coloneqq \pp[\big]{ U(\fl) \big/ U(\fl)\underline{\fm}_\chi }^{\ad \underline{\fm}}$.
One can then consider the nilradical~$\fu$ of~$\fp$, where $\fp = \fl \oplus \fu$,
and define the new subalgebra $\widetilde{\fm} \coloneqq \underline{\fm} \oplus \fu$, along
with its shift $\widetilde{\fm}_\chi$.

\begin{defn}
A $U(\fg)$\module~M is a \emph{generalised Whittaker module} for the
pair~$(e, \fp)$ if $\widetilde{\fm}$ acts on $M$ by generalised character~$\chi$,
or equivalently if $\widetilde{\fm}_\chi$ acts on $M$ by locally nilpotent
endomorphisms.
The full subcategory of all generalised Whittaker modules in
$\Mod{U(\fg)}$ is denoted~$\wWhit~*{e,\fp}$, while the
full subcategory of $\wWhit~*{e,\fp}$ consisting of those modules on
which $\ft$ acts semi-simply is denoted~$\smash{\wWhit~{e,\fp}}$.
\end{defn}

The functor $\smash{\pp{\text{--}}^{\widetilde{\fm}_\chi}}$, taking
$\widetilde{\fm}_\chi$\invariants\ of the module $M$, gives a natural functor
from $\wWhit~*{e,\fp}$ to $\Mod{U(\fl,e)}$. This can be seen as
$U(\fl)$ acts naturally on $M^{\fu}$ for any $M \in \wWhit~*{e,\fp}$,
and $M^{\widetilde{\fm}_\chi} = \smash{\pp{M^\fu}^{\underline{\fm}_\chi}}$, so there
is a well-defined action of $U(\fl) \big/ U(\fl) \underline{\fm}_\chi$.
A module $M \in \wWhit~*{e,\fp}$ is said to be \emph{of finite type} if
$M^{\widetilde{\fm}_\chi}$ is finite-dimensional when viewed as a
$U(\fl,e)$\module.

\begin{prop}{\normalfont \cite[Proposition~4.2]{Los:CatOWAlg}}
If $e \in \fg$ is a \emph{regular nilpotent} in $\fl$, then a generalised
Whittaker module $M \in \wWhit~*{e,\fp}$ is of finite type if and only
if the action of $Z(\fg) \subseteq U(\fg)$ on $M$ is locally finite. In particular, this
holds for any nilpotent in \typeA.
\end{prop}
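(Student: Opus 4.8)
The plan is to relate the finite-type condition on a generalised Whittaker module $M$ — which is a statement about the $U(\fl,e)$-module $M^{\widetilde{\fm}_\chi}$ being finite-dimensional — to local finiteness of the $Z(\fg)$-action on $M$. First I would observe that since $e$ is regular in $\fl$, the \Walgebra\ $U(\fl,e)$ is isomorphic to $Z(\fl)$ by Kostant's theorem (the example following \cref{eq:WalgEvenDef}), which is a finitely generated commutative algebra. So ``$M^{\widetilde{\fm}_\chi}$ is finite-dimensional over $U(\fl,e)$'' is equivalent to saying the $Z(\fl)$-action on $M^{\widetilde{\fm}_\chi}$ is locally finite \emph{and} $M^{\widetilde{\fm}_\chi}$ is finitely generated — but via the Skryabin equivalence for $\fl$ applied to $M^\fu \in \Whit_{\underline{\fm},\chi}(\fl)$, one can control generation, so the real content is the local finiteness.

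\textbf{Key steps.} The argument proceeds in the following order. (1) Reduce to $\fl$: note $M^{\widetilde{\fm}_\chi} = (M^\fu)^{\underline{\fm}_\chi} = \Wh(M^\fu)$ where $M^\fu$ is a Whittaker module for $(\underline{\fm},\chi)$ over $\fl$, so by the Skryabin equivalence (\cref{thm:SkryEquiv} applied to $\fl$) finite type is equivalent to $M^\fu$ being finitely generated as a $U(\fl)$-module with locally finite $Z(\fl)$-action. (2) Relate the center of $U(\fg)$ to that of $U(\fl)$: there is a Harish-Chandra-type homomorphism $Z(\fg) \to Z(\fl)$ (the composite of the obvious projection $U(\fp) \to U(\fl)$ with restriction, suitably $\rho$-shifted), and a key point is that $Z(\fl)$ is finite as a module over the image of $Z(\fg)$ — this is the standard fact that $U(\fl)$ is finite over $Z(\fg) \cdot U(\fl)^{\fl}$, or more precisely that $Z(\fl)$ is integral over the Harish-Chandra image of $Z(\fg)$. (3) Use this finiteness to transfer local finiteness back and forth: if $Z(\fg)$ acts locally finitely on $M$, it acts locally finitely on the submodule/subquotient relevant to $M^\fu$, and then the module-finiteness of $Z(\fl)$ over the image of $Z(\fg)$ forces $Z(\fl)$ to act locally finitely on $M^{\widetilde{\fm}_\chi}$, giving finite-dimensionality; conversely, finite-dimensionality of $M^{\widetilde{\fm}_\chi}$ over $Z(\fl)$ pushes forward through the Skryabin equivalence and the PBW-type decomposition $U(\fg) = U(\bar{\fu}) \otimes U(\fl) \otimes U(\fu)$ to bound the generalized eigenvalues of $Z(\fg)$ on all of $M$. (4) Finally, invoke the \typeA\ case: every nilpotent in $\fsl_n$ is regular in some Levi (the Levi being the block-diagonal subalgebra whose blocks match the Jordan block sizes of $e$), so the hypothesis is automatically available.

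\textbf{Main obstacle.} I expect step (2)–(3), the bookkeeping of how $Z(\fg)$ controls $Z(\fl)$, to be the crux. The cleanest route is probably to cite the Kostant separation of variables / the fact that $U(\fp)$ is free over $Z(\fg) \otimes U(\fu)$-type statements, or to work directly with the Verma-like module $U(\fg) \otimes_{U(\fp)} (-)$ and track central characters; the subtlety is that $M^\fu$ need not be a highest-weight module, so one must argue at the level of generalized central characters and use that $M$ is finitely generated to ensure only finitely many appear. The conversion between ``$Z$ acts locally finitely'' and ``the relevant Whittaker-vector space is finite-dimensional'' also secretly uses that $M \in \wWhit~*{e,\fp}$ is finitely generated over $U(\fg)$, which must be stated carefully so that $M^\fu$ inherits finite generation over $U(\fl)$ — this follows because $\fu$ acts locally nilpotently and $M$ is Noetherian, but it deserves an explicit remark rather than being swept under the rug.
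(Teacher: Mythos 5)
This proposition is quoted from Losev's paper with a citation and no argument---there is no proof in the thesis to compare against---so I am assessing your sketch on its own terms.

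Your overall strategy is the right one, and the forward direction is essentially fine. Reducing to the Levi via $M^{\widetilde{\fm}_\chi}=(M^\fu)^{\underline{\fm}_\chi}$, invoking Kostant's theorem to identify $U(\fl,e)\cong Z(\fl)$, relating $Z(\fg)$ and $Z(\fl)$ by a Harish--Chandra-type homomorphism, and using that $Z(\fl)\cong\CC[\fh]^{W_\fl}$ is module-finite over the Harish--Chandra image of $Z(\fg)\cong\CC[\fh]^{W_\fg}$ are exactly the right ingredients. The type~A remark---that every nilpotent is regular in the block-diagonal Levi matching its Jordan type---is also correct.

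The converse direction is where the genuine gap lies. Local finiteness of $Z(\fg)$ (equivalently of $Z(\fl)$) on the small subspace $M^{\widetilde{\fm}_\chi}$ does not by itself propagate to a locally finite action on all of $M$; one needs to know that $M$ is generated over $U(\fg)$ by $M^{\widetilde{\fm}_\chi}$, or equivalently that $M\mapsto M^{\widetilde{\fm}_\chi}$ is a faithfully exact functor on $\wWhit~*{e,\fp}$ (from which finite length of $M$, hence local finiteness of $Z(\fg)$, would follow). Crucially, $\widetilde{\fm}=\underline{\fm}\oplus\fu$ is \emph{not} a Premet subalgebra for $e$ in $\fg$, so \cref{thm:SkryEquiv} cannot be cited off the shelf: the required exactness/generation is precisely the input Losev manufactures with his completion machinery (the analogue of \cref{prop:los5.1,cor:los5.2}), and your phrase ``bound the generalized eigenvalues of $Z(\fg)$ on all of $M$'' names this step without supplying the mechanism. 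A related slip: the claim that $M^\fu$ is finitely generated over $U(\fl)$ ``because $\fu$ acts locally nilpotently and $M$ is Noetherian'' does not follow directly---$M^\fu$ is not a $U(\fg)$-submodule of $M$, so Noetherianness of $M$ over $U(\fg)$ does not transfer to a finiteness statement for $M^\fu$ over $U(\fl)$ without the same parabolic-Skryabin input.
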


\begin{defn}
The category of all finite-type modules in $\wWhit~*{e,\fp}$ is
denoted~$\wWhit*{e,\fp}$, and the subcategory of finite-type modules on which
$\ft$ acts semi-simply is denoted~$\wWhit{e,\fp}$.
\end{defn}

\begin{thm}{\normalfont \cite[Theorem~4.1]{Los:CatOWAlg}}
\label{thm:CatOEquiv}
There is an equivalence of categories
\begin{align*}
\fK & \colon \wWhit~*{e,\fp} \to \wO~*{e,\fp}.
\intertext{This furthermore induces equivalences of subcategories}
\fK & \colon \wWhit~{e,\fp}  \to \wO~{e,\fp} \\
\fK & \colon \wWhit*{e,\fp}  \to \wO*{e,\fp} \\
\fK & \colon \wWhit{e,\fp}   \to \wO{e,\fp}.
\end{align*}
\end{thm}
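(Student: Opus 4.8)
The plan is to build the functor $\fK$ as a version of the Skryabin functor adapted to the parabolic datum, and then to verify that it restricts to the claimed subcategories by tracking the action of the torus $\ft$ and the weight structure. Concretely, I would set $\widetilde{Q}_\chi \coloneqq U(\fg) \otimes_{U(\widetilde{\fm})} \CC_\chi \simeq U(\fg)/U(\fg)\widetilde{\fm}_\chi$, which carries commuting left $U(\fg)$- and right $U(\fl,e)$-actions in the same way that $Q_\chi$ carries a right $U(\fg,e)$-action; here the right $U(\fl,e)$-action comes from the identification $U(\fl,e) \simeq \End_{U(\fl)}(U(\fl)/U(\fl)\underline{\fm}_\chi)\op$ of \cref{eq:WalgEndDef} combined with the fact that $\widetilde{\fm}_\chi$-invariants of $\widetilde{Q}_\chi$ compute $\underline{\fm}_\chi$-invariants after first taking $\fu$-invariants. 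The functor $\fK$ is then essentially $\widetilde{Q}_\chi \otimes_{U(\fl,e)} (\text{--})$ precomposed with the identification of $\cO(e,\fp)$-type modules as $U(\fl,e)$-modules with the appropriate weight and finiteness conditions; its quasi-inverse is $M \mapsto M^{\widetilde{\fm}_\chi}$.

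First I would establish the plain equivalence $\fK\colon \wWhit~*{e,\fp} \to \wO~*{e,\fp}$. The key input is a relative version of the Skryabin equivalence (\cref{thm:SkryEquiv}): since $e$ is regular in $\fl$, the pair $(\underline{\fm},\chi)$ inside $\fl$ is a genuine Premet datum, so Skryabin gives $\Mod{U(\fl,e)} \simeq \Whit_{\underline{\fm},\chi}(\fl)$. The extra ingredient is the parabolic induction/restriction across $\fu$: taking $\fu$-invariants and inducing along $\fu$ should set up an equivalence between $\widetilde{\fm}_\chi$-locally-nilpotent $U(\fg)$-modules and $\underline{\fm}_\chi$-locally-nilpotent $U(\fl)$-modules, much as in the classical relation between category $\cO$ for $\fg$ and for a Levi. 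One runs the same homological argument as in the proof of \cref{thm:SkryEquiv} — reduce to an isomorphism on associated graded modules using the Kazhdan filtration, invoke the vanishing of higher Lie algebra cohomology $H^{>0}(\widetilde{\fm}, \widetilde{Q}_\chi)$ (which follows exactly as in \cref{thm:WalgQuantLieCoh} since $\widetilde{\fm}$ is again strictly negatively graded for a suitably shifted good grading, or by combining the $\fu$-cohomology vanishing with the $\underline{\fm}_\chi$-cohomology vanishing inside $\fl$) — to show the unit and counit of the adjunction $(\widetilde{Q}_\chi \otimes_{U(\fl,e)}(\text{--}),\; (\text{--})^{\widetilde{\fm}_\chi})$ are isomorphisms. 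One must also check that the finiteness condition \cref{cond:CatOfg} built into $\wO~*{e,\fp}$ matches up with being of finite type and with the weight-boundedness condition \cref{cond:CatOWln}, which is where the hypothesis that $e$ be distinguished in $\fl$ and that $\fp \supseteq \ft$ gets used: these guarantee that the $\ft$-weights of $\widetilde{Q}_\chi$ are bounded above in the pre-order determined by $\fp$, so that tensoring a $U(\fl,e)$-module with bounded weights produces a $U(\fg,e)$-module satisfying \cref{cond:CatOWln}, and conversely.

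The restriction to the three subcategories is then bookkeeping: the torus $\ft$ acts compatibly on $\widetilde{Q}_\chi$ (because $\ft \subseteq \liez[\fg]{e}$ normalises $\widetilde{\fm}$ and fixes $\chi$, so the $\ft$-action descends to $\widetilde{Q}_\chi$ and commutes with the right $U(\fl,e)$-action), and $\fK$ is $\ft$-equivariant in the appropriate sense. Hence $\ft$ acts semisimply on $M$ if and only if it acts semisimply on $\fK(M)$, giving the two ``underlined'' equivalences; and $M$ is of finite type, i.e.\ $M^{\widetilde{\fm}_\chi}$ is finite-dimensional over $U(\fl,e)$, precisely when the corresponding $U(\fg,e)$-module has finite-dimensional generalised $\ft$-weight spaces, i.e.\ satisfies \cref{cond:CatOWss}, giving the ``starred'' equivalences; intersecting yields $\wWhit{e,\fp} \to \wO{e,\fp}$ since $\wO{e,\fp} = \wO~{e,\fp} \cap \wO*{e,\fp}$. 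The main obstacle I anticipate is the relative Skryabin step: setting up the right $U(\fl,e)$-module structure on $\widetilde{Q}_\chi$ cleanly and proving the cohomology vanishing $H^{>0}(\widetilde{\fm}, \widetilde{Q}_\chi) = 0$ in a way that genuinely combines the $\fu$-part (where one uses that $\fu$ is a nilradical and Lie algebra cohomology of $\fu$ with coefficients in $U(\fg)/U(\fg)\fu$-type modules is concentrated in degree zero) with the $\underline{\fm}$-part (handled by the Gan--Ginzburg argument of \cref{thm:WalgQuantLieCoh}); getting the spectral sequence or filtration argument to interleave these two correctly is the delicate point, though it is exactly the content of \cite[Theorem~4.1]{Los:CatOWAlg} which I would follow.
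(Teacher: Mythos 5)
There is a basic structural gap in your proposal: the theorem asserts an equivalence with $\wO~*{e,\fp}$, which by definition is a full subcategory of $\Mod{U(\fg,e)}$, but every piece of your construction lands in $\Mod{U(\fl,e)}$, not $\Mod{U(\fg,e)}$. The bimodule you build, $\widetilde{Q}_\chi = U(\fg)/U(\fg)\widetilde{\fm}_\chi$, carries (at best, and even this needs care since $\fl$ does not normalise $\underline{\fm}$) a right $U(\fl,e)$\nobreakdash-action, and the proposed quasi-inverse $M \mapsto M^{\widetilde{\fm}_\chi}$ is a functor $\wWhit~*{e,\fp} \to \Mod{U(\fl,e)}$ -- exactly as the text states just before \cref{thm:CatOEquiv}. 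Since $e$ is regular in $\fl$, $U(\fl,e) \simeq Z(\fl)$ is commutative, whereas $U(\fg,e)$ generally is not; already for $e=0$ your proposal would try to identify category $\cO$ for $U(\fg)$ with a category of $U(\fh)$\nobreakdash-modules, which is false. The ``identification of $\cO(e,\fp)$-type modules as $U(\fl,e)$-modules'' you invoke is precisely the nontrivial content you would need to prove, and there is no mechanism in a relative-Skryabin argument to produce the missing $U(\fg,e)$\nobreakdash-module structures, because $\widetilde{\fm}$ is not a Premet subalgebra of $\fg$ for $e$ and the endomorphism-algebra picture \cref{eq:WalgEndDef} gives $U(\fl,e)$, not $U(\fg,e)$.

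The paper's proof is of an entirely different nature. It does not go through a Skryabin-type bimodule at all, but instead through Losev's deformation-quantisation machinery: a $T$\nobreakdash-equivariant topological isomorphism $\Phi\colon U(\fg)^\wedge \to \pp[\big]{U(\fg,e)\otimes\Weyl{V}}^\wedge$ of completed quantum algebras, built via Fedosov star products, the Luna slice theorem, and the equivariant Slodowy slice (\cref{prop:los5.1}, \cref{cor:los5.2}, \cref{lem:MyHeartMap}). The functor $\fK$ is then the composite $\fK'\circ\Phi_*$, where $\Phi_*$ transports Whittaker modules across the completion isomorphism and $\fK'$ takes invariants with respect to the Lagrangian $\fm\cap V$ inside the Weyl-algebra factor. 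It is this decoupling $U(\fg)^\wedge \simeq U(\fg,e)^\wedge\hat\otimes\Weyl{V}^\wedge$ that makes the target $U(\fg,e)$ appear; nothing analogous is present in your sketch, and the spectral-sequence argument you suggest at the end cannot substitute for it, since cohomology vanishing alone does not change which algebra acts on the invariants.
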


The proof of \cref*{thm:CatOEquiv} uses Losev's techniques of completions of
deformation quantisations of Poisson algebras, developed in,
e.g.~\cite{Los:QuantSymplWalg}. The second main result of this thesis is a
generalisation of the above \namecref{thm:CatOEquiv}, so we shall develop
Losev's machinery here, with an eye to applying it in the proof of this
\namecref{thm:CatOEquiv} and its generalisation.

\subsection{Deformation quantisations of Poisson algebras}

Losev's results are geometric in nature, and rely on the interpretation of
\Walgebras\ as \emph{deformation quantisations} of certain commutative Poisson
algebras.

\begin{defn}
\label{def:DefQuant}
Given a Poisson algebra $A$ with Poisson bracket $\poibr{\cdot,\cdot}$, a
\emph{deformation quantisation} of $A$ is an associative unital product
$\star \colon A \otimes A \to \power{A}{\hbar}$ (often called a \emph{star
product}). Writing $f \star g = \sum_{k\ge0} D_k(f,g) \hbar^{2k}$, and extending
$\Cpower{\hbar}$-bilinearly to a product on $\power{A}{\hbar}$, a deformation
quantisation $\star$ is one which satisfies the following conditions for
$f,g \in A$ (viewed as elements of $\power{A}{\hbar}$ by the natural inclusion):
\begin{enumerate}
\item \label[condition]{cond:DefQuantProd}
      $\star$ is an associative binary product on $\power{A}{\hbar}$, continuous
      in the $\hbar$-adic topology;
\item \label[condition]{cond:DefQuantMult}
      $f \star g = fg + O(\hbar^2)$ (i.e.\ $D_0(f,g) = fg$), which implies that
      $\star$ degenerates to the usual multiplication in $A$ when $\hbar = 0$;
\item \label[condition]{cond:DefQuantPoisson}
      $[f,g] \coloneqq f \star g - g \star f = \hbar^2 \poibr{f,g} + O(\hbar^4)$ (i.e.\
      $D_1(f,g) - D_1(g,f) = \poibr{f,g}$), which means the
      $\hbar^2$\nobreakdash-term of $\star$ encodes the Poisson bracket of $A$.
\end{enumerate}
We shall also require the stronger condition that $\star$ is a
\emph{differential} deformation quantisation, namely that:
\begin{enumerate}
\setcounter{enumi}{3}
\item For each $k$, $D_k(\cdot,\cdot)$ is a bidifferential operator of order at
      most $k$ in each variable.
\end{enumerate}
\end{defn}

\begin{rem}
Usually, the star product is expanded as $f \star g \coloneqq \sum_{k\ge0} D_k(f,g) \hbar^k$,
and the powers of $\hbar$ in \cref{cond:DefQuantMult,cond:DefQuantPoisson} are
halved (i.e.\ $\hbar$ in place of $\hbar^2$, and $\hbar^2$ in place of
$\hbar^4$). The conventions here are used for better compatibility with the
Kazhdan filtration.
\end{rem}

\begin{rem}
The star product on the algebra $\power{A}{\hbar}$ can be used to introduce a
new product on the Poisson algebra $A$, induced by the projection
$\power{A}{\hbar} \to A$, $\hbar \mapsto 1$. Concretely, define the new
associative product $\circ \colon A \otimes A \to A$ by $f \circ g \coloneqq \sum_{k\ge0} D_k(f,g)$.
Denote the algebra $A$ with this new algebra structure by $\cA$.
\end{rem}

\begin{prop}{\normalfont \cite[Corollary~3.3.3]{Los:QuantSymplWalg}}
The Rees algebra of the \Walgebra\ $U(\fg,e)$, viewed as a filtered algebra with
the Kazhdan filtration, is the unique deformation quantisation of the ring of functions
on the Slodowy slice~$\slodowy_\chi$, up to isomorphism.
\end{prop}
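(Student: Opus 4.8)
The plan is to produce the required deformation quantisation directly from the \Walgebra\ and then invoke a rigidity result of Losev for the uniqueness. First I would form the Rees algebra of $U(\fg,e)$ with respect to the Kazhdan filtration,
$R_\hbar \coloneqq \bigoplus_{n\ge0}\bigl(F_n U(\fg,e)\bigr)\hbar^n \subseteq U(\fg,e)[\hbar]$.
Since the Kazhdan filtration is exhaustive with $F_0 U(\fg,e) = \CC$ (\cref{rem:KazhdanProps}), the algebra $R_\hbar$ is $\hbar$-torsion-free and graded, with $R_\hbar/\hbar R_\hbar \simeq \gr U(\fg,e)$ and $R_\hbar/(\hbar-1)R_\hbar \simeq U(\fg,e)$. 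By the Gan--Ginzburg theorem (\cref{thm:WalgQuant}) there is a Poisson algebra isomorphism $\gr U(\fg,e) \simeq \Cpoly{\slodowy_\chi}$, so $R_\hbar/\hbar R_\hbar \simeq \Cpoly{\slodowy_\chi}$. The Kazhdan filtration satisfies $[F_m,F_n] \subseteq F_{m+n-2}$ (\cref{rem:KazhdanProps}), so in $R_\hbar$ the commutator of two homogeneous elements is divisible by $\hbar^2$ and, modulo $\hbar^2$, reproduces the Poisson bracket on $\Cpoly{\slodowy_\chi}$ --- precisely the Poisson-compatibility requirement (\cref{cond:DefQuantPoisson}) in the definition of a deformation quantisation (\cref{def:DefQuant}). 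Completing $R_\hbar$ $\hbar$-adically --- equivalently, using the contracting $\CC*$\action\ on $\slodowy_\chi$ of \cref{prop:SContract} to pass freely between the graded and the formal pictures --- yields a star product on $\power{\Cpoly{\slodowy_\chi}}{\hbar}$. It is differential because it descends, via quantum Hamiltonian reduction (see \cref{rem:KazhdanProps,thm:SlodowyHamRed}), from the standard PBW (symmetrisation) star product on $\Cpoly{\fg^*}$, which is differential; alternatively differentiality can be checked directly on a PBW basis. This exhibits the Rees algebra as a differential deformation quantisation of $\Cpoly{\slodowy_\chi}$.

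For uniqueness, let $\star'$ be any differential deformation quantisation of $\Cpoly{\slodowy_\chi}$. The key point is that $\slodowy_\chi$ is a smooth affine variety carrying a contracting $\CC*$\action\ (\cref{prop:SContract}) under which the Poisson bracket is homogeneous of degree $-2$ and $\Cpoly{\slodowy_\chi}$ is non-negatively graded with degree-zero part $\CC$. A standard averaging argument over the contracting torus shows that $\star'$ is isomorphic to a homogeneous deformation quantisation, and homogeneous deformation quantisations of $\Cpoly{\slodowy_\chi}$ carry the same data as filtered quantisations of the Poisson algebra $\Cpoly{\slodowy_\chi}$ (pass through the Rees construction in each direction). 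I would then invoke Losev's classification of filtered quantisations of conical Poisson varieties \cite{Los:QuantSymplWalg}: the invariant classifying such quantisations, together with the obstructions relating any two of them, is controlled by cohomology in strictly negative internal degree, which is trivial here because $\slodowy_\chi$ is contracted by its $\CC*$\action\ to the single point $e$. Hence the filtered quantisation with associated graded Poisson algebra $\Cpoly{\slodowy_\chi}$ is unique up to isomorphism, and $\star'$ is therefore isomorphic to the star product produced by $R_\hbar$.

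The serious content is entirely in the uniqueness half, and it is not special to \Walgebras: it rests on the deformation-theoretic classification of quantisations of smooth conical Poisson varieties (in the style of Bezrukavnikov--Kaledin, adapted by Losev), to the effect that such a variety admits at most one quantisation compatible with its grading. Once that input is granted, the statement for $U(\fg,e)$ is immediate --- existence is the Rees construction above, and uniqueness is the cited classification applied through the Gan--Ginzburg identification $\gr U(\fg,e) \simeq \Cpoly{\slodowy_\chi}$. In practice, at this level of generality the cleanest route is to make the reduction to the smooth conical setting explicit and then quote \cite[Corollary~3.3.3]{Los:QuantSymplWalg}.
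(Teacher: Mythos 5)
The paper offers no proof of this proposition; it is stated purely as a citation to \cite[Corollary~3.3.3]{Los:QuantSymplWalg}, so there is no in-paper argument against which to compare yours. Read as a gloss on \emph{why} that citation delivers what is claimed, your existence half is correct: the Rees algebra of the Kazhdan filtration, the Gan--Ginzburg identification $\gr U(\fg,e)\simeq\Cpoly[\big]{\slodowy_\chi}$ from \cref{thm:WalgQuant}, and the bound $[F_m,F_n]\subseteq F_{m+n-2}$ recovering the Poisson bracket are exactly the right ingredients for producing a homogeneous star product.

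The uniqueness half, however, compresses the genuinely hard point. The period-map style classification of graded quantisations by vanishing of negative-degree cohomology (Bezrukavnikov--Kaledin) applies directly to smooth \emph{symplectic} varieties with a contracting $\CC*$\action; but $\slodowy_\chi$ is a Poisson variety, not a symplectic one, and it is not a space to which that classification can be quoted off the shelf. Losev's corollary does not classify quantisations of $\slodowy_\chi$ directly: he proves uniqueness of the $\CC*$\nobreakdash-homogeneous Fedosov quantisation of the \emph{equivariant Slodowy slice} $G\times\slodowy_\chi$ --- which \emph{is} symplectic, carries a Hamiltonian $G$\nobreakdash-action, and recovers $\slodowy_\chi$ as a reduction --- and then descends by quantum Hamiltonian reduction. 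This is the same machinery the paper itself invokes in the proof of \cref{lem:MyHeartMap}. Your sketch would need this detour spelled out for the cohomological input to be the right one; as written, the appeal to ``cohomology in strictly negative internal degree'' for $\slodowy_\chi$ itself is not the argument Losev's corollary rests on, even though the guiding principle --- conicality forces uniqueness --- is the correct one. In practice, since you already conclude by citing Losev, your proposal and the paper converge at the same reference; the gap is only in the heuristic you offer for why the reference applies.
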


\subsection{Completions of quantum algebras and Losev's machinery}
\label{sec:LosevPaper}

To prove \cref{thm:CatOEquiv}, we need to introduce a technical theorem of
Losev \cite[Proposition~5.1]{Los:CatOWAlg}, and to that end introduce some
needed notation. In addition, the machinery of this proposition shall be needed
to prove the second main result of this thesis.

Assume that all of the following hold:
\begin{itemize}
\item $\fv = \bigoplus_{i \in \ZZ} \fv_i$ is a graded finite-dimensional vector
      space on which a torus $T$ acting by preserving the grading.
\item $A \coloneqq \Sym(\fv)$, with an induced grading $A = \bigoplus_{i \in \ZZ} A_i$ and
      induced action by $T$.
\item $\cA$ is an algebra with the same underlying vector space as $A$, where
      the algebra structure comes from a $T$-invariant homogeneous star
      product.
\item $\omega_1$ is a symplectic form on $\fv_1$ (where $\omega_1(u,v)$ is the constant
      term of the commutator in $\cA$), and $\fy$ is a lagrangian subspace
      of $\fv_1$.
\item $\fm \coloneqq \fy \oplus \bigoplus_{i\le0} \fv_i$.
\item $v_1, v_2, \dotsc, v_n$ is a homogeneous basis of $\fv$ such that
      $v_1, v_2, \dotsc, v_m$ form a basis of $\fm$. Further, let $d_i$ be the
      degree of $v_i$ and assume that $d_1, d_2, \dotsc, d_m$ are increasing and
      that all $v_i$ are $T$-semi-invariant.
\item $A^\heartsuit$ is the subalgebra of $\Cpower{\fv^*}$ consisting of elements
      of the form $\sum_{i\le n} f_i$ for some $n$, where $f_i$ is a homogeneous power
      series of degree $i$.
\item $\cA^\heartsuit$ is the algebra $A^\heartsuit$ with multiplication coming
      from $\cA$. Any element of $\cA^\heartsuit$ can be written as an infinite
      linear combination of monomials $v_{i_1} v_{i_2} \dotsm v_{i_\ell}$ where
      $i_1 \ge \dotsb \ge i_\ell$, and where $\sum_{j=1}^\ell d_{i_j} \le c$ for some
      $c$. This gives us a filtration $F_c \cA^\heartsuit$.
\item $\theta$ is a co-character of $T$, and $\fv_{\ge0}$ and $\fv_{>0}$ are the sums
      of positive and strictly-positive (respectively) $\ad \theta$-eigenspaces of
      $\fv$. We require that $\fv_{>0} \subseteq \fm \subseteq \fv_{\ge0}$. \emph{Note that beyond
      this condition, these spaces are unrelated to the $\ZZ$\grading\ on the
      vector space $\fv$.}
\item $\cA_{\ge0}, \cA_{>0}, \cA^\heartsuit_{\ge0}, \cA^\heartsuit_{>0}$ are all
      defined analogously.
\item $\cA^\wedge \coloneqq \lim \cA / \cA\fm^k$, which has an injective algebra
      homomorphism $\cA^\heartsuit \to \cA^\wedge$.
\end{itemize}

\begin{prop}{\normalfont \cite[Proposition~5.1]{Los:CatOWAlg}}
\label{prop:los5.1}
Let $\fv$ and $A$ be as above, and $\cA$ and $\cA'$ be two different algebras
coming from $A$ as above with two different star products. Suppose there is a
subspace $\fy \subseteq \fv(1)$ which is Lagrangian for both symplectic forms, and
every element of $A$ can be written as a finite sum of monomials in both
$\cA$ and $\cA'$. Then any homogeneous $T$-equivariant isomorphism
$\Phi \colon \cA^\heartsuit \to \cA'^\heartsuit$ satisfying
$\Phi(v_i) - v_i \in F_{d_i-2} \cA + (F_{d_i} \cA \cap \fv^2 \cA)$
extends uniquely to a topological algebra isomorphism
$\Phi \colon \cA^\wedge \to \cA'^\wedge$ with
$\Phi(\cA^\wedge \fm) = \cA'^\wedge \fm$.
\end{prop}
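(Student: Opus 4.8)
The statement is an extension-by-continuity result: I would promote $\Phi$ from the "small" algebras $\cA^\heartsuit,\cA'^\heartsuit$ to the $\fm$-adic completions by showing it is continuous and that the source is dense. By hypothesis every element of $A$ is a finite sum of monomials for $\star$ (and likewise for the second product), so the finite sums $A$, hence $\cA^\heartsuit$, are $\fm$-adically dense in $\cA^\wedge = \lim \cA/\cA\fm^k$; consequently a continuous algebra homomorphism extending $\Phi$ is unique once it exists. The content then splits into: (i) $\Phi$ is continuous for the $\fm$-adic filtrations, so it extends; (ii) the extension is bijective; (iii) it carries $\cA^\wedge\fm$ onto $\cA'^\wedge\fm$. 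It matters here that the hypothesis fixes one Lagrangian $\fy \subseteq \fv(1)$, hence one subalgebra $\fm = \fy \oplus \bigoplus_{i\le 0}\fv_i$, common to both star products — this shared $\fm$ is exactly what makes the two completions comparable and gives statement (iii) its meaning.

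The crux is that (i)--(iii) all reduce to the single claim that $\Phi(v_i) \in \cA'^\wedge\fm$ for each generator $v_i$ of $\fm$, together with the symmetric statement for $\Phi^{-1}$. Granting this, the bound on generators propagates to all powers of $\fm$, so that $\Phi(\cA^\heartsuit\fm^k) \subseteq \cA'^\wedge\fm^k$ for every $k$, which is continuity; the propagation works because the Lagrangian property of $\fy$ forces $[\fm,\fm]_\star \subseteq \cA\fm$ (the only constant terms that could a priori escape the ideal are values of $\omega_1$ on $\fy$, which vanish, and any remaining correction is of negative degree, hence already in $\cA\fm$), so that $\{\cA\fm^k\}$ is a well-behaved decreasing chain of left ideals. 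Thus $\Phi$ extends to a topological algebra homomorphism, still $T$-equivariant; running the same argument with $\Phi^{-1}$ gives the opposite inclusions, so the extension is an isomorphism with $\Phi(\cA^\wedge\fm) = \cA'^\wedge\fm$, and uniqueness was already noted.

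It then remains to verify $\Phi(v_i) \in \cA'^\wedge\fm$ for $i \le m$, which is exactly what the generator hypothesis is designed for. Writing $\Phi(v_i) = v_i + w_i$ with $w_i \in F_{d_i-2}\cA' + (F_{d_i}\cA' \cap \fv^2\cA')$, the vector $v_i$ lies in $\fm$, and each of the two summands holding $w_i$ consists of PBW monomials that must involve a factor from $\fm$: for $F_{d_i-2}\cA'$ because such a monomial has total degree low enough to force a negative-degree factor, which lies in $\bigoplus_{i<0}\fv_i \subseteq \fm$; for $F_{d_i}\cA' \cap \fv^2\cA'$ because a monomial of length at least $2$ and sufficiently low total degree cannot have all factors of positive degree; in either case the $\fm$-factor, being of smallest degree, sits at the right end of the PBW monomial, so $w_i \in \cA'^\wedge\fm$. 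The symmetric statement for $\Phi^{-1}$ is then a routine descending induction on order: the generator estimate already shows $\Phi$ is $F_\bullet$-filtered (that filtration being multiplicative), and $\gr\Phi$ is the identity on the linear part of $\gr\cA = \Sym\fv$ modulo $\fv^2$, hence an automorphism, from which $\Phi^{-1}$ inherits the same estimate.

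\textbf{Expected obstacle.} The genuinely delicate point is making the propagation in the second paragraph fully rigorous: once the star products have destroyed the homogeneity of "monomials", one must organise the right simultaneous induction on the $\fm$-adic order and on the order filtration $F_\bullet$ to conclude $\Phi(\cA^\wedge\fm^k) \subseteq \cA'^\wedge\fm^k$, and it is precisely here that the Lagrangian hypothesis on $\fy$ is indispensable — without it $[\fm,\fm]_\star$ would leak constants out of $\cA\fm$ and the chain $\{\cA\fm^k\}$ would misbehave. The remaining ingredients — density, uniqueness, multiplicativity of $F_\bullet$, the transfer of the generator estimate to $\Phi^{-1}$, and the persistence of $T$-equivariance — are formal.
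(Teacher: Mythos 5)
The thesis does not actually prove this proposition — it is stated verbatim as a black-box citation of \cite[Proposition~5.1]{Los:CatOWAlg} and used without an internal argument, so there is no ``paper's own proof'' to compare against.

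Taking your proposal on its own merits: the outline is a plausible reconstruction of what such a proof must accomplish — identify $\cA^\heartsuit$ as $\fm$-adically dense in $\cA^\wedge$, reduce continuity (hence extendability and uniqueness) to the single estimate $\Phi(v_i) \in \cA'^\wedge\fm$ for generators $v_i$ of $\fm$, derive that estimate from the hypothesis on $\Phi(v_i)-v_i$, and rerun the argument for $\Phi^{-1}$. Two caveats. First, the phrase ``the $\fm$-factor, being of smallest degree, sits at the right end of the PBW monomial'' conflates degree order with index order; it is the index ordering $i_1 \ge \dotsb \ge i_\ell$ that governs PBW monomials here. The two happen to coincide for the monomials you need: if the rightmost index $i_\ell$ exceeded $m$, every factor would lie outside $\fm$ and hence have degree $\ge 1$, forcing total degree $\ge 0$ (respectively $\ge 2$ for length $\ge 2$), contradicting the bounds $d_i - 2 \le -1$ and $d_i \le 1$. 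That should be spelled out rather than appealed to via ``smallest degree.'' Second, and as you yourself flag, the inclusion $\Phi(\cA\fm^k) \subseteq \cA'^\wedge\fm^k$ for all $k$ does not follow formally from the $k=1$ case, because $\cA\fm^k$ is only a left ideal and an algebra map need not respect powers of a left ideal without some compatibility between $\fm$ and the ambient multiplication. Your observation that the Lagrangian condition kills the degree-zero piece of $[\fm,\fm]_\star$ is the correct mechanism, but turning it into the required simultaneous induction on the $\fm$-adic order and the $F_\bullet$-filtration is precisely the substantive content of Losev's proof, not a peripheral technicality. As written, the ``expected obstacle'' paragraph correctly locates the hard step but does not discharge it, so what you have is an accurate plan rather than a complete argument.
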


\begin{cor}{\normalfont \cite[Corollary~5.2]{Los:CatOWAlg}}
\label{cor:los5.2}
The equivalence $\Phi \colon \cA^\wedge \to \cA'^\wedge$ of the above
\namecref{prop:los5.1} induces equivalences
$\Phi_* \colon \wWhit*{\fm}(\cA) \to \wWhit*{\fm}(\cA')$ and
$\Phi_* \colon \wWhit{\fm}(\cA) \to \wWhit{\fm}(\cA')$, where again
$\wWhit*{\fm}(\cA)$ is the category of all $\cA$\modules\ on which $\fm$ acts by
locally nilpotent endomorphisms, and $\wWhit{\fm}(\cA)$ is the subcategory on
which $\ft$ acts semi-simply. Furthermore, this functor commutes with taking
$\fm$\invariants, i.e.~$\Phi_*(M^\fm) = \Phi_*(M)^\fm$ for all
$M \in \wWhit*{\fm}(\cA)$.
\end{cor}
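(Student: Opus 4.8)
The strategy is to present the whole statement as a transport of structure along the topological algebra isomorphism $\Phi \colon \cA^\wedge \to \cA'^\wedge$ produced by \cref{prop:los5.1}; after the heavy lifting done there, the corollary is essentially formal. The first step is to identify $\wWhit*{\fm}(\cA)$ with the category of \emph{discrete} $\cA^\wedge$\modules, meaning those $\cA^\wedge$\modules\ $M$ in which every vector is annihilated by one of the open left ideals $\cA^\wedge\fm^k = \ker\pp{\cA^\wedge \to \cA/\cA\fm^k}$. If $\fm$ acts locally nilpotently on an $\cA$\module\ $M$, then for each $m \in M$ some power $\fm^k$ kills $m$; consequently the left ideal $\cA\fm^k$ kills $m$, since any element $\sum_j b_j y_j$ with $y_j \in \fm^k$ sends $m$ to $\sum_j b_j\pp{y_j m} = 0$. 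Thus $a \mapsto a m$ factors through $\cA/\cA\fm^k$, and these factorisations are compatible as $m$ and $k$ vary and assemble into an action of $\cA^\wedge = \lim \cA/\cA\fm^k$ on $M$ for which $m$ is killed by $\cA^\wedge\fm^k$, i.e.\ a discrete $\cA^\wedge$\module. Conversely, restricting a discrete $\cA^\wedge$\module\ along $\cA \hookrightarrow \cA^\wedge$ yields a Whittaker module, because $\fm^k \subseteq \cA^\wedge\fm^k$; the two constructions are mutually inverse, and everything applies verbatim with $\cA'$ in place of $\cA$.

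Granting this dictionary, $\Phi_*$ is nothing but restriction of scalars along $\Phi^{-1}$: as $\Phi$ is a homeomorphism it carries open left ideals of $\cA'^\wedge$ to open left ideals of $\cA^\wedge$ and conversely, so restriction along $\Phi^{-1}$ sends discrete $\cA^\wedge$\modules\ to discrete $\cA'^\wedge$\modules\ while restriction along $\Phi$ furnishes an inverse functor, giving the equivalence $\Phi_* \colon \wWhit*{\fm}(\cA) \isoto \wWhit*{\fm}(\cA')$. For the subcategories on which $\ft$ acts semisimply I would use that $\Phi$ is $T$\equivariant: it intertwines the $T$\actions, hence the $\ft$\actions, on the two algebras and therefore on their modules, so $\Phi_*$ preserves the property that $\ft$ acts semisimply and restricts to an equivalence $\wWhit{\fm}(\cA) \isoto \wWhit{\fm}(\cA')$.

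Compatibility with $\fm$\invariants\ then follows from the observation that for any $\cA^\wedge$\module\ $M$ one has $M^\fm = \set{m \st \fm\, m = 0} = \set{m \st \pp{\cA^\wedge\fm}\, m = 0}$, since $\cA^\wedge\fm$ is precisely the first open left ideal $\ker\pp{\cA^\wedge \to \cA/\cA\fm}$, generated by $\fm$. Because the $\cA'^\wedge$\action\ on $\Phi_*M$ is the $\cA^\wedge$\action\ on $M$ twisted by $\Phi^{-1}$, this gives $\pp{\Phi_*M}^\fm = \set{m \st \Phi^{-1}\pp{\cA'^\wedge\fm}\, m = 0}$, which equals $\set{m \st \pp{\cA^\wedge\fm}\, m = 0} = M^\fm$ by the relation $\Phi\pp{\cA^\wedge\fm} = \cA'^\wedge\fm$ supplied by \cref{prop:los5.1}. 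The reduced-algebra structures on $\Phi_*\pp{M^\fm}$ and $\pp{\Phi_*M}^\fm$ coincide because $\Phi_*$ alters only the action and not the underlying space, so $\Phi_*\pp{M^\fm} = \pp{\Phi_*M}^\fm$.

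There is no deep obstacle here---\cref{prop:los5.1} is where the real content lies---but the step most likely to mislead, and the one to state carefully, is that $\fm$\invariants\ are cut out by the left ideal $\cA^\wedge\fm$ and not by the subspace $\fm$: $\Phi$ does \emph{not} fix $\fm$ pointwise, yet \cref{prop:los5.1} gives $\Phi\pp{\cA^\wedge\fm} = \cA'^\wedge\fm$, which is exactly what makes the invariants correspond. The remaining care is in the discrete-module dictionary of the first step---that local nilpotence of the $\fm$\action\ is equivalent to the $\cA$\action\ extending continuously to a discrete $\cA^\wedge$\action---which rests on the topology of $\cA^\wedge$ being generated by the ideals $\cA^\wedge\fm^k$ together with continuity of the module action, as set up in \cref{sec:LosevPaper}.
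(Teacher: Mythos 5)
Your proof is correct. The paper gives no argument of its own for this corollary---it is cited directly from Losev---so there is no internal proof to compare against; your deduction from \cref{prop:los5.1} via the dictionary between locally nilpotent $\fm$\nobreakdash-modules and discrete $\cA^\wedge$\nobreakdash-modules, transport along the topological isomorphism, $T$-equivariance for the $\ft$-semisimple subcategories, and the preserved ideal $\Phi(\cA^\wedge\fm) = \cA'^\wedge\fm$ is the standard argument, and the point you flag at the end---that $\Phi$ need not fix $\fm$ as a subspace but does preserve the left ideal $\cA^\wedge\fm$---is precisely what makes the invariant functors intertwine.
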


Losev uses these technical results to construct an isomorphism between
the completed universal enveloping algebra $U(\fg)^\wedge_\chi$ and
$U(\fg,e)^\wedge_\chi \hat{\otimes} \Weyl{V}^\wedge_0$, the completed tensor
product of the \Walgebra\ and the completed Weyl algebra $\Weyl{V}\wedge_0$,
where $V$ is the symplectic leaf through $e$. He does this by making the
following choices for the above objects.
Recall that we start with a semisimple Lie algebra $\fg$ with distinguished
nilpotent element $e \in \fg$ forming part of an \sltriple\ $\set{e,h,f}$.

\begin{enumerate}
\item $\fv \coloneqq \fg_\chi = \set{\xi - \chi(\xi) \st \xi \in \fg}$.
\item $T$ is a connected maximal torus in the centraliser
      $Z_\fg\pp[\big]{\set{e,h,f}}$ with $h \in \ft$; $\theta$ is an arbitrary
      (generic) co-character $\theta \in \Hom(\CC*,T)$. This determines a parabolic
      $\fp$ as the span of the positive eigenspaces of $\ad \theta$.
\item Choosing $m > 2 + 2d$, where $d$ is the maximum eigenvalue of $\ad h$ in
      $\fg$, define the grading on $\fv$ by
      $\fv_i = \set{\xi \in \fg \st (h-m\theta)\xi = (i-2)\xi}$. This is just the Kazhdan
      grading with a shift by $m$ so that everything in the nilradical of $\fp$
      has negative grading. The algebra $\fm$ corresponds to a choice of
      $\widetilde{\fm}_\chi$.
\item $\cA \coloneqq U(\fg)$ and $\cA' \coloneqq U(\fg,e) \otimes \Weyl{V}$, where $V = [\fg,f]$
      is the symplectic leaf in $\fg$ through $e$,
      and $\Weyl{V}$ is its Weyl algebra.
\end{enumerate}

The map $\Phi \colon U(\fg)^\heartsuit \to \pp[\big]{U(\fg,e) \otimes \Weyl{V}}^\heartsuit$
exists by \cite[Lemma~5.3]{Los:CatOWAlg}, and comes from an application of the
Luna slice theorem, and the identification of these spaces as the quotients of
the $\CC*$-finite parts of the quantum algebras
\begin{equation*}
\power{\Cpoly{\fg^*}^\wedge_\chi}{\hbar}
\qquad \text{and} \qquad
\power{\Cpoly{\slodowy_\chi}^\wedge_\chi}{\hbar} \widehat{\otimes}_{\Cpower{\hbar}}
  \power{\Cpoly{V^*}^\wedge_0}{\hbar}
\end{equation*}
by the ideal generated by $\hbar - 1$.

\begin{proof}[{Proof of \cref*{thm:CatOEquiv}}]

To prove the \namecref{thm:CatOEquiv}, we examine the category equivalences of
\cref{cor:los5.2}. Note that $\wWhit*{\fm}(\cA) = \wWhit~*{e,\fp}$
for $\fp$ the parabolic subalgebra determined by~$\theta$, and similarly the
subcategories of $\ft$\nobreakdash-semisimple modules are equal,
$\wWhit{\fm}(\cA) = \wWhit~{e,\fp}$.

It remains to construct an equivalence between $\wWhit*{\fm}(\cA')$ and
$\wO~*{e,\fp}$; let this be given by the invariant functor with respect to
$\fm \cap V$, which can be seen to be a Lagrangian subspace of~$V$ by applying
\cref{GGprop5} of good gradings in the Levi subalgebra.
\begin{align*}
\fK' \colon \wWhit*{\fm}(\cA') & \to \wO~*{e,\fp} \\
  M & \mapsto M^{\fm \cap V}
\end{align*}
The desired functor $\fK \colon \wWhit~*{e,\fp} \to \wO~*{e,p}$ is the
composition of functors $\fK' \circ \Phi_*$. It preserves the necessary
subcategories by \cref{cor:los5.2}.
\end{proof}

\section[Equivalences between categories \texorpdfstring{$\cO$}{O}]
{Equivalences between categories \texorpdfstring{$\mathbfcal{O}$}{O}}

We shall use a similar argument to that of Losev to prove an equivalence between
the category $\wO~*{e_2,\fp'}$ and an appropriate subcategory of Whittaker
vectors in $\wO~*{e_1,\fp}$ for a pair of nilpotent elements $e_1 \le e_2$ in a Lie
algebra of \typeA\, where the partial ordering considered is the
\emph{refinement ordering}. We expand upon the reductions produced in
\cref*{chap:TypeA} to produce these.

\begin{defn}
Let $\lambda = (\lambda_1,\dotsc,\lambda_k)$ and $\mu = (\mu_1,\dotsc,\mu_\ell)$ be two partitions of
$n$ satisfying $\lambda_1 \ge \dotsb \ge \lambda_k > 0$ and $\mu_1 \ge \dotsb \ge \mu_\ell > 0$. The
\emph{refinement ordering} is the partial ordering on the set of partitions
where $\lambda \ge \mu$ if and only if $\mu$ is a \emph{refinement} of $\lambda$, i.e.~there
exists a partition $\nu_1,\nu_2,\dotsc,\nu_k$ of the set $\set{1,\dotsc,\ell}$ such
that $\lambda_i = \sum_{j \in \nu_i} \mu_j$ for every $i$.

We also call the partial ordering on nilpotent orbits in $\fsl_n$ induced by
this ordering \emph{the refinement ordering}. Concretely, let $\orbit_1$ and
$\orbit_2$ be two nilpotent orbits in $\fsl_n$ corresponding to partitions $\lambda$
and $\mu$, respectively. We say that $\orbit_1 \ge \orbit_2$ in the refinement
ordering if $\lambda \ge \mu$ in the refinement ordering.
\end{defn}

\begin{rem}
Note that the dominance ordering is a \emph{refinement} of the refinement
ordering; this means that $\lambda \ge \mu$ in the dominance ordering whenever this holds
in the refinement ordering. In general, it can be the case that $\lambda \ge \mu$ in the
dominance ordering, but $\lambda \ngeq \mu$ in the refinement ordering. For example,
$(3,1) > (2,2)$ in the dominance ordering, but not in the refinement ordering.
\end{rem}

Choose a pair of \sltriples\ $\set{e_1,h_1,f_1}$ and $\set{e_2,h_2,f_2}$ for each of the
nilpotents elements $e_1$ and $e_2$. As in \cref{sec:LosevPaper}, we seek to
make a set of choices which satisfy the hypotheses of \cref{prop:los5.1}.

\begin{enumerate}
\item Define $\fv \coloneqq \set{\xi - \chi_2(\xi) \st \xi \in \liez{e_1}}$.

\item Note that since $e_1 < e_2$ in the refinement ordering, the pair of Levi
      subalgebras~$\fl_1$ and~$\fl_2$, chosen such that $e_1$ and~$e_2$ are
      regular their respective Levi subalgebras, can further be chosen so
      that~$\fl_1$ is a subalgebra of~$\fl_2$. As a result, a maximal connected
      torus $T$ in the centraliser $Z_\fg \pp[\big]{\set{e_2,h_2,f_2}}$ can be
      chosen which preserves $\fv$. Choose an arbitrary generic co-character
      $\theta_2 \in \Hom(\CC*,T)$.

\item Choosing $m > 2 + 2d$, where $d$ is the maximum eigenvalue of $\ad h_2$ on
      $\fg$, we define the grading on $\fv$ to be give by
      $\fv_i = \set{\xi \in \fv \st (\ad h_2-m\theta)\xi = (i-2)\xi}$.

\item Define $\cA \coloneqq U(\fg,e_1)$ and $\cA' = U(\fg,e_2) \otimes \Weyl{V}$, where
      $V = [\fg,f_2] \cap \liez{e_1}$ is the symplectic leaf in $\slodowy_{e_1}$
      passing through $e_2$.
\end{enumerate}

\begin{lem}
\label{lem:MyLos5.1}
These choices satisfy the hypotheses of \cref{prop:los5.1}.
\end{lem}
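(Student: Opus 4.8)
The plan is to verify the hypotheses of \cref{prop:los5.1} one at a time, by transcribing the choices Losev makes in the proof of \cref{thm:CatOEquiv} with the universal enveloping algebra $U(\fg)$ replaced by the \Walgebra\ $U(\fg,e_1)$, the Poisson variety $\fg^*$ replaced by the Slodowy slice $\slodowy_{\chi_1}$, and the ambient Lie algebra $\fg$ replaced by the centraliser $\liez{e_1}$. The substitution is sound because $U(\fg,e_1)$ is itself a deformation quantisation of a polynomial Poisson algebra: by \cref{thm:WalgQuant} the Kazhdan-associated graded of $U(\fg,e_1)$ is $\Cpoly[\big]{\slodowy_{\chi_1}}$, and since $\slodowy_{\chi_1}$ is an affine subspace of $\fg^*$ modelled on $\ker\ad^* f_1 \simeq (\liez{e_1})^*$, this is a polynomial ring on $\dim\liez{e_1}$ generators. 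A filtered vector-space splitting of the Kazhdan filtration therefore identifies the underlying space of $U(\fg,e_1)$ with $A := \Sym(\fv)$, where $\fv = \set{\xi-\chi_2(\xi) \st \xi\in\liez{e_1}}$, and transports the product of $\cA := U(\fg,e_1)$ to a star product on $A$.

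For the second star product, realising $\cA' := U(\fg,e_2)\otimes\Weyl{V}$ with $V = [\fg,f_2]\cap\liez{e_1}$, one argues the same way for $U(\fg,e_2)$ and tensors with the obvious presentation of the Weyl algebra $\Weyl{V}$. That the two constructions produce the same underlying graded space $A$ comes down to checking $\dim V = \dim\liez{e_1} - \dim\liez{e_2}$, equivalently that $V$ is the tangent space at $e_2$ to the symplectic leaf of $\slodowy_{\chi_1}$ through $e_2$ -- using $\dim\pp[\big]{\orbit_{e_2}\cap\slodowy_{e_1}} = \dim\orbit_{e_2}-\dim\orbit_{e_1}$ and $\liez{e_2}\cap[\fg,f_2]=\set{0}$ from the $f_2$-version of \cref{prop:SOTrans} -- together with the fact that the Poisson-transverse slice to that leaf is again a Slodowy slice, namely $\slodowy_{\chi_2}$; both of these are read off the explicit pyramid description of the centralisers (cf.~\cref{fig:ggnilcommute}). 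The condition in \cref{prop:los5.1} that every element of $A$ be a finite sum of monomials in both $\cA$ and $\cA'$ then holds for the usual reason: the PBW theorem for \Walgebras\ for $\cA$, and its tensor with the evident monomial basis of $\Weyl{V}$ for $\cA'$.

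It remains to install the torus action and the gradings. The torus $T\subseteq Z_\fg\pp[\big]{\set{e_2,h_2,f_2}}$ was chosen -- using the nesting $\fl_1\subseteq\fl_2$, available precisely because $e_1 < e_2$ in the refinement ordering -- so as to normalise $\fm_1$, to fix $\chi_1$, and to stabilise $\fv$; it therefore acts on $U(\fg,e_1) = \pp[\big]{U(\fg)/U(\fg)\fm_{1,\chi_1}}^{\fm_1}$ by the residual adjoint action, the star product is $T$-invariant, and $T$ preserves the $\ZZ$\grading\ by construction. The grading operator $\ad h_2 - m\theta$ stabilises $\fv = \liez{e_1}$ -- the $\ad\theta$ part because $T$ does, the $\ad h_2$ part because the \sltriples\ produced by the construction of \cref{chap:TypeA} may be aligned so that $\liez{e_1}$ is $\ad h_2$-stable -- and the star product is homogeneous for it since the star product realising $U(\fg,e_1)$ is built $\ad$-equivariantly from the graded Lie bracket of $\fg$; the choice $m > 2+2d$ sends the nilradical directions into strictly negative degree, so that, $\theta$ being generic, one obtains $\fv_{>0}\subseteq\fm\subseteq\fv_{\ge0}$ with $\fm$ the subalgebra of $\fv$ cut out by the generalised Premet datum of the reduction that produces $U(\fg,e_2)$. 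Finally, $\fv_1$ equipped with the constant term of its commutator is symplectic for both products, and the common Lagrangian $\fy\subseteq\fv_1$ is taken to be its intersection with a Premet subalgebra for a good grading of the Levi $\fl_2$, which is Lagrangian for both forms by \cref{GGprop5} applied inside the Levi, exactly as in the proof of \cref{thm:CatOEquiv}.

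The step I expect to be the main obstacle is the one compressed into the last two paragraphs: arranging $h_1$, $h_2$, $\theta$ and $T$ so that all three gradings in play -- the Kazhdan filtration on $U(\fg,e_1)$ attached to $e_1$, which carries the deformation-quantisation structure, the internal $\ad h_2$\grading, and the $\ad\theta$\grading\ -- commute and interact as \cref{prop:los5.1} demands, together with the vector-space identification $\liez{e_1}\simeq\liez{e_2}\oplus V$ and the identification of the transverse slice with $\slodowy_{\chi_2}$. This is exactly where the hypothesis $e_1\le e_2$ is used: the nesting $\fl_1\subseteq\fl_2$ and the explicit pyramid construction of \cref{chap:TypeA} let these semisimple elements be chosen inside a common Cartan-and-torus of $\fl_2$ and make the centraliser computations transparent, after which every remaining verification is the literal analogue of a step in Losev's proof of \cref{thm:CatOEquiv}.
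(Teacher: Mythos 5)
Your proposal follows essentially the same approach as the paper's proof: both identify $\cA=U(\fg,e_1)$ and $\cA'=U(\fg,e_2)\otimes\Weyl{V}$ as deformation quantisations of $\Sym(\liez{e_1})$ via the Kazhdan filtration, and both hinge on realising $\slodowy_{\chi_2}$ as a transverse slice to the symplectic leaf $V=[\fg,e_2]\cap\liez{e_1}$ inside $\slodowy_{\chi_1}$, the torus and grading data being inherited from the compatible Levi nesting $\fl_1\subseteq\fl_2$. The paper records this via a commutative diagram of inclusions $\chi_2+\fm_2^{*,\bot}\hookrightarrow\chi_1+\fm_1^{*,\bot}$ covering $\slodowy_{\chi_2}\hookrightarrow\slodowy_{\chi_1}$ and is otherwise terser than you are; your extra care over the dimension count $\dim V = \dim\liez{e_1}-\dim\liez{e_2}$, the PBW-style monomial finiteness, and the mutual compatibility of the $\ad h_2$, $\ad\theta$ and Kazhdan gradings fills in verifications the paper leaves implicit, and your flagged worry about aligning the two \sltriples\ is a genuine subtlety that the paper's own proof does not explicitly address either.
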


\begin{proof}
It is clear from the construction that $\fv$ is a finite-dimensional graded
vector space, and that the torus $T$ preserves the grading. Since
$\cA = U(\fg,e_1)$ is a deformation quantisation of the Slodowy slice
$\slodowy_{\chi_1}$, it follows that it comes from a $T$\invariant\ homogeneous
star product on $\Cpoly[\big]{\slodowy_{\chi_1}} = \Sym({\liez{e}})$. $\omega_1$ is
again a symplectic form on $\fv_1$, and $\fm$ is defined similarly. It remains
no demonstrate that $\cA$ and $\cA'$ are both algebras coming from
$A = \Sym(\liez{e})$ with different star products.

Consider the subalgebras $\fm_1$ and $\fm_2$ constructed in \cref{chap:TypeA}.
As the moment map pre-image $\mu_i^{-1}(\chi_i) = \chi_i + \fm_i^{*,\bot}$, this gives
the commutative diagram.
\begin{equation}
\begin{tikzcd}[]
\fg^*                    \rar[-, double equal sign distance]
  & \fg^*                                                                \\
\chi_2 + \fm_2^{*,\bot}     \uar[hookrightarrow] \rar[hookrightarrow]{\iota}
  & \chi_1 + \fm_1^{*,\bot} \uar[hookrightarrow]                            \\
\slodowy_{\chi_2}           \uar[twoheadleftarrow, xshift=.60ex]
                            \uar[hookrightarrow, xshift=-.60ex]
                            \rar[hookrightarrow]{\varphi}
  & \slodowy_{\chi_1}       \uar[twoheadleftarrow, xshift=.60ex]
                            \uar[hookrightarrow, xshift=-.60ex]
\end{tikzcd}
\label{eq:SlodowyInclusions}
\end{equation}
Here, the vertical maps $\chi_i + \fm_i^{*,\bot} \hookrightarrow \fg^*$ and
$\chi_i + \fm_i^{*,\bot} \twoheadrightarrow \slodowy_{\chi_i}$ are the natural maps
coming from the expression of $\slodowy_{\chi_i}$ as a Hamiltonian reduction of
$\fg^*$, and the maps $\slodowy_{\chi_i} \hookrightarrow \chi_i + \fm_i^{*,\bot}$ are
the maps which come from the natural expression
$\slodowy_{\chi_i} = \kappa \pp[\big]{e_i + \liez{f_i}} \subseteq \fg^*$. The map $\iota$ comes
from the embedding of $\fm_1$ as a subspace of $\fm_2$ by the decomposition
$\fm_2 = \fm_1 \rtimes \fk$, and the map $\varphi$ is defined as the composition of
the necessary morphisms. Note that the embedding
$\varphi \colon \slodowy_{\chi_2} \hookrightarrow \slodowy_{\chi_1}$ is a transverse slice
to the symplectic symplectic leaf
$V = \liebr{\fg, e_2} \cap \liez{e_1} \subseteq \slodowy_{\chi_1}$.
\end{proof}

\begin{lem}
\label{lem:MyHeartMap}
With these choices, there is a map
$\Phi \colon U(\fg,e_1)^\heartsuit \to \pp[\big]{U(\fg,e_2) \otimes \Weyl{V}}^\heartsuit$.
\end{lem}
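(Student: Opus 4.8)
The plan is to transcribe Losev's argument for \cite[Lemma~5.3]{Los:CatOWAlg}, with $U(\fg,e_1)$ (a quantisation of $\slodowy_{\chi_1}$) in the role played there by $U(\fg)$ (a quantisation of $\slodowy_0 = \fg^*$), and with the point $e_2 \in \slodowy_{\chi_1}$ in the role of $\chi \in \fg^*$. First I would recall that, by the uniqueness of the deformation quantisation of a Slodowy slice (\cite[Corollary~3.3.3]{Los:QuantSymplWalg}), the algebra $U(\fg,e_i)^\heartsuit$ is identified with the quotient by the ideal $(\hbar-1)$ of the $\CC*$-finite part of the $\hbar$-adic completion $\power{\Cpoly{\slodowy_{\chi_i}}^\wedge_{e_2}}{\hbar}$ at $e_2$, while $\pp[\big]{U(\fg,e_2) \otimes \Weyl{V}}^\heartsuit$ is likewise the quotient by $(\hbar-1)$ of the $\CC*$-finite part of $\power{\Cpoly{\slodowy_{\chi_2}}^\wedge_{e_2}}{\hbar} \widehat{\otimes}_{\Cpower{\hbar}} \power{\Cpoly{V^*}^\wedge_0}{\hbar}$, where $V$ is the symplectic leaf of $\slodowy_{\chi_1}$ through $e_2$. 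Hence it is enough to construct a homogeneous, $T$-equivariant isomorphism between these two completed quantum algebras and then restrict to $\CC*$-finite parts and set $\hbar = 1$.

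The classical shadow of such an isomorphism is supplied by \cref{lem:MyLos5.1}: according to the diagram \labelcref{eq:SlodowyInclusions}, the map $\varphi$ realises $\slodowy_{\chi_2}$ as a transverse slice in $\slodowy_{\chi_1}$ to the symplectic leaf $V = \liebr{\fg,e_2} \cap \liez{e_1}$ through $e_2$. Applying the Luna slice theorem to this configuration — exactly as in \cite[Lemma~5.3]{Los:CatOWAlg}, and using the contracting $\CC*$\action\ of \cref{prop:SContract} to reduce to the formal neighbourhood of the fixed point $e_2$ — I would obtain a Poisson isomorphism
\begin{equation*}
\Cpoly{\slodowy_{\chi_1}}^\wedge_{e_2} \isoto
  \Cpoly{\slodowy_{\chi_2}}^\wedge_{e_2} \widehat{\otimes} \Cpoly{V^*}^\wedge_0,
\end{equation*}
and this can be arranged to be $T$-equivariant because $T$ fixes $e_2$ and, by the choices made in the construction above ($\fl_1 \subseteq \fl_2$ and $T \subseteq Z_\fg\pp[\big]{\set{e_2,h_2,f_2}}$), it preserves both the slice $\varphi(\slodowy_{\chi_2})$ and the leaf $V$.

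Finally I would quantise this splitting. By the uniqueness of the deformation quantisation of the two Slodowy factors (again \cite[Corollary~3.3.3]{Los:QuantSymplWalg}), together with the rigidity of the formal Weyl algebra of $V$, the classical $T$- and $\CC*$-equivariant isomorphism lifts to a homogeneous isomorphism of the completed $\hbar$-adic quantum algebras displayed above; passing to $\CC*$-finite parts and setting $\hbar = 1$ then produces the desired map $\Phi$ — in fact an isomorphism $U(\fg,e_1)^\heartsuit \isoto \pp[\big]{U(\fg,e_2) \otimes \Weyl{V}}^\heartsuit$, which is precisely what \cref{prop:los5.1} requires as input. I expect the middle step to be the main obstacle: one must verify that the Luna slice theorem genuinely applies to the symplectic leaf $V$ lying inside the Poisson variety $\slodowy_{\chi_1}$ — that is, that $V$ fits the orbit-and-normal-slice picture with a reductive stabiliser, so that the formal splitting exists and can be chosen equivariant — and that the contracting $\CC*$-action legitimately permits the reduction to formal completions at $e_2$; it is exactly here that the compatibility $\fl_1 \subseteq \fl_2$ of the Levi subalgebras and the earlier choice of the torus $T$ are used.
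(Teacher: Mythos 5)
Your overall plan is the right one — mimic Losev's argument with $U(\fg,e_1)$ in the role of $U(\fg)$, reduce to formal neighbourhoods at $e_2$, and produce a splitting of quantum algebras that survives passage to $\CC*$-finite parts and $\hbar = 1$. You also correctly locate, and flag, the one real obstacle: the Luna slice theorem cannot be applied directly to the symplectic leaf $V$ inside the Poisson variety $\slodowy_{\chi_1}$, because Luna's theorem concerns a \emph{reductive group action with a closed orbit}, and there is no such action on $\slodowy_{\chi_1}$ itself (the Premet group is unipotent and does not act on the slice, and $V$ is a leaf of a Poisson structure, not a group orbit). You leave this unresolved.

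The paper resolves exactly this point by \emph{equivariantizing}: instead of working on $\slodowy_{\chi_i}$ directly, it passes to Losev's equivariant Slodowy slices $X_i \coloneqq G \times \slodowy_{\chi_i}$ with the $\widetilde{G} \coloneqq G \times \CC* \times G_0$-action written out in the proof. The point $x = (1,\chi_2)$ then has a closed $\widetilde{G}$-orbit (since it is already a $G$-orbit) and a reductive stabiliser $G_0 \times \CC*$, so the hypotheses of the Luna slice theorem are genuinely met at the level of $X_1$. One constructs the $\widetilde{G}$-equivariant symplectomorphism $T_x X_1 \to T_x X_2 \oplus V^*$, extends the Fedosov star products to the completions $\power{\Cpoly{X_i}^\wedge_{Gx}}{\hbar}$, applies Luna there to get the quantum isomorphism $\Phi_\hbar$ on the equivariant slices, and only then descends to $\slodowy_{\chi_i}$ by taking $G$-invariants. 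This last step (take $G$-invariants after equivariantizing) is the missing idea in your sketch. A second, smaller divergence: the paper builds the quantum isomorphism directly via Fedosov star products on the equivariant slices, rather than first producing a classical Poisson isomorphism of formal neighbourhoods and then lifting it by ``uniqueness of deformation quantisation plus rigidity of the formal Weyl algebra''; the latter route is not obviously available, since you would still need to know that the chosen Poisson isomorphism lifts, which is not an immediate consequence of uniqueness of the quantisation of each factor. Once you adopt the equivariant-slice device and work at the level of $\hbar$-adic quantum algebras as in \cite[Theorem~3.3.1]{Los:QuantSymplWalg}, this issue disappears.
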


\begin{proof} \cite[Theorem~3.3.1]{Los:QuantSymplWalg}
Consider Losev's \emph{equivariant Slodowy slices}:
$X_i \coloneqq G \times \slodowy_{\chi_i}$, where $G = SL_n$. The embedding
$\slodowy_{\chi_2} \hookrightarrow \slodowy_{\chi_1}$ extends to an embedding
$X_2 \hookrightarrow X_1$. Note that both $X_i$ have an action of
$\widetilde{G} \coloneqq G \times \CC* \times G_0$, where $G_0$ is the subgroup of
$Z_G(e_2,h_2,f_2)$ which preserves the grading, defined as follows:
\begin{align*}
g \cdot (g_1,\alpha)   & = (g g_1, \alpha) &
t \cdot (g_1,\alpha)   & = (g_1 \gamma(t)^{-1}, t \cdot \alpha) &
g_0 \cdot (g_1,\alpha) & = (g_1 g_0^{-1}, g_0 \alpha)
\end{align*}
Both $X_i$ are stable under the action of $\widetilde{G}$. Let $x = (1,\chi_2)$ and
note that since $\widetilde{G} \cdot x = G \cdot x$, which implies that
$\widetilde{G} \cdot x$ is closed. The stabiliser of $x$ is
$\set{(g_0 \gamma(t), t, g_0) \st t \in \CC*, g_0 \in G_0}$, which can be identified with
$G_0 \times \CC*$.

The symplectic subspace $T_x X_2 \subseteq T_x X_1$ has an orthogonal complement
identified with $V$, resulting in a $(G_0 \times \CC*)$\equivariant\
symplectomorphism $\psi \colon T_x X_1 \to T_x X_2 \oplus V^*$. The Fedosov star products
on $\power{\Cpoly{X_1}}{\hbar}$ and $\power{\Cpoly{X_2 \times V^*}}{\hbar}$ are
both differential, so they extend to the completions
$\power{\Cpoly{X_1}^\wedge_{G x}}{\hbar}$ and
$\power{\Cpoly{X_2 \times V^*}^\wedge_{G x}}{\hbar}$.

Now, proceeding as in \cite[Theorem~3.3.1]{Los:QuantSymplWalg}, the map $\psi$ and
the Luna slice theorem can be used to produce a $\widetilde{G}$\equivariant\
isomorphism
$\Phi_\hbar \colon \power{\Cpoly{X_1}^\wedge_{G x}}{\hbar}
            \to \power{\Cpoly{X_2 \times V^*}^\wedge_{G x}}{\hbar}$. Taking
$G$\invariants\ induces a map 
$\Phi_\hbar \colon \power{\Cpoly{\slodowy_{\chi_1}}^\wedge_{G x}}{\hbar}
            \to \power{\Cpoly{\slodowy_{\chi_2} \times V^*}^\wedge_{G x}}{\hbar}$,
and restricting to the $\CC*$-finite parts produces the desired map
$\Phi \colon U(\fg,e_1)^\heartsuit \to \pp[\big]{U(\fg,e_2) \otimes \Weyl{V}}^\heartsuit$.
\end{proof}

Combining \cref{lem:MyLos5.1,lem:MyHeartMap} with \cref{prop:los5.1,cor:los5.2}
completes the proof of the following \namecref{thm:RefinedCatOEquiv}, forming an
analogue of Losev's results for our case.
\begin{thm}
\label{thm:RefinedCatOEquiv}
There is an equivalence of categories
\begin{align*}
K \colon \wWhit{\fm}(U(\fg,e_1)) \to \wO{e_2,\fp_2},
\end{align*}
where $\wWhit{\fm}(U(\fg,e_1))$ is a full subcategory of $\wO{e_1,\fp_1}$.
\end{thm}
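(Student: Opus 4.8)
The plan is to imitate the proof of \cref{thm:CatOEquiv} almost verbatim, with the \Walgebra\ $U(\fg,e_1)$ playing the role that the enveloping algebra $U(\fg)$ plays in Losev's argument and $e_2$ playing the role of the nilpotent $e$ there; the statement then becomes a formal consequence of \cref{prop:los5.1,cor:los5.2} once \cref{lem:MyLos5.1,lem:MyHeartMap} are in hand. The equivalence $K$ will be obtained by composing two functors — transport of module categories across a completed-algebra isomorphism $\Phi$, followed by reduction of the Weyl-algebra factor via Whittaker vectors — and then restricting to the $\ft$-semisimple subcategories.

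First I would feed the data assembled in \cref{lem:MyLos5.1} — the graded space $\fv = \set{\xi - \chi_2(\xi) \st \xi \in \liez{e_1}}$, the torus $T$, the generic cocharacter $\theta_2$, the Lagrangian $\fy \subseteq \fv_1$, the algebra $\fm$, and the two algebra structures $\cA = U(\fg,e_1)$ and $\cA' = U(\fg,e_2) \otimes \Weyl{V}$ on the common underlying space $A = \Sym(\liez{e_1})$ — into \cref{prop:los5.1}. By \cref{lem:MyHeartMap} there is a homogeneous $T$-equivariant isomorphism $\Phi \colon U(\fg,e_1)^\heartsuit \isoto \pp[\big]{U(\fg,e_2) \otimes \Weyl{V}}^\heartsuit$, and a short verification shows that the Luna-slice construction of \cref{lem:MyHeartMap} produces a $\Phi$ agreeing with the identity to leading order, i.e.\ $\Phi(v_i) - v_i \in F_{d_i-2}\cA + (F_{d_i}\cA \cap \fv^2\cA)$, as \cref{prop:los5.1} demands. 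That proposition then extends $\Phi$ to a topological algebra isomorphism of the $\fm$-adic completions sending $U(\fg,e_1)^\wedge\fm$ onto $\pp[\big]{U(\fg,e_2) \otimes \Weyl{V}}^\wedge\fm$, and \cref{cor:los5.2} upgrades this to an equivalence $\Phi_* \colon \wWhit*{\fm}(U(\fg,e_1)) \isoto \wWhit*{\fm}(U(\fg,e_2) \otimes \Weyl{V})$ commuting with $\fm$-invariants, together with its analogous restriction to the $\ft$-semisimple subcategories.

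Next I would compose with the Weyl-algebra reduction, exactly as in the final step of the proof of \cref{thm:CatOEquiv}. Since $V = [\fg,f_2] \cap \liez{e_1}$ is the symplectic leaf through $e_2$ in $\slodowy_{\chi_1}$ and $\fm \cap V$ is a Lagrangian subspace of $V$ — which follows from \cref{GGprop5} applied to the good grading induced on the Levi subalgebra $\fl_2$ — the functor $\fK' \colon M \mapsto M^{\fm \cap V}$ is an equivalence $\wWhit*{\fm}(U(\fg,e_2) \otimes \Weyl{V}) \isoto \wO~*{e_2,\fp_2}$, with quasi-inverse given by inducing along the Gel'fand--Graev module of the pair $(\fm \cap V, \chi_2)$ for $\Weyl{V}$. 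Setting $K \coloneqq \fK' \circ \Phi_*$ and restricting to $\ft$-semisimple modules, which is permitted by \cref{cor:los5.2}, yields $K \colon \wWhit{\fm}(U(\fg,e_1)) \isoto \wO{e_2,\fp_2}$. Finally, a bookkeeping check parallel to the identification $\wWhit{\fm}(U(\fg)) = \wWhit~{e,\fp}$ in Losev's argument shows that $\wWhit{\fm}(U(\fg,e_1))$ sits inside $\wO{e_1,\fp_1}$ as a full subcategory: since $\fm \subseteq \fv$ contains every strictly-positive $\ad\theta_2$-eigenspace, local nilpotence of $\fm$ together with $\ft$-semisimplicity forces the weight-boundedness and finiteness conditions \cref{cond:CatOWln,cond:CatOWss} defining $\wO{e_1,\fp_1}$.

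The main obstacle is concentrated entirely in \cref{lem:MyLos5.1,lem:MyHeartMap} and in confirming that \cref{prop:los5.1,cor:los5.2} genuinely apply with $U(\fg,e_1)$ — itself a deformation quantisation of the Poisson variety $\slodowy_{\chi_1}$ rather than an enveloping algebra — in place of $U(\fg)$. Everything hinges on the diagram \labelcref{eq:SlodowyInclusions}, which realises $\varphi \colon \slodowy_{\chi_2} \hookrightarrow \slodowy_{\chi_1}$ as a transverse slice to the symplectic leaf $V$; that transversality is precisely the geometric input the Luna-slice argument of \cref{lem:MyHeartMap} needs, and once it and the leading-order compatibility of $\Phi$ are established, the rest of the proof is purely formal.
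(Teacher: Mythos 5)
Your proposal matches the paper's argument: the paper's ``proof'' is the single sentence ``Combining \cref{lem:MyLos5.1,lem:MyHeartMap} with \cref{prop:los5.1,cor:los5.2} completes the proof,'' and you have simply unpacked what that combination amounts to, mirroring the proof of \cref{thm:CatOEquiv} with $U(\fg,e_1)$ in the role of $U(\fg)$, the equivalence $\Phi_*$ from \cref{cor:los5.2}, and the Weyl-reduction functor $\fK' \colon M \mapsto M^{\fm \cap V}$. The only additions beyond the paper — the leading-order check $\Phi(v_i) - v_i \in F_{d_i-2}\cA + (F_{d_i}\cA \cap \fv^2\cA)$ and the bookkeeping showing $\wWhit{\fm}(U(\fg,e_1)) \subseteq \wO{e_1,\fp_1}$ — are points the paper leaves implicit, and your sketches of both are sound.
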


\addcontentsline{toc}{chapter}{Bibliography}
\newcommand{\etalchar}[1]{$^{#1}$}
\providecommand{\bysame}{\leavevmode\hbox to3em{\hrulefill}\thinspace}
\providecommand{\MR}{\relax\ifhmode\unskip\space\fi MR }
% \MRhref is called by the amsart/book/proc definition of \MR.
\providecommand{\MRhref}[2]{%
  \href{http://www.ams.org/mathscinet-getitem?mr=#1}{#2}
}
\providecommand{\href}[2]{#2}

\end{document}